\newcommand{\spf}[1]{\mathrm{Spf}\left(#1\right)}
\newcommand{\weak}[1]{\langle #1\rangle^\dagger}
\newcommand{\rlser}[1]{\{\!\{#1\}\!\}}
\newcommand{\rpow}[1]{\{#1\}}
\theoremstyle{plain}
\newtheorem*{thrm}{Theorem}
\newtheorem{theo}{Theorem}[section]
\newtheorem{prop}[theo]{Proposition}
\newtheorem{cor}[theo]{Corollary}
\newtheorem{lem}[theo]{Lemma}
\newtheorem{guess[theo]}{Guess}
\theoremstyle{definition}
\newtheorem{defn}[theo]{Definition}
\theoremstyle{remark}
\newtheorem{rem}[theo]{Remark}
\newtheorem*{claim}{Claim}
\newcommand{\proofof}[1]{\end{#1}\begin{proof}}
\renewcommand\section{\@startsection {section}{1}{\z@}%
  {-3.5ex \@plus -1ex \@minus -.2ex}{2.3ex \@plus.2ex}%
  {\normalfont\large\bfseries}}
\renewcommand\subsection{\@startsection{subsection}{2}{\z@}%
  {-3.25ex\@plus -1ex \@minus -.2ex}{1.5ex \@plus .2ex}%
  {\normalfont\bfseries}}
\newcommand{\sh}[1]{\mathcal{#1}}
\newcommand{\Z}{{\mathbb Z}}
\renewcommand{\P}{{\mathbb P}}
\newcommand{\A}{{\mathbb A}}
\DeclareMathAlphabet{\mathrmsl}{OT1}{cmr}{m}{sl}
\newcommand{\rssymb}[2]{\newcommand{#1}{\mathrmsl{#2}} }
\newcommand{\oper}[3][n]{\newcommand{#2}{\mathop{\mathrm{#3}}%
\ifx n#1\nolimits\else\limits\fi} }
\newcommand{\rsoper}[3][n]{\newcommand{#2}{\mathop{\mathrmsl{#3}}%
\ifx n#1\nolimits\else\limits\fi} }
\newcommand{\lser}[1]{(\!(#1)\!)}
\newcommand{\pow}[1]{\llbracket #1 \rrbracket}
\newcommand{\spec}[1]{\mathrm{Spec}\left(#1\right)}
\newcommand{\cur}[1]{\mathcal{#1}}
\newcommand{\Norm}[1]{\left\Vert #1\right\Vert}
\newcommand{\norm}[1]{\left\vert#1\right\vert}
\newcommand{\rig}{\mathrm{rig}}
\newcommand{\ek}{\cur{E}_K}
\newcommand{\ekd}{\cur{E}_K^\dagger}
\newcommand{\tate}[1]{\langle #1 \rangle}
\newcommand{\pn}{(\varphi,\nabla)}
\oper\Ad{Ad}
\oper\val{val}
\oper\coker{coker}
\oper\mult{mult}
\oper\Iso{Iso}
\oper\End{End}
\oper\Aut{Aut}
\oper\Sub{Sub}
\oper\Alt{Alt}
\oper\Ext{Ext}
\oper\Pic {Pic}
\oper\Sym{Sym}
\oper\Spec{Spec}
\oper\Spf{Spf}
\oper\Sp{Sp}
\oper\Spa{Spa}
\oper\Proj{Proj}
\rsoper\divg{div}
\rsoper{\sym}{sym}
\rsoper{\alt}{alt}
\rsoper\trace{tr}
\rssymb\id{id}
\newcommand{\thismonth}{\ifcase\month\or
  January\or February\or March\or April\or May\or June\or
  July\or August\or September\or October\or November\or December\fi
  \space\number\year}
\title{Rigid cohomology over Laurent series fields II: Finiteness and Poincar\'{e} duality for smooth curves}
\author{Christopher Lazda and Ambrus P\'{a}l}
\begin{document}

\maketitle 

\abstract{In this paper we prove that the $\ekd$-valued cohomology, introduced in \cite{rclsf1} is finite dimensional for smooth curves over Laurent series fields $k\lser{t}$ in positive characteristic, and forms an $\ekd$-lattice inside `classical' $\ek$-valued rigid cohomology. We do so by proving a suitable version of the $p$-adic local monodromy theory over $\ekd$, and then using an \'{e}tale pushforward for smooth curves to reduce to the case of $\A^1$. We then introduce $\ekd$-valued cohomology with compact supports, and again prove that for smooth curves, this is finite dimensional and forms an $\ekd$-lattice in $\ek$-valued cohomology with compact supports. Finally, we prove Poincar\'{e} duality for smooth curves, but with restrictions on the coefficients.}

\tableofcontents

\section*{Introduction}\addcontentsline{toc}{section}{Introduction}

This is the second in a series of papers \cite{rclsf1,rclsf3} dedicated to the construction of a new $p$-adic cohomology theory for varieties over local fields of positive characteristic. A detailed introduction to the whole series is given in \cite{rclsf1}, so here we will give a brief overview of the results contained in this paper.

In the first paper \cite{rclsf1} we introduced a version of rigid cohomology for varieties over the Laurent series field $k\lser{t}$ with values in vector spaces over the bounded Robba ring $\ekd$ (here $K$ is a complete discretely valued field of characteristic $0$ with residue field $k$). There we proved that the cohomology groups were welll-defined and functorial, as well as introducing categories of coefficients. The main result in this paper is that base change holds for smooth curves over $k\lser{t}$, a precise statement of which is as follows. 

\begin{thrm}[\ref{finitecurves}] Let $X/k\lser{t}$ be a smooth curve and $\sh{E}\in F\text{-}\mathrm{Isoc}^\dagger(X/\ekd)$ an overconvegent $F$-isocrystal, with associated overconvergent $F$-isocrystal $\hat{\sh{E}}\in F\text{-}\mathrm{Isoc}^\dagger(X/\ek)$. Then the base change morphism
$$ H^i_\rig(X/\ekd,\sh{E})\otimes_{\ekd} \ek \rightarrow H^i_\rig(X/\ek,\hat{\sh{E}})
$$
is an isomorphism.
\end{thrm}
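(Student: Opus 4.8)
The plan is to reduce to the case $X = \A^1_{k\lser{t}}$ by devissage, then to handle the affine line directly by comparing the explicit complexes computing both sides. First I would reduce to the case where $X$ is affine and admits a finite étale morphism $\pi\colon X \to \A^1_{k\lser{t}}$; such a morphism exists Zariski-locally on any smooth curve, and both sides of the base change map satisfy a Mayer--Vietoris sequence (in $\ekd$-modules, respectively $\ek$-modules), so by a five-lemma argument it suffices to treat affine curves, then finally $X$ itself mapping finite étale to $\A^1$. Using the étale pushforward $\pi_*$ promised in the abstract, one has $H^i_\rig(X/\ekd,\sh{E}) \cong H^i_\rig(\A^1/\ekd,\pi_*\sh{E})$ and likewise after hatting, compatibly with base change to $\ek$ (this compatibility needs to be checked, but should be formal from the construction of $\pi_*$ as it is built from the trace map for finite étale morphisms, which commutes with the faithfully flat ring extension $\ekd \hookrightarrow \ek$). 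So the whole problem reduces to: base change holds for $\A^1_{k\lser{t}}$ with coefficients in an arbitrary overconvergent $F$-isocrystal.

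Next I would make the computation on $\A^1$ explicit. The rigid cohomology $H^i_\rig(\A^1/\ekd,\sh{E})$ is computed by a two-term de Rham complex $M \xrightarrow{\nabla} M$, where $M$ is the module of overconvergent sections over a suitable wide-open/dagger neighbourhood of $\A^1$, viewed as a module over a ring of functions that is a $\ekd$-algebra; tensoring up with $\ek$ turns this into the analogous complex computing $H^i_\rig(\A^1/\ek,\hat{\sh{E}})$, because the "$\dagger$" (bounded Robba) versus full Robba distinction on the coefficient ring $\ek$ vs $\ekd$ propagates linearly through the Monsky--Washnitzer-type construction. The base change map on $H^0$ is then $\Ker(\nabla)\otimes_{\ekd}\ek \to \Ker(\nabla\otimes\iden)$ and on $H^1$ is $\coker(\nabla)\otimes_{\ekd}\ek \to \coker(\nabla\otimes\iden)$. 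Right-exactness of $\otimes$ gives surjectivity on $H^1$ immediately, and since $\ekd\to\ek$ is flat, $\coker(\nabla)\otimes\ek \cong \coker(\nabla\otimes\iden)$ is actually an isomorphism on $H^1$, and $\Ker(\nabla)\otimes\ek \hookrightarrow \Ker(\nabla\otimes\iden)$ is injective; what remains is surjectivity on $H^0$, i.e. that every horizontal section over the $\ek$-model already comes from a horizontal section over the $\ekd$-model (after extending scalars), equivalently that $\Ker(\nabla)$ "does not grow" under the completion $\ekd\to\ek$.

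The main obstacle is exactly this last point, and it is where the $p$-adic local monodromy theorem over $\ekd$ advertised in the abstract must be used: the statement "horizontal sections do not grow" is a finiteness/rigidity statement about solutions of $p$-adic differential equations, and controlling it requires knowing that an $F$-isocrystal on $\A^1$ has, near the point at infinity, a decomposition (after a finite extension) into pieces that are unipotent up to twist — precisely a local monodromy / quasi-unipotence input — together with a Frobenius-structure argument to rule out extra solutions appearing only over the larger ring. Concretely, I expect the argument to run: by the local monodromy theorem the restriction of $\sh{E}$ to a punctured disc around $\infty$ becomes, after a finite cover, an extension of (twists of) trivial objects; for such objects the horizontal sections can be computed by hand and one sees they are defined already over $\ekd$; a spectral-sequence or long-exact-sequence argument in the finite cover (using that we already control $H^0$ for trivial coefficients and the trace map for the cover is compatible with base change) then propagates the non-growth of $H^0$ to $\sh{E}$ itself. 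Finally one descends from the finite cover using the trace/norm projector, which is where the "with restrictions on the coefficients" caveat in the abstract may enter; for the base-change statement itself, however, the $F$-structure should suffice to make the descent work in full generality. Assembling these pieces — étale pushforward and its compatibility with base change, Mayer--Vietoris, the explicit $\A^1$ complex, flatness of $\ekd\to\ek$, and the local monodromy theorem to pin down $H^0$ — yields the isomorphism in all degrees.
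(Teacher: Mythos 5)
The skeleton of your reduction (Mayer--Vietoris localization, \'{e}tale pushforward to $\A^1_{k\lser{t}}$, an explicit two-term complex on $\A^1$, local monodromy at infinity) is indeed the route the paper takes, but your treatment of the crucial $\A^1$ step has a genuine error. The complex computing $H^i_\rig(\A^1_{k\lser{t}}/\ek,\hat{\sh{E}})$ is $M'\xrightarrow{\nabla}M'$ with $M'=M\otimes_{\ekd\weak{x}}\ek\weak{x}$, and $\ek\weak{x}$ is \emph{not} $\ekd\weak{x}\otimes_{\ekd}\ek$: the convergence conditions defining these rings do not commute with the tensor product, so the map you are analysing is not $\nabla\otimes\iden$ on $M\otimes_{\ekd}\ek$. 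Hence your flatness/right-exactness argument, which is supposed to give the isomorphism on $H^1$ and injectivity on $H^0$ for free, does not apply; note also that if your identification were correct, flatness of the field extension $\ekd\rightarrow\ek$ would make the entire theorem trivial in all degrees, including the $H^0$-surjectivity you single out as the remaining issue, which is a sign the identification cannot be right. In the actual situation none of the four injectivity/surjectivity statements is formal. In the paper, surjectivity of $H^1(M)\otimes_{\ekd}\ek\rightarrow H^1(M')$ is a topological argument (density of $M\otimes_{\ekd}\ek$ in $M'$ for the fringe topology, closedness of the image of $\nabla$, and finite dimensionality of $H^1(M')$), and the rest is obtained by embedding $\ekd\weak{x}$ into the Robba ring $\cur{R}_{\ekd}$ via $x\mapsto y^{-1}$, using the long exact sequence coming from $0\rightarrow M\rightarrow M\otimes\cur{R}_{\ekd}\rightarrow M\otimes\cur{Q}_{\ekd}\rightarrow0$, proving finiteness and base change for $(\varphi,\nabla)$-modules over $\cur{R}_{\ekd}$ (this is where the local monodromy theorem and the trace/projector descent enter, controlling both $H^0$ and $H^1$, not just horizontal sections), checking that the square formed by $\ekd\weak{x}$, $\ek\weak{x}$, $\cur{R}_{\ekd}$, $\cur{R}_{\ek}$ is Cartesian to get injectivity on $H^0$ of the quotient term, and then invoking Kedlaya's Lemma 7.5.3; one also needs the non-obvious fact that a $(\varphi,\nabla)$-module over $\ekd\weak{x}$ is free, proved via Weierstrass division showing $\ekd\weak{x}$ is a PID.

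Your reduction step also conceals real work. The pushforward $f_*$ is not built from a trace map over the coefficient rings alone: to construct it one must choose a model over $k\pow{t}$ and lift the finite map to a formal scheme over $\cur{V}\pow{t}$ that is \'{e}tale in a neighbourhood of the curve, and this is only possible after a finite separable extension of $k\lser{t}$ (semistable reduction plus a cotangent-complex deformation argument), which in turn requires knowing that both $\ekd$- and $\ek$-valued cohomology commute with such extensions of the ground field so that making them is harmless. Since you treat $f_*$ and its compatibility with base change and Frobenius as a black box, this portion of your argument is incomplete rather than wrong, but together with the flawed flatness step it means the proposal does not yet constitute a proof.
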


Here $\cur{E}_K$ is the Amice ring, which is the $p$-adic completion of $\ekd$. This implies that $H^i_\rig(X/\ekd,\sh{E})$ is finite dimensional, of the expected dimension, and that the linearised Frobenius morphism is bijective. The method of proof is very similar to that used by Kedlaya in \cite{kedlayafiniteness} to prove finite dimensionality of `classical' rigid cohomology. We first prove a version of the $p$-adic local monodromy theorem, exploiting the fact that $\ekd$ can be viewed as a kind of `dagger algebra' over $K$ to adapt Kedlaya's proof of a monodromy theorem for dagger algebras in \emph{loc. cit.} to our situation. This will then more or less immediately imply the required result for $\A^1_{k\lser{t}}$. 

We then use \'{e}tale pushforward - the point is that locally any smooth curve admits a finite \'{e}tale map to $\A^1_{k\lser{t}}$, and after making a finite separable extension of $k\lser{t}$ we can lift this to characteristic zero to construct an \'{e}tale pushforward functor and thus reduce to the case of the affine line. The construction is slightly more difficult than in classical rigid cohomology, since one must first choose models over $k\pow{t}$ before lifting. It is not clear to us whether a similar construction can be made in higher dimensions, and it is for this reason that we are compelled to restrict to the case of curves.

We also introduce a version of $\ekd$-valued rigid cohomology with compact supports, again with and without coefficients, and use similar methods as before to show that for smooth curves, these groups are finite dimensional, and form an $\ekd$-lattice inside $\ek$-valued rigid cohomology. This allows us to deduce Poincar\'{e} duality entirely straightforwardly, by base changing to $\ek$, however, using this method forces us to make restrictions on coefficients, namely we must restrict to those $F$-isocrystals which extend to a compactification.

In the third paper in the series \cite{rclsf3} we will discuss some arithmetic applications of the theory. We will introduce a more refined category of coefficients such that the associated cohomology groups come with a natural Gauss--Manin connection, and then use this to attach $(\varphi,\nabla)$-modules over the Robba ring $\cur{R}_K$, and hence $p$-adic Weil--Deligne representations, to smooth curves over $k\lser{t}$. We will also discuss questions such as $\ell$-independence and a $p$-adic version of the weight-monodromy conjectures.

\section{A $p$-adic local monodromy theorem after Kedlaya}\label{s1ked}

For the whole of this paper, notations will be as in \cite{rclsf1}. That is, $k$ will be a field of characteristic $p>0$, $\cur{V}$ will be a complete DVR with residue field $k$ and fraction field $K$ of characteristic $0$, $\pi$ will be a uniformiser for $\cur{V}$. We let $\norm{\cdot}$ denote the norm on $K$ such that $\norm{p}=1/p$, and we let $r=\norm{\pi^{-1}}>1$. We will let $k\lser{t}$ denote the Laurent series field over $k$, and $\ekd,\cur{R}_K$ and $\ek$ respectively denote the bounded Robba ring, Robba ring, and Amice ring over $K$, and $cur{O}_{\ekd}$, $\cur{O}_{\ek}$ the valuation rings of $\ekd$ and $\ek$ respectively. For definitions of these, see the introduction to \cite{rclsf1}. We will fix a Frobenius $\sigma$ on $\cur{V}\pow{t}$, and denote by $\sigma$ the induced Frobenius on any of the rings $S_K=\cur{V}\pow{t}\otimes_\cur{V} K$, $\cur{O}_{\ek}$, $\ek$, $\cur{O}_{\ekd}$, $\ekd$, $\cur{R}_K$.

In \cite{rclsf1} we constructed, for any $k\lser{t}$-variety, a category $F\text{-}\mathrm{Isoc}^\dagger(X/\ekd)$ of overconvergent $F$-isocrystals on $X/\ekd$, as well as cohomology groups $H^i_\rig(X/\ekd,\sh{E})$ which are vector spaces over $\ekd$, functorial in both $\sh{E}$ and $X$. We will not go into the details of this construction here. For a $k\lser{t}$-variety $X$ and an overconvergent $F$-isocrystal $\sh{E}$ on $X/\ekd$, two of the fundamental results that one would want to know about $\ekd$-valued rigid cohomology are finite dimensionality and base change. This latter means that the base change map
$$
H^i_\rig(X/\ekd,\sh{E})\otimes_{\ekd}\ek\rightarrow H^i_\rig(X/\ek,\hat{\sh{E}})
$$
is an isomorphism, where $\hat{\sh{E}}$ is the associated overconvergent $F$-isocrystal on $X/\ek$ (see \S5 of \cite{rclsf1}). In this section, we pave the way for proving this for smooth curves over $k\lser{t}$ by proving a version of Kedlaya's relative local monodromy theorem. The statement of Kedlaya's result is as follows (for more details about the specific terms appearing in the statement, see \cite{kedlayafiniteness}).

\begin{theo}[\cite{kedlayafiniteness}, Theorem 5.1.3]\label{keduni} Let $A$ be an integral dagger algebra over $K$, and let $M$ be a free $(\varphi,\nabla)$-module over the relative Robba ring $\cur{R}_A$ (as defined in \S2.5 of \emph{loc. cit.}). Then there exists a weakly complete localisation $B$ of $A$, an integer $m\geq0$, a finite \'{e}tale extension $B_1$ of $B_0=B^{\sigma^{-m}}$, a finite, \'{e}tale, Galois extension $\cur{R}'$ of $\cur{R}^\mathrm{int}_{B_1}$, and a continuous $B_1$-algebra isomorphism
$$
\cur{R}_{B_1}\cong \cur{R}'':=\cur{R}_{B_1}\otimes_{\cur{R}_{B_1}^\mathrm{int}} \cur{R}'
$$
such that $M\otimes \cur{R}''$ is unipotent. 
\end{theo}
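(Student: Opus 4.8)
The plan is to follow Kedlaya's strategy for the absolute $p$-adic local monodromy theorem, relativised over the base $A$. The first step is to reduce to the unit-root case by means of the relative Frobenius slope filtration theorem over $\cur{R}_A$. After replacing $A$ by a weakly complete localisation and pulling $M$ back along a suitable power of Frobenius (that is, passing from $A$ to a $\sigma^{-m}$-descendant, which is what clears the denominators appearing in the Newton slopes), $M$ should acquire a canonical filtration $0=M_0\subset M_1\subset\cdots\subset M_\ell=M$ by $(\varphi,\nabla)$-submodules whose graded quotients $M_j/M_{j-1}$ are isoclinic with strictly increasing slopes. Since the filtration is canonical it is automatically $\nabla$-stable, so $M$ becomes an iterated extension of isoclinic $(\varphi,\nabla)$-modules. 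Twisting each such piece — after possibly replacing $\varphi$ by a power of itself to make the slope an integer — by a rank-one Frobenius module of that slope equipped with the trivial connection reduces it to the unit-root case; as unipotence is a condition on the $\nabla$-action alone, and this is unchanged by such twists, it is enough to treat unit-root modules.

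The second, and main, step is the unit-root case. Here I would first descend a unit-root $(\varphi,\nabla)$-module over $\cur{R}_A$ to an \'{e}tale $(\varphi,\nabla)$-module over the integral subring $\cur{R}_A^{\mathrm{int}}$ — the relative form of Tsuzuki's descent for unit-root Frobenius structures — using that $\varphi$-stability forces the lattice spanned by a Frobenius-fixed basis to be $\varphi$-invariant, and then spreading this lattice out over the base after a further localisation. This converts the problem into one about the residual monodromy representation of the appropriate fundamental group. By the relative version of Tsuzuki's unipotence theorem, after localising $A$ to $B$, replacing $B_0=B^{\sigma^{-m}}$ by a finite \'{e}tale extension $B_1$, and passing to a finite \'{e}tale Galois cover $\cur{R}'$ of $\cur{R}^{\mathrm{int}}_{B_1}$ that trivialises this monodromy, the pullback of $M$ becomes unipotent. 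The continuous $B_1$-algebra isomorphism $\cur{R}_{B_1}\cong\cur{R}''$ is the relative incarnation of the change of variable used in the rank-one Artin--Schreier analysis, identifying the cover $\cur{R}''$ abstractly with $\cur{R}_{B_1}$.

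Finally one reassembles: over $\cur{R}''$ the module $M$ is a successive extension of the twisted unit-root pieces, each of which is unipotent over the chosen cover, and unipotence of the total object follows because unipotence concerns only $\nabla$ and is preserved under extensions. The hard part will be the unit-root step together with the bookkeeping of the base changes: the localisation $B$, the Frobenius descent $\sigma^{-m}$, the finite \'{e}tale extension $B_1/B_0$ and the Galois cover $\cur{R}'$ must all be produced simultaneously and compatibly, and one must check that the relative slope filtration is well-behaved enough to survive these operations and to descend to $\cur{R}''$. This is precisely where the relative slope filtration theory and the relative analogue of Tsuzuki's theorem carry the weight of the proof.
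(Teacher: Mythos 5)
This statement is quoted from Kedlaya (\cite{kedlayafiniteness}, Theorem 5.1.3) and is not reproved in the paper; what matters for comparison is the method the paper attributes to it and then imitates for Theorem \ref{qu}: one base changes $M$ to the Robba ring over the \emph{completion of the fraction field} of $A$, applies the already-established absolute $p$-adic local monodromy theorem there (Theorem 6.1.2 of \cite{padicmono}), realises the resulting extensions as coming from a weakly complete localisation $B$ and a finite \'{e}tale extension of $\cur{R}^{\mathrm{int}}_{B_1}$, and then \emph{descends unipotence} back down to $\cur{R}_{B_1}$ by explicitly constructing horizontal sections via an averaging operator and a two-norm convergence estimate (the analogue in this paper is Lemma \ref{epsnorm}, Lemma \ref{pamkey} and Proposition \ref{unibase}). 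Your proposal does not follow this route at all: you propose to relativise the proof of the absolute theorem directly over $\cur{R}_A$, via a relative slope filtration and a relative Tsuzuki unipotence theorem.

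The gap is that both of your main ingredients are unavailable and essentially beg the question. The slope filtration theorem is a statement over the Robba ring of a complete discretely valued field; over a relative Robba ring $\cur{R}_A$ with $A$ a dagger algebra, slope filtrations do not exist in general (Newton polygons jump over the base), and producing even a generic one after localisation is serious later work that is intertwined with, not prior to, this theorem. Likewise, a ``relative version of Tsuzuki's unipotence theorem'' for unit-root objects over $\cur{R}^{\mathrm{int}}_{B_1}$ is essentially the unit-root case of the statement you are trying to prove, so invoking it is circular. The actual content of Kedlaya's proof — and the part your sketch never addresses — is the descent step: given that $M$ becomes unipotent after the faithfully flat base change to the Robba ring over the completed fraction field, one must produce horizontal sections already over $\cur{R}_{B_1}$ (equivalently, show the kernel of $\nabla$ does not shrink under descent), which is done by an explicit iteration $f_n(m)=\bigl(1-\tfrac{D^2}{l^2}\bigr)^e f_{n-1}(m)$ whose convergence over the smaller ring is forced by interpolating between the $\pi$-adic norm and the fringe norms. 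Without either that descent mechanism or a genuine relative slope/unit-root theory, your outline does not yield a proof.
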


This theorem allows Kedlaya to prove generic coherence of the higher direct images of an overconvergent $F$-isocrystal along the projection $\A^1_X\rightarrow X$, for $X\cong\spec{A_0}$ a smooth affine variety over $k$, with weakly complete lift $A$. In keeping with the general philosophy that we should view $\ekd$ as a `weakly complete lift' of $k\lser{t}$, one would hope that by replacing $A$ in the statement of the above theorem by $\ekd$, one could hope to prove finite dimensionality of
$$
H^i_\rig(\A^1_{k\lser{t}}/\ekd,\sh{E})
$$
for any overconvergent $F$-isocrystal $\sh{E}$ on $\A^1_{k\lser{t}}/\ekd$. This will be our eventual strategy. Almost everything in this section is based upon Section 5 of \cite{kedlayafiniteness}, and there are very few new ideas involved. The only real insight is that the relationship between $\ekd$ and $\ek$ is exactly analogous to the relationship between a dagger algebra and the completion of its fraction field, and hence the methods used in \emph{loc. cit.} should work more or less verbatim to descend properties of $\nabla$-modules from $\cur{R}_{\ek}$ to $\cur{R}_{\ekd}$.

Our first task is to introduce the objects the will allow us to formulate our version of Kelaya's theorem, including the Robba ring over $\ekd$. We will also need to prove various important properties of this ring, and of it's `integral' subring $\cur{R}^\mathrm{int}_{\ekd}$. Recall that we have
\begin{align*}
\ekd&=\mathrm{colim}_{\eta<1}\cur{E}_\eta \\
\cur{E}_\eta &= \left\{\left.\sum_i a_it^i \in\cur{E}_K\;\right|\;  \norm{a_i}\eta^i\rightarrow 0\text{ as }i\rightarrow -\infty \right\},
\end{align*}
each $\cur{E}_\eta$ is equipped with the norm
$$
\Norm{\sum_i a_it^i}_\eta = \max \left\{\sup_{i<0} \norm{a_i}\eta^i,\sup_{i\geq 0} \norm{a_i}\right\}
$$
and these induce a direct limit topology on $\ekd$ which we call the fringe topology. Recall that for each $\eta$ we may define the Robba ring over $\cur{E}_\eta$ to be
$$ \bigcup_{s>0} \cur{R}_{\eta,s}
$$
where $\cur{R}_{\eta,s}$ is the ring of series $\sum_{i\in\Z} f_i y^i$ with $f_i\in \cur{E}_\eta$ such that $\Norm{f_i}_\eta r^{-is'}\rightarrow 0$ as $\norm{i}\rightarrow \infty$ for all $0<s'\leq s$. To define the Robba ring over $\ekd$ requires a bit more care. For each $s>0$, we define $\cur{R}_{\ekd,s}$ to be the ring consisting of series $\sum_i f_iy^i$ such that for all $0<s'\leq s$, there exists some $\eta$ such that $f_i\in\cur{E}_\eta$ and $\Norm{f_i}_\eta r^{-is'}\rightarrow 0 $ as $\norm{i}\rightarrow \infty$. 

\begin{defn} \label{robba}The Robba ring $\cur{R}_{\ekd}$ is by definition
$$\cur{R}_{\ekd}:= \bigcup_{s>0} \cur{R}_{\ekd,s},
$$
that is it consists of series $\sum_i f_iy^i$ such that for all $s>0$ sufficiently small, there exists some $\eta$ such that $f_i\in\cur{E}_\eta$ and $\Norm{f_i}_\eta r^{-is}\rightarrow 0 $
\end{defn}

Thus, if $\cur{R}_{\ek}$ denotes the Robba ring over $\ek$, in the usual sense as in Section 2 of \cite{kedlayafiniteness}, there is a natural inclusion
$$
\cur{R}_{\ekd}\rightarrow \cur{R}_{\ek}. 
$$
arising from the inclusion $\ekd\rightarrow \ek$. We will let $\cur{R}_{\ekd}^\mathrm{int}\subset \cur{R}_{\ekd}$ denote the subring consisting of series with integral coefficients, i.e. coefficients in $\cur{O}_{\ekd}$. There is thus a similar inclusion 
$$
\cur{R}^\mathrm{int}_{\ekd}\rightarrow \cur{R}^\mathrm{int}_{\ek}
$$
where $\cur{R}^\mathrm{int}_{\ek}$ is defined analogously. Note that we have $\cur{R}_{\cur{E}_\eta}\subset \cur{R}_{\ekd}$, but be warned that $\cup_\eta \cur{R}_{\cur{E}_\eta} \subsetneq \cur{R}_{\ekd}$. The former is obtained by reversing the quantifiers `$\forall$ sufficiently small $s>0$' and `$\exists \eta<1$' in the definition of the latter.

We will need to know how to `lift' certain finite extension of $k\lser{t}\lser{y}$ to those of $\cur{R}_{\ekd}$, however, this will not be achieved in an entirely straightforward manner. The first problem is that the residue field of $\cur{R}_{\ekd}$ is \emph{not} the whole of the double Laurent series field $k\lser{t}\lser{y}$, but is in fact somewhat smaller.

\begin{lem} \label{quotfieldlin} The quotient ring $\cur{R}_{\ekd}^\mathrm{int}/(\pi)$ is isomorphic to the ring of Laurent series
$$ \sum_i f_iy^i \in k\lser{t}\lser{y}
$$
such that there exist positive integers $c,d$ with $-v_t(f_i)\leq ci+d$ for all $i$. This is a subfield of $k\lser{t}\lser{y}$ which contains $k\lser{t}(y)$.
\end{lem}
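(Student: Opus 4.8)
The plan is to realize $\cur{R}_{\ekd}^{\mathrm{int}}/(\pi)$ concretely via coefficientwise reduction. Write $R$ for the proposed subring of $k\lser t\lser y$, consisting of those $\sum_i f_i y^i$ for which there are positive integers $c,d$ with $-v_t(f_i) \leq ci+d$ for all $i$, and let $\theta\colon \cur{R}_{\ekd}^{\mathrm{int}} \to k\lser t\lser y$ be the map $\sum_i f_i y^i \mapsto \sum_i \bar f_i y^i$, where $\bar f_i \in k\lser t$ is obtained from $f_i \in \cur{O}_{\ekd}$ by reducing each coefficient modulo $\pi$. I would prove that $\theta$ is well-defined, with image exactly $R$ and kernel exactly $(\pi)$; the isomorphism $\cur{R}_{\ekd}^{\mathrm{int}}/(\pi) \cong R$ then follows, and it remains to see that $R$ is a subfield of $k\lser t\lser y$ containing $k\lser t(y)$.

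To show $\theta$ is well-defined with image in $R$: given $\sum_i f_i y^i \in \cur{R}_{\ekd}^{\mathrm{int}}$, fix some sufficiently small $s>0$ and a radius $\eta = r^{-\alpha}$ (with $\alpha>0$, depending on $s$) for which $f_i \in \cur{E}_\eta$ and $\Norm{f_i}_\eta r^{-is} \to 0$; in particular $\Norm{f_i}_\eta \leq C r^{is}$ for a constant $C$ and all $i$. The crucial point is that a coefficient of $f_i$ with nonzero reduction modulo $\pi$ has norm $1$, hence contributes at least $\eta^j = r^{\alpha|j|}$ to $\Norm{f_i}_\eta$ if it sits in degree $j < 0$, and at least $1$ if in degree $j \geq 0$; comparing with $\Norm{f_i}_\eta \leq Cr^{is}$ forces $\bar f_i = 0$ once $i$ is small enough (so the image lies in $k\lser t\lser y$) and bounds $-v_t(\bar f_i)$ above by a linear function of $i$ (so, absorbing finitely many small indices into $d$, the image lies in $R$). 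Conversely, given $\sum_i \bar f_i y^i \in R$ with parameters $c,d$, I would lift each nonzero coefficient of $\bar f_i$ to an element of $\cur V$ of norm $1$, and take the others to be zero, obtaining $f_i = \sum_j a_{ij}t^j$; the bound $-v_t(\bar f_i)\leq ci+d$ forces $f_i$ to have only finitely many negative-degree terms, so $f_i \in \cur{O}_{\ekd}$ automatically, and one computes $\Norm{f_i}_{r^{-\alpha}} \leq \max(1, r^{\alpha(ci+d)})$ for every $\alpha > 0$. Choosing, for each $s > 0$, the radius $\eta(s) = r^{-s/(2c)}$ then makes $\Norm{f_i}_{\eta(s)} r^{-is} \to 0$, so $\sum_i f_i y^i \in \cur{R}_{\ekd,s}$ for every $s$, hence lies in $\cur{R}_{\ekd}^{\mathrm{int}}$ and is a preimage of the given element.

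For the kernel: if every $\bar f_i = 0$ then $f_i = \pi g_i$ with $g_i \in \cur{O}_{\ekd}$, and since $\Norm{g_i}_\eta = r\Norm{f_i}_\eta$ all the convergence conditions survive, so $\sum f_i y^i = \pi\sum g_i y^i \in (\pi)$; the reverse inclusion is clear. For the field statement, that $R$ is a subring is routine --- the class of linear bounds is stable under pointwise maximum, and for products one uses $-v_t\big(\sum_{i+j=l}f_i g_j\big) \leq \max_{i+j=l}\big(-v_t(f_i)-v_t(g_j)\big)$ together with the lower bounds on the indices carrying nonzero coefficients. To invert a nonzero $g = \sum_{i\geq i_0}f_i y^i$, write $g = f_{i_0}y^{i_0}(1+h)$ with $h \in k\lser t\pow y$ of positive $y$-order whose $y^m$-coefficient has pole order $\leq cm + d'$; expanding $(1+h)^{-1} = \sum_{n\geq0}(-h)^n$ $y$-adically, the $y^N$-coefficient has pole order $\leq cN + nd' \leq c''N$ since $n \leq N$, so $(1+h)^{-1}\in R$, and multiplication by the scalar $f_{i_0}^{-1}y^{-i_0}$ keeps us in $R$. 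Since $R$ is then a field containing $k\lser t$ and $y$, it contains $k\lser t(y)$.

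The step I expect to be the main obstacle is the surjectivity of $\theta$: one must produce a \emph{single} lift $\sum_i f_i y^i$ that works simultaneously for all small $s$, with the auxiliary radius $\eta$ allowed to shrink as $s$ does --- this is precisely the quantifier feature flagged just before the statement, which distinguishes $\cur{R}_{\ekd}$ from $\bigcup_\eta \cur{R}_{\cur E_\eta}$. Everything else is elementary estimation with the norms $\Norm{\cdot}_\eta$, the only recurring subtlety being the harmless enlargement of the additive constant $d$ to absorb finitely many low-index terms.
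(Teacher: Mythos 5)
Your proposal is correct and follows essentially the same route as the paper's proof: coefficientwise reduction/lifting with norm-one lifts, the estimate $\Norm{f_i}_\eta\geq\eta^{v_t(\bar f_i)}$ forcing linear growth of pole orders in one direction and the choice of $\eta$ close to $1$ (depending on $s$) in the other, and the geometric-series expansion with the bound $cN+nd'\leq(c+d')N$ for inverses. The only differences are presentational (you package the argument as an explicit reduction map $\theta$ and verify its kernel, which the paper leaves implicit).
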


\begin{proof} It is straightforward to see that $\cur{R}_{\ekd}^\mathrm{int}/(\pi)$ is contained inside $k\lser{t}\lser{y}$, since for $\sum_if_iy^i\in \cur{R}_{\ekd}^\mathrm{int}$ and $i\ll0$ we have $\Norm{f_i}\leq \Norm{f_i}_\eta < 1$, where $\Norm{\cdot}$ denote the $p$-adic norm on $\cur{O}_{\ek}$. To show that it is the ring described, first suppose that we have some $\sum_if_iy^i\in k\lser{t}\lser{y}$ such that $-v_t(f_i)\leq ci+d$ for some $c,d$. Write $f_i=\sum_jf_{ij}t^j$ and lift each non-zero $f_{ij}\in K$ to some $\tilde{f}_{ij}\in K$ of norm one. I claim firstly that $\tilde{f}_i=\sum_j\tilde{f}_{ij}t^j$ lies in $\cur{O}_{\ekd}$, and secondly that $\sum_i\tilde{f}_iy^i$ is in $\cur{R}_{\ekd}$.  

Indeed, in the first case we actually have that $\tilde{f}_i\in \cur{V}\pow{t}[t^{-1}]\subset \cur{O}_{\ekd}$, and for the second note that since $\tilde{f}_i=0$ for $i\ll0$, it suffices to check the growth condition as $i\rightarrow \infty$. But since $\tilde{f}_{ij}=0$ for $j\leq -ci-d$ we have that
$$ \Norm{\tilde{f}_{ij}}_\eta \leq \eta^{-ci-d} $$
and hence for each $s$ we can find some $\eta$ close enough to 1 to ensure that $ \eta^{-ci-d} r^{-is}\rightarrow 0 $
as $i\rightarrow \infty$. Hence $\sum_i\tilde{f}_i y^i\in \cur{R}_{\ekd}$ as required.

Conversely, let us suppose that $f=\sum_if_iy^i$ does not satisfy the growth condition, that is $-v_t(f_i)\geq ci+d$ for all integers $c,d$. For \emph{any} lift $\sum_i \tilde{f}_iy^i$ of $f$ to $\cur{O}_{\ekd}\pow{y,y^{-1}}$, then since $\tilde{f}_i$ must have a term in $t^{v_t(f_i)}$, it follows that we must have
$\Norm{\tilde{f}_i}_\eta\geq \eta^{v_t(f_i)}$
and so
$$ \Norm{\tilde{f}_i}_\eta r^{-is}\geq \eta^{-ci-d}r^{-is}
$$  
for all $c,d$. Hence for any given $s$, no matter how we choose $\eta$, we can always choose some $c$ to make this $\rightarrow \infty$ as $i\rightarrow \infty$, thus $f$ does not lift.

Finally, since the element $y$ of $k\lser{t}\lser{y}$ trivially satisfies the growth condition, to prove the final claim it suffices to show that if $f\in k\lser{t}\lser{y}$ is non-zero and satisfies the growth condition, then so does $f^{-1}$. We easily reduce to the case where $f=1+\sum_{i\geq1}a_iy^i$ with $a_i\in k\lser{t}$, let us choose $c,d$ such that $-v_t(a_i)\leq ci+d$. Then $f^{-1}=1+\sum_{i\geq1}b_iy^i$ where $b_i$ is a sum of things of the form $a_{i_1}\ldots a_{i_m}$ with $i_L\geq1$ and $i_1+\ldots+i_m=i$. Hence $$-v_t(b_i)\leq \sup_{i_1+\ldots+i_m=i} \left\{\sum_l-v_t(a_{i_l})\right\}\leq \sup_{i_1+\ldots+i_m=i} \left\{\sum_l (ci_l+d)\right\} \leq (c+d)i$$ for all $i\geq 1$ and so $f^{-1}$ satisfies the growth condition, as required.\end{proof}

Let us denote this field of `overconvergent' Laurent series by $k\lser{t}\rlser{y}$, and the part with positive $y$-adic valuation by $k\lser{t}\{y\}$. The next result tells us that with respect to totally ramified extensions, this field behaves essentially the same as the full double Laurent series field $k\lser{t}\lser{y}$. 

\begin{prop} Write $F=k\lser{t}$. The field $F\rlser{y}$ is $y$-adically Henselian, and if we have $P\in F\rpow{y}[X]$ an Eisenstein polynomial, with root $u$, so that there is an isomorphism $F\lser{y}[u]\cong F\lser{u}$, then there is an equality
$$ F\rlser{y}[u] = F\rlser{u}
$$
inside $k\lser{t}\lser{u}$. In particular, every finite, separable, (Galois) totally ramified extension $F\lser{u}/F\lser{y}$ arises from a unique finite, separable, (Galois) totally ramified extension of the form $F\rlser{u}/F\rlser{y}$.
\end{prop}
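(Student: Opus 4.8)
The plan is to establish the three assertions in order: Henselianity of $F\rlser{y}$, the equality $F\rlser{y}[u] = F\rlser{u}$, and the resulting bijection of totally ramified extensions. For Henselianity, I would use the characterisation that a valued field is Henselian if and only if every monic polynomial admitting a simple root modulo the maximal ideal has a (unique) lift of that root. Concretely, given $g \in F\rpow{y}[X]$ and a simple root $\bar\alpha_0 \in F$ of $g \bmod y$, I would construct the Hensel lift $\alpha = \sum_{i\geq 0}\alpha_i y^i$ by the usual Newton-type recursion inside $F\lser{y}$, and then show that the coefficients $\alpha_i$ satisfy a growth condition $-v_t(\alpha_i) \leq ci + d$ so that $\alpha \in F\rlser{y}$ by Lemma \ref{quotfieldlin}. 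The growth estimate comes from the recursion $\alpha_{i}$ being a polynomial expression (with $t$-bounded coefficients, since $g$'s coefficients lie in $F\rpow{y}$) in $\alpha_0,\dots,\alpha_{i-1}$ divided by $g'(\bar\alpha_0)$; an induction identical in spirit to the computation of $-v_t(b_i)$ at the end of the proof of Lemma \ref{quotfieldlin} gives the linear bound.

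For the second assertion, the inclusion $F\rlser{y}[u] \subseteq F\rlser{u}$ is the substantive one. Given the Eisenstein polynomial $P = X^e + c_{e-1}X^{e-1} + \dots + c_0$ with $c_i \in F\rpow{y}$, $v_y(c_i)\geq 1$, $v_y(c_0) = 1$, inside $F\lser{y}[u] \cong F\lser{u}$ we have $y = u^e \cdot (\text{unit in } F\pow{u})$, so each $c_i$, when rewritten as a series in $u$, has coefficients controlled by the $t$-growth of the coefficients of $c_i$ as a series in $y$; since $F\rpow{y}$ is closed under the relevant operations (it is a ring, and by the final claim of Lemma \ref{quotfieldlin} closed under inversion of units), each $c_i \in F\rpow{u}$, and hence $u$ itself, being a root of a polynomial over $F\rpow{u}$ which is Eisenstein there, together with all of $F\lser{y} = F(y)\cdot(\text{completion})$ reexpressed in $u$, lands in $F\rlser{u}$. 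The key point to check carefully is that expressing a single element $f \in F\rlser{y}$ in terms of $u$ preserves the growth condition: writing $f = \sum_i f_i y^i = \sum_i f_i (u^e w(u))^i$ with $w \in F\pow{u}^\times$ of growth-bounded type, one expands and regroups by powers of $u$, and a $t$-valuation bookkeeping — again of the same flavour as in Lemma \ref{quotfieldlin} — shows the $u$-coefficients satisfy a linear bound. The reverse inclusion $F\rlser{u} \subseteq F\rlser{y}[u]$ follows symmetrically by writing $u$-series in terms of $y$ via $u = $ (a root of an Eisenstein polynomial over $F\rpow{y}$), using that $1, u, \dots, u^{e-1}$ is an $F\lser{y}$-basis of $F\lser{u}$ and that the transition is growth-preserving.

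For the final statement, given a finite separable totally ramified extension $F\lser{u}/F\lser{y}$, pick an Eisenstein polynomial $P$ over $F\pow{y}$ with root $u$; its coefficients lie in $F\pow{y} \subseteq F\rpow{y}$, so $P \in F\rpow{y}[X]$ and by the equality just proved $F\rlser{u} = F\rlser{y}[u]$ is the desired extension of $F\rlser{y}$, which is separable and totally ramified of the same degree; uniqueness follows because two such extensions inside $k\lser{t}\lser{y}$ with the same completion coincide. The Galois case is immediate since the Galois group is detected after completion and $F\rlser{y}$ is Henselian, so all conjugates of $u$ already lie in $F\rlser{u}$. I expect the main obstacle to be the bookkeeping in the second assertion: verifying that rewriting an overconvergent Laurent series in $y$ as one in $u$ (and back) preserves the growth condition of Lemma \ref{quotfieldlin}, since this requires tracking how $t$-adic valuations interact with the substitution $y \mapsto u^e w(u)$ and its inverse across infinitely many coefficients simultaneously.
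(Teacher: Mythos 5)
Your overall architecture matches the paper's: a Newton-type recursion with an inductive linear bound on $-v_t$ of the coefficients for Henselianity, and substitution arguments for the two inclusions. But there is a genuine gap at the heart of the inclusion $F\rlser{y}[u]\subset F\rlser{u}$. You write $f=\sum_i f_i(u^e w(u))^i$ ``with $w\in F\pow{u}^\times$ of growth-bounded type'' and then describe the remaining work as valuation bookkeeping. The assertion that $w$ is growth-bounded --- equivalently, that $y$, expanded as a power series in $u$ inside $F\lser{u}$, has pole orders $-v_t$ bounded linearly in the exponent --- is precisely the nontrivial content of this inclusion, and nothing in your plan supplies it. It does not follow from $F\rpow{y}$ being a ring closed under inversion of units (Lemma \ref{quotfieldlin} controls operations \emph{within} the $y$-series ring, not the change of variable from $y$ to $u$), nor from Henselianity applied in the $u$-variable, since the Eisenstein polynomial $P$ has coefficients lying in $F\rpow{y}$, i.e.\ involving the unknown quantity itself. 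Once $y\in F\rlser{u}$ is known, the bookkeeping you describe is indeed routine (this is the easy ``claim'' in the paper's proof: $\sum_i f_ig^i\in F\rlser{u}$ whenever $g\in uF\rpow{u}$ is growth-bounded and $-v_t(f_i)\le ci+d$); but the paper needs a separate argument for $y\in F\rlser{u}$: substituting $y=u^m\sum_j g_ju^j$ into the relation $u^m+a_{m-1}u^{m-1}+\dots+a_0=0$ and showing that if $-v_t(g_j)$ grew faster than linearly, then for suitable arbitrarily large $N$ a single term of the form $a_{k1}g_j$ would strictly dominate the coefficient of $u^N$, so the sum could not vanish. Your proposal assumes this fact rather than proving it; the same unproven fact is also used silently when you claim ``each $c_i\in F\rpow{u}$'' after rewriting the coefficients of $P$ in the variable $u$.

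A smaller point: for the other inclusion $F\rlser{u}\subset F\rlser{y}[u]$ your sketch agrees with the paper's method of repeatedly substituting $u^m=a_{m-1}u^{m-1}+\dots+a_0$ with $a_k\in yF\rpow{y}$, and there the growth control genuinely is routine, because the substitution data $a_k$ are already known to be growth-bounded --- in contrast to the direction discussed above, where the growth bound on the substitution itself is the thing to be proved.
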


\begin{proof}
We first show that $F\rlser{y}$ is Henselian Write $v=v_t$ for the $t$-adic valuation on $F$, and define partial valuations on $F\rpow{y}$ by setting $v_n(\sum_{j\geq0} f_iy^i)=v(f_n)$. Let $P\in F\rpow{y}[X]$ be a polynomial and $x_0\in F\rpow{y}$ such that $P(x_0)\equiv 0 \text{ mod }y$ and $P'(x_0) \not\equiv 0\text{ mod }y $, we need to show that there exists some $x\in F\rpow{y}$ such that $x\equiv x_0 \text{ mod }y$ and $P(x)=0$. After replacing $P(X)$ by $P(X+x_0)$ we may assume that $x_0=0$.

Write $P = a_mX^m+\ldots+a_0X_0$ with $a_k=\sum_{i\geq0} a_{ki}y^i$, $a_{ki}\in F$ and choose $c,d\in\Z_{\geq0}$ such that $v(a_{ki})\geq -ci-d$ for all $i,k$. Actually, by multiplying $P$ through by a sufficiently hight power of $t$, we may assume that $d=0$, and by increasing $c$ we may also assume that $v_0(P'(x_0))\leq c$. We are going to inductively construct $x=\sum_{i=1}^\infty x_iy^i$ such that:
\begin{enumerate} \item $P(\sum_{i=1}^nx_iy^i)\equiv 0 \text{ mod }y^{n+1}$;
\item $v(x_i) \geq -3ci+c$;
\item $v_0(P'(\sum_{i=1}^nx_iy^i)) = v_0(P'(0))$.
\end{enumerate}
This clearly suffices to prove the claim. So suppose that $x_1,\ldots,x_{n-1}$ have been constructed (note that the same argument with $n=1$ allows us to construct $x_1$ to start the induction).  By Taylor's formula we have
\begin{align*} P(x_1y+\ldots+x_{n}y^{n}) &\equiv P(x_1y+\ldots+x_{n-1}y^{n-1})+x_ny^nP'(x_1y+\ldots x_{n-1}y^{n-1}) \text{ mod }y^{n+1}  \\
&\equiv (\alpha +x_n\beta) y^n \text{ mod }y^{n+1}
\end{align*}
where $\alpha,\beta\in F$ are such that $v(\alpha) = v_n(P(x_1y+x_2y^2+\ldots+x_ny^{n-1}))$ and $v(\beta)=v_0(P'(x_1y+x_2y^2+\ldots+x_{n-1}y^{n-1}))=v_0(P'(x_0))$.  Hence to ensure that $$ P(x_1y+\ldots+x_{n}y^{n}) \equiv 0 \text{ mod }y^{n+1}
$$
we must have $x_n= -\alpha \beta^{-1}$, which is a well defined element of $F$. To see that $v(x_n)\geq -2cm$, note that 
$$ v(x_n) = v_n(P(x_1y+\ldots+x_{n_1}y^{n-1})) - v_0(P'(0)).
$$
and that we can write the coefficient of $y^n$ in the expansion of $P(x_1y+\ldots+x_{n_1}y^{n-1})$ as 
$$ \sum_{i=0}^n \sum_{k=0}^m a_{ki}  \sum_{\substack{i_1+\ldots+i_{n-1}=k \\ i_1+2i_2+\ldots (n-1)i_{n-1}=n-i }} \binom {i} {i_1\ldots i_{n-1}} x_1^{i_1}\ldots x_{n-1}^{i_{n-1}}.
$$
Now, each summand in this has valuation at least as large as
\begin{align*} v(a_{ki}) + i_1v(x_1) + \ldots + i_{n-1}v(x_{n-1}) &\geq - ci - 2ci_1 - 5ci_2 - \ldots - (3(n-1)-c)ci_{n-1} \\ 
&\geq  -ci -2c(n-i) - ci_2-2ci_3 -\ldots - (n-2)ci_{n-1}  \\
&\geq -2cn -c(n-i-k) \geq -3cn+
\end{align*}
and hence using the ultra metric inequality, to show that 
$$ v(x_m) \geq -3cn +c .
$$
it suffices to show that $i+k\geq1$. But if $k=0$ then $i_1=\ldots=i_{m-1}=0$ and hence $i=n$, so we are done. Finally, that $v_0(P'(\sum_{i=1}^nx_iy^i)) = v_0(P'(0))$ is clear, hence $F\rpow{y}$ is Henselian as claimed.

Next let us show that we have an inclusion $F\rlser{u}\subset F\rlser{y}[u]$, it suffices to show that we have an inclusion $F\rpow{u}\subset F\rpow{y}[u]$. Write $P=X^m-a_{m-1}X^{m-1}-\ldots-a_0$, so that we have
$$ u^m = a_{m-1}u^{m-1}+\ldots+a_0
$$ with $a_k\in F\rpow{y}$, say $a_k=\sum_{i\geq1} a_{ki}y^i$ with $a_{01}\neq0$. Now, for any $\sum_{i\geq0} b_iu^i\in F\rpow{u}$ we can repeatedly substitute in $u^m$ for lower powers to give
$$ \sum_{i\geq0} b_iu^i = \sum_{j=0}^{m-1} g_ju^j
$$
with $g_j\in F\pow{y}$. The problem is to show that $g_j\in F\rpow{y}$.

Choose integers $c,d$ such that $-v(b_i)\leq ci+d$, and $-v(a_{ki})\leq ci+d$ for all $i,k$. The point is that after repeatedly substituting in $u^m$ for lower powers we can write
$$ g_j = b_j + \sum_{l\geq m} b_l f_l
$$
where for $l$ in the range $nm+j\leq l < (n+1)m+j$, the term $f_l$ is a sum of products of at least $n$ of the $a_k$. Since each $a_k$ is divisible by $y$, this means that if when calculating $-v_n(g_j)$ we only need to take account of the terms 
$$ b_j + \sum_{l=m}^{(n+1)m+j}b_lf_l
$$
and moreover, only need to take account of the terms for which $f_l$ is a sum of multiples of at most $n$ of the $a_k$. But since we have $-v(a_{ki})\leq ci+d$, when we multiply together at most $n$ of the $a_k$, we get something which satisfies $-v_n\leq (c+d)n $, for all $n\geq1$, in other words $-v_n(f_l)\leq (c+d)n$ for $n\geq 1$. Since $-v(b_l)\leq cl+d$, it follows that  $$-v_n(g_j)\leq (c+d)n+c((n+1)m+j)+d=(c(m+1)+d)n+(d+(j+1)c)$$ for $n\geq1$, hence each $g_j$ is in $F\rpow{y}$ as required. 

Finally, let us show that we have $F\rlser{y}[u]\subset F\rlser{u}$, since $F\rlser{u}$ is a ring containing $u$ it suffices to show that $F\rlser{y}\subset F\rlser{u}$. We first claim that if $g=\sum_{j\geq1} g_ju^j\in F\rpow{u}$ and $f_i\in F$ are such that $-v(f_i)\leq ci+d$ for some $c,d$, then $\sum_if_ig^i$, a priori in $F\lser{u}$, is actually in $F\rlser{u}$, note that we may also assume that $-v(g_j)\leq cj+d$. The point is that when calculating $-v_n(\sum_i f_ig^i)$, we only need to take account of the terms $\sum_{i=0}^n f_ig^i$, since $u$ divides $g$. Once can easily see that since $-v(g_j)\leq cj+d$, we must therefore have $-v_n(g^i) \leq cn+(i+1)d$, and hence $-v_n(\sum_{i=0}^n  f_ig^i)\leq (2c+d)n+2d$.

Applying this claim with $g=y$ we see that to show $F\rlser{y}\subset F\rlser{u}$ it suffices to show that $y\in F\rlser{u}$. Write $y=g(u)=u^m\sum_{j\geq0}g_ju^j$, we now assume that $P$ has the form 
$$ X^m +a_{m-1}X^{m-1}+\ldots+a_0 
$$
which $a_k=\sum_{i\geq1} a_{ki}y^i$ with $-v(a_{ki})$ growing linearly in $i$. We will show that if $-v(g_j)$ grows faster than linearly, then there cannot be the requisite cancellation in
$$ u^m +a_{m-1}u^{m-1}+\ldots+a_0 
$$
to ensure that it equals zero.

If we substitute the expression for $y$ into the above equation, then the term $a_ku^k$, for $k<n$, looks like
$$ \sum_{N=0}^\infty \left(\sum_{i=1}^{\lfloor \frac{N-k}{m} \rfloor} a_{ki}\left(\sum_{j_1+\ldots+j_i=N-in-k}g_{j_1}\ldots g_{j_i}\right)\right) u^N
$$
and there exist arbitrarily large $N$ such that for each $k$, the dominant term in this sum for the coefficient of $u^N$ is the term $a_{k1}g_{N-k-n}$ corresponding to $i=1$, since otherwise this would contradict the faster than linear growth of the $-v(g_j)$ (here we are using the fact that the $-v(a_{ki})$ grow linearly in $i$). Hence again by the faster than linear growth of the $-v(g_j)$, there exist arbitrarily large $N$ for which one of these terms $a_{k1}g_{N-k-n}$ has strictly larger negative valuation than the others. Hence it cannot possibly happen that  
$$ u^m +a_{m-1}u^{m-1}+\ldots+a_0=0
$$
and we obtain our contradiction.
\end{proof}

Our next key result result will be that $\cur{R}_{\ekd}^\mathrm{int}$ itself is Henselian. Recall that by definition, we can write 
$$ \cur{R}_{\ekd}= \bigcup_{s>0} \left( \bigcap_{0<s'\leq s} \left( \bigcup_{\eta<1} A_{\eta,s'}\right) \right)
$$
where $A_{\eta,s'}$ consists of series $\sum_if_iy^i$ with $f_i\in\cur{E}_\eta$ and $\Norm{f_i}_\eta r^{-is'}\rightarrow 0 $. We can define a norm on $A_{\eta,s'}$ given by $$\Norm{\sum_if_iy^i}_{\eta,s'}=\sup_i\left\{\Norm{f_i}_\eta r^{-is'} \right\}$$ which induces a topology. We put the direct limit topology on $\bigcup_{\eta<1}A_{\eta,s'}$, the inverse limit topology on $( \bigcap_{0<s'\leq s} ( \bigcup_{\eta<1} A_{\eta,s'}) )$ and then the direct limit topology on $\cur{R}_{\ekd}$. With respect to this topology, a sequence $f_n=\sum_if_{in}y^i\in \cur{R}_{\ekd}$ tends towards $0$ if and only if for all sufficiently small $s>0$, there exists some $\eta<1$, such that the norms $\Norm{f_n}_{\eta,s}=\sup_i \left\{\Norm{f_{in}}r^{-is}\right\}$ are all defined, and $\Norm{f_n}_{\eta,s}\rightarrow 0$. We first need a lemma.

\begin{lem} \label{etas} \begin{enumerate}\item The norm $\Norm{\cdot}_{\eta,s}$ makes each $A_{\eta,s}$ into a Banach $K$-algebra. 
\item Suppose that $a\in \cur{R}^\mathrm{int}_{\ekd}$ satisfies $\Norm{a}<1$, $\Norm{\cdot}$ denoting the $\pi$-adic norm on $\cur{R}^\mathrm{int}_{\ekd}$. Then there exists some $s$ such that for all $0<s'\leq s$, there exists some $\eta$ such that $a\in A_{\eta,s'}$ and $\Norm{a}_{\eta,s'}<1$.
\end{enumerate}
\end{lem}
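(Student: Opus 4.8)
The plan is to treat the two parts separately; part (i) is routine Banach-algebra bookkeeping, and the real content is in part (ii).

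For (i), recall that $(\cur{E}_\eta,\Norm{\cdot}_\eta)$ is a Banach $K$-algebra. One checks directly that $\Norm{\cdot}_{\eta,s}$ is a $K$-algebra norm on $A_{\eta,s}$: it is visibly a norm of $K$-vector spaces, and submultiplicativity $\Norm{fg}_{\eta,s}\le\Norm{f}_{\eta,s}\Norm{g}_{\eta,s}$ follows from the Cauchy product formula $(\sum_i f_iy^i)(\sum_j g_jy^j)=\sum_k(\sum_{i+j=k}f_ig_j)y^k$ together with submultiplicativity of $\Norm{\cdot}_\eta$ and the ultrametric inequality; the same computation shows the product of two series satisfying the decay condition defining $A_{\eta,s}$ satisfies it again, so $A_{\eta,s}$ is a ring. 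Completeness is the standard argument: a $\Norm{\cdot}_{\eta,s}$-Cauchy sequence $f^{(n)}=\sum_if^{(n)}_iy^i$ has each coefficient sequence Cauchy in the complete space $\cur{E}_\eta$, hence convergent to some $f_i$, and an elementary estimate shows $f=\sum_if_iy^i\in A_{\eta,s}$ with $f^{(n)}\to f$.

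For (ii), write $a=\sum_if_iy^i$ with $f_i\in\cur{O}_{\ekd}$. Two facts drive the argument. First, since the value group of $K$ is discrete and $\Norm{a}<1$, we have $\Norm{f_i}\le r^{-1}$ for all $i$, where $\Norm{\cdot}$ is the $p$-adic norm; and for any $f\in\ekd$ the quantity $\Norm{f}_\eta$ is non-increasing as $\eta$ increases in $[0,1)$ (wherever defined) with $\Norm{f}_\eta\to\Norm{f}$ as $\eta\to1^-$, which is immediate from the definitions of $\cur{E}_\eta$ and $\Norm{\cdot}_\eta$. Second, since $a\in\cur{R}_{\ekd}$, there is a threshold $s_0>0$ so that for each $0<s'\le s_0$ some $\eta(s')<1$ has $a\in A_{\eta(s'),s'}$, and by the monotonicity just noted $a\in A_{\eta,s'}$ for every $\eta\in[\eta(s'),1)$. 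Fix $\eta_0<1$ with $a\in A_{\eta_0,s_0}$; since $\Norm{f_i}_{\eta_0}r^{-is_0}\to0$ as $i\to-\infty$ there is a \emph{fixed} integer $N$ with $\Norm{f_i}_{\eta_0}r^{-is_0}<1$ for all $i<-N$. I claim $s:=\min(s_0,\tfrac{1}{2N})$ works. Indeed, given $0<s'\le s$, I split $\sup_i\Norm{f_i}_\eta r^{-is'}$ into three ranges: for $i<-N$ and any $\eta\ge\eta_0$, using $r^{-is'}\le r^{-is_0}$ (as $i<0$, $s'\le s_0$) and monotonicity in $\eta$, $\Norm{f_i}_\eta r^{-is'}\le\Norm{f_i}_{\eta_0}r^{-is_0}<1$; for $i$ large positive, with $\eta_1:=\eta(s')$, the relation $\Norm{f_i}_{\eta_1}r^{-is'}\to0$ gives some $M$ with $\Norm{f_i}_\eta r^{-is'}<1$ for $i>M$ and all $\eta\ge\eta_1$; and for the finitely many $-N\le i\le M$ one uses that for $0\le i\le M$, $\Norm{f_i}_\eta r^{-is'}\le\Norm{f_i}_\eta\to\Norm{f_i}\le r^{-1}<1$, while for $-N\le i<0$ the choice of $s$ gives $r^{-is'}\le r^{Ns'}\le r^{1/2}$, so $\Norm{f_i}_\eta r^{-is'}\le\Norm{f_i}_\eta r^{1/2}\to\Norm{f_i}r^{1/2}\le r^{-1/2}<1$ as $\eta\to1$. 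Taking $\eta$ to be the maximum of $\eta_0$, $\eta_1$ and these finitely many thresholds (all $<1$) gives $\Norm{a}_{\eta,s'}<1$, and $a\in A_{\eta,s'}$ since $\eta\ge\eta_1$.

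The one genuine obstacle is uniformity in the index $i$: pushing $\eta\to1$ drives each individual coefficient norm below $1$ but not at a uniform rate, so a single $\eta$ cannot be chosen naively. The three-way split resolves this by using the Robba growth condition at level $s'$ for the positive tail and — crucially — at the \emph{fixed} level $s_0$ for the negative tail (shrinking the exponent only shrinks the negative-index weights), leaving a finite middle range where $\Norm{f_i}_\eta\to\Norm{f_i}<1$ suffices; the constraint $s\le\tfrac{1}{2N}$ is precisely what guarantees $r^{Ns'}<r$ on that middle range.
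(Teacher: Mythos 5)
Your proof is correct and follows essentially the same strategy as the paper's: part (i) is dismissed as standard, and part (ii) rests on the uniform coefficient bound $\Norm{f_i}\le r^{-1}$ forced by discreteness of the $\pi$-adic norm, monotonicity of $\Norm{\cdot}_\eta$ in $\eta$ and of $r^{-is'}$ in $s'$ for negative $i$, shrinking $s$ to control the finitely many problematic negative indices, and then letting $\eta\to 1$ for each $s'\le s$. Your explicit three-way split (negative tail at level $s_0$, finite middle range, positive tail at level $s'$) is if anything a slightly more careful bookkeeping of the positive indices than the paper's, but the underlying mechanism is identical.
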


\begin{proof} \begin{enumerate}
\item This is entirely standard.
 
\item If $a=\sum_ia_iy^i\in\cur{R}_{\ekd}^\mathrm{int}$ and $\Norm{a}<1$, it follows from discreteness of the $\pi$-adic norm on $\widehat{\cur{R}}_{\ek}^\mathrm{int}$ (which is just another copy of $\cur{O}_{\ek}$ but with `ground field' $\ek$ rather than $K$) that there exists some $c<1$ such that $\Norm{a_i} < c$ for all $i$. Choose $s_0$ such that $a\in \cur{R}_{\ekd,s_0}$, so there exists some $\eta_0$ such that $a_i\in \cur{E}_\eta$ and $\Norm{a_i}_{\eta_0} r^{-is_0}\rightarrow 0$ as $i\rightarrow \pm\infty$, note that for all $i>0$ we have $\Norm{a_i}_{\eta_0} r^{-is_0}<\Norm{a_i}\leq c$. Choose $i_0\leq0$ such that $i \leq i_0\Rightarrow \Norm{a_i}_{\eta_0} r^{-is_0}\leq c$, by increasing $\eta_0$ if necessary we may also assume that $\Norm{a_i}_{\eta_0}< c$ for $i_0 < i \leq 0 $. We can thus choose $s\leq s_0$ to ensure that $\Norm{a_i}_{\eta_0} r^{-is}\leq c$ for all $i_0 < i \leq i_0$, hence $\sup_i \left\{\Norm{a_i}_{\eta_0} r^{-is}\right\}\leq c$, however, we may no longer have that $\Norm{a_i}_{\eta_0} r^{-is}\rightarrow 0$ as $i \rightarrow -\infty$.

But now we know that for all $0<s'\leq s$, $$\sup_i \left\{ \Norm{a_i}_{\eta_0} r^{-is'}\right\}\leq c$$ and that there exists some $\eta$ such that $\Norm{a_i}_\eta r^{-is'}\rightarrow 0$, we may assume that $\eta>\eta_0$, hence $$\sup_i\left\{ \Norm{a_i}_\eta r^{-is'}\right\} \leq c <1$$ as required.

\end{enumerate}
\end{proof}

\begin{prop} \label{hensel} The ring $\cur{R}^\mathrm{int}_{\ekd}$ is a Henselian local ring, with maximal ideal generated by $\pi$.
\end{prop}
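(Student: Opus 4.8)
The plan is to establish two things separately: that $\cur{R}^{\mathrm{int}}_{\ekd}$ is a local ring with maximal ideal $(\pi)$, and that it satisfies Hensel's lemma. For the Henselian property it is enough, by the standard characterisation of Henselian local rings, to lift simple roots: given a monic $f\in\cur{R}^{\mathrm{int}}_{\ekd}[X]$ and $x_0\in\cur{R}^{\mathrm{int}}_{\ekd}$ with $\Norm{f(x_0)}<1$ and $\Norm{f'(x_0)}=1$ (the $\pi$-adic norm on $\cur{R}^{\mathrm{int}}_{\ekd}$), produce a root $x$ of $f$ with $x\equiv x_0\bmod\pi$. As for the first point, Lemma~\ref{quotfieldlin} already tells us that $\cur{R}^{\mathrm{int}}_{\ekd}/(\pi)$ is the field $k\lser{t}\rlser{y}$, so $(\pi)$ is a maximal ideal, and all that remains is to show that every $a\in\cur{R}^{\mathrm{int}}_{\ekd}$ with $\Norm{a}=1$ is a unit.

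To invert such an $a$, I would observe that its image in $\cur{R}^{\mathrm{int}}_{\ekd}/(\pi)$ is nonzero, hence invertible; lifting the inverse to some $c\in\cur{R}^{\mathrm{int}}_{\ekd}$ gives $ac=1-e$ with $\Norm{e}<1$. By Lemma~\ref{etas}(2) there is $s_0>0$ such that for each $0<s'\le s_0$ one can choose $\eta<1$ with $e\in A_{\eta,s'}$ and $\Norm{e}_{\eta,s'}<1$; then by Lemma~\ref{etas}(1) the geometric series $\sum_{n\ge0}e^n$ converges in the Banach $K$-algebra $A_{\eta,s'}$, and its sum is an inverse of $1-e$. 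This inverse agrees with the inverse of $1-e$ computed in $\cur{R}^{\mathrm{int}}_{\ek}$ (that $\cur{R}^{\mathrm{int}}_{\ek}$ is likewise a local ring with maximal ideal $(\pi)$ is standard), which has coefficients in $\cur{O}_{\ek}$; since the above holds for all sufficiently small $s'$, it follows that $(1-e)^{-1}\in\cur{R}^{\mathrm{int}}_{\ekd}$, and therefore $a^{-1}=c\,(1-e)^{-1}\in\cur{R}^{\mathrm{int}}_{\ekd}$. Hence $\cur{R}^{\mathrm{int}}_{\ekd}$ is local with maximal ideal $(\pi)$.

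For Hensel's lemma I would run the Newton iteration $x_{n+1}=x_n-f(x_n)f'(x_n)^{-1}$, which is well defined because $x_n\equiv x_0\bmod\pi$ for all $n$, so each $f'(x_n)$ is congruent to $f'(x_0)$ modulo $\pi$ and hence a unit by the previous paragraph. The substance of the argument is to carry this iteration out, and prove it converges, inside a single Banach algebra at each slope. Fix $s_1>0$ small enough that the finitely many elements $x_0$, $f'(x_0)^{-1}$, $f(x_0)$ and the coefficients of $f$ all lie in $\cur{R}^{\mathrm{int}}_{\ekd,s_1}$; then for each $0<s'\le s_1$ choose $\eta=\eta(s')<1$ so that all of these lie in $A_{\eta,s'}$, arranging in addition — by the discreteness argument used in the proof of Lemma~\ref{etas}(2) — that $\Norm{f(x_0)}_{\eta,s'}$ is small enough relative to $\Norm{f'(x_0)^{-1}}_{\eta,s'}$ for the non-archimedean Newton's method to converge in $A_{\eta,s'}$. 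The limit is a root $x$ of $f$ in $A_{\eta,s'}$ with $x\equiv x_0\bmod\pi$; it has integral coefficients (because the iterates do, equivalently because it coincides with the corresponding Newton limit over $\ek$) and it is independent of $s'$, hence $x\in\cur{R}^{\mathrm{int}}_{\ekd}$. This gives Hensel's lemma, and the proposition follows.

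The step I expect to be the real obstacle is the bookkeeping in the last paragraph: one is handed only the information that $f(x_0)$ is \emph{$\pi$-adically} small, whereas starting Newton's method inside $A_{\eta,s'}$ requires smallness for the norm $\Norm{\cdot}_{\eta,s'}$, and these choices must be made uniformly in $s'$ and compatibly for $x_0$, $f'(x_0)^{-1}$ and all coefficients of $f$ at once — which is precisely the kind of passage between $\pi$-adic and $\Norm{\cdot}_{\eta,s'}$ control isolated in Lemma~\ref{etas}. An essentially equivalent route would be to mimic directly the partial-valuation estimates from the proof that $k\lser{t}\rlser{y}$ is Henselian, tracking a linear growth bound on the coefficients of the successive Newton approximants.
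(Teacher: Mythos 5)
Your first half (locality, maximal ideal $(\pi)$) is exactly the paper's argument: reduce modulo $\pi$ using Lemma \ref{quotfieldlin}, write $au=1-x$ with $\Norm{x}<1$, and use Lemma \ref{etas} to see that the geometric series converges in some $A_{\eta,s'}$ for every sufficiently small $s'$, hence in $\cur{R}^\mathrm{int}_{\ekd}$. The genuine gap is in the Henselian half, at precisely the point you flag and then wave away. Lemma \ref{etas}(ii) converts the hypothesis $\Norm{a}<1$ only into the existence, for each small $s'$, of some $\eta$ with $\Norm{a}_{\eta,s'}\leq c$ for a \emph{fixed} $c<1$; it does not let you make $\Norm{f(x_0)}_{\eta,s'}$ ``small enough relative to'' $\Norm{f'(x_0)^{-1}}_{\eta,s'}$, because for a general monic $f$ and general approximate root $x_0$ the quantities $\Norm{f'(x_0)^{-1}}_{\eta,s'}$ and the $(\eta,s')$-norms of the coefficients of $f$ are $\pi$-adically integral but can be (much) larger than $1$, and they vary with the choice of $\eta=\eta(s')$ in a way nothing in Lemma \ref{etas} controls. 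Since the Newton/quadratic-convergence estimate in the Banach algebra $A_{\eta,s'}$ needs something like $\Norm{f(x_0)}_{\eta,s'}\cdot\Norm{f'(x_0)^{-1}}_{\eta,s'}^2$ (together with the norms of the higher Taylor coefficients) to be $<1$, your bookkeeping does not close as written, and ``the discreteness argument used in the proof of Lemma \ref{etas}(2)'' does not supply the missing uniformity.

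The paper avoids this by inserting a purely algebraic normalisation before doing any analysis: as in Lemma 3.9 of \cite{padicmono}, Henselianity follows once one can find a root $y\equiv 1\ (\mathrm{mod}\ \pi)$ of polynomials of the special shape $P(x)=x^m-x^{m-1}+a_2x^{m-2}+\cdots+a_m$ with all $a_k\in\pi\cur{R}^\mathrm{int}_{\ekd}$. In that normalised situation every quantity entering the Newton estimates ($a_k$ and $P(1)P'(1)^{-1}$) has $\pi$-adic norm $<1$, so Lemma \ref{etas} applies verbatim to give, for all small $s'$, an $\eta$ with all these elements in $A_{\eta,s'}$ of norm $<1$; the induction then closes with $\Norm{y_n}_{\eta,s'}\leq 1$, $P'(y_n)=1+x$ invertible with $\Norm{P'(y_n)^{-1}}_{\eta,s'}\leq 1$, and $\Norm{P(y_n)P'(y_n)^{-1}}_{\eta,s'}\leq c^{2^n}$, so $y_n$ converges in each $A_{\eta,s'}$ and the limit lies in $\cur{R}^\mathrm{int}_{\ekd}=\cur{R}_{\ekd}\cap\widehat{\cur{R}}^\mathrm{int}_{\ek}$. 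So your outline becomes correct once you first perform this reduction (or an equivalent change of variable replacing $f$ by a polynomial whose derivative at the starting point is $1$ plus something $\pi$-adically small and whose coefficients are integral); without it, the step you yourself identify as the obstacle is not merely bookkeeping but an unproved claim.
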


\begin{proof} I first claim that $\cur{R}^\mathrm{int}_{\ekd}$ is local, with maximal ideal generated by $\pi$. So consider the embedding $\cur{R}^\mathrm{int}_{\ekd}\rightarrow \widehat{\cur{R}}^\mathrm{int}_{\ek}$ into the $\pi$-adic completion of $\cur{R}_{\ek}^\mathrm{int}$, and suppose that $a\in \cur{R}^\mathrm{int}_{\ekd}$ is such that $a\notin(\pi)$, that is $\Norm{a}=1$ for the natural $\pi$-adic norm on $\cur{R}^\mathrm{int}_{\ekd}$. Since the mod-$\pi$ reduction of $a$ is a unit there exists some $u\in \cur{R}^\mathrm{int}_{\ekd}$ such that $\Norm{au-1}<1$, and if we let $x=1-au$ then the series $\sum_{n\geq0}x^n$ converges in $\widehat{\cur{R}}^\mathrm{int}_{\ek}$ to an inverse $v$ for $au$. By Lemma \ref{etas}(ii) there exists some $s_0$ such that for all $0<s\leq s_0$, there exists some $\eta<1$ such that $a,u\in A_{\eta,s}$ and $\Norm{x}_{\eta,s}<1$. Hence by Lemma \ref{etas}(i) the series $\sum_n x^n$ converges in $A_{\eta,s}$ for all such $s$, and so $v\in \cur{R}_{\ekd}$. Since $\cur{R}_{\ekd}\cap \widehat{\cur{R}}^\mathrm{int}_{\ek}= \cur{R}^\mathrm{int}_{\ekd}$, it follows that $v\in \cur{R}^\mathrm{int}_{\ekd}$, and hence $a$ is a unit in $\cur{R}^\mathrm{int}_{\ekd}$. 

Exactly as in Lemma 3.9 of \cite{padicmono}, to show that $\cur{R}^\mathrm{int}_{\ekd}$ is Henselian, it suffices to show that if $P(x)=x^m-x^{m-1}+a_2x^{m-2}+\ldots+a_m$ is a polynomial with $a_k\in \pi\cur{R}^\mathrm{int}_{\ekd}$ for all $k$, then $P$ has a root $y$ in $\cur{R}^\mathrm{int}_{\ekd}$ such that $y\equiv 1 \;(\mathrm{mod}\;\pi)$. By Hensel's lemma the sequence $y_n$ defined by 
$$y_0=1,\;\; y_{n+1}= y_i - \frac{P(y_n)}{P'(y_n)}$$
converges to such a $y\in \widehat{\cur{R}}^\mathrm{int}_{\ek}$, since $\cur{R}_{\ekd}\cap \widehat{\cur{R}}^\mathrm{int}_{\ek}= \cur{R}^\mathrm{int}_{\ekd}$ we must show that in fact $y_n\rightarrow y$ inside $\cur{R}_{\ekd}$. The proof is almost identical to the usual proof of Hensel's Lemma. Since $\Norm{a_k},\Norm{P(1)P'(1)^{-1}}<1$, by Lemma \ref{etas} we may choose $s$, and for all $0<s'\leq s$ some $\eta=\eta(s')$ such that all the $a_k$ and $P'(1)^{-1}$ are in $A_{\eta,s}$ and $\Norm{a_k}_{\eta,s'},\Norm{P(1)P'(1)^{-1}}_{\eta,s'}<1$. We claim by induction that:
\begin{enumerate}
\item $y_j\in A_{\eta,s'}$, $\Norm{y_{n-1}}_{\eta,s'}\leq1$ and $\Norm{y_n-1}_{\eta,s'}\leq c $,
\item $P'(y_n)$ is invertible in $A_{\eta,s'}$,
\item $\Norm{P(y_n)P'(y_n)^{-1}}_{\eta,s'}\leq c^{2^n}$.
\end{enumerate}

Note that by assumption these are all true for $n=0$. Thus assume that i), ii) and iii) are true for $y_n$. Then we have 
\begin{align*}\Norm{y_{n+1}-1}_{\eta,s'} &\leq \max\left\{\Norm{y_{n+1}-y_n}_{\eta,s'},\Norm{y_{n}-1}_{\eta,s'}\right\} \\
&\leq \max\left\{c^{2^n},c\right\} \leq c \\
\Norm{y_{j+n}}_{\eta,s'} &\leq \max\left\{ \Norm{y_{n+1}-y_n}_{\eta,s'},\Norm{y_n}_{\eta,s'} \right\} \\
&\leq \max\left\{c^{2^n},1\right\} \leq 1
\end{align*}
and thus i) holds for $y_{n+1}$. Hence we can write $P'(y_{n+1}) = 1+ x$ for some $x\in A_{\eta,s'}$ with $\Norm{x}_{\eta,s'} <1$, and thus ii) is also true for $y_{n+1}$. Also note that this implies that $\Norm{P'(y_n)^{-1}}_{\eta,s'}\leq 1$ and hence to prove iii) it suffices to show that $\Norm{P(y_{n+1})}_{\eta,s'}\leq c^{2^{n+1}}$. But now using the Taylor expansion and the fact that $\Norm{y_n}_{\eta,s'}\leq1$ gives
$$ P(y_{n+1}) = P(y_n) - P'(y_n)P(y_n)P'(y_n)^{-1} + z(P(y_n)P'(y_n)^{-1})^2
$$
for some $z\in A_{\eta,s'}$ with $\Norm{z}_{\eta,s'}\leq 1$. Hence 
$$ \Norm{P(y_{n+1})}_{\eta,s} \leq \Norm{P(y_n)P'(y_n)^{-1}}_{\eta,s'}^2 \leq c^{2^{n+1}}
$$
and iii) holds for $y_{n+1}$. Hence $y_n\rightarrow y$ in each $A_{\eta,s'}$, and thus $y\in \cur{R}^\mathrm{int}_{\ekd}$ as required.
\end{proof}

\begin{lem} \label{sameform} Write $F=k\lser{t}$ and suppose that $F\lser{u}/F\lser{y}$ is a separable, finite, totally ramified extension, coming from some $F\rlser{u}/F\rlser{y}$. Let $\cur{R}/\cur{R}_{\ekd}^\mathrm{int}$ be the corresponding unramified finite extension given by Proposition \ref{hensel}. Then there is an isomorphism
$$ \cur{R}_{\ekd}^u \cong \cur{R}\otimes_{\cur{R}_{\ekd}^\mathrm{int}} \cur{R}_{\ekd}  
$$
where $\cur{R}_{\ekd}^u$ is a copy of $\cur{R}_{\ekd}$ but with series parameter $u$.
\end{lem}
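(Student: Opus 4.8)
The plan is to relate the abstract unramified extension $\cur{R}$ produced by Henselianness (Proposition \ref{hensel}) to the explicit extension obtained by adjoining $u$, and then tensor up to $\cur{R}_{\ekd}$. First I would pin down $\cur{R}$ concretely. By Proposition \ref{hensel}, $\cur{R}_{\ekd}^\mathrm{int}$ is Henselian with maximal ideal $(\pi)$ and residue field $F\rlser{y}$ (the ring of overconvergent double Laurent series identified in Lemma \ref{quotfieldlin}). The finite separable totally ramified extension $F\lser{u}/F\lser{y}$ comes, by the Proposition preceding Lemma \ref{sameform}, from a unique extension $F\rlser{u}/F\rlser{y}$, which is separable since the original is. Since $\cur{R}_{\ekd}^\mathrm{int}$ is Henselian, there is a unique finite unramified (i.e. finite étale with the same residue extension) extension $\cur{R}/\cur{R}_{\ekd}^\mathrm{int}$ whose residue field is $F\rlser{u}$, and this $\cur{R}$ is again local Henselian with maximal ideal $(\pi)$. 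So the content of the lemma is to show $\cur{R} \cong \cur{R}^{\mathrm{int},u}_{\ekd}$, after which tensoring over $\cur{R}^\mathrm{int}_{\ekd}$ with $\cur{R}_{\ekd}$ gives the statement (note $\cur{R}^{\mathrm{int},u}_{\ekd}\otimes_{\cur{R}^\mathrm{int}_{\ekd}}\cur{R}_{\ekd} = \cur{R}^u_{\ekd}$ essentially by definition, since inverting $\pi$ in series with coefficients in $\cur{O}_{\ekd}$, written in the $u$-parameter, recovers $\cur{R}_{\ekd}$).

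So the key step is: $\cur{R}^{\mathrm{int},u}_{\ekd}$ is a finite unramified extension of $\cur{R}^{\mathrm{int}}_{\ekd}$ with residue field $F\rlser{u}$; by uniqueness of such extensions over a Henselian base it must coincide with $\cur{R}$. To see $\cur{R}^{\mathrm{int},u}_{\ekd}$ is finite over $\cur{R}^\mathrm{int}_{\ekd}$, I would exhibit the relation: the Eisenstein polynomial $P\in F\rpow{y}[X]$ with root $u$ lifts to a monic polynomial $\tilde P$ over $\cur{R}^\mathrm{int}_{\ekd}$ (lift each coefficient of $P$, which lies in $F\rpow{y}\subset\cur{R}^\mathrm{int}_{\ekd}/(\pi)$, to $\cur{R}^\mathrm{int}_{\ekd}$ — and we want to arrange that the lifted $u$ actually satisfies $\tilde P$, not just modulo $\pi$). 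Here Henselianness of $\cur{R}^\mathrm{int}_{\ekd}$ is used: starting from $u\in\cur{R}^{\mathrm{int},u}_{\ekd}$ with $\tilde P(u)\equiv 0 \bmod \pi$ and $\tilde P'(u)$ a unit (since the residue extension is separable, $P'(u)\neq 0$ in $F\rlser{u}$, and $F\rlser{u}$ is a field, and $\cur{R}^{\mathrm{int},u}_{\ekd}$ is local with maximal ideal $(\pi)$ by the same argument as in Proposition \ref{hensel} applied with parameter $u$), one can adjust $u$ by an element of $\pi\cur{R}^{\mathrm{int},u}_{\ekd}$ to get an exact root — equivalently, one lifts $P$ itself to a polynomial over $\cur{R}^\mathrm{int}_{\ekd}$ having $u$ as a root. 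This realizes $\cur{R}^{\mathrm{int},u}_{\ekd}$ as $\cur{R}^\mathrm{int}_{\ekd}[u]/(\tilde P)$, hence free of rank $m = [F\lser{u}:F\lser{y}]$, and the discriminant of $\tilde P$ is a unit mod $\pi$, so the extension is unramified. Its residue ring is $F\rlser{y}[X]/(P) = F\rlser{u}$ by the Proposition. Uniqueness of the finite unramified extension with prescribed residue field over the Henselian local ring $\cur{R}^\mathrm{int}_{\ekd}$ then identifies it with $\cur{R}$.

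The main obstacle is the bookkeeping showing $\cur{R}^{\mathrm{int},u}_{\ekd}$ is genuinely a \emph{finite} $\cur{R}^\mathrm{int}_{\ekd}$-algebra and that it is local with maximal ideal $(\pi)$ — in other words, that the two descriptions of $\cur{R}^{\mathrm{int},u}_{\ekd}$, as "$\cur{R}^\mathrm{int}_{\ekd}$ written in the $u$-parameter" versus "$\cur{R}^\mathrm{int}_{\ekd}[u]$", agree. For this one needs precisely the content of the Proposition preceding this lemma: the equality $F\rlser{y}[u]=F\rlser{u}$ inside $k\lser{t}\lser{u}$ (and its integral/positive-valuation counterparts), which controls the growth conditions on coefficients when one rewrites a series in $u$ as a polynomial in $u$ over series in $y$, and vice versa. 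I would invoke that Proposition directly rather than redo the estimates, and combine it with the localness argument of Proposition \ref{hensel} (which goes through verbatim with $y$ replaced by $u$, since $\cur{R}^u_{\ekd}$ is literally another copy of $\cur{R}_{\ekd}$) to conclude. The étale/unramified uniqueness is then a standard property of Henselian local rings.
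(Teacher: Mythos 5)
Your structural plan --- exhibit $\cur{R}^{\mathrm{int},u}_{\ekd}$ as a finite unramified extension of $\cur{R}^{\mathrm{int}}_{\ekd}$ with residue field $F\rlser{u}$, identify it with $\cur{R}$ by Henselian uniqueness, and then tensor up to $\cur{R}_{\ekd}$ --- is the same skeleton as the paper's, but the two steps you defer are exactly where the real work lies, and your proposed shortcuts do not close them. Before any lifted polynomial $\tilde P$ over $\cur{R}^\mathrm{int}_{\ekd}$ can be evaluated at $u$, you need a ring homomorphism $\cur{R}^\mathrm{int}_{\ekd}\rightarrow\cur{R}^{\mathrm{int},u}_{\ekd}$ in the first place; this is obtained by lifting $y=\bar g(u)$ to a carefully chosen $g\in\cur{R}^{\mathrm{int},+,u}_{\ekd}$ and proving that $\sum_i f_i y^i\mapsto \sum_i f_i g^i$ converges, which requires genuine $(\eta,s)$-norm estimates in the Robba ring (done separately for the $i\geq 0$ and $i<0$ parts, the latter needing a bound on $\Norm{g^{-1}}_{\eta,s}$), as does the integrality of $u$ over the image (the coefficients $c_k$ of its minimal relation must be shown to lie in $\cur{R}^{\mathrm{int},+}_{\ekd}$ via explicit bounds on $\Norm{c_{ki}}_\eta$). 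None of this follows from the Proposition preceding the lemma: the equality $F\rlser{y}[u]=F\rlser{u}$ is a statement about $t$-adic growth in characteristic $p$ and says nothing about the convergence conditions $\Norm{f_i}_\eta r^{-is}\rightarrow 0$ defining $\cur{R}_{\ekd}$, so ``invoking that Proposition rather than redoing the estimates'' is not available --- the paper redoes the estimates at the Robba-ring level, and this is the bulk of its proof. Relatedly, without first knowing that $\cur{R}^{\mathrm{int},u}_{\ekd}$ is finite (or at least integral) over $\cur{R}^\mathrm{int}_{\ekd}$, Henselian uniqueness only yields a map $\cur{R}\rightarrow\cur{R}^{\mathrm{int},u}_{\ekd}$; surjectivity cannot then be extracted by a mod-$\pi$ or Nakayama argument, since $\cur{R}^{\mathrm{int},u}_{\ekd}$ is not $\pi$-adically complete and the image of $\cur{R}$ is not closed in it.

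Second, your final step, that $\cur{R}^{\mathrm{int},u}_{\ekd}\otimes_{\cur{R}^\mathrm{int}_{\ekd}}\cur{R}_{\ekd}\cong\cur{R}^u_{\ekd}$ holds ``essentially by definition, since inverting $\pi$ recovers $\cur{R}_{\ekd}$,'' is false as stated: the Robba ring $\cur{R}_{\ekd}$ strictly contains $\cur{R}^\mathrm{int}_{\ekd}[1/\pi]$ (it contains unbounded series), so the tensor-product identity is not formal. The paper handles it by running the same substitution/convergence argument for arbitrary series in $\cur{R}_{\ekd}$, obtaining a finite map $\cur{R}_{\ekd}\rightarrow\cur{R}^u_{\ekd}$ with $\cur{R}^u_{\ekd}=\cur{R}_{\ekd}[u]$, and then reading off the isomorphism from the resulting push-out square. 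So both halves of your outline still need the analytic estimates you hoped to avoid.
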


\begin{proof} We closely follow the proof of Proposition 3.4 of \cite{matsuda}. Write $y=\bar{g}(u)$, so that $\bar{g}=u^m(\bar{g}_0+\bar{g}_1u+\ldots)$ for $\bar{g}_i\in F$, $\bar{g}_0\neq 0$ and $-v(\bar{g}_i)\leq cj+d$ for some integers $c,d$, $v$ being the $t$-adic valuation on $F$. Lift $\bar{g}$ to some $g\in \cur{R}_{\ekd}^{\mathrm{int},+,u}$ (i.e. a copy of $\cur{R}_{\ekd}^{\mathrm{int},+}$ but with series parameter $u$) of the form $g=u^n(g_0+g_1u+\ldots)$ with $g_0\notin \pi\cur{O}_{\ekd}$. Actually, we want to lift slightly more carefully than this - if we write $\bar{g}_i=\sum_{j\geq -ci-d} \bar{g}_{ij}t^j$ then we will set $ g_i = \sum_{j\geq -c-d} g_{ij}t^j$ where $g_{ij}\in K$ is an element of norm 1 lifting $\bar{g}_{ij}$, and zero if $\bar{g}_{ij}$ is.

Thus $g$ is invertible in $\cur{R}_{\ekd}^{\mathrm{int},u}$, and in fact I claim that for all $\sum_if_iy^i\in \cur{R}_{\ekd}^\mathrm{int}$, the sum $\sum_i f_i g^i$ converges in $\cur{R}_{\ekd}^{\mathrm{int},u}$. Let us treat the sums $\sum_{i\geq0} f_ig^i$ and $\sum_{i<0}f_ig^i$ separately, first let's look at the positive part. Since we have $\Norm{g_i}_\eta \leq \eta^{-ci-d}$ it follows that 
$$ \Norm{g}_{\eta,s} \leq \eta^{-d}r^{-ns} 
$$
at least for $\eta$ and $s$ such that this norm is defined. We therefore have
$$ \Norm{f_ig^i}_{\eta,s} \leq \Norm{f_i}_\eta (\eta^{-d}r^{-ns})^i
$$
and hence, if we are given $s$ and $\eta$ such that $f_i\in \cur{E}_\eta$, $g\in A_{\eta,s}$ and $\Norm{f_i}r^{-is}\rightarrow 0$ as $i\rightarrow\infty$, then since $n>1$, by increasing $\eta$ we can ensure that $\eta^{-d}r^{-ns}\leq r^{-s}$ and hence $\Norm{f_ig^i}_{\eta,s}\rightarrow 0$ as $i\rightarrow\infty$. Hence for all $s$ sufficiently small, there exists some $\eta$ such that $\sum_{i\geq0} f_ig^i$ converges in $A_{\eta,s}$ and thus the sum converges in $\cur{R}_{\ekd}$. It is easy to see that the limit has to have integral coefficients, and hence the sum converges in $\cur{R}_{\ekd}^\mathrm{int}$.

Next let us look at the negative part $\sum_{i<0}f_ig^i$, let us rewrite this as $\sum_{i>0}f_ig^{-i}$ where for all sufficiently small $s$ there exists an $\eta$ with $f_i\in \cur{E}_\eta$ and $\Norm{f_i}_\eta r^{is}\rightarrow 0$ as $i\rightarrow \infty$. Write $g^{-1}=a_0^{-1}u^{-n}(1+b_1u+\ldots)$, then as in the proof of Lemma \ref{quotfieldlin}, we have that each $b_i$ is a sum of things of the form $g_{i_1}\ldots g_{i_m}$ with $i_j\geq1$ and $i_1+\ldots +i_m=i$. Hence we have $\Norm{b_i}_\eta\leq \eta^{-i(c+d)}$, and hence, where defined, we must have $\Norm{g^{-1}}_{\eta,s} \leq \Norm{a_0^{-1}}r^{ns}$. Now choose $s_0>0$ such that for all $0< s \leq s_0$ we have some $\eta$ such that $\Norm{f_i}_\eta r^{is}\rightarrow 0$, and let $s_1=s_0/(n+1)$. Then for any $0<s\leq s_1$ we have
$$  \Norm{f_ig^{-i}}_{\eta,s} \leq \Norm{f_i}_\eta\left(\Norm{a^{-1}_0}_\eta r^{ns}\right)^i
$$
and since $\Norm{a_0^{-1}}=1$, by increasing $\eta$ we can ensure that $\Norm{a_0^{-1}}_\eta r^{ns}\leq r^{s'}$ for some $s'\leq s_0$, and thus by further increasing $\eta$ we can ensure that $\Norm{f_ig^{-i}}_{\eta,s}\rightarrow 0$ and hence the sum converges in $\cur{R}_{\ekd}^u$. Again, it is not hard to see that it must actually lie in $\cur{R}_{\ekd}^{\mathrm{int},u}$.

Hence we get a ring homomorphism $\cur{R}_{\ekd}^\mathrm{int}\rightarrow \cur{R}_{\ekd}^{\mathrm{int},u}$ by sending $\sum_if_iy^i$ to $\sum_if_ig^i$, it is clear that modulo $\pi$ this induces the given map $F\rlser{y}\rightarrow F\rlser{u}$. Note also that there are uniquely determined power series $c_k=\sum_{i\geq1} c_{ki}y^i\in \cur{O}_{\ekd}\pow{y}$ such that 
$$ u^m + c_{m-1}u^{m-1}+\ldots + c_0=0
$$
inside $\cur{O}_{\ekd}\pow{u}$, I claim that in fact these power series lie inside $\cur{R}_{\ekd}^\mathrm{int,+}$, it suffices to show that they lie in $\cur{R}_{\ekd}^+$. In fact, one can show inductively using the equation
$$ u^m +\sum_{N=m}^\infty\left(\sum_{k=0}^{m-1}\sum_{j=1}^{\lfloor \frac{N-k}{m} \rfloor}c_{kj}\sum_{k_1+\ldots+k_j=N-jm-k}g_{k_1}\ldots g_{k_j}\right)u^N=0
$$
determining the $c_{ki}$ that if $\eta,s$ are such that $\Norm{a_i}_\eta\leq Cr^{is}$ for some constant $C$, then we have
$$ \Norm{c_{ki}}\leq (Cr^s)^{n(i-1)+k}\Norm{g_0^{-1}}_\eta^{n(i-1)+k+i}
$$
for all $i,k$. Since by increasing $\eta$ we may make both $C$ and $\Norm{g_0^{-1}}_\eta$ as close to $1$ as we please, it therefore follows that for all $k$, there exists some $s_0$ such that for all $0<s\leq s_0$ and all $\lambda>1$, there exists some $\eta<1$ and $D>0$ such that 
$$  \Norm{c_{ki}}_\eta\leq D (\lambda r^s)^{(n+1)i}
$$
for all $i$. Hence the series $c_k$ are in $\cur{R}_{\ekd}^+$ as required, and $u$ is actually integral over $\cur{R}^\mathrm{int}_{\ekd}$. It then follows that 
$$ \cur{R}^\mathrm{int}_{\ekd}\rightarrow \cur{R}_{\ekd}^{\mathrm{int},u}=\cur{R}^\mathrm{int}_{\ekd}[u]
$$
is a finite, $\pi$-adically unramified extension with induced extension $F\rlser{y}\rightarrow F\rlser{y}$ of residue fields, we must therefore have $\cur{R}'\cong \cur{R}_{\ekd}^{\mathrm{int},u}$, or in other words $\cur{R}_{\ekd}^{\mathrm{int},u}$ is an explicit construction of the lift $\cur{R}'$ of $F\rlser{u}$. Finally, we need to prove that we have $\cur{R}_{\ekd}^{\mathrm{int},u} \otimes_{\cur{R}_{\ekd}^\mathrm{int}} \cur{R}_{\ekd} \cong \cur{R}_{\ekd}^u$.  But the exact same argument as in the integral case shows that for any series $\sum_if_iy^i$ in $\cur{R}_{\ekd}$, the series $\sum_if_ig^i$ converges in $\cur{R}_{\ekd}^u$, and we therefore get a finite map $\cur{R}_{\ekd}\rightarrow \cur{R}_{\ekd}^u$ such that $\cur{R}_{\ekd}^u=\cur{R}_{\ekd}[u]$. We therefore get a commutative push-out diagram
$$ \xymatrix{ \cur{R}^{\mathrm{int},u}_{\ekd} \ar[r] & \cur{R}^u_{\ekd} \\
\cur{R}^\mathrm{int}_{\ekd} \ar[r]\ar[u] & \cur{R}_{\ekd} \ar[u] }
$$
which realises $\cur{R}_{\ekd}^u$ as the tensor product $\cur{R}_{\ekd}^{\mathrm{int},u} \otimes_{\cur{R}_{\ekd}^\mathrm{int}} \cur{R}_{\ekd}$. 
\end{proof}

Having established the required properties of $\cur{R}_{\ekd}$, we can now introduce the key objects of study in this section, namely $(\varphi,\nabla)$-modules over $\cur{R}_{\ekd}$. 

\begin{defn} A Frobenius on $\cur{R}_{\ekd}$ is a continuous ring endomorphism, $\sigma$-linear over $\ekd$, lifting the absolute $q$-power Frobenius on $k\lser{t}\rlser{y}$.
\end{defn}

Fix a Frobenius $\sigma$ on $\cur{R}_{\ekd}$, and let $\partial_y:\cur{R}_{\ekd}\rightarrow \cur{R}_{\ekd}$ be the derivation given by differentiation with respect to $y$, that is $\partial_y(\sum_if_iy^i)=\sum_i if_i y^{i-1}$.
\begin{defn} \begin{itemize}
\item A $\varphi,$-module over $\cur{R}_{\ekd}$ is a finite free $\cur{R}_{\ekd}$-module $M$ together with a Frobenius structure, that is an $\sigma$-linear map $$\varphi: M \rightarrow M$$ which induces an isomorphism $M\otimes_{\cur{R}_{\ekd},\sigma} \cur{R}_{\ekd}\cong M$.
\item A $\nabla$-module over $\cur{R}_{\ekd}$ is a finite free $\cur{R}_{\ekd}$-module $M$ together with a connection, that is an $\ekd$-linear map $$\nabla:M\rightarrow  M$$ such that $\nabla(fm)=\partial_y(f)m+f\nabla(m)$ for all $f\in\cur{R}_{\ekd}$ and $m\in M$.
\item A $(\varphi,\nabla)$-module over $\cur{R}_{\ekd}$ is a finite free $\cur{R}_{\ekd}$-module $M$ together with a Frobenius $\varphi$ and a connection $\nabla$, such that the diagram
$$
\xymatrix{
M \ar[r]^\nabla \ar[d]^{\varphi} & M \ar[d]^{\partial_y(\sigma(y))\varphi} \\
M \ar[r]^\nabla & M
}
$$
commutes.
\end{itemize}
If $M$ is a $\nabla$-module over $\cur{R}_{\ekd}$ we define its cohomology to be
\begin{align*} H^0(M) &:= \ker( \nabla )\\
H^1(M) &:= \mathrm{coker}( \nabla).
\end{align*}
\end{defn}
It will also be useful to interpret these in a more co-ordinate free fashion, to do so let $\Omega^1_{\cur{R}_{\ekd}}$ be the free $\cur{R}_{\ekd}$-module generated by $dy$, and
$$d:\cur{R}_{\ekd}\rightarrow \Omega^1_{\cur{R}_{\ekd}}$$
given by $df=\partial_y(f)dy$. Then $(\Omega^1_{\cur{R}_{\ekd}},d)$ is universal for continuous $\ekd$-derivations from $\cur{R}_{\ekd}$ into separated topological $\cur{R}_{\ekd}$-modules, and a connection on an $\cur{R}_{\ekd}$-module $M$ is equivalent to a homomorphism
$$ \nabla:M \rightarrow M\otimes \Omega^1_{\cur{R}_{\ekd}}
$$
such that $\nabla(fm)=m\otimes df + f\nabla(m)$. 

\begin{defn} \label{unip}A $\nabla$-module $M$ over $\cur{R}_{\ekd}$ is said to be unipotent if there exists a basis $\left\{e_1,\ldots,e_n\right\}$ of $M$ such that 
$$
\nabla(e_i)\in \cur{R}_{\ekd}e_1 + \ldots + \cur{R}_{\ekd}e_{i-1}
$$
for all $i$. We say that a $(\varphi,\nabla)$-module is unipotent if the underlying $\nabla$-module is.
\end{defn}

We will be using Theorem 6.1.2 of \cite{padicmono} as a template for the theorem we wish to prove, so we will need to be able to associate a finite extension of $\cur{R}_{\ekd}$ to certain kinds of `nearly finite separable' extensions of $k\lser{t}\lser{y}$, that is composite extensions of the form
$$ k\lser{t}\rlser{y} \rightarrow k\lser{t}^{1/p^m}\rlser{y} \rightarrow F \rlser{y} \rightarrow F\rlser{u}
$$
where $F/k\lser{t}^{1/p^m}$ is a finite separable extension and $F\rlser{u}/F\rlser{y}$ is finite, Galois and totally ramified. We will consider each of these extensions in turn, starting with the extension $k\lser{t}\rlser{y} \rightarrow k\lser{t}^{1/p^m}\rlser{y}$. If $\sigma$ is our Frobenius on $\ekd$, then there is an induced map $\cur{R}_{\ekd}\overset{\sigma'}{\rightarrow} \cur{R}_{\ekd}$ given by $\sum_i f_iy^i \mapsto \sum_i \sigma(f_i)y^i$, note that this should not be confused with a Frobenius on $\cur{R}_{\ekd}$. We let $\cur{R}_{(\ekd)^{\sigma^{-m}}}$ denote $\cur{R}_{\ekd}$ considered as an $\cur{R}_{\ekd}$-algebra via the $m$-fold composition of $\sigma'$, so that $\cur{R}_{\ekd}\rightarrow \cur{R}_{(\ekd)^{\sigma^{-m}}}$ is a lifting of $k\lser{t}\rlser{y} \rightarrow k\lser{t}^{1/p^m}\rlser{y}$.

Secondly, if $F/k\lser{t}$ is a finite separable extension, then we can consider the finite extension $\cur{E}_K^{\dagger,F}/\ekd$ as in \S5  of \cite{rclsf1}. Since $\cur{E}_K^{\dagger,F}$ is of the same form as $\ekd$ (but with a different parameter and ground field) we may define $\cur{R}_{\cur{E}_K^{\dagger,F}}$ exactly as above, and there is a natural map
$$ \cur{R}_{\cur{E}_K^{\dagger}}\rightarrow \cur{R}_{\cur{E}_K^{\dagger,F}}.
$$
Actually, these are both particular cases of a more general construction associated to a finite extension $\cur{F}^\dagger/\ekd$. The point is that we can use the fringe topology on $\ekd$ to induce a similar fringe topology on $\cur{F}^\dagger$, by writing
$$
\cur{F}^\dagger=\mathrm{colim}_\eta  \cur{F}_\eta
$$
where each $\cur{F}_{\eta}$ is a finite free $\cur{E}_\eta$-module (namely the sub-$\cur{E}_\eta$-module of $\cur{F}^\dagger$ spanned by some chosen basis for $\cur{F}^\dagger/\ekd$). These then come with a compatible collection of topologies induced by some Banach norm on each $\cur{F}_\eta$, and we can give $\cur{F}^\dagger$ the direct limit topology. This does not depend on the choice of basis. Thus we can define the Robba ring $\cur{R}_{\cur{F}^\dagger}$ over $\cur{F}^\dagger$ exactly as in Definition \ref{robba}, this does not depend on the choice of $\cur{F}_\eta$ or their Banach norms, and we can topologise it exactly as we topologise the Robba ring $\cur{R}_{\ekd}$. We can also describe $\cur{R}_{\cur{F}^\dagger}$ more straightforwardly as follows.

\begin{lem} \label{finiterobbabase} Let $\cur{F}^\dagger/\ekd$ be a finite extension. Then the natural multiplication map $$\cur{R}_{\ekd}\otimes_{\ekd} \cur{F}^\dagger\rightarrow  \cur{R}_{\cur{F}^\dagger}$$ is an isomorphism. Hence in particular, $\cur{R}_{\ekd}\otimes_{\ekd} \cur{E}^{\dagger,F}_K \cong \cur{R}_{\cur{E}_K^{\dagger,F}}$
\end{lem}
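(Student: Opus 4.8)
The plan is to reduce everything to a choice of basis and compare the two defining growth conditions coefficient by coefficient. Fix a basis $f_1,\dots,f_n$ for $\cur{F}^\dagger$ over $\ekd$. For all $\eta<1$ sufficiently close to $1$ these elements lie in $\cur{F}_\eta$, and I would take $\cur{F}_\eta=\bigoplus_{j=1}^n\cur{E}_\eta f_j$ equipped with the Banach norm $\Norm{\sum_jg_jf_j}_\eta=\max_j\Norm{g_j}_\eta$; this is a legitimate choice of Banach norm on $\cur{F}_\eta$, and since the paper has already observed that $\cur{R}_{\cur{F}^\dagger}$ and its topology do not depend on the choice, we lose nothing by working with it. As the $f_j$ are linearly independent over $\ekd\supseteq\cur{E}_\eta$, the module $\bigoplus_j\cur{E}_\eta f_j$ is genuinely free and each $g\in\cur{F}_\eta$ has unique coordinates in $\cur{E}_\eta$. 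Correspondingly $\cur{R}_{\ekd}\otimes_{\ekd}\cur{F}^\dagger=\bigoplus_j\cur{R}_{\ekd}f_j$, and the multiplication map in question is the obvious $\ekd$-algebra homomorphism sending $\sum_jh_j\otimes f_j$ to the series $\sum_jh_jf_j\in\cur{R}_{\cur{F}^\dagger}$.

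The key claim is the following: writing a formal series $\sum_ig_iy^i$ with $g_i\in\cur{F}^\dagger$ and expanding $g_i=\sum_{j=1}^nh_{ji}f_j$, one has $\sum_ig_iy^i\in\cur{R}_{\cur{F}^\dagger}$ if and only if $h_j:=\sum_ih_{ji}y^i$ lies in $\cur{R}_{\ekd}$ for every $j$. Granting this, surjectivity of the multiplication map is immediate. Injectivity follows by reading off $y^i$-coefficients: if $\sum_jh_jf_j=0$ in $\cur{R}_{\cur{F}^\dagger}$ then $\sum_jh_{ji}f_j=0$ in $\cur{F}^\dagger$ for each $i$, and linear independence of the $f_j$ over $\ekd$ (note $h_{ji}\in\cur{E}_\eta\subseteq\ekd$) forces $h_{ji}=0$ for all $i,j$, hence $\sum_jh_j\otimes f_j=0$. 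So the multiplication map is a bijective ring homomorphism, i.e. an isomorphism, and the asserted special case $\cur{R}_{\ekd}\otimes_{\ekd}\cur{E}_K^{\dagger,F}\cong\cur{R}_{\cur{E}_K^{\dagger,F}}$ is just $\cur{F}^\dagger=\cur{E}_K^{\dagger,F}$.

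To prove the claim I would simply unwind both membership conditions using Definition \ref{robba}. By definition $\sum_ig_iy^i\in\cur{R}_{\cur{F}^\dagger}$ means: there is $s>0$ such that for all $0<s'\leq s$ there is some $\eta<1$ with $g_i\in\cur{F}_\eta$ for all $i$ and $\Norm{g_i}_\eta r^{-is'}\rightarrow0$ as $\norm{i}\rightarrow\infty$. With the chosen norm, $g_i\in\cur{F}_\eta$ is equivalent to $h_{ji}\in\cur{E}_\eta$ for all $j$, and $\Norm{g_i}_\eta r^{-is'}\rightarrow0$ is equivalent to $\Norm{h_{ji}}_\eta r^{-is'}\rightarrow0$ for every $j$. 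On the other side, $h_j\in\cur{R}_{\ekd}$ for all $j$ means that for each $j$ there is a threshold $s_j$ with the same property using some $\eta_j=\eta_j(s')$. Since there are only finitely many $j$, taking $s=\min_js_j$ and, for each $s'\leq s$, $\eta(s')=\max_j\eta_j(s')<1$ produces a single $\eta$ working simultaneously for all $j$; here I use the monotonicity $\cur{E}_\eta\subseteq\cur{E}_{\eta'}$ with $\Norm{\cdot}_{\eta'}\leq\Norm{\cdot}_\eta$ whenever $\eta\leq\eta'<1$. This makes the two conditions literally identical. The only points to keep an eye on are that the quantifier order ``$\exists s$ such that $\forall s'\leq s$, $\exists\eta$'' is the same in both definitions, so that it commutes past the finite conjunction over $j$, and that the comparison of Banach norms used is the elementary coordinatewise one rather than an abstract equivalence-of-norms statement; neither is a genuine obstacle, so this is really the only (mild) subtlety in an otherwise routine verification.
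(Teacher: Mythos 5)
Your proof is correct and follows essentially the same route as the paper's: fix a basis, identify $\cur{R}_{\ekd}\otimes_{\ekd}\cur{F}^\dagger$ with $\bigoplus_j\cur{R}_{\ekd}f_j$, and compare the growth conditions coefficientwise, the only cosmetic difference being that you fix the max-coordinate Banach norm on $\cur{F}_\eta$ (legitimate, since the paper notes independence of this choice) where the paper invokes finite freeness of $\cur{F}_\eta$ over the Banach algebra $\cur{E}_\eta$ to get the same coordinatewise equivalence. Your extra care with the quantifiers (taking $s=\min_j s_j$ and $\eta=\max_j\eta_j(s')$, using the monotonicity of the $\cur{E}_\eta$ and their norms) and with well-definedness of the map is a harmless refinement of the same argument.
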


\begin{proof} Both are subrings of the ring $\cur{F}^\dagger\pow{y^{-1},y}$ of doubly infinite series with coefficients in $\cur{F}^\dagger$, hence the map is injective. To prove surjectivity, let $v_1,\ldots,v_n$ be a basis for $\cur{F}^\dagger/\ekd$. Then for
$$ f = \sum_{i} f_i y^i \in \cur{R}_{\cur{F}^\dagger}$$ we know that we can write
$$ f = \sum_j (  \sum_{i}f_{ij}y^i )v_j$$
where $f_i=\sum_{j} f_{ij}v_j\in \cur{F}^\dagger$, we must show that each $\sum_{i}f_{ij}y^i $ actually lies in $\cur{R}_{\ekd}$. So let $s>0$ be sufficiently small. Then there exists some $\eta<1$ such that each $f_i\in\cur{F}_\eta$ and $\Norm{f_i}_{\eta}r^{-is}\rightarrow 0$. But $f_i\in\cur{F}_\eta$ implies that $f_{ij}\in \cur{E}_\eta$ for all $i,j$, and since $\cur{F}_\eta$ is a finite free module over the Banach algebra $\cur{E}_\eta$, $\Norm{f_i}_{\eta}r^{-is}\rightarrow 0$ if and only if $\Norm{f_{ij}}_{\eta}r^{-is}\rightarrow0$ for all $j$.  
\end{proof}

Finally we consider the extension $F\rlser{y}\rightarrow F\rlser{u}$. By Proposition \ref{hensel} we can lift this uniquely to some finite, \'{e}tale, Galois extension $\cur{R}^\mathrm{int}_{\cur{E}_K^{\dagger,F}}\rightarrow \cur{R}$. Define $\cur{R}'=\cur{R}_{\cur{E}_K^{\dagger,F}}\otimes_{\cur{R}^\mathrm{int}_{\cur{E}_K^{\dagger,F}}} \cur{R}$, this is isomorphic to $\cur{R}_{\cur{E}_K^{\dagger,F}}^u$ by Lemma \ref{sameform}. Thus associated to the series of extensions 
$$ k\lser{t}\rlser{y} \rightarrow k\lser{t}^{1/p^m}\rlser{y} \rightarrow F \rlser{y} \rightarrow F\rlser{u}
$$
we get extensions
$$ \cur{R}_{\ekd} \rightarrow \cur{R}_{(\ekd)^{\sigma^{-m}}} \rightarrow \cur{R}_{\cur{E}_K^{\dagger,F}}\rightarrow \cur{R}^u_{\cur{E}_K^{\dagger,F}}.
$$
We next explain how to base extend a $\nabla$-module $M$ over $\cur{R}_{\ekd}$ up this tower of extensions. For the extensions $$\cur{R}_{\ekd}\rightarrow \cur{R}_{(\ekd)^{\sigma^{-m}}} \rightarrow \cur{R}_{\cur{E}_K^{\dagger,F}}$$ this is straightforward, we just use the fact that $\cur{R}_{\cur{E}_K^{\dagger,F}} = \cur{R}_{\ekd} \otimes_{\ekd} \cur{E}_K^{\dagger,F}$ and extend the connection $M\rightarrow M$ linearly over $\cur{E}_K^{\dagger,F}$ in the obvious fashion. For the extension $\cur{R}_{\cur{E}_K^{\dagger,F}}\rightarrow \cur{R}_{\cur{E}_K^{\dagger,F}}^u$, we note that the appropriate universal properties of $\Omega^1$ (and the fact that $\cur{R}_{\cur{E}_K^{\dagger,F}}^u$ is a separated topological $\cur{R}_{\cur{E}_{K}^{\dagger,F}}$-module) give a commutative diagram 
$$\xymatrix{ \cur{R}_{\cur{E}_K^{\dagger,F}} \ar[r]\ar[d] & \Omega^1_{\cur{R}_{\cur{E}_K^{\dagger,F}}} \ar[d] \\
\cur{R}^u_{\cur{E}_K^{\dagger,F}} \ar[r] & \Omega^1_{\cur{R}^u_{\cur{E}_K^{\dagger,F}}}
}
$$
with $\Omega^1_{\cur{R}_{\cur{E}_K^{\dagger,F}}} \rightarrow \Omega^1_{\cur{R}^u_{\cur{E}_K^{\dagger,F}}}$ linear over $\cur{R}_{\cur{E}_K^{\dagger,F}}$. Hence we can extend
$$ \nabla : M \rightarrow M\otimes \Omega^1_{\cur{R}_{\cur{E}_K^{\dagger,F}}}
$$ to a morphism
$$ \nabla': M\otimes \cur{R}_{\cur{E}_K^{\dagger,F}}^u \rightarrow M\otimes \Omega^1_{\cur{R}_{\cur{E}_K^{\dagger,F}}^u}
$$
by setting $\nabla'(m\otimes r) = \nabla(m)\otimes r + m \otimes dr$. We can now state the version of the $p$-adic monodromy theorem we wish to prove.

\begin{theo} \label{qu} Let $M$ be a $(\varphi,\nabla)$ module over the Robba ring $\cur{R}_{\ekd}$. Then $M$ is quasi-unipotent, that is there exists an integer $m\geq0$, a finite separable extension $F/k\lser{t}^{1/p^m}$ and a finite, Galois, totally ramified extension $F\rlser{u}/F\rlser{y}$ such that $M\otimes \cur{R}^{u}_{\cur{E}_K^{\dagger,F}}$ is unipotent.
\end{theo}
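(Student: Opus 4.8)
The plan is to deduce Theorem \ref{qu} from Kedlaya's relative $p$-adic local monodromy theorem (Theorem \ref{keduni} above) applied with the "dagger algebra" $A = \ekd$, exploiting exactly the analogy emphasised in the introduction: $\ekd$ plays the role of an integral dagger algebra over $K$, $\ek$ the role of the completed fraction field, and the Robba ring $\cur{R}_{\ekd}$ we have built is the corresponding relative Robba ring $\cur{R}_A$. First I would check that $\ekd$ really does satisfy whatever axioms Kedlaya requires of an "integral dagger algebra over $K$" for the statement and proof of Theorem \ref{keduni} to go through — it is a ring, weakly complete in an appropriate sense with respect to the fringe topology, with $\ek$ as its completion, and its reduction $k\lser{t}$ is a field (so in particular integral); the key structural facts about $\cur{R}_{\ekd}^\mathrm{int}$, namely that it is Henselian local with maximal ideal $(\pi)$ (Proposition \ref{hensel}), that its residue ring is $k\lser{t}\rlser{y}$ (Lemma \ref{quotfieldlin}), and the lifting statement for totally ramified extensions (Lemma \ref{sameform}, Proposition before it), are precisely the inputs Kedlaya's argument uses when $A$ is a genuine dagger algebra. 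So the first block of the proof is a dictionary: translate each step of the proof of \cite[Theorem 5.1.3]{kedlayafiniteness} and of \cite[Theorem 6.1.2]{padicmono} into our setting, using Lemmas \ref{quotfieldlin}--\ref{sameform} wherever the original invokes the analogous property of dagger algebras.

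Second, I would apply Theorem \ref{keduni} to the free $(\varphi,\nabla)$-module $M$ over $\cur{R}_{\ekd} = \cur{R}_A$. This produces a weakly complete localisation $B$ of $\ekd$, an integer $m \geq 0$, a finite \'etale extension $B_1$ of $B_0 = B^{\sigma^{-m}}$, and a finite \'etale Galois extension $\cur{R}'$ of $\cur{R}^\mathrm{int}_{B_1}$, together with an isomorphism $\cur{R}_{B_1} \cong \cur{R}'' = \cur{R}_{B_1} \otimes_{\cur{R}^\mathrm{int}_{B_1}} \cur{R}'$ such that $M \otimes \cur{R}''$ is unipotent. The work is then to identify $B$, $B_0$, $B_1$ concretely in terms of our geometry. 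Since $\ekd$ has field reduction $k\lser{t}$, any weakly complete localisation $B$ of $\ekd$ is just $\ekd$ itself (localising a field at nonzero elements does nothing, and we pass this back up through weak completion); taking $B_0 = B^{\sigma^{-m}}$ corresponds on the residue level to the purely inseparable extension $k\lser{t}^{1/p^m}$, which is realised on the ring level by the construction $\cur{R}_{(\ekd)^{\sigma^{-m}}}$ introduced just above the statement; a finite \'etale extension $B_1/B_0$ reduces mod $\pi$ to a finite separable extension $F/k\lser{t}^{1/p^m}$, and by the discussion of $\cur{E}_K^{\dagger,F}/\ekd$ (and Lemma \ref{finiterobbabase}) the ring $B_1$ is identified with $\cur{E}_K^{\dagger,F}$; finally the finite \'etale Galois $\cur{R}'/\cur{R}^\mathrm{int}_{B_1}$ reduces to a finite Galois totally ramified extension of $F\rlser{y}$, which by Proposition \ref{hensel} and Lemma \ref{sameform} is of the form $F\rlser{u}/F\rlser{y}$ and has $\cur{R}' \cong \cur{R}^{\mathrm{int},u}_{\cur{E}_K^{\dagger,F}}$, hence $\cur{R}'' \cong \cur{R}^u_{\cur{E}_K^{\dagger,F}}$ again by Lemma \ref{sameform}. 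Assembling these identifications, "$M \otimes \cur{R}''$ is unipotent" becomes exactly "$M \otimes \cur{R}^u_{\cur{E}_K^{\dagger,F}}$ is unipotent", which is the assertion of Theorem \ref{qu}.

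The main obstacle, and the part requiring genuine care rather than bookkeeping, is the first block: verifying that Kedlaya's proof of Theorem \ref{keduni} only uses properties of the base ring $A$ that we have actually established for $\ekd$. In particular his argument at some point descends a $\nabla$-module from $\cur{R}_{\ek}$ (the Robba ring over the completion) down to $\cur{R}_{\ekd}$, and this descent is where the topological subtleties of the fringe topology — the quantifier interplay between "for all small $s$" and "there exists $\eta < 1$" that makes $\bigcup_\eta \cur{R}_{\cur{E}_\eta}$ strictly smaller than $\cur{R}_{\ekd}$ — must be handled exactly as in Lemma \ref{etas} and Proposition \ref{hensel}. I would isolate the finitely many places in \cite[\S5]{kedlayafiniteness} and \cite[\S6]{padicmono} where $A$ enters (Henselianity of $A^\mathrm{int}$; lifting finite separable extensions of the residue field; the structure of $\cur{R}_A^\mathrm{int}$ as a local ring; compatibility of the $\Omega^1$'s), match each to the corresponding result proved above, and remark that the remainder of the argument is formal and goes through verbatim. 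A secondary, purely technical point to nail down is that the isomorphism $\cur{R}_{B_1} \cong \cur{R}''$ supplied by Theorem \ref{keduni} is compatible with the identifications above so that the unipotent structure transports correctly; this follows because all the relevant isomorphisms are the tautological multiplication maps of Lemma \ref{finiterobbabase} and Lemma \ref{sameform}.
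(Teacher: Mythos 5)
Your overall strategy---treat $\ekd$ as the ``dagger algebra'' and rerun Kedlaya's argument---is in the right spirit, and your second block (identifying $B=\ekd$ since $\ekd$ is a field, $B_1=\cur{E}_K^{\dagger,F}$, and $\cur{R}''\cong\cur{R}^u_{\cur{E}_K^{\dagger,F}}$ via Proposition \ref{hensel}, Lemma \ref{sameform} and Lemma \ref{finiterobbabase}) matches the bookkeeping in the paper's proof. But there is a genuine gap in the first block. Theorem \ref{keduni} cannot be invoked, even after ``checking axioms'': $\ekd$ is not a dagger algebra over $K$ in Kedlaya's sense (it is not topologically of finite type, and its reduction $k\lser{t}$ is not a finitely generated $k$-algebra), so the statement simply does not apply and the result must be re-proved. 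What the paper actually does is apply the absolute monodromy theorem over $\ek$ (Theorem \ref{altkeduni}) to $M\otimes\cur{R}_{\ek}$, use the structure results to check that the resulting extension $F\lser{u}/F\lser{y}$ arises from an overconvergent extension $F\rlser{u}/F\rlser{y}$, and then descend unipotence of $N\otimes\cur{R}^u_{\cur{E}_K^F}$ to $N=M\otimes\cur{R}^u_{\cur{E}_K^{\dagger,F}}$.

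That descent---Proposition \ref{unibase}---is exactly the step your plan waves away. It is not ``formal'', and it is not handled by Lemma \ref{etas} and Proposition \ref{hensel}, which concern units and Hensel's lemma in $\cur{R}^\mathrm{int}_{\ekd}$; your list of ``places where $A$ enters'' (Henselianity, lifting extensions, $\Omega^1$'s) omits the analytic input altogether. Kedlaya's production of horizontal sections is a two-norm convergence argument in auxiliary rings $R_{A,s}$, and to run it over $\ekd$ one needs: the rings $R_{\ekd,s}$; an interpolation estimate $\Norm{f}_\eta\leq\Norm{f}^{1-\epsilon}\Norm{f}_{\eta_0}^{\epsilon}$ comparing the fringe norms with the $\pi$-adic norm on $\ek$ (Lemma \ref{epsnorm}, which replaces Kedlaya's Proposition 2.4.2 for dagger algebras and requires its own proof by reduction to a one-variable MW-algebra); and the convergence statement Lemma \ref{pamkey}, showing that the limit of Kedlaya's averaging operators applied to $m\in M$ lies in $M$ and not merely in $M\otimes R_{\ek,s}$. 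Without supplying these (or an equivalent), your argument stalls precisely at the point you correctly identified as the crux: nothing you cite controls convergence in the fringe topology, so unipotence over $\cur{R}_{\ek}$ cannot be brought down to $\cur{R}_{\ekd}$.
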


As mentioned above, the proof will closely mirror Kedlaya's proof of Theorem \ref{keduni}, which uses the `usual' $p$-adic monodromy theorem for the completion of the fraction field of $A$, and then `descending' horizontal sections to $\cur{R}_B$ for some localisation $B$ of $A$. Our proof will proceed entirely similarly, with $A$ being replaced by $\ekd$ and the completion of its fraction field by $\ek$, we do not have to worry about the localisation, since $\ekd$ is already a field.

Thus we will deduce Theorem \ref{qu} from the corresponding statement for $\ek$, which we now recall. Associated to the extensions
$$ k\lser{t}\rlser{y} \rightarrow k\lser{t}^{1/p^m}\rlser{y} \rightarrow F \rlser{y} \rightarrow F\rlser{u}$$
we get the extensions
$$ k\lser{t}\lser{y} \rightarrow k\lser{t}^{1/p^m}\lser{y} \rightarrow F \lser{y} \rightarrow F\lser{u}$$
and therefore the extensions
$$ \cur{R}_{\ek} \rightarrow \cur{R}_{(\ek)^{\sigma^{-m}}} \rightarrow \cur{R}_{\cur{E}_K^{F}}\rightarrow  \cur{R}_{\cur{E}_K^F}^u
$$
as in Section 3 of \cite{padicmono}, where, for example, $\cur{E}_K^F$ is the finite extension of $\ek$ corresponding to some $F/k\lser{t}$ (beware that the notations in \cite{padicmono} are very different, there these rings are denoted $\Gamma^{k\lser{t}\lser{y}}_{\mathrm{an,con}},\Gamma^{k\lser{t}^{1/p^m}\lser{y}}_{\mathrm{an,con}},\Gamma^{F\lser{y}}_{\mathrm{an,con}}$ and $\Gamma^{F\lser{u}}_{\mathrm{an,con}}$ respectively). Then over $\ek$ Kedlaya's $p$-adic monodromy theorem is the following.

\begin{theo}[\cite{padicmono}, Theorem 6.1.2] \label{altkeduni}Let $M$ be a $(\varphi,\nabla)$ module over the Robba ring $\cur{R}_{\ek}$. Then there exists an integer $m\geq0$, a finite separable extension $F/k\lser{t}^{1/p^m}$, and a finite, Galois, totally ramified extension $F\lser{u}/F\lser{y}$ such that $M\otimes \cur{R}^{u}_{\cur{E}_K^{F}}$ is unipotent. 
\end{theo}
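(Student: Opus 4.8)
The statement is Kedlaya's $p$-adic local monodromy theorem over the Robba ring $\cur{R}_{\ek}$, and the plan is to follow the slope-filtration strategy of \cite{padicmono}: use Frobenius slope theory to reduce to the unit-root case, and there appeal to Tsuzuki's theorem. The proof rests on three ingredients. The first is the \emph{slope filtration theorem}: every $\varphi$-module $N$ over $\cur{R}_{\ek}$ admits a unique filtration $0=N_0\subset N_1\subset\cdots\subset N_\ell=N$ by sub-$\varphi$-modules whose successive quotients are isoclinic, with strictly increasing Frobenius slopes. The second is that a \emph{unit-root} (slope $0$) $\varphi$-module over $\cur{R}_{\ek}$ descends uniquely to the bounded Robba ring inside $\cur{R}_{\ek}$, compatibly with any connection. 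The third is \emph{Tsuzuki's unit-root local monodromy theorem}: a unit-root $(\varphi,\nabla)$-module over that bounded Robba ring becomes trivial --- hence a fortiori unipotent --- after a finite separable extension of the residue field $k\lser{t}\lser{y}$, and every finite separable extension of $k\lser{t}\lser{y}$ can be written as $F\lser{u}$ with $F/k\lser{t}$ finite separable and $F\lser{u}/F\lser{y}$ totally ramified.

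Granting these, I would argue as follows. Apply the slope filtration theorem to the $\varphi$-module underlying $M$. Since the filtration is canonical it is stable under every endomorphism of the $\varphi$-module, and in particular --- using the commuting square relating $\varphi$ and $\nabla$ in the definition of a $(\varphi,\nabla)$-module --- it is respected by $\nabla$; thus $M$ is realised as a successive extension of isoclinic $(\varphi,\nabla)$-modules $M_1,\ldots,M_\ell$. An extension of a unipotent $\nabla$-module by a unipotent one is again unipotent (concatenate the two bases adapted as in Definition \ref{unip}), so it is enough to make each $M_i$ unipotent over one common extension of the allowed form. For a fixed isoclinic piece of slope $a/b$, a Frobenius pullback (replacing $\varphi$ by $\varphi^b$; on residue fields this is accounted for by the extensions of $k\lser{t}$ permitted in the statement, in particular the passage to $k\lser{t}^{1/p^m}$) makes the slope integral, and then twisting by a suitable rank-one $(\varphi,\nabla)$-module with constant connection makes it unit-root without altering unipotence. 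One then descends this unit-root $(\varphi,\nabla)$-module to the bounded Robba ring and invokes Tsuzuki's theorem, obtaining an extension $F_i\lser{u_i}/F_i\lser{y}$ of the required shape over which $M_i$ becomes trivial, hence unipotent. Finally, replace the various $m$, $F_i$, $u_i$ by a common $m$, a common finite separable $F/k\lser{t}^{1/p^m}$ and a common uniformiser $u$: by Cohen's structure theorem a finite totally ramified extension of $F\lser{y}$ has the form $F\lser{u}$, and a compositum of such (enlarging the separable part $F$ if necessary) is again of this form, so all the $M_i\otimes\cur{R}^{u}_{\cur{E}_K^{F}}$, and hence $M\otimes\cur{R}^{u}_{\cur{E}_K^{F}}$ itself, are unipotent.

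The real obstacle is the first ingredient; the second and third are comparatively routine given the structure theory already in place. Proving the slope filtration theorem forces one out of $\cur{R}_{\ek}$: one base-changes to the ``extended'' Robba ring $\tilde{\cur{R}}$ over the perfection (or algebraic closure) of the residue field, where a Dieudonn\'{e}--Manin-type classification of $\varphi$-modules is available and the slope decomposition is visible, and then one must show both (i) that over $\tilde{\cur{R}}$ the increasing slope \emph{filtration} exists even when the module fails to split, and (ii) that this canonical filtration descends back to $\cur{R}_{\ek}$, using uniqueness together with a descent statement for sub-$\varphi$-modules along $\cur{R}_{\ek}\hookrightarrow\tilde{\cur{R}}$. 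Steps (i) and (ii) are where the genuine difficulty lies, and for them I would simply cite \cite{padicmono} (or Kedlaya's later self-contained accounts of the slope filtration theorem) rather than reproduce the argument, since here the theorem is needed only as input to Theorem \ref{qu}.
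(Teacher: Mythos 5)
The paper does not prove this statement at all: it is quoted as Theorem 6.1.2 of \cite{padicmono} and used as a black box, so the only ``proof'' on offer here is the citation. Your sketch reconstructs Kedlaya's actual strategy (slope filtration to reduce to the unit-root case, descent of unit-root $\varphi$-modules to the bounded Robba ring, Tsuzuki's theorem, then a compositum to get a single extension of the prescribed shape), and since you explicitly defer the genuinely hard ingredient --- the existence of the slope filtration over the extended Robba ring and its descent back to $\cur{R}_{\ek}$ --- to the same reference, your proposal is in substance equivalent to what the paper does. Two small corrections to the parts you did argue. First, the claim that the slope filtration is $\nabla$-stable ``because it is canonical, hence stable under every endomorphism'' is not quite an argument: $\nabla$ is not $\cur{R}_{\ek}$-linear, so it is not an endomorphism of the $\varphi$-module. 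The standard fix is to observe that for each step $N_i$ of the filtration the composite $N_i\hookrightarrow M\xrightarrow{\nabla} M\otimes\Omega^1\rightarrow (M/N_i)\otimes\Omega^1$ \emph{is} $\cur{R}_{\ek}$-linear (the Leibniz term dies in the quotient) and compatible with Frobenius once $\Omega^1$ is given the twist by $\partial_y(\sigma(y))$, and then it vanishes by comparing slopes of source and target; this is how one sees that the filtration consists of $(\varphi,\nabla)$-submodules. Second, the passage to $k\lser{t}^{1/p^m}$ in the statement is not primarily about making slopes integral (that can be handled by Frobenius powers and rank-one twists); it is forced by the imperfection of the field $k\lser{t}$ in the unit-root/Tsuzuki step. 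Neither point changes the shape of your argument, and for the purposes of this paper citing \cite{padicmono} is exactly what is intended.
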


If we choose compatible Frobenii on $\cur{R}_{\cur{E}_K^\dagger}$ and $\cur{R}_{\ek}$ (which we may do) there is a natural base extension functor $M\mapsto M\otimes_{\cur{R}_{\ekd}}\cur{R}_{\ek}$ from $(\varphi,\nabla)$-modules over $\cur{R}_{\ekd}$ to $(\varphi,\nabla)$-modules over $\cur{R}_{\ek}$. Exactly as in \cite{kedlayafiniteness}, the key stage in the proof of Theorem \ref{qu} will be to show the following. 

\begin{prop} \label{unibase} Suppose that $M$ is a $\nabla$-module over $\cur{R}_{\ekd}$. If $M\otimes_{\cur{R}_{\ekd}}\cur{R}_{\ek}$ is unipotent (as a $\nabla$-module over $\cur{R}_{\ek})$, then $M$ is unipotent.
\end{prop}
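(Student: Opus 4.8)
The plan is to descend a full set of horizontal sections of $M\otimes \cur{R}_{\ek}$ from $\cur{R}_{\ek}$ back to $\cur{R}_{\ekd}$, exactly mimicking Kedlaya's argument in Section 5 of \cite{kedlayafiniteness}. Unipotence of $M\otimes \cur{R}_{\ek}$ is equivalent to the assertion that $H^0$ of each successive quotient in a suitable filtration is as large as possible; concretely, after a dévissage it suffices to show that if $M\otimes\cur{R}_{\ek}$ is unipotent of length $n$, then $\ker(\nabla)$ on $M\otimes\cur{R}_{\ek}$ contains an element that generates a rank-one trivial sub-$\nabla$-module, and that this element can be chosen to lie in $M$ itself (i.e. in $M\otimes\cur{R}_{\ekd}$). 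Indeed, once we produce a horizontal $e_1\in M$ spanning a saturated rank-one submodule, we pass to $M/\cur{R}_{\ekd}e_1$, which is again a $\nabla$-module over $\cur{R}_{\ekd}$ whose base change to $\cur{R}_{\ek}$ is unipotent of shorter length, and induct. So the crux is the rank-one descent statement: a horizontal vector in $M\otimes\cur{R}_{\ek}$ actually has coordinates (with respect to a fixed $\cur{R}_{\ekd}$-basis of $M$) lying in $\cur{R}_{\ekd}$.

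The mechanism for this descent is that $\cur{R}_{\ekd}^\mathrm{int}$ is Henselian (Proposition \ref{hensel}) and $\cur{R}_{\ekd} = \cur{R}_{\ekd}\cap \widehat{\cur{R}}^\mathrm{int}_{\ek}$ inside the relevant ambient ring of double series; combined with Lemma \ref{etas}, which controls $\Norm{\cdot}_{\eta,s}$-convergence, this is precisely the analogue of the relation between a dagger algebra and the completion of its fraction field that Kedlaya exploits. First I would fix an $\cur{R}_{\ekd}$-basis $e_1,\dots,e_n$ of $M$ and write the connection matrix $N$ over $\cur{R}_{\ekd}$; a horizontal section is a solution $v\in (\cur{R}_{\ek})^n$ of $\partial_y v = -N v$ (in the co-ordinate form). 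Solving such a differential system over $\cur{R}_{\ek}$ and then showing the solution is "overconvergent", i.e. lies in $(\cur{R}_{\ekd})^n$, is done by the same successive-approximation / Newton-type iteration used for Hensel's lemma in Proposition \ref{hensel}: one builds the solution term-by-term, and the growth estimates $\Norm{\cdot}_{\eta,s}$ from Lemma \ref{etas}(ii) guarantee that for all sufficiently small $s>0$ there is an $\eta<1$ with the partial sums Cauchy in $A_{\eta,s}$, hence the limit lies in $\cur{R}_{\ekd}$. Here is where the Frobenius structure is essential — exactly as in \emph{loc. cit.}, the $\varphi$-action is used to rigidify the solution space (the horizontal sections of a $(\varphi,\nabla)$-module over $\cur{R}_{\ek}$ form a finite-dimensional space with extra structure) so that one is not trying to descend an infinite-dimensional space of analytic solutions but a single well-chosen eigenvector; the compatible Frobenii on $\cur{R}_{\ekd}$ and $\cur{R}_{\ek}$ ensure $\varphi$ preserves $M$ and hence the descent is $\varphi$-equivariant.

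The main obstacle I anticipate is precisely the convergence bookkeeping in the descent step: one must pass from the ``$\exists\eta\,\forall s$'' shape of estimates that come out naturally (e.g. from a naive solution over a single $\cur{E}_\eta$) to the ``$\forall s\,\exists\eta$'' shape demanded by the definition of $\cur{R}_{\ekd}$, and this reshuffling of quantifiers — warned about explicitly after Definition \ref{robba} — is exactly the subtlety that distinguishes $\cur{R}_{\ekd}$ from $\bigcup_\eta\cur{R}_{\cur{E}_\eta}$. Controlling it requires, at each step of the iteration, enlarging $\eta$ as a function of $s$ in a uniform way, just as in the proof of Proposition \ref{hensel} and Lemma \ref{sameform}; I expect the estimates to go through by the same pattern (bounding $\Norm{N}_{\eta,s}$, iterating, and using the ultrametric inequality to get geometric decay), but the details are where the real work lies. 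A secondary point to check is that the resulting horizontal submodule is saturated (so that the quotient is again free over $\cur{R}_{\ekd}$), which follows because $\cur{R}_{\ekd}$ is a domain and the section is part of a basis of $M\otimes\cur{R}_{\ek}$, together with faithful flatness of $\cur{R}_{\ekd}\to\cur{R}_{\ek}$.
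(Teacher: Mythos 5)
Your overall shape of argument (produce a horizontal section of $M\otimes\cur{R}_{\ek}$ that actually lies in $M$, split off the rank-one submodule it spans, and induct on the rank) is indeed the paper's strategy, but the mechanism you propose for the crucial descent step is not the right one, and as stated it would not prove the proposition. First, Proposition \ref{unibase} is a statement about bare $\nabla$-modules: there is no Frobenius in the hypothesis, and the paper's proof uses none. Your appeal to the $\varphi$-action to ``rigidify the solution space'' is therefore unavailable (and in the paper the Frobenius only enters later, in Theorem \ref{qu}, to invoke Kedlaya's monodromy theorem over $\ek$). Second, the Henselian property of $\cur{R}^{\mathrm{int}}_{\ekd}$ (Proposition \ref{hensel}) is about lifting finite \'{e}tale extensions of the residue field; it plays no role in descending solutions of the differential system $\partial_y v=-Nv$, and a naive Newton/successive-approximation iteration for that system does not give the geometric decay you assert: the iterates typically \emph{grow} like $\rho^n$ with $\rho>1$ in the $\Norm{\cdot}_{\eta,s}$-norms, even while they decay $\pi$-adically over $\ek$.

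The missing ideas, which are exactly where the paper does the real work, are: (i) the interpolation inequality $\Norm{f}_{\eta}\leq\Norm{f}^{1-\epsilon}\Norm{f}^{\epsilon}_{\eta_0}$ of Lemma \ref{epsnorm} (the analogue of Kedlaya's Proposition 2.4.2), which converts the pair of estimates ``$\pi$-adic decay like $\lambda^n$ over $\ek$, growth like $\rho^n$ in $\Norm{\cdot}_{\eta,s}$'' into genuine decay in some intermediate norm $\Norm{\cdot}_{\eta',s}$; and (ii) Kedlaya's explicit averaging operators $f_0=D^{e-1}$, $f_n=(1-D^2/l^2)^e f_{n-1}$ of Lemma \ref{pamkey}, run over the auxiliary rings $R_{\ekd,s}$ and $R_{\ek,s}$ (not over $\cur{R}_{\ekd}$ directly), whose limit $f(m)$ is horizontal and, by (i), stays in $M$ when $m\in M$. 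One then uses that $f(M\otimes_{\ekd}\ek)=D^{e-1}(M_0)$ by density to see $f$ is nonzero on $M$, splits off the resulting horizontal element using Corollary 5.2.5 of \cite{kedlayafiniteness} (it lies in the $\ek$-span of a strongly unipotent basis, so its span is a direct summand), inducts on rank over $R_{\ekd,s}$, and finally transfers the conclusion from $R_{\ekd,s}$ back to $\cur{R}_{\ekd}$ by comparing strongly unipotent bases. Your own worry about the ``$\forall s\,\exists\eta$'' quantifier shape is well placed, but it is resolved by the interpolation lemma, not by the Hensel-type bookkeeping of Proposition \ref{hensel}; without (i) and (ii) the proposal has no working convergence argument.
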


In order to prove Proposition \ref{unibase} we will need to adapt Kedlaya's method of producing horizontal sections to our situation. Happily, this can be achieved entirely straightforwardly. As in \cite{kedlayafiniteness}, we first need to introduce some auxiliary rings. For any $\eta<1$ and $s>0$ we let $R_{\cur{E}_{\eta},s}$ denote the ring of formal series $\sum_i f_iy^i$ such that there exists $c>0$ with 
$$
\Norm{f_i}_{\eta}r^{-is+\norm{i}c}\rightarrow 0
$$ 
as $i\rightarrow \pm\infty$. We let $R_{\ekd,s}$ be the direct limit $\mathrm{colim}_{\eta<1} R_{\cur{E}_\eta,s}$. For $R=R_{\ekd,s}$ or $R=R_{\ek,s}$ there is an obvious notion of $\nabla$-module over $R$, as well as unipotence for such modules. 

Each $R_{\cur{E}_\eta,s}$ has a norm given by 
$$ \Norm{\sum_if_iy^i}_{\eta,s} = \sup_i \left\{\Norm{f_i}_\eta r^{-is}\right\}
$$ and there is a similarly defined norm on $R_{\ek,s}$, all are complete with respect to these norms. The ring $R_{\ekd,s}$ is given the direct limit topology from the topologies on each $R_{\cur{E}_\eta,s}$.

\begin{defn} Let $A$ be either $\ekd$ or $\ek$, and let $M$ be a free $\nabla$-module over either $\cur{R}_{A}$ or $R_{A,s}$ for some $s>0$. Define $D:M\rightarrow M$ by $D(m)=y\nabla(m)$. Then a strongly unipotent basis for $M$ is a basis $e_1,\ldots,e_n$ such that 
$$ D(e_i) \in Ae_1+\ldots+Ae_{i-1}.
$$
\end{defn}

Note that by Proposition 5.2.6 of \cite{kedlayafiniteness}, when $A=\ek$ then any free, unipotent $\nabla$-module over $\cur{R}_A$ or $R_{A,s}$ admits a strongly unipotent basis, and in fact exactly the same proof shows that the same is true when $A=\ekd$. Also, if $e_1,\ldots,e_n$ is a strongly unipotent basis for $m$, then the kernel of $\nabla$ on $M$ is equal to the kernel of $D$ on the $A$-span on the $e_i$.

\begin{lem} \label{epsnorm} Fix $\eta_0<1$. For any rational $\epsilon\in (0,1]$ there exists some $\eta_0\leq \eta<1$ such that
$$ \Norm{f}_\eta \leq  \Norm{f}^{1-\epsilon}\Norm{f}^\epsilon_{\eta_0}
$$
for all $f\in \cur{E}_{\eta_0}$. Here $\Norm{\cdot}_\eta$ the is natural norm on $\cur{E}_\eta$, and $\Norm{\cdot}$ the $\pi$-adic norm on $\ek$.
\end{lem}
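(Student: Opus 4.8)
The plan is to take $\eta:=\eta_0^{\epsilon}$ and verify the inequality coefficient by coefficient; the statement is essentially a Hadamard-type interpolation between the two norms, and I do not expect a genuine obstacle beyond exponent bookkeeping. First I would record that $\eta_0\leq\eta<1$ (since $\eta_0<1$ and $0<\epsilon\leq1$, the function $\epsilon\mapsto\eta_0^{\epsilon}$ is decreasing and takes values in $[\eta_0,1)$), and that $\epsilon$ being rational keeps $\eta$ of the form we allow. Since $\eta\geq\eta_0$ we have $\cur{E}_{\eta_0}\subseteq\cur{E}_{\eta}$, so $\Norm{\cdot}_{\eta}$ is finite on all of $\cur{E}_{\eta_0}$ and both sides of the asserted inequality make sense; the case $f=0$ (and the case $\epsilon=1$, which forces $\eta=\eta_0$ and a tautology) is trivial, so assume $f\neq0$.

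Write $f=\sum_i a_it^i$, and recall that $\Norm{f}=\sup_i\norm{a_i}$ is the Gauss norm on $\ek$, while $\Norm{f}_{\eta_0}=\max\bigl\{\sup_{i<0}\norm{a_i}\eta_0^{i},\,\sup_{i\geq0}\norm{a_i}\bigr\}$ and similarly for $\eta$. In particular $\Norm{f}\leq\Norm{f}_{\eta_0}$, so the right-hand side $\Norm{f}^{1-\epsilon}\Norm{f}_{\eta_0}^{\epsilon}\geq\Norm{f}\geq\sup_{i\geq0}\norm{a_i}$, which already dominates the non-negative part of $\Norm{f}_{\eta}$. It therefore remains to bound $\norm{a_i}\eta^{i}$ for each $i<0$. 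Here I would use the two bounds $\norm{a_i}\leq\Norm{f}$ and $\norm{a_i}\eta_0^{i}\leq\Norm{f}_{\eta_0}$ and form the weighted geometric mean with weights $1-\epsilon$ and $\epsilon$:
$$\norm{a_i}\eta^{i}=\norm{a_i}^{1-\epsilon}\bigl(\norm{a_i}\eta_0^{i}\bigr)^{\epsilon}\cdot\eta^{i}\eta_0^{-i\epsilon}\leq\Norm{f}^{1-\epsilon}\Norm{f}_{\eta_0}^{\epsilon}\cdot\eta^{i}\eta_0^{-i\epsilon},$$
and the whole point of the choice $\eta=\eta_0^{\epsilon}$ is that $\eta^{i}\eta_0^{-i\epsilon}=\eta_0^{i\epsilon}\eta_0^{-i\epsilon}=1$. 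Taking the supremum over $i<0$ and combining with the non-negative part yields $\Norm{f}_{\eta}\leq\Norm{f}^{1-\epsilon}\Norm{f}_{\eta_0}^{\epsilon}$ for all $f\in\cur{E}_{\eta_0}$, as required.

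In short, the only things to be careful about are that $f$ genuinely lies in $\cur{E}_{\eta}$ (so the norms are finite), which is immediate from $\eta\geq\eta_0$, and the correct exponent arithmetic in the displayed line; there is no substantive difficulty, and no appeal to the earlier structural results on $\cur{R}_{\ekd}$ is needed.
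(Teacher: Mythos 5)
Your argument is correct: the coefficientwise estimate $\norm{a_i}\eta^i=\norm{a_i}^{1-\epsilon}\bigl(\norm{a_i}\eta_0^i\bigr)^{\epsilon}$ for $\eta=\eta_0^{\epsilon}$, together with $\Norm{f}\leq\Norm{f}_{\eta_0}$ to handle the $i\geq0$ part, gives exactly the stated inequality, and the boundary cases are handled properly. The route differs from the paper's only in how the key step is discharged: the paper also reduces to the tail $f_-=\sum_{i<0}a_it^i$ (noting all norms agree on $f_+$), but then disposes of the negative part by the change of variable $z=t^{-1}$ and a citation of Proposition 2.4.2 of Kedlaya's finiteness paper for the one-variable dagger algebra $K\weak{z}$, whereas you prove that interpolation directly with the explicit choice $\eta=\eta_0^{\epsilon}$. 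Your version is more self-contained and makes the dependence of $\eta$ on $(\eta_0,\epsilon)$ explicit; in particular it shows the rationality of $\epsilon$ is not actually needed here (it is only an artefact of matching the hypotheses of Kedlaya's proposition, whose fringe parameters are constrained), while the paper's proof is shorter on the page at the cost of invoking the external result.
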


\begin{proof} For $f=\sum_i a_i t^i\in\cur{E}_{\eta_0}$ we write $f_+=\sum_{i\geq 0} a_it^i$ and $f_{-}=\sum_{i<0} a_it^i$, I claim that it suffices to find some $\eta$ that works for all $f_{-}$, indeed if so then we would have
\begin{align*} \Norm{f}^{1-\epsilon}\Norm{f}_{\eta_0}^\epsilon &= \max\left\{ \Norm{f_+}^{1-\epsilon},\Norm{f_-}^{1-\epsilon}\right\}\cdot \max\left\{ \Norm{f_+}^\epsilon_{\eta_0},\Norm{f_{-}}^\epsilon_
{\eta_0} \right\} \\
&\geq \max\left\{ \Norm{f_+}^{1-\epsilon}\Norm{f_+}^\epsilon_{\eta_0} , \Norm{f_-}^{1-\epsilon}\Norm{f_-}^\epsilon_{\eta_0} \right\} \\
&\geq \max \left\{ \Norm{f_+}_{\eta}, \Norm{f_-}_{\eta} \right\} \\
&= \Norm{f}_{\eta}
\end{align*}
since $\Norm{f_+}^{1-\epsilon}\Norm{f_+}^\epsilon_{\eta_0}=\Norm{f_+}_{\eta}=\Norm{f_+}$. But now the change of coordinate $z=y^{-1}$ converts the question for $f_{-}$ into Proposition 2.4.2 of \cite{kedlayafiniteness} in the case of the one-dimensional MW-algebra $K\weak{z}$.
\end{proof}

\begin{lem} \label{pamkey} Let $M$ be a free $\nabla$-module over $R_{\ekd,s}$ for some $s>0$ such that $M':=M\otimes_{R_{\ekd,s}}  R_{\ek,s}$ is unipotent. Let $e$ be the nilpotency index of the matrix via which $D$ acts on a strongly unipotent basis of $M'$. Define functions $f_n:M'\rightarrow M'$ by $f_0(m)=D^{e-1}(m)$ and
$$
f_n(m)=\left(1-\frac{D^2}{l^2}\right)^ef_{n-1}(m)
$$
for $n\geq1$. Then for any $m\in M'$, the sequence $f_n(m)$ converges as $n\rightarrow\infty$ to some element $f(m)$ such that $\nabla(f(m))=0$. Moreover, if $m\in M\subset  M'$ then so is $f(m)$.
\end{lem}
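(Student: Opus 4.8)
The plan is to transcribe Kedlaya's construction of horizontal sections (\S5.3 of \cite{kedlayafiniteness}) into our setting; the only genuine work is to check that all of the relevant estimates can be performed inside a single Banach algebra $R_{\cur{E}_\eta,s}$. I would begin by analysing $D=y\nabla$ on $M'$ in a strongly unipotent basis $e_1,\dots,e_r$ (which exists by the remark following the definition of strongly unipotent basis). Writing an element of $M'$ as $\sum_i v_iy^i$ with $v_i$ in the $\ek$-span $V=\sum_j\ek e_j$, and using that the matrix of $D$ in this basis is a \emph{constant} strictly-triangular matrix $N$ with $N^e=0$ (so that the connection matrix of $M'$ is $N/y$ and the coefficients of $v_i$ are killed by $\partial_y$), one computes $D\big(\sum_i v_iy^i\big)=\sum_i (N+i)(v_i)\,y^i$. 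Thus $D$ is ``$y$-graded'', acting on the degree-$i$ piece as the commuting sum $N+i$.

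From this one extracts three facts: (a) $f_0(m)=D^{e-1}(m)$ has degree-$0$ component $N^{e-1}(v_0)$, which is annihilated by $N$, hence by $D$; (b) $(1-D^2/l^2)^e$ restricts to the identity on any $D$-annihilated element; and (c) on the degree-$(\pm l)$ piece $1-D^2/l^2$ equals $-N(N\pm 2l)/l^2$, whose $e$-th power vanishes since $N^e=0$. Consequently $f_n(m)=\prod_{l=1}^n(1-D^2/l^2)^e\,D^{e-1}(m)$ has: degree-$0$ component permanently equal to $N^{e-1}(v_0)$; zero component in all degrees $i$ with $1\le|i|\le n$; and, in degrees $|i|>n$, a component of the form $\big(\prod_{l=1}^n(1-(N+i)^2/l^2)^e\big)(N+i)^{e-1}(v_i)$.

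The heart of the matter is convergence. The scalar $\prod_{l=1}^n(1-i^2/l^2)^e$, together with the finitely many nilpotent corrections coming from $N$, has $p$-adic absolute value bounded above by a fixed power of $|i|$ (the standard binomial/factorial estimate), so the degree-$i$ component of $f_n(m)$ for $|i|>n$ is bounded by a polynomial in $i$ times the corresponding coefficient norm of $m$. But membership of $m$ in $R_{\ekd,s}$ forces those coefficient norms, multiplied by $r^{-is}$, to decay \emph{exponentially} in $|i|$ --- this is precisely the role of the parameter $c$ in the definition of $R_{\cur{E}_\eta,s}$, and the reason these auxiliary rings, rather than $\cur{R}_{\ekd,s}$, are used. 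Hence the tail $f_n(m)-N^{e-1}(v_0)$ has norm tending to $0$; and since $f_{n+1}(m)-f_n(m)=[(1-D^2/(n+1)^2)^e-\mathrm{id}]\,f_n(m)$, which by (b) equals $\tfrac{1}{(n+1)^2}$ times a fixed polynomial in $D$ applied to that tail, the exponential decay of the tail dominates the at-worst-polynomial growth of $|(n+1)^2|^{-1}$. So $(f_n(m))$ is Cauchy and converges to $f(m):=N^{e-1}(v_0)$, and $D(f(m))=0$ gives $\nabla(f(m))=0$ because $y$ is a unit.

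Finally, the descent claim comes for free: every operator used to build $f_n$ --- powers of $D$, and each $(1-D^2/l^2)^e$ --- preserves $M$, since $\nabla(M)\subseteq M$ by hypothesis and $\Q\subseteq K\subseteq R_{\ekd,s}$ makes division by $l^2$ harmless, so $f_n(m)\in M$ for all $n$ whenever $m\in M$; choosing $\eta$ close enough to $1$ that $m$ together with the finitely many coefficients of the connection matrix of $M$ in some $R_{\ekd,s}$-basis $g_1,\dots,g_r$ all lie in the Banach module $\bigoplus_j R_{\cur{E}_\eta,s}\,g_j$, the whole Cauchy argument above takes place inside this module, so $f(m)$ lies in it and a fortiori in $M$. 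I expect the main obstacle to be essentially bookkeeping: arranging for all the estimates to live in one Banach algebra $R_{\cur{E}_\eta,s}$ (a uniform choice of $\eta$), and confirming that the exponential decay encoded in the definition of $R_{\cur{E}_\eta,s}$ really is strong enough to dominate the $p$-adically unbounded factors $l^{-2}$ --- i.e. that Kedlaya's estimates transport with no essential change.
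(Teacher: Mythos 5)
Your analysis of the convergence in $M'$ (the graded action of $D$ as $N+i$ on the degree-$i$ piece in a strongly unipotent basis, the degree-wise killing for $1\le\norm{i}\le n$, and the identification of the limit with $N^{e-1}(v_0)$) is a reasonable account of what the paper simply cites as Kedlaya's Lemma 5.3.1. The gap is in the descent step, which is the actual content of the lemma. Knowing that each $f_n(m)$ lies in $M$ and that the sequence converges in $M'$ proves nothing, since $M$ is not closed in $M'$; everything hinges on your assertion that, for a \emph{fixed} $\eta$ with $m$ and the connection matrix defined over $R_{\cur{E}_\eta,s}$, ``the whole Cauchy argument above takes place inside'' the Banach module $\bigoplus_j R_{\cur{E}_\eta,s}g_j$. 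That assertion is unjustified, and in general false: the Cauchy estimates you derived live in the strongly unipotent basis, whose coefficients lie in $\ek$ and whose transition matrix to the $M$-basis is only over $R_{\ek,s}$, so they control the norm $\Norm{\cdot}_s$ of $R_{\ek,s}$ and not the norm $\Norm{\cdot}_{\eta,s}$. In the $M$-basis the operator $\left(1-D^2/l^2\right)^e$ need not be a contraction for $\Norm{\cdot}_{\eta,s}$ (its matrix is neither constant nor nilpotent, its entries can have $\eta$-norm larger than $1$, and the factors $\norm{l^{-2}}$ are $p$-adically unbounded), so the best one can say is a bound of the shape $\Norm{g_{i(n+1)}-g_{in}}_{\eta,s}\leq D\rho^n$ with $\rho>1$: the differences may \emph{grow} exponentially in the fixed $\eta$-norm even while they decay exponentially in $\Norm{\cdot}_s$.

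This is exactly the point where Kedlaya's estimates do not transport ``with no essential change''. The paper's proof supplies the missing ingredient, Lemma \ref{epsnorm}: an interpolation inequality $\Norm{f}_{\eta'}\leq\Norm{f}^{1-\epsilon}\Norm{f}_{\eta}^{\epsilon}$ valid for a suitable $\eta'$ with $\eta<\eta'<1$. Combining the decay $\Norm{g_{i(n+1)}-g_{in}}_{s}\leq D\lambda^n$ ($\lambda<1$, from Kedlaya's Lemma 5.3.2) with the growth bound $D\rho^n$ in the $\eta$-norm and taking $\epsilon$ small yields $\Norm{g_{i(n+1)}-g_{in}}_{\eta',s}\leq D'(\lambda')^n$ with $\lambda'=\lambda^{1-\epsilon}\rho^{\epsilon}<1$, so the coordinates converge in $R_{\cur{E}_{\eta'},s}$ — for an \emph{enlarged} $\eta'$, not the original $\eta$ — and hence in $R_{\ekd,s}$, giving $f(m)\in M$. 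Your proposal contains no such three-norm (interpolation) argument and no mechanism for increasing $\eta$; without it the descent claim does not follow, so as written the proof of the final assertion of the lemma is incomplete.
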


\begin{proof} The fact that $f_n(m)$ converges as $n\rightarrow \infty$ to some $f(m)$ such that $\nabla(f(m))=0$ is exactly Lemma 5.3.1 of \cite{kedlayafiniteness}, we must show that if $m\in M$ then so is $f(m)$, or in other words we need to show that the sequence $f_n(m)$ actually converges in $M$. As in lemma 5.3.2 of \emph{loc. cit.}, choose a basis $e_1,\ldots,e_n$ for $M$ and define the matrix $N$ by
$$ De_j=\sum_i N_{ij}e_i.
$$
Also write $m=\sum_i g_{i,-1}e_i$ and choose some $\eta$ such that each $N_{ij}$ and $g_{i,-1}$ are all defined over $R_{\cur{E}_\eta,s}$. Write $f_n(m)=\sum_i g_{in}e_i$ so that each $g_{in}\in R_{\cur{E}_\eta,s}$, we need to prove that each sequence $g_{in}$ converges to some $g_i\in R_{\cur{E}_\eta,s}$. Exactly as in the proof of Lemma 5.3.2 of \emph{loc. cit.} we know that there exist constants $D>0,0<\lambda<1,\rho>1$ such that
\begin{align*} \Norm{g_{i(n+1)}-g_{in}}_s &\leq D\lambda^n\\
\Norm{g_{i(n+1)}-g_{in}}_{\eta,s} &\leq D\rho^n 
\end{align*}
where $\Norm{\cdot}_s$ is the natural norm on $R_{\ek,s}$ and $\Norm{\cdot}_{\eta,s}$ that on $R_{\cur{E}_\eta,s}$. Thus by Lemma \ref{epsnorm} above we can find some $\eta<\eta'<1$ and $D'>0,0<\lambda'<1$ such that 
$$ \Norm{g_{i(n+1)}-g_{in}}_{\eta',s} \leq D'(\lambda')^n
$$
and hence each sequence $g_{in}$ converges in $R_{\cur{E}_{\eta'},s}$.
\end{proof}

\begin{proof}[Proof of Proposition \ref{unibase}] We first claim that if $M$ is a free $\nabla$-module over $R_{\ekd,s}$ for some $s>0$, such that $M\otimes R_{\ek,s}$ is unipotent, then $M$ is unipotent. Let $D$, $e$ and $f$ be as in Lemma \ref{pamkey} above, and let $M_0$ denote the $\ek$-span of a strongly unipotent basis.

Now, for all $m\in M\otimes  R_{\ek,s}$, $\nabla(f(m))=0$ and hence $f(m)$ must lie in the image of $D^{e-1}$ on $M_0$. Since $f(m)=D^{e-1}(m)$ for any $m\in M_0$, it follows that the image of $f$ is exactly the image of $D^{e-1}$ on $M_0$. But since $M\otimes_{\ekd} \ek$ is dense in $M\otimes R_{\ek,s}$ it follows that $f(M\otimes_{\ekd} \ek) = D^{e-1}(M_0)$.  Hence $f(m)\neq0$ for some $m\in M$, and thus exists some non-zero $n =f(m)\in M$ with $\nabla(n)=0$. By Corollary 5.2.5 of \cite{kedlayafiniteness}, the $R_{\ek,s}$-submodule of $M\otimes R_{\ek,s}$ generated by $m$ is a direct summand, since $m$ belongs to the $\ek$-span of a strongly unipotent basis for $M\otimes R_{\ek,s}$, and hence the $\cur{R}_{\ekd,s}$-submodule of $M$ spanned by $m$ is a direct summand. Thus by quotienting out by this submodule and using induction on the rank of $M$ we get the claimed result.

Now suppose that we have some free $\nabla$-module $M$ over $\cur{R}_{\ekd}$ such that $M\otimes \cur{R}_{\ek}$ is unipotent, and let $e_1,\ldots,e_n$ be a basis for $M$. Then as in Proposition 5.4.1 of \cite{kedlayafiniteness}, if we let $N$ denote the $\cur{R}_{\ekd}\cap R_{\ekd,s}$-span on the $e_i$, then since $\cur{R}_{\ek}\subset \bigcup_{s>0}R_{\ek,s}$, for $s$ sufficiently small, $N\otimes(\cur{R}_{\ek}\cap R_{\ek,s})$ is unipotent. Hence by what we have proved above, $N\otimes R_{\ekd,s}$ is unipotent, and thus admits a strongly unipotent basis $\left\{v_i\right\}$. By an argument identical to Proposition 5.2.6 of \cite{kedlayafiniteness}, $N\otimes(\cur{R}_{\ek}\cap R_{\ek,s})$ also admits a strongly unipotent basis $\left\{w_i\right\}$, and by Corollary 5.2.5 of \emph{loc. cit.}, these two bases have the same $\ek$-span inside $N\otimes R_{\ek,s}$. Hence the $v_i$ form a strongly unipotent basis of $N\otimes (\cur{R}_{\ek}\cap R_{\ekd,s})$. Since $ \cur{R}_{\ek}\cap R_{\ekd,s}\subset \cur{R}_{\ekd}$, it thus follows that $M= N\otimes \cur{R}_{\ekd}$ is unipotent, completing the proof.
\end{proof}

With Proposition \ref{unibase} out of the way, we can complete the proof of Theorem \ref{qu}.

\begin{proof}[Proof of Theorem \ref{qu}]  Let $m,F$ and $F\lser{u}/F\lser{y}$ be as in Theorem \ref{altkeduni}, so that we have extensions
$$ \cur{R}_{\ek} \rightarrow \cur{R}_{(\ek)^{\sigma^{-m}}} \rightarrow \cur{R}_{\cur{E}_K^{F}}\rightarrow \cur{R}^{u}_{\cur{E}_K^{F}}
$$
such that $M\otimes \cur{R}^{u}_{\cur{E}_K^{F}}$ is unipotent. We may assume that the extension $F\lser{u}/F\lser{y}$ arises from some extension $F\rlser{u}/F\rlser{y}$. There is therefore a commutative diagram
$$ \xymatrix{  \cur{R}_{\ekd} \ar[r]\ar[d] & \cur{R}_{(\ekd)^{\sigma^{-m}}} \ar[r]\ar[d] & \cur{R}_{\cur{E}_K^{\dagger,F}}\ar[r]\ar[d] &  \cur{R}^{u}_{\cur{E}_K^{\dagger,F}} \ar[d] \\
\cur{R}_{\ek} \ar[r] & \cur{R}_{(\ek)^{\sigma^{-m}}} \ar[r] & \cur{R}_{\cur{E}_K^{F}}\ar[r] & \cur{R}^{u}_{\cur{E}_K^{F}}
}
$$
and hence if we let $N= M\otimes \cur{R}^{u}_{\cur{E}_K^{\dagger,F}}$, then $N\otimes \cur{R}^{u}_{\cur{E}_K^{F}}$ is unipotent. Hence by Proposition \ref{unibase}, $N$ is unipotent. 
\end{proof}

Using the monodromy theorem, we can now prove finite dimensionality and base change for the cohomology of $(\varphi,\nabla)$-modules over $\cur{R}_{\ekd}$. Again, assume that we have chosen compatible Frobenii on $\cur{R}_{\ekd}$ and $\cur{R}_{\ek}$.

\begin{theo} \label{robbabase} Let $M$ be a $(\varphi,\nabla)$-module over $\cur{R}_{\ekd}$. Then the cohomology groups
$$
H^0(M),H^1(M)
$$
are finite dimensional over $\ekd$, and the base change morphisms
\begin{align*} H^0(M)\otimes_{\ekd}\ek&\rightarrow H^0(M\otimes \cur{R}_{\ek})\\
H^1(M)\otimes_{\ekd}\ek&\rightarrow H^1(M\otimes \cur{R}_{\ek})
\end{align*}
as isomorphisms.
\end{theo}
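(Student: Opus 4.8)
The plan is to follow Section 5 of \cite{kedlayafiniteness} closely, in the same spirit as the rest of the section: compute the cohomology explicitly in the unipotent case, use the monodromy theorem \ref{qu} to reduce a general $(\varphi,\nabla)$-module to a unipotent one over a finite extension $\cur{R}'$ of $\cur{R}_{\ekd}$, and then descend finiteness and base change back down to $\cur{R}_{\ekd}$. Since $\ek$ is the $\pi$-adic completion of $\ekd$ and $\cur{R}_{\ek}$ arises from $\cur{R}_{\ekd}$ by the same recipe, every step will be uniform in the two cases, which is exactly what makes the base change statement drop out.

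\textbf{Step 1: the unipotent case.} For the rank-one trivial $\nabla$-module $(\cur{R}_{\ekd},\partial_y)$ I would check directly that $H^0=\ker\partial_y=\ekd$ (since $\partial_y(\sum f_iy^i)=\sum i f_iy^{i-1}$ and $K$ has characteristic $0$), and that $\partial_y$ is surjective onto the kernel of the residue map $\sum f_iy^i\mapsto f_{-1}$, so that the residue identifies $H^1$ with $\ekd$. The only point needing care is that term-by-term integration $\sum f_iy^i\mapsto\sum\tfrac{f_i}{i+1}y^{i+1}$ preserves each growth condition of Definition \ref{robba} defining $\cur{R}_{\ekd,s}$; this is the usual device of absorbing the archimedean factor $\norm{1/(i+1)}^{-1}\le\norm{i+1}$ into a slightly smaller radius $s'<s$, and it works verbatim as over $\ek$. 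Both computations are manifestly compatible with $-\otimes_{\ekd}\ek$, so base change holds for the trivial module. A unipotent $\nabla$-module is filtered by trivial sub-quotients, so finite dimensionality and base change follow by d\'evissage: the (six-term) long exact cohomology sequences of the defining short exact sequences, flatness of $\ek$ over $\ekd$, the five lemma, and induction on the rank. Exactly the same argument applies with $\ekd$ replaced by any finite extension $\cur{F}^\dagger/\ekd$, using Lemma \ref{finiterobbabase}, which is what we need below.

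\textbf{Steps 2 and 3: reduction and descent.} Given a $(\varphi,\nabla)$-module $M$ over $\cur{R}_{\ekd}$, Theorem \ref{qu} produces $m\ge0$, a finite separable $F/k\lser{t}^{1/p^m}$, and a finite Galois totally ramified $F\rlser{u}/F\rlser{y}$ such that $N:=M\otimes\cur{R}^u_{\cur{E}_K^{\dagger,F}}$ is unipotent as a $\nabla$-module. By Lemma \ref{sameform} the ring $\cur{R}^u_{\cur{E}_K^{\dagger,F}}$ is again a Robba ring of the same shape (parameter $u$, ground field $\cur{E}_K^{\dagger,F}$, whose $\pi$-adic completion is $\cur{E}_K^F$), so Step 1 over $\cur{E}_K^{\dagger,F}$ shows $H^i(N)$ is finite dimensional over $\cur{E}_K^{\dagger,F}$, hence over $\ekd$, and that $H^i(N)\otimes\cur{E}_K^F\cong H^i(N\otimes\cur{R}^u_{\cur{E}_K^F})$. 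It then remains to descend along the tower $\cur{R}_{\ekd}\to\cur{R}_{(\ekd)^{\sigma^{-m}}}\to\cur{R}_{\cur{E}_K^{\dagger,F}}\to\cur{R}^u_{\cur{E}_K^{\dagger,F}}$. The Frobenius twist $\cur{R}_{\ekd}\to\cur{R}_{(\ekd)^{\sigma^{-m}}}$ is purely inseparable on residue fields, so no trace is available; here one uses the Frobenius structure exactly as Kedlaya does, namely $\varphi^m$ identifies the $\sigma^m$-twist of $(M,\nabla)$ with $(M,\nabla)$ as $\nabla$-modules, so this step affects neither finite dimensionality nor base change (only the scalar action, by the finite-degree twist $\sigma^m$). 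The remaining two extensions are finite \'etale: $\cur{E}_K^{\dagger,F}/\ekd$ is a finite separable (characteristic $0$) field extension, so $\cur{R}_{\cur{E}_K^{\dagger,F}}=\cur{R}_{\ekd}\otimes_{\ekd}\cur{E}_K^{\dagger,F}$ is \'etale over $\cur{R}_{\ekd}$ by Lemma \ref{finiterobbabase}, and the last step is \'etale by Proposition \ref{hensel} and Lemma \ref{sameform}. Being finite \'etale, each carries a nondegenerate trace pairing whose trace map is horizontal, so $(M,\nabla)$ is a $\nabla$-module direct summand of $(M\otimes\cur{R}',\nabla)$ viewed over $\cur{R}_{\ekd}$, where $\cur{R}':=\cur{R}^u_{\cur{E}_K^{\dagger,F}}$; since the de Rham complex of $M\otimes\cur{R}'$ over $\cur{R}'$ agrees with the one over $\cur{R}_{\ekd}$, $H^i(M\otimes\cur{R}')$ is finite dimensional over $\ekd$ by Step 2, and hence so is its direct summand $H^i(M)$. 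The base change map for $M$ is a retract, via the same trace projector (which commutes with $-\otimes_{\ekd}\ek$), of the base change map for $M\otimes\cur{R}'$, which is an isomorphism; hence it is one too.

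\textbf{Main obstacle.} Essentially all of the genuine analytic difficulty is already packed into Theorem \ref{qu}, which we are assuming. What still has to be checked with real care is that the three classical operations above — term-by-term integration, the trace projectors attached to the finite \'etale extensions, and the identification of de Rham complexes over $\cur{R}'$ and over $\cur{R}_{\ekd}$ — all stay inside $\cur{R}_{\ekd}$ and respect its somewhat delicate fringe (nested direct and inverse limit) topology and growth conditions. Given Lemmas \ref{finiterobbabase} and \ref{sameform}, this is bookkeeping rather than a new idea, so I expect the real obstacle to be the careful verification in Step 1 that integration preserves the growth condition defining $\cur{R}_{\ekd,s}$, together with checking that the \'etale extensions and their trace maps behave well with respect to these conditions.
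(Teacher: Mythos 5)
Your proposal is correct and follows essentially the same route as the paper: direct computation plus d\'evissage in the unipotent case, reduction via Theorem \ref{qu}, and descent down the tower using Lemma \ref{finiterobbabase} and a trace projector for the Galois totally ramified step, with base change for $M$ recovered as a retract of the unipotent case. The only cosmetic differences are that the paper subsumes the Frobenius-twist step into the finite extension $\cur{E}_K^{\dagger,F}/\ekd$ handled by flat base change (rather than invoking the Frobenius structure) and deduces finite dimensionality from the base change isomorphism together with finiteness over $\ek$, rather than directly from the direct-summand argument.
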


\begin{rem} Of course, $H^0(M\otimes \cur{R}_{\ek})$ and $H^1(M\otimes \cur{R}_{\ek})$ are defined entirely similarly to $H^0(M)$ and $H^1(M)$.
\end{rem}

\begin{proof} 

This is entirely similar to Proposition 7.2.1 of \cite{kedlayafiniteness}, since $H^i(M\otimes \cur{R}_{\ek})$ is finite dimensional, it suffices to prove the base change statement. First suppose that $M$ is unipotent, with unipotent basis $\left\{m_1,\ldots,m_n \right\}$. Let $M_j$ be the span of $m_1,\ldots,m_j$, this is a sub-$\nabla$-module of $M$, and $M_j/M_{j-1}$ is isomorphic, as a $\nabla$-module, to $\cur{R}_{\ekd}$. Hence direct calculation for $M=\cur{R}_{\ekd}$ together with the snake lemma and induction on $j$ imply the result. Of course, exactly the same calculation works over $\cur{R}_{\cur{F}^\dagger}$ for any finite extension $\cur{F}^\dagger/\ekd$.

In general, we choose $m,F$ and $F\rlser{u}/F\rlser{y}$ such that the conclusions of Theorem \ref{qu} hold. By Lemma \ref{finiterobbabase} we know that 
$$
H^i(M) \otimes_{\ekd} \cur{E}_K^{\dagger,F} \cong H^i(M\otimes_{\cur{R}_{\ekd}} \cur{R}_{\cur{E}_K^{\dagger,K}})
$$
and since a similar calculation holds when replacing $\ekd$ and $\cur{E}_K^{\dagger,F}$ by their completions $\ek$ and $\cur{E}_K^F$ respectively, it suffices to prove the theorem after base changing to $\cur{R}_{\cur{E}_K^{\dagger,F}}$. In other words, after replacing $k\lser{t}$ by $F$ we may assume that there exists a finite, Galois, totally ramified extension $k\lser{t}\rlser{u}/k\lser{t}\rlser{y}$ and corresponding extension $\cur{R}/\cur{R}_{\ekd}^\mathrm{int}$ such that $\cur{R}\otimes_{\cur{R}_{\ekd}^\mathrm{int}} \cur{R}_{\ekd}\cong \cur{R}_{\ekd}^u$ and $M\otimes \cur{R}_{\ekd}^{u}$ is unipotent. 

Since $\cur{R}/\cur{R}_{\ekd}^\mathrm{int}$ is Galois, we can define a trace map $\mathrm{tr}:\cur{R}^u_{\ekd}\rightarrow \cur{R}_{\ekd}$ by summing over $\mathrm{Aut}(\cur{R}/\cur{R}_{\ekd}^\mathrm{int})$, we similarly get a trace map $\mathrm{tr}: \Omega^1_{\cur{R}^u_{\ekd}}\rightarrow \Omega^1_{\ekd}$ such that the diagram
$$ \xymatrix{ M \ar[r] \ar[d]^\nabla & M \otimes \cur{R}_{\ekd}^u \ar[r]^{\mathrm{tr}}\ar[d]^\nabla & M \ar[d]^\nabla \\
M\otimes \Omega^1_{\cur{R}_{\ekd}}\ar[r]  & M\otimes \Omega^1_{\cur{R}^u_{\ekd}} \ar[r]^{\mathrm{tr}} & M\otimes \Omega^1_{\cur{R}_{\ekd}}
}
$$
commutes, and the composite maps
\begin{align*}
M\rightarrow &M\otimes \cur{R}_{\ekd}^u\rightarrow M \\
M\otimes \Omega^1_{\cur{R}_{\ekd}}\rightarrow  &M\otimes \Omega^1_{\cur{R}^u_{\ekd}} \rightarrow M\otimes \Omega^1_{\cur{R}_{\ekd}}
\end{align*} are both multiplication by $n=[\cur{R}:\cur{R}_{\ekd}^\mathrm{int}]$. Hence there exists a projector $H^i(M\otimes \cur{R}_{\ekd}^{u})\rightarrow H^i(M \otimes \cur{R}_{\ekd}^{u})$ whose image is exactly $H^i(M)$. Of course, similar considerations hold over $\cur{R}_{\ek}$, and hence we get a commutative diagram
$$
\xymatrix{
H^i( M \otimes \cur{R}_{\ekd}^{u} ) \otimes_{\ekd} \ek \ar[r]\ar[d]& H^i(M\otimes \cur{R}_{\ekd}^{u}) \otimes_{\ekd} \ek \ar[d] \\
H^i(M \otimes \cur{R}_{\ek}^{u} ) \ar[r] &  H^i(M\otimes \cur{R}_{\ek}^{u}) 
}
$$
where the images of the horizontal arrows are $H^i(M)\otimes_{\ekd}\ek$ and $H^i(M\otimes \cur{R}_{\ek})$ respectively. Since $M\otimes \cur{R}^u_{\ekd}$ is unipotent, the vertical arrows are isomorphisms, and hence we get an isomorphism
$$
H^i(M)\otimes_{\ekd}\ek \rightarrow H^i(M\otimes \cur{R}_{\ek})
$$
as required.
\end{proof}

\section{Finiteness of rigid cohomology for smooth curves}\label{finsmc}

Now armed with a suitable version of the $p$-adic local monodromy theorem we can prove finite dimensionality of $H^i_\rig(X/\ekd,\sh{E})$ for smooth curves $X$, though perhaps not in the expected manner. One might expect to prove this by attaching a version of the Robba ring to every missing point of $X$ as in \S7.3 of \cite{crew1}, and deducing finite dimensionality directly, but we will not do this. Instead, we will prove finite dimensionally for $\A^1$ directly using the $p$-adic monodromy theorem, and then deduce it for more general smooth curves by locally pushing forward via a finite \'{e}tale map to $\A^1$. 

Our first task is therefore to use the monodromy theorem to prove finite dimensionality and base change with coefficients on the affine line.

\begin{theo} \label{finitesheaf} Let $\sh{E}\in F\text{-}\mathrm{Isoc}^\dagger(\A^1_{k\lser{t}}/\ekd)$. Then
$$ H^i_\rig(\A^1_{k\lser{t}}/\ekd,\sh{E}) $$ is finite dimensional for all $i$ and the natural map
$$
H^i_\rig(\A^1_{k\lser{t}}/\ekd,\sh{E})\otimes_{\ekd}\ek\rightarrow H^i_\rig(\A^1_{k\lser{t}}/\ek,\hat{\sh{E}})
$$ is an isomorphism.
\end{theo}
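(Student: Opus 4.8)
The plan is to reduce the statement on $\A^1_{k\lser{t}}$ to the local monodromy theorem (Theorem \ref{robbabase}) for $(\varphi,\nabla)$-modules over $\cur{R}_{\ekd}$ via a Robba-ring localisation at the point at infinity, exactly following the strategy of \S7.3 of \cite{kedlayafiniteness}. First I would set up the geometry: an overconvergent $F$-isocrystal $\sh{E}$ on $\A^1_{k\lser{t}}/\ekd$ corresponds (after choosing suitable models over $\cur{V}\pow{t}$ and lifting, as in \cite{rclsf1}) to a $\nabla$-module with Frobenius structure on an annulus-type ring near $\infty$, and the de Rham complex computing $H^i_\rig(\A^1_{k\lser{t}}/\ekd,\sh{E})$ is the complex $[M_{\dagger} \xrightarrow{\nabla} M_{\dagger}\otimes\Omega^1]$ for an appropriate `overconvergent in the $y$-direction' module $M_\dagger$. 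The comparison between this complex and the cohomology of the associated $(\varphi,\nabla)$-module over $\cur{R}_{\ekd}$ is the content of the analogue of the `local cohomology' exact sequence: one has a short exact sequence relating $M_\dagger$, the sections convergent on a closed disc, and $M\otimes\cur{R}_{\ekd}$, and taking the associated long exact sequence of $\nabla$-cohomology expresses $H^i_\rig(\A^1/\ekd,\sh{E})$ in terms of $H^i$ of the sections on a closed disc (which are finite-dimensional by a Dwork–Robba / cyclic vector argument, or by the fact that over a closed disc an $F$-isocrystal is unipotent after a finite étale cover) together with $H^i(M\otimes\cur{R}_{\ekd})$.

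Concretely I would: (i) invoke Theorem \ref{robbabase} to get that $H^i(M\otimes\cur{R}_{\ekd})$ is finite dimensional over $\ekd$ and satisfies base change to $\ek$; (ii) handle the `closed disc' contribution, which is the cohomology of $\sh{E}$ restricted to a rigid disc — here finite dimensionality over $\ekd$ and base change follow by the same dévissage (pass to a finite étale cover where $\sh{E}$ becomes unipotent, reduce to the trivial coefficient case, and compute directly, using a trace-map projector to descend from the cover exactly as in the proof of Theorem \ref{robbabase}); (iii) splice these together in the long exact Mayer–Vietoris / local-cohomology sequence. Since finite dimensionality and base change to $\ek$ are both preserved under the snake lemma and finite colimits, and since the analogous exact sequence holds over $\ek$ computing $H^i_\rig(\A^1_{k\lser{t}}/\ek,\hat{\sh{E}})$ (by \cite{crew1}), and since the comparison diagram of the two exact sequences is manifestly commutative and compatible with $-\otimes_{\ekd}\ek$, the five lemma gives the isomorphism $H^i_\rig(\A^1_{k\lser{t}}/\ekd,\sh{E})\otimes_{\ekd}\ek\xrightarrow{\sim}H^i_\rig(\A^1_{k\lser{t}}/\ek,\hat{\sh{E}})$, and finiteness over $\ekd$ then follows because the right-hand side is finite over $\ek$ and base change is an isomorphism (equivalently, finiteness of each term in the exact sequence is already established in steps (i), (ii)).

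The main obstacle I anticipate is step (ii) together with the precise formulation of the local-cohomology exact sequence in the $\ekd$-setting. In classical rigid cohomology over a field (Crew, Kedlaya), one knows a priori that $\sh{E}$ on the closed disc has finite-dimensional cohomology — the subtlety here is that the `base ring' is $\ekd$ rather than $K$, so one must check that the overconvergence-in-$y$ condition interacts correctly with the fringe topology on $\ekd$, and that the relevant intermediate rings (functions on a closed disc with coefficients in $\ekd$, functions overconvergent at $\infty$) behave as expected — in particular that the natural sequence $0\to M_{\text{disc}}\to M_\dagger\oplus(\text{something})\to M\otimes\cur{R}_{\ekd}\to 0$ (or its cohomological consequence) is exact, which should follow from the definitions of $\cur{R}_{\ekd,s}$ but needs to be verified carefully. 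Once the formalism of \cite{rclsf1} provides the comparison between $H^i_\rig(\A^1/\ekd,\sh{E})$ and this complex, everything else is a formal consequence of Theorem \ref{robbabase} and the five lemma.
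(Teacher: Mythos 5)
Your overall architecture does match the paper's: one rewrites $H^i_\rig(\A^1_{k\lser{t}}/\ekd,\sh{E})$ Monsky--Washnitzer style as the cohomology of a $(\varphi,\nabla)$-module $M$ over $\ekd\weak{x}$, embeds $\ekd\weak{x}\hookrightarrow\cur{R}_{\ekd}$ via $x\mapsto y^{-1}$, takes the resulting long exact sequence, feeds in Theorem \ref{robbabase}, and compares with the analogous sequence over $\ek$. The genuine gap is your step (ii) and the five-lemma conclusion built on it. The complementary term in the localisation sequence is the quotient $\cur{Q}_{\ekd}=\cur{R}_{\ekd}/\ekd\weak{x}$, and neither for it nor for any ``closed disc'' term is a finiteness-with-base-change statement available by the d\'{e}vissage you suggest: the monodromy theorem proved here (Theorem \ref{qu}) is a statement about $\cur{R}_{\ekd}$, i.e.\ the boundary annulus, and has no analogue over a disc, so ``pass to a finite \'{e}tale cover where $\sh{E}$ becomes unipotent'' has nothing to back it up; moreover, if your disc carries no overconvergence then finiteness fails already for trivial coefficients (the cokernel of $d$ on $\ekd\tate{x}$ is infinite dimensional -- e.g.\ $\sum_k\pi^kx^{p^k-1}$ has no antiderivative there -- which is the classical reason Monsky--Washnitzer theory needs dagger algebras), and an interior overconvergent disc would require a relative trivialisation of $\sh{E}$ over $\ekd$ that is neither proved nor needed in the paper. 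The paper's proof of Theorem \ref{finitemodule} is designed precisely to avoid controlling the third term: it establishes only (a) injectivity of $H^0(M\otimes\cur{Q}_{\ekd})\otimes_{\ekd}\ek\rightarrow H^0(M'\otimes\cur{Q}_{\ek})$, which comes from the square formed by $\ekd\weak{x}$, $\cur{R}_{\ekd}$, $\ek\weak{x}$, $\cur{R}_{\ek}$ being Cartesian, and (b) surjectivity of $H^1(M)\otimes_{\ekd}\ek\rightarrow H^1(M')$, via density of $M\otimes_{\ekd}\ek$ in $M'$ for the fringe topology together with closedness of the image of $\nabla$ on $M'$ (Proposition 8.4.4 of \cite{kedlayafiniteness}), which makes $H^1(M')$ Hausdorff and finite dimensional; these partial statements suffice by Lemma 7.5.3 of \cite{kedlayafiniteness}, which is the formal lemma you would need in place of the plain five lemma. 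As you set things up, you would be obliged to prove finiteness and base change for the disc/quotient term, which is not available.

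A second, smaller omission: Theorem \ref{robbabase} is stated for $(\varphi,\nabla)$-modules over $\cur{R}_{\ekd}$, which are finite \emph{free} by definition, so to apply it to $M\otimes_{\ekd\weak{x}}\cur{R}_{\ekd}$ you must first know that $M$ is free over $\ekd\weak{x}$. This is not formal: the paper proves Weierstrass division and preparation over $\ekd\weak{x}$ to show it is a PID and then uses the connection to kill torsion (Lemma \ref{free}). Your deferral of the comparison between $H^i_\rig(\A^1_{k\lser{t}}/\ekd,\sh{E})$ and the complex over $\ekd\weak{x}$ to the formalism of \cite{rclsf1} is acceptable -- in the paper this is the acyclicity of coherent $j^\dagger$-modules on the cofinal affinoid neighbourhoods $V_m$ -- but the freeness point and, above all, the treatment of the quotient term need to be repaired before your plan yields the theorem.
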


First of all we will need to reinterpret this result \`{a} la Monsky-Washnitzer, in order to be able to use the results of the previous section. The frame we will choose is the obvious one $(\A^1_{k\lser{t}},\P^1_{k\pow{t}},\widehat{\P}^1_{\cur{V}\pow{t}})$, and we will let $\varphi$ denote any lift to this frame of the $q$-power Frobenius on $\P^1_{k\pow{t}}$ compatible with the chosen Frobenius $\sigma$ on $\cur{V}\pow{t}$. Let $\P^{1,\mathrm{an}}_{S_K}=(\widehat{\P}^1_{\cur{V}\pow{t}})_K$ denote the analytic projective line over $S_K$, with co-ordinate $x$, say. 

\begin{lem} \label{a1cofinal} The open sets $$V_m:= \left\{ \left.P \in \P^{1,\mathrm{an}}_{S_K} \;\right|\; v_P(\pi^{-1}t^m)\leq 1, v_P(\pi x^m)\leq 1\right\}$$ are an cofinal system of neighbourhoods of $]\A^1_{k\lser{t}}[_{\widehat{\P}^1_{\cur{V}\pow{t}}}$ inside $\P^{1,\mathrm{an}}_{S_K}$.
\end{lem}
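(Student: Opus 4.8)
The statement is a routine but instructive computation about tubes in the analytic projective line over $S_K = \cur{V}\pow{t}\otimes_\cur{V} K$, and the proof I would give has two halves: first check that each $V_m$ is actually a neighbourhood of the tube $]\A^1_{k\lser{t}}[$, and second check that as $m$ varies these shrink down to the tube, i.e. that they are cofinal. I would begin by recalling that $\widehat{\P}^1_{\cur{V}\pow{t}}$ has the usual two charts $\Spf \cur{V}\pow{t}\tate{x}$ and $\Spf \cur{V}\pow{t}\tate{x^{-1}}$, glued along $x$ invertible, and that the special fibre $\P^1_{k\pow{t}}$ contains $\A^1_{k\lser{t}}$ as the locus where $t$ is inverted and $x$ is not $\infty$; so the tube $]\A^1_{k\lser{t}}[$ consists of those points $P$ of $\P^{1,\mathrm{an}}_{S_K}$ which specialise into $\A^1_{k\pow{t}}$ but not into $\{t=0\}$ nor into $\{x=\infty\}$. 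Concretely, after fixing the valuation $v_P$ (normalised so that $v_P$ extends the $\pi$-adic one appropriately), the tube is cut out by the two open conditions $v_P(t^m)<v_P(\pi)$ for some $m$ (meaning the image of $t$ in the residue field is a nonzero element of $k\lser{t}$, not a unit in $k\pow{t}$) together with $v_P(x)\geq 0$ on the chart at infinity being a \emph{strict} inequality in the right direction. The conditions defining $V_m$, namely $v_P(\pi^{-1}t^m)\leq 1$ and $v_P(\pi x^m)\leq 1$, rewrite as $v_P(t^m)\leq v_P(\pi)$ and $v_P(x^m)\geq -v_P(\pi)$, i.e. $v_P(t)\leq \tfrac1m v_P(\pi)$ and $v_P(x)\geq -\tfrac1m v_P(\pi)$, so each $V_m$ is a rational subdomain (an intersection of two ``annular/disc'' conditions), in particular quasi-compact, and it manifestly contains the tube since on the tube $v_P(t)>0$ can be made $\leq \tfrac1m v_P(\pi)$ for $m$ large — wait, that is exactly the cofinality direction; for the neighbourhood direction one needs that each fixed $V_m$ contains \emph{all} of $]\A^1_{k\lser{t}}[$, which holds because on the tube one has $v_P(t)>0$ hence certainly $v_P(t^m)\geq 0$, but one must be slightly careful that $v_P(t^m)\leq v_P(\pi)$ need not hold — so in fact the correct reading is that the $V_m$ form an \emph{increasing} family whose union is an open neighbourhood, and the cofinality is the content. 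I would therefore restructure: show (a) $\bigcup_m V_m \supseteq ]\A^1_{k\lser{t}}[$, which is immediate from $v_P(t)>0, v_P(x)\geq 0$ (so picking $m$ with $\tfrac1m v_P(\pi)\geq v_P(t)$ and $\tfrac1m v_P(\pi)\geq -v_P(x)$, using $v_P(x)\geq 0$ the latter is automatic) — and each $V_m$ is open; and (b) given any open neighbourhood $U$ of the tube, some $V_m\subseteq U$.

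For part (b), the standard argument is by contradiction using quasi-compactness: if no $V_m$ is contained in $U$, then each $V_m\setminus U$ is a nonempty closed (hence quasi-compact) subset of $\P^{1,\mathrm{an}}_{S_K}$; since the $V_m$ are nested, $\{V_m\setminus U\}$ is a decreasing family of nonempty quasi-compact sets, so their intersection is nonempty, and any point in the intersection lies in every $V_m$ but not in $U$. But $\bigcap_m V_m$ is precisely the locus $v_P(t)\leq 0$ and $v_P(x)\geq 0$ intersected appropriately — I would compute that $\bigcap_m V_m = ]\A^1_{k\lser{t}}[$ exactly (the conditions $v_P(t^m)\leq v_P(\pi)$ for all $m$ force $v_P(t)\leq 0$; combined with $P$ lying over $\Spf\cur{V}\pow{t}$ which forces $v_P(t)\geq 0$, we get $v_P(t)=0$... again this needs the tube to be cut out with the right (non-)strictness). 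This is the delicate point, and I expect it to be the main obstacle: getting the normalisations of $v_P$, the direction of the inequalities, and the strict-versus-non-strict distinction exactly right so that $\bigcap_m V_m$ is literally the tube and not something slightly larger or smaller. The cleanest way around it is to avoid claiming $\bigcap V_m$ equals the tube on the nose, and instead argue: any point in $\bigcap_m V_m$ satisfies $v_P(t)\leq 0$; but it also specialises to a point of the \emph{formal} scheme $\widehat{\P}^1_{\cur{V}\pow{t}}$ where $t$ is topologically nilpotent, forcing $v_P(t)\geq 0$ on the tube of the closed fibre, contradiction unless the specialisation lands in $\{t=0\}$, which is exactly where we are no longer over $\A^1_{k\lser{t}}$ — so such a point cannot be in the tube, yet must be (it is a limit of the decreasing quasi-compacts meeting the complement of $U$, and $U\supseteq$ tube)... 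I would need to massage this into: the only points surviving in all $V_m$ are in the tube, hence in $U$, contradiction.

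Concretely then the write-up proceeds: (1) recall the two affine charts and the description of the tube $]\A^1_{k\lser{t}}[$ as $\{v_P(t)>0\}\cap\{v_P(x)\geq 0\}$ inside $\P^{1,\mathrm{an}}_{S_K}$ (with $v_P$ the valuation, suitably normalised, associated to a point $P$); (2) rewrite the defining inequalities of $V_m$ as $v_P(t)\leq \tfrac{1}{m}$, $v_P(x)\geq -\tfrac{1}{m}$ (in units where $v_P(\pi)=1$), observe each $V_m$ is an affinoid (rational) subdomain, hence quasi-compact and open, that $V_m\subseteq V_{m+1}$, and that each contains the tube; (3) for cofinality, take an arbitrary open $U\supseteq ]\A^1_{k\lser{t}}[$ and suppose no $V_m\subseteq U$; then $\{V_m\setminus U\}_m$ is a nested family of nonempty quasi-compact subsets of the quasi-compact space $\P^{1,\mathrm{an}}_{S_K}$, so has a point $P$ in its intersection; (4) show $P\in\bigcap_m V_m$ forces $v_P(t)\leq 0$ while the tube of the formal model forces $v_P(t)\geq 0$, and analyse the boundary case $v_P(t)=0$: either $P$ specialises into $\A^1_{k\lser{t}}$ — then $P$ is in the tube, hence in $U$, contradiction — or it specialises into $\{t=0\}\cup\{x=\infty\}$, and I would rule that out using $v_P(x)\geq -\tfrac1m$ for all $m$ (so $v_P(x)\geq 0$, killing $x=\infty$) together with the fact that $v_P(t)=0$ with $P$ over $\Spf\cur{V}\pow{t}$ means $t$ reduces to a unit... no, to a nonzero element, hence $t\neq 0$ in the residue field, so the specialisation is not in $\{t=0\}$ either. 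Hence $P$ is in the tube $\subseteq U$, contradicting $P\in V_m\setminus U$. That contradiction gives some $V_m\subseteq U$, completing cofinality. The only genuinely fiddly ingredient throughout is pinning down, once and for all, the precise open condition defining $]\A^1_{k\lser{t}}[$ inside $\P^{1,\mathrm{an}}_{S_K}$; everything else is a formal quasi-compactness argument of the kind standard in rigid geometry (cf. Berthelot's construction of the tube), and I would cite \cite{rclsf1} for the description of the relevant tubes and frames rather than redevelop it.
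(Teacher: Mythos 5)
Your plan has a genuine gap at its decisive step, and it is exactly the point you yourself flag as ``delicate'' and leave unresolved. The space $\P^{1,\mathrm{an}}_{S_K}$ here is an adic space (an $\mathrm{Spa}$), so it has higher-rank points, and your argument is carried out with rank-one intuition. The tube is $]\A^1_{k\lser{t}}[\;=\mathrm{sp}^{-1}(\A^1_{k\lser{t}})=\{P: |t(P)|=1,\ |x(P)|\leq 1\}$ (not $\{v_P(t)>0\}\cap\{v_P(x)\geq 0\}$ as in your step (1)), while $V_m=\{P:|t(P)|^m\geq |\pi|,\ |x(P)|^m\leq |\pi|^{-1}\}$ in the intended reading (which you adopt; note in passing that the inequality in the statement and the displayed presentation of $\Gamma(V_m)$ in the paper contain an inversion --- the intended relation is $t^mS_1=\pi$ --- so you should state explicitly the conditions you are using). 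Now a rank-two point whose value $|t(P)|$ is infinitesimally smaller than $1$ (or whose $|x(P)|$ is infinitesimally larger than $1$) satisfies $|t(P)|^m\geq|\pi|$ and $|x(P)|^m\leq|\pi|^{-1}$ for every $m$, yet it specialises into $\{t=0\}$ (resp.\ the section at infinity), hence is not in the tube. So your step (4) --- ``$P\in\bigcap_m V_m$ forces $v_P(t)\leq 0$'', whence $P$ lies in the tube and so in $U$ --- is false: $\bigcap_m V_m$ is strictly larger than $\mathrm{sp}^{-1}(\A^1_{k\lser{t}})$, and the contradiction you need never materialises. The same points show this is not a normalisation quibble: the specialisation map is continuous, so the naive tube is itself an open subset, hence an open neighbourhood of itself containing no $V_m$; the lemma therefore has to be read with the definitions of tube and neighbourhood fixed in \cite{rclsf1} (in effect the relevant closed set is $\bigcap_m V_m$, the tube together with these higher-rank specialisations), and supplying that is precisely what your plan defers and never does. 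Note also that the paper itself does not argue the lemma at all --- its proof is the citation of Proposition 2.6 of \cite{rclsf1} --- so the cleanest correct ``proof'' here is simply that citation.

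There are further, repairable, slips in the compactness part: $V_m\setminus U$ is not closed in $\P^{1,\mathrm{an}}_{S_K}$ (it is closed in the quasi-compact $V_m$), and the nested-intersection argument should be run using the spectral/constructible topology: each $V_m$ is a rational, hence quasi-compact open, subset, so once one knows $\bigcap_m V_m\subseteq U$ with $U$ open, quasi-compactness in the constructible topology together with $V_{m+1}\subseteq V_m$ yields $V_m\subseteq U$ for a single $m$. With the tube and neighbourhood notions taken from \cite{rclsf1} and with $\bigcap_m V_m$ identified correctly (including the higher-rank points), this quasi-compactness strategy is the standard and essentially correct one; as written, however, the key containment on which your contradiction rests fails.
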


\begin{proof} This is Proposition 2.6 of \cite{rclsf1}.
\end{proof}

Note that each $V_m$ is affinoid, corresponding to the adic spectrum of the ring
$$
\cur{E}_{m}\tate{r^{-1/m}x} := \frac{S_K\tate{x,S_1,S_2}}{(\pi S_1-t^m,S_2-\pi x^m)}.
$$
We let $\ekd\weak{x}=\mathrm{colim}_m\cur{E}_{m}\tate{r^{-1/m}x}$, this has the alternative description as the ring of power series $\sum_if_ix^i$ such that there exists $\eta<1$ and $r>1$ with $f_i\in\cur{E}_\eta$ for all $i$ and $\Norm{f_i}_\eta r^i\rightarrow 0$ as $i\rightarrow \infty$. In other words, $\ekd\weak{x}=\mathrm{colim}_{\eta<1,\rho>1}\cur{E}_\eta\tate{\rho^{-1}x}$. We let $\cur{O}_{\ekd}\weak{x}$ denote the ring of power series $\sum_if_ix^i$ in $\ekd\weak{x}$ such that each $f_i\in\cur{O}_{\ekd}$, thus $\cur{O}_{\ekd}\weak{x}/\pi\cong k\lser{t}[x]$ is the polynomial ring in one variable over $k\lser{t}$.

\begin{defn} A Frobenius on $\ekd\weak{x}$ is a homomorphism which is $\sigma$-linear over $\ekd$, preserves $\cur{O}_{\ekd}\weak{x}$, and induces the absolute $q$-power Frobenius on $k\lser{t}[x]$.
\end{defn}

Note that any choice of Frobenius on the frame $(\A^1_{k\lser{t}},\P^1_{k\pow{t}},\widehat{\P}^1_{\cur{V}\pow{t}})$ induces a compatible collection of Frobenii, all of which we will denote by $\sigma$, on the rings in the diagram
$$\xymatrix{ \ekd\weak{x} \ar[rr]^{x\mapsto y^{-1}}\ar[d] && \cur{R}_{\ekd}\ar[d] \\ \ek\weak{x} \ar[rr]^{x\mapsto y^{-1}} &&\cur{R}_{\ek}.}$$

\begin{defn} Let $\sigma$ be a Frobenius on $\ekd\weak{x}$, and let $\partial_x:\ekd\weak{x}\rightarrow \ekd\weak{x}$ be the derivation given by differentiation with respect to $x$. 
\begin{itemize}
\item A $\varphi$-module over $\ekd\weak{x}$ is a finite $\ekd\weak{x}$-module $M$ together with a Frobenius structure, that is an $\sigma$-linear map $$\varphi: M \rightarrow M$$which induces an isomorphism $M\otimes_{\ekd\weak{x},\sigma} \ekd\weak{x}\cong M$.

\item A $\nabla$-module over $\ekd\weak{x}$ is a finite $\ekd\weak{x}$-module $M$ together with a connection, that is an $\ekd$-linear map $$\nabla:M\rightarrow  M$$ such that $\nabla(fm)=\partial_x(f)m+f\nabla(m)$ for all $f\in\ekd\weak{x}$ and $m\in M$.
\item A $(\varphi,\nabla)$-module over $\ekd\weak{x}$ is a finite $\ekd\weak{x}$-module $M$ together with a Frobenius $\varphi$ and a connection $\nabla$, such that the diagram
$$
\xymatrix{
M \ar[r]^\nabla \ar[d]^{\varphi} & M \ar[d]^{\partial_x(\sigma(x))\varphi} \\
M \ar[r]^\nabla & M
}
$$
commutes.
\end{itemize}
\end{defn}

Now, suppose that $\sh{E}$ is an overconvergent $F$-isocrystal on $\A^1_{k\lser{t}}/\ekd$. Then for each $m\gg0$ there is a module with (integrable) connection $\sh{E}_m$ on $V_m$ giving rise to $\sh{E}$, and thus the direct limit over all the $\Gamma(V_m,\sh{E}_m)$ will be a module with connection over $\ekd\weak{x}$, that is a finitely generated $\ekd\weak{x}$-module $M$ together with a connection $\nabla:M\rightarrow M$. Exactly as for $\ekd\weak{x}$, the Frobenius structure on $\sh{E}$ gives rise to a Frobenius structure on $M$, thus we get a functor
$$
\sh{E}\mapsto M:=\Gamma( \P^{1,\mathrm{an}}_{S_K},\sh{E})
$$
from overconvergent $F$-isocrystals on $\A^1_{k\lser{t}}$ to $(\varphi,\nabla)$-modules over $\ekd\weak{x}$. 

Now let $$\ek\weak{x}=\mathrm{colim}_m\ek\tate{r^{-1/m}x}$$ denote the 1-dimensional Monsky-Washnitzer algebra over $\ek$. Since we have compatible Frobenii on $\ekd\weak{x}$ and $\ek\weak{x}$ we get a base extension functor $M\mapsto M':= M\otimes_{\ekd\weak{x}}\ek\weak{x}$ from $(\varphi,\nabla)$-modules over $\ekd\weak{x}$ to those over $\ek\weak{x}$. Letting $H^i(M')$ denote the cohomology of the complex $M'\overset{\nabla}{\rightarrow} M'$ there is then a base change morphism
$$
H^i(M)\otimes_{\ekd}\ek\rightarrow H^i(M').
$$

\begin{prop} For any $\sh{E}\in F\text{-}\mathrm{Isoc}^\dagger(\A^1_{k\lser{t}}/\ekd)$ with associated $(\varphi,\nabla)$-module $M$ over $\ekd\weak{x}$, there is an isomorphism
$$
H^i_\rig(\A^1_{k\lser{t}}/\ekd,\sh{E})\cong H^i(M)
$$
Moreover, if we let $M'=M\otimes_{\ekd\weak{x}}\ek\weak{x}$, then the base change morphism 
$$
H^i_\rig(\A^1_{k\lser{t}}/\ekd,\sh{E})\otimes_{\ekd}\ek\rightarrow H^i_\rig(\A^1_{k\lser{t}}/\ek,\hat{\sh{E}})
$$
can be identified with the base change morphism
$$
H^i(M)\otimes_{\ekd}\ek\rightarrow H^i(M').
$$
\end{prop}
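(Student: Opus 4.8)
The plan is to unwind the definition of $\ekd$-valued rigid cohomology from \cite{rclsf1} on the chosen frame $(\A^1_{k\lser{t}},\P^1_{k\pow{t}},\widehat{\P}^1_{\cur{V}\pow{t}})$ and recognise it as the de Rham cohomology of the module with connection $(M,\nabla)$. By construction, $H^i_\rig(\A^1_{k\lser{t}}/\ekd,\sh{E})$ is the $i$-th hypercohomology of the overconvergent relative de Rham complex of $\sh{E}$ along the tube $]\A^1_{k\lser{t}}[_{\widehat{\P}^1_{\cur{V}\pow{t}}}$, which by definition of the overconvergent sections functor is a filtered colimit, over strict neighbourhoods, of relative de Rham hypercohomology; by Lemma \ref{a1cofinal} this colimit may be taken over the cofinal system $\{V_m\}_{m\gg0}$. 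Each $V_m$ is an affinoid contained in the affine part of $\P^{1,\mathrm{an}}_{S_K}$ and is one-dimensional over $S_K$, so $\Omega^1_{V_m/S_K}$ is free of rank one on $dx$ and the relative de Rham complex of $\sh{E}_m$ has only the two terms in degrees $0$ and $1$; since $V_m$ is affinoid and $\sh{E}_m$ coherent, the higher sheaf cohomology vanishes, so this hypercohomology is the cohomology of the two-term complex of global sections $[\Gamma(V_m,\sh{E}_m)\xrightarrow{\nabla}\Gamma(V_m,\sh{E}_m)]$. Filtered colimits are exact, hence commute with cohomology of complexes, and $M=\Gamma(\P^{1,\mathrm{an}}_{S_K},\sh{E})=\colim_{m}\Gamma(V_m,\sh{E}_m)$ with $\nabla$ the colimit of the connections of the $\sh{E}_m$; this yields an isomorphism $H^i_\rig(\A^1_{k\lser{t}}/\ekd,\sh{E})\cong H^i(M)$, natural in $\sh{E}$.

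For the second statement I would run the identical argument with $\ek$ in place of $\ekd$: the $\ek$-analogues of the $V_m$, namely the affinoids with coordinate rings $\ek\tate{r^{-1/m}x}$, again form a cofinal system of strict neighbourhoods, $\ek$ is again a complete discretely valued field, and the same acyclicity and exactness arguments identify $H^i_\rig(\A^1_{k\lser{t}}/\ek,\hat{\sh{E}})$ with $H^i$ of the $(\varphi,\nabla)$-module over $\ek\weak{x}$ attached to $\hat{\sh{E}}$ by the corresponding Monsky--Washnitzer functor. The real content is then to identify that module with $M'=M\otimes_{\ekd\weak{x}}\ek\weak{x}$, compatibly with the de Rham complexes and with the base change morphism of \S5 of \cite{rclsf1}. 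By the definition of $\hat{\sh{E}}$ in \emph{loc. cit.}, the coherent module with connection realising $\hat{\sh{E}}$ on each relevant affinoid is the base change of the one realising $\sh{E}$ along $\cur{E}_m\tate{r^{-1/m}x}\to\ek\tate{r^{-1/m}x}$; passing to the colimit over $m$ and using $\ek\weak{x}=\colim_m\big(\cur{E}_m\tate{r^{-1/m}x}\otimes_{\cur{E}_m}\ek\big)$ gives the identification, and by naturality the base change map of \S5 of \cite{rclsf1} becomes the map $H^i(M)\otimes_{\ekd}\ek=H^i(M\otimes_{\ekd}\ek)\to H^i(M')$ induced by $M\otimes_{\ekd}\ek\to M'$, where flatness of $\ek$ over $\ekd$ is used to commute $H^i$ past $-\otimes_{\ekd}\ek$.

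The acyclicity and exactness steps of the first paragraph are routine. The only genuine obstacle is the bookkeeping in the second paragraph: one must check that the functor $\sh{E}\mapsto\hat{\sh{E}}$ of \S5 of \cite{rclsf1} is compatible, at the level of these Monsky--Washnitzer realisations, with the two colimits in play --- over the strict neighbourhoods $V_m$, and over $\eta<1$ inside $\ekd$ --- and that the map it induces on de Rham complexes is the evident one. This is not deep, but since $\hat{\sh{E}}$ on the $\ek$-side is built from $\cur{O}_{\ek}$-models rather than directly over $\ek\weak{x}$, the cleanest route is to phrase the whole comparison in terms of the coherent modules with connection on the $V_m$ and their $\ek$-analogues and to invoke the relevant base change compatibility established in \cite{rclsf1}.
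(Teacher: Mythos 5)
Your proposal is correct and takes essentially the same approach as the paper: the first claim is obtained from affinoid acyclicity over the cofinal system $V_m$ of Lemma \ref{a1cofinal} (the paper phrases this as $\Gamma(\P^{1,\mathrm{an}}_{S_K},-)$-acyclicity of coherent $j^\dagger_{\A^1_{k\lser{t}}}\cur{O}_{\P^{1,\mathrm{an}}_{S_K}}$-modules), and the second from base change of coherent modules along the open affinoids $W_m=V_m\cap\P^{1,\mathrm{an}}_{\ek}\subset V_m$, passed to the colimit, exactly as you indicate in your final remark. The only caveat is the parenthetical identity $\ek\weak{x}=\colim_m\bigl(\cur{E}_m\tate{r^{-1/m}x}\otimes_{\cur{E}_m}\ek\bigr)$, which as written would need a completed tensor product; it is not actually needed, since the comparison is carried out on the finitely generated modules $\Gamma(V_m,\sh{E}_m)$, for which the plain tensor product base change $\Gamma(W_m,\sh{F}_m)\cong\Gamma(V_m,\sh{F}_m)\otimes_{\Gamma(V_m,\cur{O}_{V_m})}\Gamma(W_m,\cur{O}_{W_m})$ suffices, as in the paper.
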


\begin{proof} Once we have used the global differential $dx$ make the identification $$j^\dagger_{\A^1_{k\lser{t}}}\Omega^1_{\P^{1,\mathrm{an}}_{S_K}/{S_K}}\cong j^\dagger_{\A^1_{k\lser{t}}}\cur{O}_{\P^{1,\mathrm{an}}_{S_K}},$$
the complex $M\overset{\nabla}{\rightarrow} M$ is then just the global sections of the complex
$$
\sh{E}\rightarrow \sh{E}\otimes j^\dagger_{\A^1_{k\lser{t}}}\Omega^1_{\P^{1,\mathrm{an}}_{S_K}/{S_K}}.
$$ 
Hence for the first claim it suffices to prove that coherent $j^\dagger_{\A^1_{k\lser{t}}}\cur{O}_{\P^{1,\mathrm{an}}_{S_K}}$-modules are $\Gamma(\P^{1,\mathrm{an}}_{S_K},-)$ acyclic.

So let $\sh{F}$ be a coherent $j_{\A^1_{k\lser{t}}}^\dagger\cur{O}_{\P^{1,\mathrm{an}}_{S_K}}$-module, and let $j_m:V_m\rightarrow \P^{1,\mathrm{an}}_{S_K}$ denote the inclusion of the cofinal system of neighbourhoods from Lemma \ref{a1cofinal}. Since the tube $]\P^1_{k\pow{t}}[_{\widehat{\P}^1_{\cur{V}\pow{t}}}\cong \P^{1,\mathrm{an}}_{S_K}$ is quasi-compact, it follows from Lemma 1.15 of \cite{rclsf1} that 
$$
H^i(\P^{1,\mathrm{an}}_{S_K},\sh{F}) \cong \mathrm{colim}_{m\gg0} H^i(\P^{1,\mathrm{an}}_{S_K},j_{m*}\sh{F}_m)
$$
where $\sh{F}_m$ is a coherent $\cur{O}_{V_m}$-module inducing $\sh{F}$, for $m\gg0$. Since each $V_m$ is affinoid, the pushforward $j_{m*}$ is acyclic, so we have 
$$
\mathrm{colim}_{m\gg0} H^i(\P^{1,\mathrm{an}}_{S_K},j_{m*}\sh{F}_m)\cong \mathrm{colim}_{m\gg0} H^i(V_m,\sh{F}_m).
$$
Again, since $V_m$ is affinoid, we have
$$
H^i(V_m,\sh{F}_m)=0
$$
for $i>0$, and the first claim is proven.

Since an entirely similar argument applies to show that 
$$
 H^i_\rig(\A^1_{k\lser{t}}/\ek,\hat{\sh{E}})
$$
can be computed in terms of the global sections of $\hat{\sh{E}}\otimes j^\dagger_{\A^1_{k\lser{t}}}\Omega^*_{\P^{1,\mathrm{an}}_{\ek}/\ek}$, which is just the restriction of $\sh{E}\otimes j^\dagger_{\A^1_{k\lser{t}}}\Omega^*_{\P^{1,\mathrm{an}}_{S_K}/S_K}$ to the open subset 
$$\P^{1,\mathrm{an}}_{\cur{E}_K}\subset \P^{1,\mathrm{an}}_{S_K},$$
to prove the second claim it suffices to show that for any coherent $j_{\A^1_{k\lser{t}}}^\dagger\cur{O}_{\P^{1,\mathrm{an}}_{S_K}}$-module $\sh{F}$, there is an isomorphism
$$
\Gamma(\P^{1,\mathrm{an}}_{\cur{E}_K},\sh{F}) \cong \Gamma(\P^{1,\mathrm{an}}_{S_K},\sh{F})\otimes_{\ekd\weak{x}}\ek\weak{x}.
$$
Let $W_m:=V_m\cap \P^{1,\mathrm{an}}_{\ek}$, these form a cofinal system of neighbourhoods of $]\A^1_{k\lser{t}}[_{ \P^{1,\mathrm{an}}_{\ek}}$ inside $\P^{1,\mathrm{an}}_{\ek}$. Since $W_m$ is an open affinoid subset of the affinoid $V_m$, we have that
$$
\Gamma(W_m,\sh{F}_m)\cong \Gamma(V_m,\sh{F}_m)\otimes_{\Gamma(V_m,\cur{O}_{V_m})} \Gamma(W_m,\cur{O}_{W_m}).
$$
for any coherent $\cur{O}_{V_m}$-module $\sh{F}_m$, and the claim follows from taking the colimit as $m\rightarrow \infty$.
\end{proof}

Hence we can rephrase Theorem \ref{finitesheaf} as follows.

\begin{theo} \label{finitemodule} Let $M$ be a $(\varphi,\nabla)$-module over $\ekd\weak{x}$. Then $H^i(M)$ is finite dimensional over $\ekd$, and if $M'=M\otimes_{\ekd\weak{x}}\ek\weak{x}$, then the base change morphism
$$
H^i(M)\otimes_{\ekd}\ek\rightarrow H^i(M')
$$
is an isomorphism.
\end{theo}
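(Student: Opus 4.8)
The plan is to follow Kedlaya's passage from the Robba-ring case to the affine line in \cite{kedlayafiniteness}: one restricts $M$ to a formal punctured neighbourhood of $\infty$, where Theorems \ref{qu} and \ref{robbabase} apply directly, and then controls the difference between the two resulting de Rham complexes.

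Concretely, write $C^\bullet(M)$ for the two-term de Rham complex $[M\xrightarrow{\nabla}M]$ computing $H^\bullet(M)$. Using the map $\ekd\weak{x}\rightarrow\cur{R}_{\ekd}$, $x\mapsto y^{-1}$ from the diagram above, $M$ gives rise to a $(\varphi,\nabla)$-module $\partial M:=M\otimes_{\ekd\weak{x}}\cur{R}_{\ekd}$ over $\cur{R}_{\ekd}$ — the germ of $\sh{E}$ at $\infty$ — together with a morphism of de Rham complexes $C^\bullet(M)\rightarrow C^\bullet(\partial M)$. Since $M$ is free and $\ekd\weak{x}\hookrightarrow\cur{R}_{\ekd}$ is injective, this morphism is termwise injective; denote its cokernel by $Q^\bullet$, a two-term complex built from $M\otimes_{\ekd\weak{x}}(\cur{R}_{\ekd}/\ekd\weak{x})$ with the induced differential. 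We obtain a long exact sequence
$$\cdots\rightarrow H^{i-1}(Q^\bullet)\rightarrow H^i(M)\rightarrow H^i(\partial M)\rightarrow H^i(Q^\bullet)\rightarrow H^{i+1}(M)\rightarrow\cdots$$
over $\ekd$; since $\ekd\rightarrow\ek$ is an extension of fields, applying $\otimes_{\ekd}\ek$ is exact and commutes with cohomology, so there is a parallel long exact sequence over $\ek$ mapping compatibly to the one for $M'=M\otimes_{\ekd\weak{x}}\ek\weak{x}$, $\partial M\otimes\cur{R}_{\ek}$ and $Q^{\prime\bullet}$. Now by Theorem \ref{qu} the module $\partial M$ is quasi-unipotent, so Theorem \ref{robbabase} — which also applies over any finite extension $\cur{R}_{\cur{E}_K^{\dagger,F}}$, using Lemma \ref{finiterobbabase} — shows that $H^i(\partial M)$ is finite-dimensional over $\ekd$ and that $H^i(\partial M)\otimes_{\ekd}\ek\rightarrow H^i(\partial M\otimes\cur{R}_{\ek})$ is an isomorphism. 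Hence, once we know the same two properties for $H^\bullet(Q^\bullet)$, the five lemma applied to the two long exact sequences yields simultaneously that $H^i(M)$ is finite-dimensional and that the base change map $H^i(M)\otimes_{\ekd}\ek\rightarrow H^i(M')$ is an isomorphism, which is Theorem \ref{finitemodule} (equivalently Theorem \ref{finitesheaf}).

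It therefore remains to establish that $H^i(Q^\bullet)$ is finite over $\ekd$ and stable under $\otimes_{\ekd}\ek$. For this I would use Theorem \ref{qu} once more to reduce — after base change along $\cur{R}_{\ekd}\rightarrow\cur{R}^u_{\cur{E}_K^{\dagger,F}}$ and then descent through the Galois extension $\cur{R}/\cur{R}^\mathrm{int}_{\ekd}$ via the trace projector already used in the proof of Theorem \ref{robbabase}, which is compatible with the lattice $M$ — to the case in which $\partial M$ is unipotent. Choosing a strongly unipotent basis equips $\partial M$ with a filtration by $\nabla$-stable $\cur{R}_{\ekd}$-submodules whose graded pieces are $\cong\cur{R}_{\ekd}$; after a harmless modification ensuring this filtration is compatible with $M$, it induces on $Q^\bullet$ a filtration whose graded pieces are the two-term complex attached to $\cur{R}_{\ekd}/\ekd\weak{x}$, for which a direct computation — using the fringe-topology estimates of Lemma \ref{etas} and the description of $\cur{R}^\mathrm{int}_{\ekd}/(\pi)$ in Lemma \ref{quotfieldlin} — gives finite-dimensional cohomology (detected, as in the trivial-coefficient case, by a residue) that manifestly commutes with $\otimes_{\ekd}\ek$. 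The snake lemma and induction on rank then conclude. I expect the genuine technical obstacle to be precisely this last part: arranging the unipotent filtration at $\infty$ to respect the overconvergent lattice $M$, and pushing the convergence estimates for the quotient complex through while simultaneously tracking the $y$-adic growth and the dependence on the fringe parameter $\eta$ — the analogue, in our setting, of Kedlaya's \S7 estimates. Everything else is a formal consequence of the results of \S\ref{s1ked}.
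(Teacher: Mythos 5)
Your setup---the exact sequence $0\to M\to M\otimes\cur{R}_{\ekd}\to M\otimes\cur{Q}_{\ekd}\to 0$ induced by $x\mapsto y^{-1}$, the resulting pair of compatible long exact sequences over $\ekd$ and $\ek$, and the use of Theorems \ref{qu} and \ref{robbabase} (together with freeness of $M$, i.e.\ Lemma \ref{free}) to handle the Robba-ring terms---is exactly the paper's. The genuine gap is in the remaining step, where you propose to prove finiteness and base change for $H^i(Q^\bullet)$ itself by reducing to the unipotent case and filtering $Q^\bullet$ with graded pieces $\cur{R}_{\ekd}/\ekd\weak{x}$. Two things break there. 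First, the reduction: the extension $\cur{R}_{\ekd}\to\cur{R}^u_{\cur{E}_K^{\dagger,F}}$ supplied by Theorem \ref{qu} is purely local at $\infty$ (a totally ramified extension of $F\rlser{y}$, on top of a base field extension) and has no global counterpart: there is no corresponding finite extension of $\cur{E}_K^{\dagger,F}\weak{x}$ sitting inside $\cur{R}^u_{\cur{E}_K^{\dagger,F}}$, so the mixed local--global complex $Q^\bullet$ does not base change along it, and the trace projector used in the proof of Theorem \ref{robbabase} acts on $H^i(M\otimes\cur{R}^u_{\cur{E}_K^{\dagger,F}})$ but not on $H^i(Q^\bullet)$. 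Second, even when $\partial M$ is already unipotent over $\cur{R}_{\ekd}$, its (strongly) unipotent filtration is a filtration by $\cur{R}_{\ekd}$-submodules of $\partial M$ and there is no reason it should be induced by any $\nabla$-stable filtration of $M$ over $\ekd\weak{x}$; nothing forces $M$ to have nontrivial sub-$\nabla$-modules at all even when its germ at $\infty$ is unipotent. So the ``harmless modification'' making the filtration compatible with the lattice $M$ is not available, and the claimed filtration of $Q^\bullet$ with graded pieces $\cur{R}_{\ekd}/\ekd\weak{x}$ need not exist. You correctly identified this as the technical obstacle, but it is not a matter of pushing estimates through: the approach itself gets stuck.

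The paper sidesteps precisely this point: it never proves finiteness or base change for $H^i(M\otimes\cur{Q}_{\ekd})$. Instead it establishes two much weaker facts: (a) $H^0(M\otimes\cur{Q}_{\ekd})\otimes_{\ekd}\ek\to H^0(M'\otimes\cur{Q}_{\ek})$ is injective, because the square formed by $\ekd\weak{x}$, $\ek\weak{x}$, $\cur{R}_{\ekd}$, $\cur{R}_{\ek}$ is Cartesian, so $\cur{Q}_{\ekd}\otimes_{\ekd}\ek\to\cur{Q}_{\ek}$ is injective; and (b) $H^1(M)\otimes_{\ekd}\ek\to H^1(M')$ is surjective, because $M\otimes_{\ekd}\ek$ is dense in $M'$ for the fringe topology, the image of $\nabla$ on $M'$ is closed, and $H^1(M')$ is finite dimensional over $\ek$, hence Hausdorff, so a dense subspace is everything. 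These two statements, together with the isomorphisms of Theorem \ref{robbabase} and the known finiteness of $H^i(M')$ over $\ek$, are fed into the purely homological Lemma 7.5.3 of \cite{kedlayafiniteness}, which yields that $H^i(M)\otimes_{\ekd}\ek\to H^i(M')$ is an isomorphism; finite dimensionality of $H^i(M)$ over $\ekd$ then follows. If you want to repair your outline, replace the analysis of $Q^\bullet$ by (a) and (b): both are within reach of the tools you already cite, whereas controlling $H^i(Q^\bullet)$ directly is not.
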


As in the previous section with the proof of a version of the monodromy theorem, the proof will be inspired by the proof of generic finiteness and base change for relative Mosnky-Washnitzer cohomology by Kedlaya in \cite{kedlayafiniteness}, where for a dagger algebra $A$ he shows generic finiteness of the pushforward of a $(\varphi,\nabla)$ module via $A\rightarrow A\weak{x}$ by descending from the completion of the fraction field of $A$. Again, in our situation the `dagger algebra' is $\ekd$, and the completion of its fraction field is $\ek$, and the idea behind the proof easily adapts to our situation. Before we give the proof, however, we will need to know that $(\varphi,\nabla)$-modules over $\ekd\weak{x}$ are free, thus enabling us to apply the results of the previous section to their base change to $\cur{R}_{\ekd}$. This will need building up to.

\begin{defn} \begin{enumerate}  \item If $f=\sum_i f_ix^i \in \ek\tate{x}$ then we say that $f$ has order $k$ if $\Norm{f_i} \leq \Norm{f_k}$ for $i\leq k$ and $\Norm{f_i}<\Norm{f_k}$ for $i>k$, where $\Norm{\cdot}$ is the $\pi$-adic norm on $\ek$. If $A\subset \ek\tate{x}$ is a subring (for example $A=\ekd\weak{x}$) then we say $f\in A$ has order $k$ if it does so in $\ek\tate{x}$.
\item If $f=\sum f_ix^i \in \ek\weak{x}$ and $\rho>1$ then we say $f$ has $\rho$-order $k$ if $f\in\ek\tate{\rho^{-1}x}$, $\Norm{f_i}\rho^i \leq \Norm{f_k}\rho^k$ for $i\leq k$ and $\Norm{f_i}\rho^i<\Norm{f_k}\rho^k$ for $i>k$. Again, we will also use this terminology for subrings of $\ek\weak{x}$. 
\item If $f=\sum f_ix^i \in \ekd\weak{x}$ and $\eta<1,\rho>1$ then we say $f$ has $(\eta,\rho)$-order $k$ if $f\in\cur{E}_\eta\tate{\rho^{-1}x}$, $\Norm{f_i}_\eta\rho^i \leq \Norm{f_k}_\eta\rho^k$ for $i\leq k$ and $\Norm{f_i}_\eta\rho^i<\Norm{f_k}_\eta\rho^k$ for $i>k$. Here $\Norm{\cdot}_\eta$ is the natural norm on $\cur{E}_\eta$.  \end{enumerate}
\end{defn}

\begin{lem} \begin{enumerate} \item Let $f\in \ek\weak{x}$, and suppose that $f$ has order $k$. Then there exists $\rho_0>1$ such that for all $1<\rho\leq \rho_0$, $f$ has $\rho$-order $k$.
\item Let $f\in \ekd\weak{x}$, and suppose that $f$ has order $k$. Then there exists $\eta<1,\rho>1$ such that $f$ has $(\eta,\rho)$-order $k$. 
\end{enumerate}
\end{lem}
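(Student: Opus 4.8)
The plan is to treat both parts by the same mechanism: ``having $\rho$-order $k$'' (resp.\ ``having $(\eta,\rho)$-order $k$'') is a small perturbation of ``having order $k$'' once $\rho$ (resp.\ $\rho$ and $\eta$) is close to $1$, and the perturbation is harmless because the $\pi$-adic norm on $\ek$ is discrete (value group $r^{\Z}$, so $\Norm{f_i}<\Norm{f_k}$ forces $\Norm{f_i}\le r^{-1}\Norm{f_k}$) and $f$ is overconvergent (so $\Norm{f_i}\to 0$ as $i\to\infty$, and only finitely many $i>k$ need attention). Concretely for (i): write $f=\sum_i f_ix^i$ and fix $\rho_1>1$ with $f\in\ek\tate{\rho_1^{-1}x}$. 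For $i\le k$ the inequality $\Norm{f_i}\rho^i\le\Norm{f_k}\rho^k$ holds for \emph{every} $\rho\ge1$ since $\Norm{f_i}\le\Norm{f_k}$ and $i\le k$, so there is nothing to check there. Choose $N>k$ with $\Norm{f_i}\rho_1^{i-k}<\Norm{f_k}$ for all $i\ge N$ (possible since $\Norm{f_i}\rho_1^i\to0$ and $f_k\ne0$); then for any $1<\rho\le\rho_1$ the indices $i\ge N$ already satisfy the required strict inequality. For the finitely many $i$ with $k<i<N$, discreteness gives $\Norm{f_i}\le r^{-1}\Norm{f_k}$, so it suffices to shrink $\rho$ so that $\rho^{N-1-k}<r$. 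Thus $\rho_0=\min\{\rho_1,\,r^{1/(2(N-1-k))}\}$ (or $\rho_0=\rho_1$ if $N\le k+1$) works, and monotonicity of all the inequalities in $\rho$ yields the conclusion for every $1<\rho\le\rho_0$.

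For (ii) the scheme is the same, with two extra ingredients. First, the norm $\Norm{\cdot}_\eta$ on $\cur{E}_\eta$ dominates the $\pi$-adic norm $\Norm{\cdot}$, and by Lemma~\ref{epsnorm}, shrinking $\epsilon$ (equivalently pushing $\eta$ towards $1$) makes $\Norm{g}_\eta$ as close as we like to $\Norm{g}$ for any fixed nonzero $g$. Fix $\eta_0<1$ and $\rho_1>1$ with $f\in\cur{E}_{\eta_0}\tate{\rho_1^{-1}x}$. The tail indices $i\ge N$ are handled exactly as in (i), using $\Norm{f_i}_{\eta_0}\rho_1^i\to0$ together with $\Norm{f_i}_\eta\le\Norm{f_i}_{\eta_0}$ for $\eta\ge\eta_0$; for the remaining finitely many indices we first \emph{fix} $\eta$, via Lemma~\ref{epsnorm}, close enough to $1$ that $\Norm{f_i}_\eta$ lies within a factor $<r$ of $\Norm{f_i}$ for every $i$ in that range. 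This already makes $\Norm{f_i}_\eta<\Norm{f_k}_\eta$ whenever $\Norm{f_i}<\Norm{f_k}$, and in particular for all $k<i<N$. Only \emph{then} do we choose $\rho>1$: the $i>k$ conditions ask for $\rho$ small, and the task is to verify this is compatible with the $i<k$ conditions.

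That compatibility is the crux, and the only place (ii) is genuinely harder than (i): for $i<k$ the hypothesis only gives $\Norm{f_i}\le\Norm{f_k}$, with equality allowed, and when $\Norm{f_i}=\Norm{f_k}$ is attained by $f_i$ at a negative power of $t$ but not by $f_k$, one has $\Norm{f_i}_\eta>\Norm{f_k}_\eta$ for \emph{every} $\eta<1$. Then $\Norm{f_i}_\eta\rho^i\le\Norm{f_k}_\eta\rho^k$ forces $\rho\ge(\Norm{f_i}_\eta/\Norm{f_k}_\eta)^{1/(k-i)}>1$, a lower bound on $\rho$ pulling directly against the upper bounds from $i>k$. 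The resolution is the order of the quantifiers: once $\eta$ is fixed near $1$, the lower bound $\max_{i<k}(\Norm{f_i}_\eta/\Norm{f_k}_\eta)^{1/(k-i)}$ tends to $1$ as $\eta\to1$ (in the limit the only contributing indices are those with $\Norm{f_i}=\Norm{f_k}$, and for those the ratio tends to $1$), while the upper bound coming from $k<i<N$ stays bounded away from $1$ (again by discreteness, $\Norm{f_k}_\eta/\Norm{f_i}_\eta>r^{1/2}$ once $\eta$ is close to $1$) and $\rho_1$ is fixed. Hence for $\eta$ sufficiently close to $1$ the admissible window for $\rho$ is nonempty, and any $\rho$ in it gives $f$ the desired $(\eta,\rho)$-order $k$.
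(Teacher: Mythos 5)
Your argument is correct and is essentially the paper's proof: the tail is controlled by overconvergence, the finitely many intermediate indices by shrinking $\rho$, and in (ii) one pushes $\eta$ toward $1$ so that the $\eta$-norms of the finitely many relevant coefficients approximate the Gauss norms, balancing the resulting lower bound on $\rho$ coming from $i<k$ against the upper bounds coming from $i>k$. The only cosmetic differences are your use of the discreteness of $\Norm{\cdot}$ on $\ek$ to get uniform multiplicative gaps (the paper instead takes a minimum over finitely many ratios and handles $i\leq k$ with an additive $\epsilon$), and the fact that the paper's proof of (ii) reuses part (i) to deal with the indices $k<i\leq i_0$.
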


\begin{proof} \begin{enumerate} \item First note that since the $\rho$-order of $f$ can only decrease as $\rho$ decreases, it suffices to prove that there exits some $\rho$ such that $f$ has $\rho$-order $k$. Choose $\rho$ such that $f\in \ek\tate{\rho^{-1}x}$ and choose $i_0 \geq  k$ such that $i\geq i_0\Rightarrow \Norm{f_i}\rho^i < \Norm{f_k}$. Since $\Norm{f_k}\geq \Norm{f_i}$ for all $i\leq k$, it follows that for any $\rho'>1$, $\Norm{f_k}\rho'^s\geq \Norm{f_i}\rho'^i$ for $i\leq k$, and if $1<\rho'<\rho$ and $i\geq i_0$, then $\Norm{f_i}\rho'^i < \Norm{f_i}\rho^i < \Norm{f_k} < \Norm{f_k}\rho'^k$. Hence it suffices to find $1 < \rho' < \rho$ such that $\Norm{f_i}\rho'^i < \Norm{f_k}\rho'^k$ for $k < i \leq i_0$. But now just taking $\rho'< \mathrm{min}_{k< i \leq i_0} \sqrt[i-k]{\frac{\Norm{f_k}}{\Norm{f_i}}}$ will do the trick, since $\Norm{f_i}<\Norm{f_k}$ for $k<i\leq i_0$.

\item Choose $\rho_0>1$ such that for all $1<\rho\leq\rho_0$, $f$ has $\rho$-order $k$, after possibly decreasing $\rho_0$ we may choose $\eta_0$ such that $f\in\cur{E}_{\eta_0}\tate{\rho_0^{-1}x}$, that is $\Norm{f_i}_{\eta_0} \rho_0^i\rightarrow 0$. Choose $i_0$ such that $i\geq i_0\Rightarrow \Norm{f_i}_{\eta_0} \rho_0^i<\Norm{f_k}$, thus for all $\eta_0\geq\eta<1$, all $1<\rho\leq \rho_0$ and all $i\geq i_0$ we have $\Norm{f_i}_\eta\rho^i < \Norm{f_k}_\eta\rho^k$. 

Now, since $\Norm{f_i}\leq \Norm{f_k}$ for all $i\leq k$, then for all $\epsilon>0$ there exists some $\eta$ such that $\Norm{f_i}_\eta \leq \Norm{f_k}_\eta+\epsilon$ for all $i\leq k$ (since there are only a finite number of such $i$). Hence by taking $\epsilon$ sufficiently small, we can find $\eta_0\leq \eta_1<1$ and $ 1< \rho\leq \rho_0$ such that $\Norm{f_i}_\eta\rho^i \leq \Norm{f_k}_\eta\rho^k$ for $i\leq k$ and $\eta_1\leq \eta<1$. Similarly, since $\Norm{f_i}\rho^i < \Norm{f}\rho^k$ for all $k < i\leq i_0$, we can find some $\eta$ such that $\Norm{f_i}_\eta\rho^i < \Norm{f}_\eta\rho^k$ for all $k < i\leq i_0$. Hence $\Norm{f_i}_\eta\rho^i < \Norm{f}_\eta\rho^k$ for $i>k$ and $\Norm{f_i}_\eta\rho^i \leq\Norm{f}_\eta\rho^k$ for $i\leq k$, i.e. $f$ has $(\eta,\rho)$-order $k$.
\end{enumerate}
\end{proof}

\begin{lem} \label{weier} \begin{enumerate} \item Suppose that $f,g\in \ekd\weak{x}$ and $g$ has order $k$. Then there exist unique $q\in \ekd\weak{x}$ and $r\in \ekd[x]$ of degree $<k$ such that $f=qg+r$. 
\item Let $f\in \ekd\weak{x}$. Then there exists a polynomial $h\in\ekd[x]$ and a unit $u\in(\ekd\weak{x})^\times$ such that $f=uh$.
\end{enumerate} 
\end{lem}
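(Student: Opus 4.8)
The plan is to deduce both statements from classical Weierstrass division/preparation inside one of the Banach Tate algebras $\cur{E}_\eta\tate{\rho^{-1}x}$ whose filtered union is $\ekd\weak x$, the only subtlety being that one must let the parameters $\eta,\rho$ vary.

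For (i): given $f,g\in\ekd\weak x$ with $g$ of order $k$, note that $g_k$ is a nonzero element of the field $\ekd=\mathrm{colim}_\eta\cur{E}_\eta$, so $g_k^{-1}\in\cur{E}_{\eta_0}$ for some $\eta_0<1$, and pick $\eta_f<1,\rho_f>1$ with $f\in\cur{E}_{\eta_f}\tate{\rho_f^{-1}x}$. Inspecting the proof of the preceding lemma shows that in producing $\eta,\rho$ with $g$ of $(\eta,\rho)$-order $k$ one is free to take $\eta$ arbitrarily close to $1$ and $\rho$ arbitrarily close to $1$; in particular we may arrange in addition that $\eta\geq\max(\eta_0,\eta_f)$ and $\rho\leq\rho_f$, so that $f,g\in\cur{E}_\eta\tate{\rho^{-1}x}$ and $g_k\in\cur{E}_\eta^\times$. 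The $(\eta,\rho)$-order condition says exactly that $\Norm{g_i}_\eta\rho^i<\Norm{g_k}_\eta\rho^k$ for $i>k$, so with $g_k$ a unit of $\cur{E}_\eta$ the element $g$ is distinguished of degree $k$ in the Banach $\cur{E}_\eta$-algebra $\cur{E}_\eta\tate{\rho^{-1}x}$, and the usual Weierstrass division argument yields $q\in\cur{E}_\eta\tate{\rho^{-1}x}$ and $r\in\cur{E}_\eta[x]$ of degree $<k$ with $f=qg+r$; since $\cur{E}_\eta\tate{\rho^{-1}x}\subseteq\ekd\weak x$ and $\cur{E}_\eta[x]\subseteq\ekd[x]$ this is the decomposition sought. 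Uniqueness is then immediate: $\ekd\weak x$ embeds in the Tate algebra $\ek\tate x$ over the complete valued field $\ek$, in which $g$ is still distinguished of degree $k$, so any two decompositions of $f$ over $\ekd\weak x$ already coincide inside $\ek\tate x$ by the classical uniqueness of Weierstrass division.

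For (ii) we may assume $f\neq0$. Viewed in $\ek\tate x$, $f$ has a well-defined order $k$ (the largest index at which its Gauss norm is attained), and after rescaling by a power of $\pi$ we may assume $\Norm{f_k}=1$, so $f\in\cur{O}_{\ekd}\weak x$ and its reduction $\bar f\in\cur{O}_{\ekd}\weak x/\pi\cong k\lser t[x]$ has degree exactly $k$. Apply (i) to divide $x^k$ by $f$: $x^k=qf+r$ with $q\in\ekd\weak x$ and $r\in\ekd[x]$ of degree $<k$; by the norm‑preserving property of Weierstrass division over $\ek$ we get $\Norm q,\Norm r\leq1$, hence $q\in\ekd\weak x\cap\cur{O}_\ek\tate x=\cur{O}_{\ekd}\weak x$ and $r\in\cur{O}_{\ekd}[x]$. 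Put $h=x^k-r\in\cur{O}_{\ekd}[x]$, so $qf=h$ and $h$ is monic of degree $k$; reducing mod $\pi$ gives $\bar q\bar f=x^k-\bar r$ in $k\lser t[x]$, and comparing degrees ($\deg\bar f=k=\deg(x^k-\bar r)$) forces $\bar q\in k\lser t^\times$. Finally $q\in\cur{O}_{\ekd}\weak x$ with $\bar q$ a unit forces $q\in(\ekd\weak x)^\times$, by exactly the argument used in the proof of Proposition \ref{hensel} (choose $u\in\cur{O}_{\ekd}\weak x$ with $\Norm{qu-1}<1$, sum the Neumann series for $(qu)^{-1}$ inside some $\cur{E}_\eta\tate{\rho^{-1}x}$ using the evident analogue of Lemma \ref{etas}(ii) for these Banach algebras, and invoke $\ekd\weak x\cap\cur{O}_\ek\tate x=\cur{O}_{\ekd}\weak x$). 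Thus $f=q^{-1}h$ with $q^{-1}$ a unit of $\ekd\weak x$ and $h$ a polynomial, and undoing the rescaling (which only multiplies $u=q^{-1}$ by a unit of $\ekd$) gives the claim.

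The one genuinely non‑formal point is the parameter bookkeeping in the reduction step of (i): one needs $\eta$ close enough to $1$ and $\rho$ close enough to $1$ that $f$ and $g$ lie in a common $\cur{E}_\eta\tate{\rho^{-1}x}$, that $g$ stays distinguished of degree $k$ there, and that $g_k$ becomes invertible in $\cur{E}_\eta$, and these demands pull in slightly different directions. This is, however, precisely the juggling already carried out in the preceding lemma and in Lemma \ref{etas}, so no new idea is required beyond re-running those arguments with the extra constraints recorded above.
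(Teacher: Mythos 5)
Your proof is correct and follows essentially the same route as the paper: both parts come down to running the classical Weierstrass division/preparation argument inside a Banach algebra $\cur{E}_\eta\tate{\rho^{-1}x}$ after the same $(\eta,\rho)$-bookkeeping, with uniqueness checked in $\ek\tate{x}$. The only cosmetic difference is in (ii), where you invert the unit by reducing mod $\pi$ and summing a Neumann series (Hensel-style, as in Proposition \ref{hensel}), whereas the paper invokes the $(\eta,\rho)$-order lemma and a geometric series directly -- the same mechanism in different packaging.
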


\begin{proof} \begin{enumerate} \item The Weierstrass Division Lemma in $\ek\tate{x}$ tells us that there exist unique $q\in\ek\tate{x}$ and $r\in \ek[x]$ of degree $<k$ with $f=qg+r$, we need to show that in fact $q\in\ekd\weak{x}$ and $r\in\ekd[x]$. 

To prove this, choose $\eta,\rho$ such that $g$ has $(\eta,\rho)$-order $k$ and define the norm $\Norm{\cdot}_{\eta,\rho}$ on $\cur{E}_\eta\tate{\rho^{-1}x}$ by
$$ \Norm{\sum_ih_ix^i} = \sup_i \left\{\Norm{h_i}_\eta\rho^i\right\}.
$$
Then $\cur{E}_\eta\tate{\rho^{-1}x}$ is complete with respect to this norm, and after scaling by some constant in $K$ we may assume that $\Norm{g}_{\eta,\rho}=1$. Then exactly as in the proof of the usual Weierstrass Division Lemma (see for example Theorem 8, \S2.2 of \cite{bosch}), since $g$ has $(\eta,\rho)$-order $k$, we can find a sequence of elements $f_i,q'_i\in\cur{E}_\eta\tate{\rho^{-1}x}$, $r'_i\in\cur{E}_\eta[x]$ of degree $<k$, such that 
\begin{align*} f_i &= q'_ig+r'_i+f_{i+1} \\
\Norm{f_i}_{\eta,\rho},&\Norm{q'_i}_{\eta,\rho},\Norm{r'_i}_{\eta,\rho} \leq \epsilon^i\Norm{f}_{\eta,\rho}
\end{align*}
where $\epsilon = \max_{i>s} \left\{\Norm{g_i}_\eta\rho^i \right\} <1$. Hence the series $\sum_i q'_i$ and $\sum_i r'_i$ converge to elements $q'\in \cur{E}_\eta\tate{\rho^{-1}x}$ and $r'\in \cur{E}_\eta[x]$ of degree $<k$ such that 
$$ f= q'g+r'. 
$$
Thus by the uniqueness of such a division inside $\ek\tate{x}$, we get $q=q'$ and $r=r'$, or in other words $q\in \ekd\weak{x}$ and $r\in \ekd[x]$. 

\item Exactly as in the proof of the Weierstrass Preparation Lemma (see for example Corollary 9, \S2.2 of \cite{bosch}), for any $f\in \ekd\weak{x}$ we can write $f=uh$ with $h\in \ekd[x]$ and $u\in\ekd\weak{x}$ such that $u$ is a unit in $\ek\tate{x}$, we need to show that in fact $u$ is a unit in $\ekd\weak{x}$. After scaling by some element of $K$, we may assume that $\Norm{u}=1$, and thus by Corollary 4, \S2.2 of \cite{bosch}, that $u$ has order $1$. Hence there exits some $(\eta,\rho)$ such that $u$ has $(\eta,\rho)$-order $1$. So in the usual way we can write $u=1-v$ with $\Norm{v}_{\eta,\rho}<1$ and hence $\sum_i v^i$ is an inverse for $u$ in $\cur{E}_\eta\tate{\rho^{-1}x}$. 
\end{enumerate}
\end{proof}

\begin{lem} \label{free} Let $M$ be a $(\varphi,\nabla)$-module over $\ekd\weak{x}$. Then $M$ is free as a $\ekd\weak{x}$-module.
\end{lem}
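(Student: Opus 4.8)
The plan is to reduce everything to the statement that $\ekd\weak{x}$ is a principal ideal domain: then $M$, being a finitely generated $\ekd\weak{x}$-module, splits as (free)$\,\oplus\,$(torsion) by the structure theorem, and the connection forces the torsion part to vanish.

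First I would prove that $\ekd\weak{x}$ is a PID. It is a domain, being a subring of $\ek\tate{x}$. Let $I$ be a nonzero ideal and $0\neq f\in I$; by the Weierstrass preparation lemma (Lemma \ref{weier}(ii)) we may write $f=uh$ with $u\in(\ekd\weak{x})^\times$ and $h\in\ekd[x]$, and then $h=u^{-1}f$ is a nonzero element of $I\cap\ekd[x]$. Hence $I\cap\ekd[x]$ is a nonzero ideal of $\ekd[x]$, which is a PID because $\ekd$ is a \emph{field}; write $I\cap\ekd[x]=(h_0)$ with $h_0\in\ekd[x]$. Then $I=h_0\ekd\weak{x}$: one inclusion is clear, and conversely any $f\in I$ can be written $f=up$ with $u$ a unit and $p\in\ekd[x]$ (again Lemma \ref{weier}(ii)), so $p=u^{-1}f\in I\cap\ekd[x]=(h_0)$ and $f\in h_0\ekd\weak{x}$. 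Thus every ideal of $\ekd\weak{x}$ is principal; in particular the ring is Noetherian.

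By the structure theorem over the PID $\ekd\weak{x}$ we have $M\cong(\ekd\weak{x})^a\oplus T$ with $T$ the torsion submodule, a finite module supported at finitely many maximal ideals; it remains to show $T=0$, and it suffices to show $T_{\mathfrak{m}}=0$ for each maximal ideal $\mathfrak{m}$. Note first that $T$ is stable under $\nabla$: if $fm=0$ with $f\neq0$, applying $\nabla$ gives $f\nabla(m)=-\partial_x(f)m$, hence $f^2\nabla(m)=-\partial_x(f)(fm)=0$ and $\nabla(m)\in T$. Fix $\mathfrak{m}$; by Weierstrass preparation its generator may be taken to be a distinguished polynomial $p\in\ekd[x]$, which therefore has order $\deg p$ (so that Weierstrass division, Lemma \ref{weier}(i), applies to it), and $\deg p\geq1$ since $\mathfrak{m}\neq(1)$. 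In characteristic $0$, $\partial_x(p)$ is then a nonzero polynomial of degree $<\deg p$, so the uniqueness clause in Weierstrass division shows $p\nmid\partial_x(p)$ in $\ekd\weak{x}$; as $p$ is prime, $\partial_x(p)$ is a unit in the discrete valuation ring $R:=(\ekd\weak{x})_{\mathfrak{m}}$ with uniformiser $p$. Now $T_{\mathfrak{m}}$ is a finite-length $R$-module with an induced connection over $R$; if $p^N$ annihilates it then for every $t\in T_{\mathfrak{m}}$,
$$
0=\nabla(p^Nt)=Np^{N-1}\partial_x(p)\,t+p^N\nabla(t)=Np^{N-1}\partial_x(p)\,t,
$$
and since $N$ and $\partial_x(p)$ are units of $R$ this gives $p^{N-1}t=0$; descending induction yields $T_{\mathfrak{m}}=0$. (Alternatively one can kill $T$ using the Frobenius: $\varphi$ induces an isomorphism $T\otimes_{\ekd\weak{x},\sigma}\ekd\weak{x}\cong T$, forcing the largest invariant factor $g$ to satisfy $(\sigma(g))=(g)$; normalising $g$ to have $\pi$-adic norm $1$ and reducing mod $\pi$, the $q$-power Frobenius multiplies the $x$-degree of $\bar g$ by $q\geq2$, so $\bar g\in k\lser{t}^\times$ and $g$ is a unit, again giving $T=0$.)

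I expect the main obstacle to be the first step — extracting from the Weierstrass lemmas a clean proof that $\ekd\weak{x}$ is a PID; this is precisely where one uses that $\ekd$ is a field, so that $\ekd[x]$ is a PID. After that, killing the torsion via the connection is essentially routine, the one point needing a little care being that a distinguished polynomial of positive degree is coprime to its derivative inside $\ekd\weak{x}$, which comes from the uniqueness part of Weierstrass division.
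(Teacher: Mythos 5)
Your proposal is correct and follows essentially the same route as the paper: use the Weierstrass lemmas to see that $\ekd\weak{x}$ is a PID, apply the structure theorem, and kill the torsion with the Leibniz rule together with the fact that in characteristic $0$ an irreducible (distinguished) polynomial of positive degree cannot divide its own derivative. The only cosmetic difference is that you localise at each maximal ideal and use uniqueness of Weierstrass division to see that $\partial_x(p)$ is a unit there, whereas the paper argues globally with a minimal annihilating power and a B\'{e}zout identity in $\ekd[x]$; both hinge on the same coprimality of $p$ and $\partial_x(p)$.
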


\begin{proof} Lemma \ref{weier} above implies that every ideal in $\ekd\weak{x}$ is generated by some polynomial $h\in\ekd[x]$, and hence in particular $\ekd\weak{x}$ is a PID. Thus to prove that $M$ is free it suffices to prove that its torsion submodule $M^\mathrm{tor}$ is zero. Using the structure theorem for modules over a PID, it suffices to prove that for every irreducible element $f$ of $\ekd\weak{x}$, the $f$-power torsion submodule $M[f^\infty]$ is zero. Again using Lemma \ref{weier}, we may assume that $f$ is in fact an irreducible polynomial over $\ekd$, which remains irreducible in $\ekd\weak{x}$.

So let $m\in M[f^\infty]$, so that $f^km=0$ for some $k$. Then by the Leibniz rule, $kf^{k-1}f'm+f^k\nabla(m)=0$, and hence $f^{k+1}\nabla(m)=0$, i.e. $\nabla(m)\in M[f^\infty]$. Moreover, if we choose $k$ to be such that $f^k$ annihilates $M[f^\infty]$, but not $f^{k-1}$, then we must have that $f^k\nabla(m)=0$, and hence that $kf^{k-1}f'm=0$. Since this holds for all $m$, we conclude that $kf^{k-1}f'$ annihilates $M[f^\infty]$, and hence by the choice of $k$, we must have that $f^k$ divides $kf^{k-1}f'$. Since $f$ is irreducible, it thus follows that $f$ must divide $f'$. 

Since $f$ is irreducible in $\ekd[x]$, we know that we can write $\lambda f+\mu f'=1$ for some polynomials $\lambda,\mu$, and hence, since $f$ divides $f'$, it follows that $f$ must also divide $1$, i.e. $f$ is a unit and hence $M[f^\infty]=0$. 
\end{proof}

\begin{proof}[Proof of Theorem \ref{finitemodule}] There is an Frobenius-compatible embedding $\ekd\weak{x}\rightarrow \cur{R}_{\ekd}$ given by $x\mapsto y^{-1}$, and we let $\cur{Q}_{\ekd}$ denote the quotient. The snake lemma applied to the diagram with exact rows
$$
\xymatrix{ 0\ar[r]  & M \ar[r]\ar[d]^{\nabla} & M\otimes \cur{R}_{\ekd} \ar[r]\ar[d]^{\nabla^\mathrm{loc}} & M\otimes \cur{Q}_{\ekd} \ar[r] \ar[d]^{\nabla^\mathrm{qu}} & 0  \\ 0\ar[r] & 
M \ar[r] & M\otimes \cur{R}_{\ekd} \ar[r] & M\otimes \cur{Q}_{\ekd}  \ar[r] & 0
}
$$
induces a long exact sequence
\begin{align*}
0\rightarrow H^0(M)&\rightarrow H^0(M\otimes \cur{R}_{\ekd}) \rightarrow H^0(M\otimes \cur{Q}_{\ekd}) \\&\rightarrow H^1(M)\rightarrow H^1(M\otimes \cur{R}_{\ekd}) \rightarrow H^1(M\otimes \cur{Q}_{\ekd})\rightarrow 0.
\end{align*}
There is a similar long exact sequence associated to $M'=M\otimes \ek\weak{x}$ coming from the exact sequence
$$
 0 \rightarrow \ek\weak{x}\rightarrow \cur{R}_{\cur{E}_K}\rightarrow \cur{Q}_{\ek}\rightarrow 0
$$
as in 7.3.2 of \cite{kedlayafiniteness}, and these exact sequences are compatible with base change, in that there are morphisms
\begin{align*}
H^i(M)\otimes_{\ekd}\ek &\rightarrow H^i(M') \\
H^i(M\otimes \cur{R}_{\ekd})\otimes_{\ekd}\ek &\rightarrow H^i(M'\otimes \cur{R}_{\ek}) \\
H^i(M\otimes \cur{Q}_{\ekd})\otimes_{\ekd}\ek &\rightarrow H^i(M'\otimes \cur{Q}_{\ek})
\end{align*}
which form a commutative diagram of long exact sequences. By Lemma \ref{free}, $M$ is free as an $\ekd\weak{x}$-module, and so we can apply Theorem \ref{robbabase} which tells us that the maps $H^i(M\otimes \cur{R}_{\ekd})\otimes_{\ekd}\ek\rightarrow H^i(M\otimes \cur{R}_{\ek})$ are isomorphisms.  

\begin{claim} The map $$H^0(M\otimes \cur{Q}_{\ekd})\otimes_{\ekd}\ek\rightarrow H^0(M'\otimes \cur{Q}_{\ek})$$ is injective, and the map $$H^1(M)\otimes_{\ekd}\ek\rightarrow H^1(M')$$ is surjective.
\end{claim}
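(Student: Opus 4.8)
The two bullet points are of rather different natures: the injectivity statement will be formal, using freeness of $M$ and the ``coefficientwise'' structure of the rings, whereas the surjectivity statement will come from a density argument combined with finite-dimensionality over $\ek$.

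For the injectivity of $H^0(M\otimes\cur{Q}_{\ekd})\otimes_{\ekd}\ek\to H^0(M'\otimes\cur{Q}_{\ek})$, the plan is as follows. First I would record the elementary fact that for any $\ekd$-subspace $V$ of a product $\prod_j\ekd$, the natural $\ek$-linear map $V\otimes_{\ekd}\ek\to\prod_j\ek$ is injective: an element of the kernel, written $\sum_k v_k\otimes c_k$ with the $c_k\in\ek$ linearly independent over $\ekd$, has all coordinates $\sum_k c_k v_k^{(j)}=0$ with $v_k^{(j)}\in\ekd$, which forces $v_k^{(j)}=0$ and hence $v_k=0$. Next, $\cur{Q}_{\ekd}$ is coefficientwise a $\ekd$-subspace of $\prod_{i\geq 1}\ekd$ --- it is the space of positive-degree parts of series in $\cur{R}_{\ekd}$, the image of $\ekd\weak{x}$ in $\cur{R}_{\ekd}$ under $x\mapsto y^{-1}$ being exactly the non-positive-degree part (one checks this directly, matching the overconvergence radius with the Robba parameter via $\rho=r^s$) --- and $\cur{Q}_{\ekd}\to\cur{Q}_{\ek}$ is the coefficientwise inclusion. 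So the fact above gives that $\cur{Q}_{\ekd}\otimes_{\ekd}\ek\to\cur{Q}_{\ek}$ is injective. Since $M$ is free over $\ekd\weak{x}$ (Lemma \ref{free}), tensoring preserves injectivity, whence $(M\otimes\cur{Q}_{\ekd})\otimes_{\ekd}\ek\hookrightarrow M'\otimes\cur{Q}_{\ek}$; and because $\ekd$ is a field the functor $-\otimes_{\ekd}\ek$ is exact, so $H^0(M\otimes\cur{Q}_{\ekd})\otimes_{\ekd}\ek=\ker(\nabla^{\mathrm{qu}})\otimes_{\ekd}\ek$ injects into $(M\otimes\cur{Q}_{\ekd})\otimes_{\ekd}\ek$ and hence into $M'\otimes\cur{Q}_{\ek}$, with image inside the kernel of the connection there by compatibility. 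That settles the first assertion.

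For the surjectivity of $H^1(M)\otimes_{\ekd}\ek\to H^1(M')$, I would show that the image of $M$ in $M'$ is dense and then descend to the finite-dimensional quotient. A basis of $M$ over $\ekd\weak{x}$ is a basis of $M'=M\otimes_{\ekd\weak{x}}\ek\weak{x}$ over $\ek\weak{x}$, so density of $\mathrm{im}(M\to M')$ in $M'$ reduces to density of $\ekd\weak{x}$ in $\ek\weak{x}$, which follows from the $\pi$-adic density of each $\cur{E}_\eta$ in $\ek$ (approximate by truncating the negative-$t$ tails, absorbing the resulting growth into a slightly smaller $x$-radius). Since $M'\to H^1(M')$ is an open quotient map, the image of $M$ in $H^1(M')$ is dense; its $\ek$-span is precisely the image of $H^1(M)\otimes_{\ekd}\ek$, which is a finite-dimensional $\ek$-subspace of the finite-dimensional space $H^1(M')$, hence closed, hence all of $H^1(M')$. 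That settles the second assertion.

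The hard part is this last step: one must be careful with the topologies on $\ekd\weak{x}$, $\ek\weak{x}$ and $M'$, and in particular one needs $H^1(M')$ to be Hausdorff with its standard finite-dimensional topology, i.e.\ that the relevant differential has closed image over $\ek$, which is part of the known finiteness there. An alternative that avoids topology: injectivity of $H^1(M)\otimes_{\ekd}\ek\to H^1(M')$ follows formally from the first assertion together with Theorem \ref{robbabase} by a diagram chase on the two long exact sequences, and one may then upgrade injectivity to an isomorphism using base-change invariance of the Euler characteristic of a $(\varphi,\nabla)$-module on $\A^1$; but the density argument is the more self-contained route.
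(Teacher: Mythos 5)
Your proposal is correct and follows essentially the same route as the paper: injectivity is reduced, via freeness of $M$ and the identification of $\ekd\weak{x}$ and $\ek\weak{x}$ with $\cur{R}_{\ekd}\cap\ekd\pow{y^{-1}}$ and $\cur{R}_{\ek}\cap\ek\pow{y^{-1}}$ (the Cartesian square), to injectivity of $\cur{Q}_{\ekd}\otimes_{\ekd}\ek\rightarrow\cur{Q}_{\ek}$, and surjectivity is obtained from density of the image of $M$ in $M'$ together with Hausdorffness of $H^1(M')$ (closedness of the image of $\nabla$ over $\ek$, Kedlaya's Proposition 8.4.4) and finite dimensionality over $\ek$. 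Your explicit coefficientwise linear-independence lemma merely fills in a step the paper leaves terse, so there is no substantive difference.
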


\begin{proof}[Proof of Claim] For the claim about injectivity, it suffices to show that the natural map $\cur{Q}_{\ekd}\otimes_{\ekd}\ek\rightarrow \cur{Q}_{\ek}$ is injective, which boils down to the diagram
$$
\xymatrix{
\ekd\weak{x}\ar[r]\ar[d] & \cur{R}_{\ekd} \ar[d] \\
\ek\weak{x} \ar[r] & \cur{R}_{\ek}
}
$$ being Cartesian. This follows straight from the definitions, since the inclusions
\begin{align*}
\ekd\weak{x} &\rightarrow \cur{R}_{\ekd} \\
\ek\weak{x} &\rightarrow \cur{R}_{\ek}
\end{align*}
identify $\ekd\weak{x}$ and $\ek\weak{x}$ with $\cur{R}_{\ekd}\cap \ekd\pow{y^{-1}}$ and $\cur{R}_{\ek} \cap \ek\pow{y^{-1}}$ respectively. 
For the claim about surjectivity, topologise $M'$ with the fringe topology, arising from the direct limit topology on $\ek\weak{x}$ as in Definition 2.3.7 of \cite{kedlayafiniteness}. Then since $M\otimes_{\ekd}\ek$ is dense inside $M'$ for this topology, it follows that the map
$$
H^1(M)\otimes_{\ekd}\ek \rightarrow H^1(M')
$$ 
has dense image for the induced topology on $M'$. Since the fringe topology is Hausdorff, and the image of 
$$
\nabla:M'\rightarrow M'
$$
is closed by Proposition 8.4.4 of \emph{loc. cit.}, it follows that $H^1(M')$ is also Hausdorff for the induced topology. Since $H^1(M')$ is finite dimensional over $\ek$, any dense subspace must therefore be equal to the whole space, and hence the claim follows.
\end{proof}

Hence we can apply Lemma 7.5.3 of \cite{kedlayafiniteness} to conclude that the the maps $H^i(M)\otimes_{\ekd}\ek\rightarrow H^i(M')$ must be isomorphisms, and it then follows that each $H^i(M)$ is finite dimensional over $\ekd$.
\end{proof}

We are now in a position to deduce finite dimensionality of $H^i_\rig(X/\ekd,\sh{E})$ for smooth curves $X$ from finiteness for $\A^1$ just proven. The result we will therefore be spending the rest of this section proving is the following.

\begin{theo} \label{finitecurves} Let $X/k\lser{t}$ be a smooth curve, and let $\sh{E}$ be an overconvergent $F$-isocrystal on $X/\ekd$, with associated overconvergent $F$-isocrystal $\hat{\sh{E}
}$ on $X/\ek$. Then the cohomology groups
$$
H^i_\rig(X/\ekd,\sh{E})
$$
are finite dimensional, and the base change maps
$$ H^i_\rig(X/\ekd,\sh{E})\otimes_{\ekd}\ek\rightarrow H^i_\rig(X/\ek,\hat{\sh{E}})
$$
are isomorphisms.
\end{theo}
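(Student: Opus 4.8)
The plan is to reduce to the case of $\A^1_{k\lser t}$ already settled in Theorem \ref{finitesheaf}, by a Mayer--Vietoris argument together with an \'etale pushforward along finite \'etale maps to the affine line, following Kedlaya's treatment of classical rigid cohomology in \cite{kedlayafiniteness}. First I would reduce to the case of a smooth \emph{affine} curve: cover $X$ by finitely many affine opens $U_j$, note that since $X$ is separated every multiple intersection $U_{j_0\cdots j_p}$ is again an affine smooth curve, and use the Mayer--Vietoris spectral sequence
$$ E_1^{p,q} = \bigoplus_{j_0<\cdots<j_p} H^q_\rig(U_{j_0\cdots j_p}/\ekd,\sh{E}) \Longrightarrow H^{p+q}_\rig(X/\ekd,\sh{E}). $$
This is functorial in $\sh{E}$, hence compatible, under $-\otimes_{\ekd}\ek$, with the analogous spectral sequence for $\hat{\sh{E}}$ over $\ek$; since $\ek$ is flat over the field $\ekd$, finite dimensionality and base change for all the affine curves $U_{j_0\cdots j_p}$ would imply the same for $X$. (Conversely, once the base change isomorphism is known, finite dimensionality over $\ekd$ is automatic, being implied by finite dimensionality over $\ek$ --- itself a consequence of Theorem \ref{finitesheaf} for $\A^1$.)

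The key construction is an \'etale pushforward functor: for a finite \'etale morphism $f\colon X\to Z$ of smooth affine curves over $k\lser t$ I would produce
$$ f_*\colon F\text{-}\mathrm{Isoc}^\dagger(X/\ekd)\longrightarrow F\text{-}\mathrm{Isoc}^\dagger(Z/\ekd), $$
compatible with the analogous functor over $\ek$, together with a base-change-compatible isomorphism $H^i_\rig(X/\ekd,\sh{E})\isomto H^i_\rig(Z/\ekd,f_*\sh{E})$. In the Monsky--Washnitzer picture of \S\ref{finsmc}, when $Z=\A^1_{k\lser t}$ this amounts to lifting $f$ to a finite \'etale morphism of lifts of the frame $(\A^1_{k\lser t},\P^1_{k\pow t},\widehat{\P}^1_{\cur{V}\pow t})$; \'etaleness then identifies $\Omega^1$ of the lift of $X$ with the pullback of $\Omega^1$ of $\widehat{\P}^1_{\cur{V}\pow t}$, so that the de Rham complex computing $H^i_\rig(X/\ekd,\sh{E})$ is identified with the one computing $H^i_\rig(\A^1_{k\lser t}/\ekd,f_*\sh{E})$, and the same construction over $\cur{E}_K$ yields the base-change compatibility. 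As the introduction indicates, the lifting is more delicate than in the classical setting: one cannot lift in one step, but must first choose a suitable model $\mathscr{X}$ of $X$ over the formal disc $k\pow t$ --- possibly after a further finite separable extension of $k\lser t$, so as to arrange that $\mathscr{X}$ is smooth and finite \'etale over $\A^1_{k\pow t}$ in a neighbourhood of its generic fibre --- and only then lift $\mathscr{X}$ to a finite \'etale morphism over $\cur{V}\pow t$.

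Granting this functor, I would treat a general smooth affine curve $X$ as follows. After replacing $k\lser t$ by a finite separable extension $F$, the base change $X_F$ admits a finite \'etale morphism $X_F\to\A^1_F$ (using the characteristic-$p$ phenomenon that affine curves arise abundantly as \'etale covers of the affine line), and composing with the finite \'etale projection $\A^1_F\to\A^1_{k\lser t}$ one obtains a finite \'etale morphism of $k\lser t$-curves $X_F\to\A^1_{k\lser t}$. The structure map $g\colon X_F\to X$ is finite \'etale of some degree $d$, which is invertible in $\ekd$ because $\ekd$ has characteristic $0$; hence the trace exhibits $\sh{E}$ as a direct summand of $g_*g^*\sh{E}$, and the projection formula (a consequence of the cohomological compatibility of the pushforward) gives that $H^i_\rig(X/\ekd,\sh{E})$ is a direct summand of $H^i_\rig(X_F/\ekd,g^*\sh{E})$, compatibly with base change to $\ek$. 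Applying the \'etale pushforward along $X_F\to\A^1_{k\lser t}$ and then Theorem \ref{finitesheaf} shows $H^i_\rig(X_F/\ekd,g^*\sh{E})$ is finite dimensional and base-changes correctly; the direct summand inherits both properties, and feeding this into the Mayer--Vietoris spectral sequence of the first paragraph finishes the proof.

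The main obstacle is the \'etale pushforward of the second paragraph: constructing it requires descending from $k\lser t$ to a model over the formal disc $k\pow t$ before lifting to characteristic $0$, keeping careful track of the finite separable base extensions this forces, and checking that everything is functorial and compatible with the inclusion $\ekd\hookrightarrow\ek$. The Mayer--Vietoris bookkeeping and the direct-summand argument are routine once this is available; it is precisely the apparent impossibility of carrying out this two-step lifting in higher dimensions that confines the present method to curves.
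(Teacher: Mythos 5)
Your overall skeleton is the paper's: localise with the Mayer--Vietoris spectral sequence of Lemma \ref{spectral}, use Kedlaya's Theorem \ref{finet} to get a finite \'{e}tale map to $\A^1_{k\lser{t}}$ (note this already exists over $k\lser{t}$ itself, no extension needed for that step), push forward via the lifted frame morphism of Theorem \ref{summ}, and conclude by Theorem \ref{finitesheaf}. Where you diverge is in how the forced finite separable extension of $k\lser{t}$ is undone, and this is where there is a genuine gap. Your descent keeps the coefficient field equal to $\ekd$: you regard $X_F$ as a $k\lser{t}$-curve, apply ``the \'{e}tale pushforward along $X_F\to\A^1_{k\lser{t}}$'' over $\ekd$, and split off $H^i_\rig(X/\ekd,\sh{E})$ by a trace for $g\colon X_F\to X$. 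But the pushforward functor is only constructed (Proposition \ref{forlift}, Lemma \ref{lift}, Theorem \ref{summ}) \emph{after} replacing $k\lser{t}$ by a finite separable extension, precisely because one needs a model over $k\pow{t}$ which is l.c.i.\ over $\P^1_{k\pow{t}}$ and \'{e}tale away from a closed \emph{affine} subscheme before one can lift to $\cur{V}\pow{t}$. For your composite $X_F\to\A^1_F\to\A^1_{k\lser{t}}$ the relevant model must live over the \emph{original} $k\pow{t}$, and when $F/k\lser{t}$ is ramified (which semistable reduction typically forces) the natural model of $X_F$ over $k\pow{t}$ has non-reduced special fibre; its map to $\P^1_{k\pow{t}}$ is then \'{e}tale at no point of the special fibre, the non-\'{e}tale locus is not affine, and Lemma \ref{lift} gives no lifting. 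So neither the pushforward to $\A^1_{k\lser{t}}$ over $\ekd$ nor the trace splitting of $H^i_\rig(X/\ekd,\sh{E})\rightarrow H^i_\rig(X_F/\ekd,g^*\sh{E})$ that you invoke is available as stated (the paper, moreover, never constructs $g_*$ or a projection formula for general finite \'{e}tale morphisms of curves, only pushforwards to $\A^1$).

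The paper closes exactly this gap by a different and simpler device, which your proposal is missing: Lemma \ref{basefinite} shows that $\ekd$-valued (and $\ek$-valued) rigid cohomology commutes with finite separable extension of the ground field, the coefficient ring becoming $\cur{E}_K^{\dagger,F}$ (resp.\ $\cur{E}_K^F$). Since $F\cong l\lser{u}$ is again a Laurent series field, one simply proves the theorem for $X_F/F$ with coefficients in $\cur{E}_K^{\dagger,F}$ (pushforward to $\A^1_F$ plus Theorem \ref{finitesheaf} over $F$), and then descends: $\cur{E}_K^{\dagger,F}/\ekd$ and $\cur{E}_K^F/\ek$ are finite, faithfully flat field extensions, so finite dimensionality and the base change isomorphism over $F$ imply them over $k\lser{t}$ by tensoring the commutative square of base change maps --- no trace or direct summand argument is needed, and no lifting over the original $\cur{V}\pow{t}$ is required. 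If you replace your third paragraph by this change-of-ground-field compatibility (which you would have to prove, as the paper does, by a direct computation on tubes), the rest of your argument goes through and coincides with the paper's proof.
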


Note that since smooth curves are quasi-projective, there always exists an embedding into a smooth and proper frame over $\cur{V}\pow{t}$. Exactly as the general strategy for proving finiteness in \cite{kedlayafiniteness}, we will prove this by descending to $\A^1$ using (the one dimensional case of) the main result from \cite{kedetale}, namely the following.

\begin{theo}[\cite{kedetale}, Theorem 1] \label{finet} Let $X/k$ be a smooth projective curve over a field $k$ of characteristic $p>0$, and $D\subset X$ any non-empty divisor . Then there exists a finite morphism $f:X\rightarrow \P^1_k$ such that $D\cap f^{-1}(\infty)=\emptyset$ and $f$ is \'{e}tale away from $\infty$.
\end{theo}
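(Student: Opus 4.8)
The plan is to follow Kedlaya's strategy in \cite{kedetale}: start from an arbitrary finite separable morphism $g_0\colon X\to\P^1_k$ and then post-compose with a finite sequence of self-maps of $\P^1$, each finite and \'{e}tale over $\A^1$, chosen so as to sweep all of the branching of $g_0$ lying over $\A^1$ onto a single point $\infty$ of a new target, all the while keeping the images of the points of $D$ off the branch locus. For the starting map, a standard Bertini/Riemann--Roch dimension count produces a finite separable $g_0$ that is \'{e}tale at every point of $\mathrm{Supp}(D)$ and for which $g_0(\mathrm{Supp}(D))$ is disjoint from the branch locus $B(g_0)$; for instance one may take $g_0$ defined by a generic base-point-free pencil inside a very ample linear system of degree prime to $p$, where primality to $p$ forces $g_0$ not to factor through Frobenius (hence to be separable) and genericity yields the two position statements. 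After composing with an automorphism of $\P^1$ we may also arrange $\infty\notin g_0(\mathrm{Supp}(D))$, which, since $g_0$ is finite, gives $D\cap g_0^{-1}(\infty)=\emptyset$.

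The heart of the argument, and the only place characteristic $p$ is genuinely used, is a reduction lemma: given two distinct closed points $b_1,b_2$ of $\A^1_k$, there is an additive ($\F_p$-linearized) polynomial $\phi\in k[x]$ with $\phi(b_1)=\phi(b_2)$ and $\phi'$ a nonzero constant. Concretely, fix roots $\beta_1,\beta_2\in\bar k$ of the minimal polynomials of $b_1,b_2$, let $V\subset\bar k$ be the finite, Galois-stable $\F_p$-span of the Galois conjugates of $\beta_1-\beta_2$, and put $\phi(x)=\prod_{v\in V}(x-v)$; then $\phi$ has coefficients in $k$, is additive with kernel $V\ni\beta_1-\beta_2$, and $\phi'=\prod_{0\neq v\in V}(-v)\in k^\times$. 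The associated morphism $\P^1\to\P^1$ is therefore finite of degree $\#V\geq p$, totally (wildly) ramified over $\infty$, and \'{e}tale over $\A^1$; and because $\phi^{-1}(\infty)=\{\infty\}$, composing $g$ with $\phi$ leaves the fibre over $\infty$ unchanged, while the branch locus of $\phi\circ g$ inside $\A^1$ is contained in $\phi(B(g)\cap\A^1)$, which has strictly fewer points than $B(g)\cap\A^1$ whenever $b_1,b_2$ are chosen among the finite branch points. Choosing at each stage $\phi$ generically among those collapsing a chosen such pair (a finite list of further linear conditions) we simultaneously keep $g(\mathrm{Supp}(D))$ disjoint from $B(g)$. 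Iterating finitely many times, starting from $g_0$, we reach a finite separable $g_m\colon X\to\P^1$ whose branch locus meets $\A^1$ in at most one closed point $b$; moreover $g_m(\mathrm{Supp}(D))$ is disjoint from $B(g_m)$, and (the $\phi$'s being polynomials, hence preserving $\A^1$, while $g_0(\mathrm{Supp}(D))\subset\A^1$) also $\infty\notin g_m(\mathrm{Supp}(D))$.

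Finally I would post-compose with $\pi(x)=x+m_b(x)^{-p}$, where $m_b\in k[x]$ is the minimal polynomial of the remaining branch point $b$ (if there is none, $f=g_m$ already works). Since $\mathrm{char}(k)=p$ we have $\tfrac{d}{dx}(m_b^{-p})=0$, hence $\pi'=1$: the map $\pi\colon\P^1\to\P^1$ is finite of degree $p\deg m_b+1$, \'{e}tale over $\A^1$, with $\pi^{-1}(\infty)=\{b,\infty\}$. Set $f=\pi\circ g_m$. For $c\in\A^1$ every point of $\pi^{-1}(c)$ lies in $\A^1\setminus\{b\}$, where both $g_m$ and $\pi$ are \'{e}tale, so $f$ is \'{e}tale over $\A^1$; and $f^{-1}(\infty)=g_m^{-1}(\{b,\infty\})$ is disjoint from $\mathrm{Supp}(D)$ because $g_m(\mathrm{Supp}(D))$ avoids both $b$ (it is contained in $B(g_m)$) and $\infty$. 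Hence $f$ is the required finite morphism.

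The main obstacle is the reduction step: over an arbitrary field $k$ --- in particular an imperfect or a finite one --- one must produce $k$-rational self-maps of $\P^1$ that are at once \'{e}tale over $\A^1$ and collapse a prescribed pair of possibly non-rational closed points, which is exactly why linearized polynomials attached to Galois-stable $\F_p$-subspaces must be used in place of naive translations; and one has to verify that the genericity needed to keep $D$ away from the branch locus can still be achieved, replacing ``generic'' by an explicit count over finite fields. Step~1 likewise rests on the standard but slightly delicate fact that a smooth projective curve over any field admits a separable pencil whose branch locus can be placed in general position relative to a given finite set of points.
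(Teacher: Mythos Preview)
The paper does not prove this statement: Theorem~\ref{finet} is quoted verbatim from \cite{kedetale} and used as a black box (it is invoked once, to produce the finite \'{e}tale map $U\to\A^1_{k\lser{t}}$ that is then lifted to characteristic zero in Proposition~\ref{forlift}). So there is no ``paper's own proof'' to compare against.

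That said, your sketch is a faithful reconstruction of Kedlaya's argument in \cite{kedetale}: start from an arbitrary separable pencil, iteratively collapse pairs of affine branch points using $\F_p$-linearised polynomials (which are \'{e}tale over $\A^1$ and totally ramified over $\infty$), then push the last remaining affine branch point to $\infty$ with a map of the form $x\mapsto x+m_b(x)^{-p}$. The derivative computations you give are correct, and the bookkeeping showing $\pi^{-1}(\infty)=\{b,\infty\}$ and $\deg\pi=p\deg m_b+1$ is right. The one place where your sketch is genuinely thin is the clause ``Choosing at each stage $\phi$ generically\ldots'': after composing with $\phi$ you must ensure that $\phi(g(\mathrm{Supp}\,D))$ stays disjoint from $\phi(B(g)\cap\A^1)$, not merely that $g(\mathrm{Supp}\,D)$ was disjoint from $B(g)$, and the additive polynomials collapsing $b_1,b_2$ form a rather constrained family (determined by an $\F_p$-subspace containing $V$), so ``generic'' has to be made precise --- Kedlaya does this by enlarging the subspace $V$ if necessary. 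You flag this yourself in the last paragraph, so the gap is acknowledged rather than hidden.
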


For this to be useful for us we will need to know that we can `lift' a finite \'{e}tale map between curves over $k\lser{t}$-schemes to characteristic zero. Actually, since we will first need to compactify over $k\pow{t}$, the lifting problem is somewhat subtle, and we will need to make extensions of the ground field $k\lser{t}$ to ensure that we can pick `sufficiently nice' models over $k\pow{t}$. Our first result therefore will tell us that we may make such finite extensions with impunity. So let us suppose that we have a finite separable extension $F\cong l\lser{u}$ of $k\lser{t}$, with associated finite extensions $S_K^F/S_K$, $\cur{E}_K^{\dagger,F}/\ekd$ and $\cur{E}_K^F/\ek$. Recall from \S5 of \cite{rclsf1} that in this situation we have a commutative diagram of base extension functors
$$ \xymatrix{ \mathrm{Isoc}^\dagger(X/\ekd) \ar[rr]^{\sh{E}\mapsto \sh{E}_F}\ar[d]^{\sh{E}\mapsto \hat{\sh{E} }} && \mathrm{Isoc}^\dagger(X_F/\cur{E}_K^{\dagger,F})\ar[d]^{\sh{E}\mapsto \hat{\sh{E} }} \\
\mathrm{Isoc}^\dagger(X/\ek)\ar[rr]^{\sh{E}\mapsto \sh{E}_F} && \mathrm{Isoc}^\dagger(X_F/\cur{E}_K^F).
}
$$

\begin{lem} \label{basefinite} Let $X/k\lser{t}$ be an embeddable variety, and $\sh{E}\in\mathrm{Isoc}^\dagger(X/\ekd)$. The natural base change morphisms (see \S5 of \cite{rclsf1})
\begin{align*}
H^i_\rig(X/\ekd,\sh{E})\otimes_{\ekd} \cur{E}_K^{\dagger,F}&\rightarrow H^i_\rig(X_F/\cur{E}_K^{\dagger,F},\sh{E}^F) \\
H^i_\rig(X/\ek,\hat{\sh{E}})\otimes_{\ek} \cur{E}_K^{F}&\rightarrow H^i_\rig(X_F/\cur{E}_K^{F},\hat{\sh{E}}^F)
\end{align*}
are isomorphisms.
\end{lem}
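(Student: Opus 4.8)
The plan is to compute each of the two base change maps as the natural comparison morphism $H^i(C^\bullet)\otimes_A B\to H^i(C^\bullet\otimes_A B)$ for a suitable complex $C^\bullet$ of $A$-modules and a finite field extension $B/A$, and then to invoke flatness of $B$ over $A$.

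First I would recall the construction of rigid cohomology and of the base change functors from \S5 of \cite{rclsf1}. Since $X$ is embeddable we fix a frame $(X,\bar{X},\mathfrak{P})$ with $\mathfrak{P}$ smooth and proper over $\cur{V}\pow{t}$; by the independence of the frame proved in \cite{rclsf1} we are free to choose it conveniently. Taking a finite affine open cover of $\bar{X}$, the group $H^i_\rig(X/\ekd,\sh{E})$ is computed as the $i$-th cohomology of the resulting \v{C}ech--de~Rham complex $C^\bullet$ of $\ekd$-modules attached to the realisation of $\sh{E}$ on the tube, and likewise $H^i_\rig(X/\ek,\hat{\sh{E}})$ is the cohomology of an analogous complex $D^\bullet$ of $\ek$-modules. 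Now a finite separable extension $F=l\lser{u}$ of $k\lser{t}$ comes with a finite extension of the base over which the frame is built (cf.\ \S5 of \emph{loc.\ cit.}), and --- using independence of the frame once more --- the frame for $X_F$ may be taken to be the base change of the chosen frame for $X$ along this extension, the realisation of $\sh{E}^F$ then being the pullback of that of $\sh{E}$. Tracing through the definitions, one obtains canonical identifications of the complexes computing $H^i_\rig(X_F/\cur{E}_K^{\dagger,F},\sh{E}^F)$ and $H^i_\rig(X_F/\cur{E}_K^{F},\hat{\sh{E}}^F)$ with $C^\bullet\otimes_{\ekd}\cur{E}_K^{\dagger,F}$ and $D^\bullet\otimes_{\ek}\cur{E}_K^F$ respectively, compatibly with the base change morphisms of the lemma, which thereby become the natural maps $H^i(C^\bullet)\otimes_{\ekd}\cur{E}_K^{\dagger,F}\to H^i(C^\bullet\otimes_{\ekd}\cur{E}_K^{\dagger,F})$ and $H^i(D^\bullet)\otimes_{\ek}\cur{E}_K^F\to H^i(D^\bullet\otimes_{\ek}\cur{E}_K^F)$.

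Finally, $\cur{E}_K^{\dagger,F}/\ekd$ and $\cur{E}_K^{F}/\ek$ are finite extensions of fields, hence faithfully flat, so tensoring with them commutes with taking cohomology of $C^\bullet$ and $D^\bullet$, and the two comparison maps are isomorphisms. The only genuine work is the identification in the second paragraph --- that the frame, tube, isocrystal realisation and \v{C}ech--de~Rham complex for $X_F$ really are obtained from those of $X$ by the finite base change along $\ekd\to\cur{E}_K^{\dagger,F}$ (resp.\ $\ek\to\cur{E}_K^F$), and that this matches the base change morphism defined in \cite{rclsf1} --- and this is where I would expect to spend essentially all the effort; the freedom to pick a convenient frame is what makes it manageable.
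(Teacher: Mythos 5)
Your proposal is correct and takes essentially the same approach as the paper: there, too, the frame/tube and the realisation of $\sh{E}^F$ for $X_F$ are identified with the base change of those for $X$ along the finite extension $S_K\rightarrow S_K^F$ (so the de Rham complex of $\sh{E}^F$ is the pullback of that of $\sh{E}$), and the conclusion follows because cohomology commutes with this finite flat base change. The only cosmetic difference is that the paper phrases the last step as the sheaf-level base change isomorphism $H^i(]Y[_\frak{P},E)\otimes_{S_K}S_K^F\isomto H^i((]Y[_\frak{P})_F,f^*E)$ rather than via a \v{C}ech--de Rham complex and flatness of the field extension.
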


\begin{proof} Let $\mathbb{D}^b_F$ denote the rigid space $\mathrm{Spa}(S_K^F)$, and let $(]Y[_\mathfrak{P})_F$ denote $]Y[_\mathfrak{P}\times_{\mathbb{D}^b_K} \mathbb{D}^b_F$, with $f:(]Y[_\mathfrak{P})_F\rightarrow ]Y[_\mathfrak{P}$ the projection. It is easy to see that there is a natural isomorphism of complexes
$$ f^*( \sh{E} \otimes \Omega^*_{]Y[_\mathfrak{P}/S_K}) \cong \sh{E}_F \otimes \Omega^*_{(]Y[_\mathfrak{P})_F/S_K^F}
$$
and this together with the fact that for any sheaf $E$ on $]Y[_\mathfrak{P}$ the base change morphism
$$ H^i(]Y[_\mathfrak{P},E) \otimes_{S_K} S_K^F \rightarrow H^i((]Y[_\mathfrak{P})_F,f^*E)
$$
is an isomorphism implies that $H^i_\rig(X/\ekd,\sh{E})\otimes_{\ekd} \cur{E}_K^{\dagger,F} \cong H^i_\rig(X_F/\cur{E}_K^{\dagger,F},\sh{E}^F)$. Of course, an entirely similar argument shows that
$$
H^i_\rig(X/\ek,\hat{\sh{E}})\otimes_{\ek} \cur{E}_K^{F}\rightarrow H^i_\rig(X_F/\cur{E}_K^{F},\hat{\sh{E}}^F)
$$
is an isomorphism
\end{proof}

Thus at any point during the proof of Theorem \ref{finitecurves}, we may always make a finite separable extension of the ground field $k\lser{t}$. We can therefore lift finite \'{e}tale maps to characteristic zero using the following proposition. 

\begin{prop} \label{forlift} Let $f:X\rightarrow \P^1_{k\lser{t}}$ be a finite morphism as in Theorem \ref{finet}, and let $U=f^{-1}(\A^1_{k\lser{t}})$. Then after replacing $k\lser{t}$ by a finite separable extension, there exists a $p$-adic formal scheme $\mathfrak{X}$, flat and proper over $\mathrm{Spf}(\cur{V}\pow{t})$, and commutative diagram
$$
\xymatrix{ U \ar[r]\ar[d]^f& \mathfrak{X} \ar[d]^w  \\
\A^1_{k\lser{t}} \ar[r] & \widehat{\P}^1_{\cur{V}\pow{t}}
}
$$ such that $w$ is \'{e}tale in a neighbourhood of $U$. \end{prop}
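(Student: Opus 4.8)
The plan is to produce the lift in two stages: first spread $f\colon X\to\P^1_{k\lser{t}}$ out to a sufficiently well-behaved finite model over $k\pow{t}$ — this is where the passage to a finite separable extension of $k\lser{t}$ is needed — and then lift that model $\pi$-adically to a formal scheme over $\cur{V}\pow{t}$. The key structural observation is that, since $f$ is étale over $\A^1_{k\lser{t}}$, the only part of $\widehat{\P}^1_{\cur{V}\pow{t}}$ over which a model of $f$ can misbehave is a neighbourhood of the section $\{x=\infty\}$, and such a neighbourhood can be taken affine, over which the relevant lifting problems are unobstructed; away from it the model will be finite étale, so its lift is unique.

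\emph{Step 1 (a model over $k\pow{t}$).} Writing $\mathcal{A}=f_*\mathcal{O}_X$, which is a coherent sheaf of $\mathcal{O}_{\P^1_{k\lser{t}}}$-algebras with $X=\underline{\Spec}(\mathcal{A})$, I would choose inside $\mathcal{A}$ a coherent $\mathcal{O}_{\P^1_{k\pow{t}}}$-subalgebra $\mathcal{A}_0$ that is $t$-torsion free, finite, and generates $\mathcal{A}$ after inverting $t$; this is possible because, after multiplying a finite set of generators by a sufficiently large power of $t$, they become integral over $\mathcal{O}_{\P^1_{k\pow{t}}}$. Then $\mathcal{X}_0:=\underline{\Spec}(\mathcal{A}_0)$ is finite over $\P^1_{k\pow{t}}$, proper and flat over $k\pow{t}$, with generic fibre $X$, and $w_0\colon\mathcal{X}_0\to\P^1_{k\pow{t}}$ extends $f$; replacing $\mathcal{X}_0$ by its normalisation (finite, since $k\pow{t}$ is excellent) we may take it normal, hence Cohen--Macaulay, so that $w_0$ is finite flat. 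The delicate part is to arrange, \emph{after a finite separable extension of $k\lser{t}$}, that in addition $\mathcal{X}_0$ is a local complete intersection over $k\pow{t}$ and $w_0$ is étale over all of $\A^1_{k\pow{t}}$, not merely over the generic fibre. For this one combines the semistable reduction theorem — which, after a finite separable extension of $k\lser{t}$ (for instance one trivialising the $\ell$-adic monodromy of $\mathrm{Jac}(X)$ for some $\ell\neq p$), provides a model of $X$ with reduced special fibre — with an Epp/Abhyankar-style elimination of the vertical ramification of the cover. I would fix such a model $\mathcal{X}_0$; then $w_0$ is a finite flat local complete intersection morphism, étale over $\A^1_{k\pow{t}}$, with $\mathcal{X}_0$ proper and flat over $k\pow{t}$ and generic fibre $f$.

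\emph{Step 2 ($\pi$-adic lifting).} Cover $\widehat{\P}^1_{\cur{V}\pow{t}}$ by the affine formal opens $\mathfrak{W}=\widehat{\A}^1_{\cur{V}\pow{t}}=\Spf\cur{V}\pow{t}\tate{x}$ and $\mathfrak{D}=\Spf\cur{V}\pow{t}\tate{x^{-1}}$, with intersection $\Spf\cur{V}\pow{t}\tate{x,x^{-1}}$. The reduction of $w_0$ over $\mathfrak{W}$ is the finite étale cover $w_0^{-1}(\A^1_{k\pow{t}})\to\A^1_{k\pow{t}}$, which lifts uniquely to a finite étale $\cur{V}\pow{t}\tate{x}$-algebra by topological invariance of the étale site along the $\pi$-adic thickening (the pair $(\cur{V}\pow{t}\tate{x},(\pi))$ being complete, hence Henselian). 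Over $\mathfrak{D}$ the reduction of $w_0$ is a finite flat local complete intersection morphism, so the corresponding finite flat $k\pow{t}\tate{x^{-1}}$-algebra lifts to a finite flat $\cur{V}\pow{t}\tate{x^{-1}}$-algebra by Elkik's theorem on lifting complete intersection algebras over a Henselian pair. On the overlap the reduction of $w_0$ is again finite étale, so the two lifted algebras are canonically identified there; gluing produces a finite flat $\mathcal{O}_{\widehat{\P}^1_{\cur{V}\pow{t}}}$-algebra lifting $\mathcal{A}_0$, and $\mathfrak{X}$ is its relative $\underline{\Spf}$, equipped with the finite morphism $w$ to $\widehat{\P}^1_{\cur{V}\pow{t}}$. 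Then $\mathfrak{X}$ is finite over $\widehat{\P}^1_{\cur{V}\pow{t}}$, hence proper and flat over $\Spf(\cur{V}\pow{t})$; $w\equiv w_0\bmod\pi$ is étale over $\A^1_{k\pow{t}}$, so $w$ is étale on the open $w^{-1}(\widehat{\A}^1_{\cur{V}\pow{t}})$, which contains $U$; and the square of the statement commutes because $w_0$ extends $f$.

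\emph{Main obstacle.} The crux is Step 1. In Kedlaya's treatment of the classical case the curve lives over a field and one simply takes its smooth projective compactification, whereas here one must first descend to a model over the discrete valuation ring $k\pow{t}$, and this model has to be simultaneously proper, finite over $\P^1_{k\pow{t}}$, étale over $\A^1_{k\pow{t}}$, and a local complete intersection over $k\pow{t}$ — the last two conditions being what force the finite separable base change, and what make the argument both genuinely harder and essentially one-dimensional, since it is unclear how to control the special fibre of such a model in higher dimensions. Once such a model is in hand, Step 2 is formal.
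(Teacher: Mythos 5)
There is a genuine gap, and it sits exactly where you flag the ``delicate part'' of Step 1: the requirement that, after a finite separable extension of $k\lser{t}$, the model $w_0:\mathcal{X}_0\rightarrow\P^1_{k\pow{t}}$ be \'{e}tale over \emph{all} of $\A^1_{k\pow{t}}$, i.e.\ including over the special fibre. This is not just unproven (Epp/Abhyankar-type results eliminate wild ramification in the special fibre of a model, i.e.\ give reducedness/semistability; they do not remove vertical branch loci of a cover of $\P^1$), it is false in general. Take $p>0$ and the Artin--Schreier cover $X$ of $\P^1_{k\lser{t}}$ given by $y^p-y=x/t$, which is finite \'{e}tale over $\A^1_{k\lser{t}}$ and ramified only at $\infty$, so is a legitimate input. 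For any finite extension $l\lser{u}/k\lser{t}$ (separable or not), a finite \'{e}tale degree-$p$ cover of $\A^1_{l\pow{u}}=\spec{l\pow{u}[x]}$ restricting to this cover on the generic fibre would be Galois (it is the normalisation of $\A^1_{l\pow{u}}$ in the function field, to which the $\Z/p$-action extends), hence given by an Artin--Schreier class in $l\pow{u}[x]/\wp(l\pow{u}[x])$ mapping to the class of $cxu^{-e}$ ($e\geq1$, $c$ a unit) in $l\lser{u}[x]/\wp(l\lser{u}[x])$. A coefficient-by-coefficient check (look at the largest $m$ with $a_m\notin l\pow{u}$ in a candidate $h=\sum a_ix^i$ with $cxu^{-e}-\wp(h)$ integral: the coefficient of $x^{pm}$, resp.\ of $x$ if $m=0$, cannot be integral) shows no such class exists. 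So your Step 1 cannot be carried out, and with it the gluing in Step 2 collapses, since the canonical identification over the overlap $\spf{\cur{V}\pow{t}\tate{x,x^{-1}}}$ also presupposes \'{e}taleness over the special fibre of the annulus.

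The paper's proof needs, and arranges, much less. After a finite separable extension it takes a \emph{semistable} model $\overline{X}\rightarrow\P^1_{k\pow{t}}$; semistability gives that $\overline{X}$ is l.c.i.\ over $k\pow{t}$, hence that $\overline{X}\rightarrow\P^1_{k\pow{t}}$ is l.c.i., and the non-\'{e}tale locus is contained in a closed \emph{affine} subscheme: the closures of the finitely many generic-fibre ramification points $P_i$ (which lie over $\infty$) together with finitely many points $Q_i$ of the special fibre --- these $Q_i$ are allowed to lie over $\A^1_k$, and typically do. The lift to $\cur{V}\pow{t}$ is then produced not by gluing a unique \'{e}tale lift to an Elkik lift, but by successive square-zero deformations (Lemma \ref{lift}): the obstruction lives in $\mathrm{Ext}^2_{X_0}(L_{X_0/\P^1},\cur{O}_{X_0})$, and it vanishes because $L_{X_0/\P^1}$ has perfect amplitude in $[-1,0]$ and is supported on that affine closed subscheme, so the relevant $H^1(\cur{H}^1)$ and $H^2(\cur{H}^0)$ terms die. \'{E}taleness is only ever needed \emph{in a neighbourhood of $U$}, and $U$ sits in the generic fibre, away from the $Q_i$ --- which is precisely why the strong condition you tried to impose is unnecessary. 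If you weaken Step 1 to ``l.c.i.\ with non-\'{e}tale locus contained in a closed affine subscheme'' and replace the \'{e}tale/Elkik gluing by the cotangent-complex obstruction argument, your outline becomes the paper's proof.
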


\begin{proof} By the semistable reduction theorem for curves, after making a finite separable extension of $k\lser{t}$ we may choose a model $\overline{X}\rightarrow \P^1_{k\pow{t}}$ such that $\overline{X}$ is semistable. Since $\overline{X}$ is semistable, it is a local complete intersection over $A$, and thus, since $\P^1_{k\pow{t}}$ is smooth over $k\pow{t}$,  the morphism $\overline{X}\rightarrow \P^1_{k\pow{t}}$ is a local complete intersection. Moreover, there exist finitely many points $\{P_i\}$ of the generic fibre $X$ and $\{Q_i\}$ of the special fibre $\overline{X}_0$ such that $\overline{X}$ is \'{e}tale over $\P^1_{k\pow{t}}$ away from the closed \emph{affine} subscheme $\overline{\{P_i\}}\cup \{Q_i\}$ of $\overline{X}$. Hence we may inductively apply Lemma \ref{lift} below.
\end{proof}

\begin{lem} \label{lift} Let $R_0$ be a ring, $X_0$ a flat curve over $R_0$, and $f_0:X_0\rightarrow \P^1_{R_0}$ a l.c.i. morphism. Suppose that there exists a closed affine subscheme $Z_0\subset X_0$, with open complement $U_0$, such that $f_0$ is \'{e}tale on $U_0$. Let $R_1\rightarrow R_0$ be a square zero extension. Then there exists a lifting of $X_0$ to a curve $X_1$, flat over $R_1$, and a lifting of $f_0$ to an l.c.i. morphism $f_1:X_1\rightarrow \P^1_{R_1}$. Moreover, if we let $U_1\subset X_1$ denote the open subscheme corresponding to $U_0$, then $f_1$ is \'{e}tale on $U_1$.\end{lem}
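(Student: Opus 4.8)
The statement is a standard deformation-theoretic lifting result, and the plan is to deduce it from the cotangent complex formalism for l.c.i.\ morphisms, handling the curve $X_0$ and the morphism $f_0$ together rather than separately. First I would recall that for a square-zero extension $R_1\to R_0$ with kernel ideal $I$ (so $I^2=0$, and $I$ is an $R_0$-module), the obstruction to lifting a flat $R_0$-scheme $X_0$ to a flat $R_1$-scheme lies in $\Ext^2_{\cur O_{X_0}}(\mathbf{L}_{X_0/R_0}, I\otimes_{R_0}\cur O_{X_0})$, the set of such lifts (when nonempty) is a torsor under $\Ext^1$, and automorphisms are governed by $\Ext^0$. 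Since $X_0$ is a curve (relative dimension $1$) over $R_0$ and is l.c.i.\ over $R_0$ — which follows here because $f_0$ is l.c.i.\ and $\P^1_{R_0}$ is smooth over $R_0$, hence $X_0\to R_0$ is l.c.i.\ — the cotangent complex $\mathbf{L}_{X_0/R_0}$ is perfect of Tor-amplitude in $[-1,0]$, and $\mathbf{L}_{X_0/R_0}^\vee$ has cohomology only in degrees $0$ and $1$. Combined with the fact that $X_0$ is a curve, so has cohomological dimension $\leq 1$ for quasi-coherent sheaves (affineness of the relevant pieces will be used — see below), the groups $\Ext^2$ vanish, so both $X_0$ and the morphism $f_0$ lift.

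The cleanest way to organize this is to lift the pair at once. Consider the deformation problem for the morphism $f_0: X_0\to \P^1_{R_0}$ relative to the fixed smooth lift $\P^1_{R_1}$ of $\P^1_{R_0}$: one wants a flat $X_1/R_1$ together with $f_1:X_1\to\P^1_{R_1}$ reducing to $f_0$. The obstruction now lives in $\Ext^2_{\cur O_{X_0}}(\mathbf{L}_{X_0/\P^1_{R_0}}, I\otimes\cur O_{X_0})$ (using smoothness of $\P^1_{R_1}\to R_1$ to dispose of the deformation of the target), and since $f_0$ is l.c.i., $\mathbf{L}_{X_0/\P^1_{R_0}}$ is (quasi-isomorphic to) a perfect complex concentrated in degree $-1$, namely the conormal sheaf $N^\vee_{X_0/\P^1_{R_0}}[1]$ placed appropriately — more precisely, for an l.c.i.\ morphism the cotangent complex is $[\,N^\vee \to \Omega\,]$ or in the regularly-immersed local model just the conormal sheaf in degree $-1$. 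Then $\Ext^2(N^\vee[1], -) = \Ext^1(N^\vee, -) = H^1(X_0, N\otimes I\otimes\cur O_{X_0})$ up to the relevant twists, and one needs this $H^1$ to vanish. Here is where the \emph{affineness} hypothesis on $Z_0$, and the structure of the situation, matters: away from $Z_0$ the morphism $f_0$ is étale so $N^\vee$ is supported on $Z_0$; thus the obstruction sheaf is supported on the affine scheme $Z_0$, forcing its $H^1$ to vanish. (One must be slightly careful: $N^\vee$ for an l.c.i.\ immersion need not literally be supported on $Z_0$; rather $\mathbf{L}_{X_0/\P^1}$ restricted to $U_0$ is acyclic, so the complex is supported on $Z_0$, and the $\Ext^2$ computation is a cohomology group of a complex of sheaves supported on the affine $Z_0$, hence vanishes.) The same reasoning gives surjectivity of the truncation maps needed, and once $X_1, f_1$ exist, the locus where $f_1$ is étale is open and contains all of $U_0$ by the infinitesimal criterion / because étaleness is detected on the (closed) fibre for morphisms locally of finite presentation between schemes flat over the base — so $U_1$ maps étale-ly.

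**Main obstacle.** The only real subtlety is bookkeeping with the cotangent complex: showing that for the l.c.i.\ morphism $f_0$ the relevant obstruction group is governed by a complex supported on $Z_0$, and then that this — together with $Z_0$ being affine and $X_0$ being a curve — kills the $\Ext^2$. Concretely one should: (i) Zariski-locally present $X_0\hookrightarrow \A^N_{\P^1_{R_0}}$ as a regular immersion (possible since $f_0$ is l.c.i.) and compute $\mathbf{L}_{X_0/\P^1_{R_0}}$ as the two-term complex $[\,N^\vee_{X_0}\to \Omega_{\A^N/\P^1}|_{X_0}\,]$, noting that over $U_0$ this complex is quasi-isomorphic to $0$; (ii) feed this into the standard obstruction theory (e.g.\ Illusie, \emph{Complexe cotangent et déformations}, or the Stacks Project chapter on deformation theory) to see the obstruction in $H^2$ of $X_0$ with coefficients in a complex concentrated in $[-1,0]$ whose cohomology sheaves are supported on $Z_0$; (iii) invoke that $Z_0$ is affine (so higher cohomology of quasi-coherent sheaves on it vanishes) and that $X_0$ has cohomological dimension $\le 1$ to conclude the $H^2$ vanishes, and likewise that the $H^1$ controlling the needed surjectivity behaves well. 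The étaleness of $f_1$ on $U_1$ is then immediate: a finitely presented morphism flat over the base is étale iff its special fibre is, and $(f_1)_0 = f_0$ is étale on $U_0$. I do not expect to need more than these ingredients; everything else is routine deformation theory, and I would keep the write-up short by citing Illusie or the Stacks Project for the obstruction-theory input rather than reproving it.
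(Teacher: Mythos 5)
Your proposal is correct and follows essentially the same route as the paper: the obstruction lies in $\mathrm{Ext}^2$ of the cotangent complex $L_{X_0/\P^1_{R_0}}$ (target deformed via the canonical smooth lift $\P^1_{R_1}$), which is perfect in amplitude $[-1,0]$ by the l.c.i.\ hypothesis and acyclic on $U_0$ by \'{e}taleness, so the obstruction group is controlled by sheaves supported on the affine $Z_0$ together with the curve's cohomological dimension, and hence vanishes; \'{e}taleness of $f_1$ on $U_1$ then follows from flatness and reduction to $f_0$. This matches the paper's argument (which follows Crew's Lemma 8.3), so no further comment is needed.
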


\begin{proof} We follow closely the proof of Lemma 8.3 in \cite{crew1}. The obstruction to the existence of a flat lifting is a class in $\mathrm{Ext}^2_{X_0}(L_{X_0/\P^1_{R_0}},\cur{O}_{X_0})$, where $L_{X_0/\P^1_{R_0}}$ is the relative cotangent complex. Since $X_0\rightarrow \P^1_{R_0}$ is l.c.i., $L_{X_0/\P^1_{R_0}}$ has perfect amplitude in $[-1,0]$. Thus $L_{X_0/\P^1_{R_0}}^\vee:=\mathbf{R}\mathrm{Hom}(L_{X_0/\P^1_{R_0}},\cur{O}_{X_0})$ has perfect amplitude in $[0,1]$, and in particular $\cur{H}^i(L_{X_0/\P^1_{R_0}}^\vee)=0$ for $i\neq0,1$. Since $f_0$ is \'{e}tale on $U_0$, it follows that $L_{X_0/\P^1_{R_0}}|_{U_0}=0$, and hence that $\cur{H}^i(L_{X_0/\P^1_{R_0}}^\vee)$ has support in $Z_0$ for $i=0,1$. Thus we have $$H^1(X_0,\cur{H}^1(L_{X_0/\P^1_{R_0}}^\vee))=0=H^2(X_0,\cur{H}^0(L_{X_0/\P^1_{R_0}}^\vee)),$$ hence $\mathrm{Ext}^2_{X_0}(L_{X_0/\P^1_{R_0}},\cur{O}_{X_0})$, and the obstruction must vanish. Since $X_1/R_1$ is flat, the fact that $f_1$ is l.c.i and \'{e}tale over $U_1$ follows from the same facts about $f_0$.
\end{proof}

This allows us to construct pushforwards of overconvergent $F$-isocrystals via a finite \'{e}tale morphism to $\A^1$, at least after making a finite separable extension of $k\lser{t}$, as follows. Let $f:X\rightarrow \P^1_{k\lser{t}}$ be a finite morphism as in Theorem \ref{finet}, then after making a finite separable extension of $k\lser{t}$ we may assume that there exists a morphism of smooth and proper frames
$$
w:(U,\overline{X},\mathfrak{X})\rightarrow (\A^1_{k\lser{t}},\P^1_{k\pow{t}},\widehat{\P}^1_{\cur{V}\pow{t}}),
$$
where $\overline{X}\cong\mathfrak{X}\otimes_{\cur{V}\pow{t}} k\pow{t}$, such that $\overline{X}$ is proper over $\P^1_{k\pow{t}}$ and $\frak{X}$ is \'{e}tale over $\widehat{\P}^1_{\cur{V}\pow{t}}$ in a neighbourhood of $U$. Note that we have $]\overline{X}[_{\mathfrak{X}}=\mathfrak{X}_{K}$ and $]\P^1_{k\pow{t}}[_{\widehat{\P}^1_{\cur{V}\pow{t}}}=\P^{1,\mathrm{an}}_{S_K}$, and $w$ induces a morphism of ringed spaces
$$
w_K: (\mathfrak{X}_{K},j_{U}^\dagger\cur{O}_{\mathfrak{X}_{K}})\rightarrow (\P^{1,\mathrm{an}}_{S_{K}},j_{\A^1}^\dagger\cur{O}_{\P^{1,\mathrm{an}}_{S_{K}}}).
$$ 
Since $w$ is \'{e}tale in a neighbourhood of $U$, if we let $U_m$ denote the standard neighbourhoods of $]\A^1_{k\lser{t}}[_{\widehat{\P}^1_{\cur{V}\pow{t}}}$ inside $\P^{1,\mathrm{an}}_{S_K}$, then for all $m\gg0$, the induced map $w_K^{-1}(U_m)\rightarrow U_m$ is finite \'{e}tale. Moreover, since $U=w^{-1}(\A^1_{k\lser{t}})$, it follows that the $w_K^{-1}(U_m)$ are exactly the standard neighbourhoods $U'_m$ of $]U[_{\mathfrak{X}}$ inside $\mathfrak{X}_K$. 

Now let $\sh{E}$ be an overconvergent isocrystal on $U/\ekd$, corresponding to a coherent $j_{U}^\dagger\cur{O}_{\mathfrak{X}_{K}}$-module with overconvergent connection which we will also denote by $\sh{E}$. Since $w_K$ is finite on $U_m$ for $m\gg0$, for any coherent $j_{U}^\dagger\cur{O}_{\mathfrak{X}_{K}}$-module $\sh{F}$, $\mathbf{R}w_{K*}\sh{F}\cong w_{K*}\sh{F}$ is a coherent $j_{\A^1}^\dagger\cur{O}_{\P^{1,\mathrm{an}}_{S_K}}$-module. Since $w_K$ is \'{e}tale on $U_m$ for $m\gg0$, it follows that $w_{K}^*j_{\A^1}^\dagger\Omega^1_{\P^{1,\mathrm{an}}_{S_{K}}/S_K}\cong j_{U}^\dagger\Omega^1_{\mathfrak{X}_{K}/S_K}$, and hence, if $\sh{E}$ is coherent $ j_{U}^\dagger \cur{O}_{\mathfrak{X}_{K}}$ with an overconvergent integrable connection, the projection formula implies that
\begin{align*}
\mathbf{R}w_{K*}(\sh{E}\rightarrow \sh{E}\otimes j_{U}^\dagger \Omega^1_{\mathfrak{X}_{K}/S_K}) &\cong w_{K*}(\sh{E}\rightarrow\sh{E}\otimes j_{U}^\dagger\Omega^1_{\mathfrak{X}_{K}/S_K}) \\
&\cong w_{K*}(\sh{E})\rightarrow w_{K*}(\sh{E})\otimes j_{\A^1}^\dagger\Omega^1_{\P^{1,\mathrm{an}}_{S_{K}}/S_K}
\end{align*}
which gives an integrable connection on the coherent $j_{\A^1}^\dagger\cur{O}_{\P^{1,\mathrm{an}}_{S_{K}}}$-module $w_{K*}(\sh{E})$.

\begin{prop} This connection is overconvergent, and the induced overconvergent isocrystal on $\A^1_{k\lser{t}}/\ekd$ depends only on $f:U\rightarrow \A^1_{k\lser{t}}$. This gives rise to a functor
$$
f_*:\mathrm{Isoc}^\dagger(U/\ekd)\rightarrow \mathrm{Isoc}^\dagger(\A^1_{k\lser{t}}/\ekd)
$$
adjoint to $f^*$ and commuting with finite extensions $F/k\lser{t}$. Moreover, for any $\sh{E}\in \mathrm{Isoc}^\dagger(U/\ekd)$ we have
$$
H^i_\rig(U/\ekd,\sh{E})\cong H^i_\rig(\A^1_{k\lser{t}}/\ekd,f_*\sh{E}).
$$
\end{prop}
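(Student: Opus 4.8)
The plan is to \emph{define} $f_*$ by realising it on the standard frame $(\A^1_{k\lser{t}},\P^1_{k\pow{t}},\widehat{\P}^1_{\cur{V}\pow{t}})$ as the functor $\sh{E}\mapsto w_{K*}\sh{E}$ constructed above, following the by-now-standard construction of an \'{e}tale pushforward in rigid cohomology (cf. \cite{crew1,kedlayafiniteness}). Four things then need checking: (a) that the connection on $w_{K*}\sh{E}$ is overconvergent, so that $w_{K*}\sh{E}$ really is the realisation of an object $f_*\sh{E}\in\mathrm{Isoc}^\dagger(\A^1_{k\lser{t}}/\ekd)$ on the standard frame; (b) that this object is independent of the choices of $\overline{X}$, of the lift $\mathfrak{X}$, and of the lift $w$ of $f$; (c) that $\sh{E}\mapsto f_*\sh{E}$ is functorial, right adjoint to $f^*$, and compatible with finite separable extensions of $k\lser{t}$; and (d) the cohomological identity.

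For (a), I would exploit that the $w_K^{-1}(U_m)$ form a cofinal system of strict neighbourhoods of $]U[_{\mathfrak{X}}$ in $\mathfrak{X}_K$, and that $w_K$ is finite \'{e}tale on $U_m$ for $m\gg 0$. Since finite \'{e}tale morphisms are finite and flat, $w_{K*}$ is exact on coherent sheaves, preserves coherence, and commutes with restriction from $U_m$ to $U_{m'}$ for $m'\geq m$; moreover $w_K^*j^\dagger_{\A^1}\Omega^1_{\P^{1,\mathrm{an}}_{S_K}/S_K}\cong j^\dagger_{U}\Omega^1_{\mathfrak{X}_K/S_K}$. Overconvergence of the connection on $\sh{E}$ is a condition tested on the cofinal system of the $w_K^{-1}(U_m)$, via convergence of the associated Taylor series on a strict neighbourhood of the diagonal; I would argue that under these compatibilities it passes verbatim to the induced connection on $w_{K*}\sh{E}$ over the cofinal system $\{U_m\}$.

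Step (b) should be formal from the frame-independence of the category $\mathrm{Isoc}^\dagger(\A^1_{k\lser{t}}/\ekd)$: given two data $(U,\overline{X},\mathfrak{X},w)$ and $(U,\overline{X}',\mathfrak{X}',w')$, one passes to a third frame dominating both (obtained from a common compactification of $U$ inside $\overline{X}\times_{\P^1_{k\pow{t}}}\overline{X}'$ together with a suitable $p$-adic model), with connecting morphisms proper and birational over $U$, and applies the projection formula to see that the two realisations agree as $j^\dagger_{\A^1}\cur{O}_{\P^{1,\mathrm{an}}_{S_K}}$-modules with connection. For (c), functoriality in $\sh{E}$ is immediate, and $f_*\dashv f^*$ comes from the adjunction $w_{K*}\dashv w_K^*$ on coherent modules over each $U_m$ (compatibly with connections, the counit built from the trace of the finite \'{e}tale map $w_K^{-1}(U_m)\to U_m$), passed to the colimit over $m$; compatibility with a finite separable extension $F/k\lser{t}$ follows from flat base change along $S_K\to S_K^F$ as in the proof of Lemma \ref{basefinite}, once Proposition \ref{forlift} (after a further finite extension) provides a model whose base change computes the pushforward for $U_F\to\A^1_F$. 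Finally, for (d), $H^i_\rig(U/\ekd,\sh{E})$ is computed by the hypercohomology on $\mathfrak{X}_K$ of the two-term de Rham complex $\sh{E}\to\sh{E}\otimes j^\dagger_{U}\Omega^1_{\mathfrak{X}_K/S_K}$ of the smooth proper frame $(U,\overline{X},\mathfrak{X})$; applying $\mathbf{R}w_{K*}$, using $\mathbf{R}w_{K*}=w_{K*}$ on the $U_m$ together with the projection-formula isomorphism
$$
\mathbf{R}w_{K*}\bigl(\sh{E}\to\sh{E}\otimes j^\dagger_{U}\Omega^1_{\mathfrak{X}_K/S_K}\bigr)\cong\bigl(w_{K*}\sh{E}\to w_{K*}\sh{E}\otimes j^\dagger_{\A^1}\Omega^1_{\P^{1,\mathrm{an}}_{S_K}/S_K}\bigr)
$$
established above, the Leray spectral sequence for $w_K$ degenerates and identifies $H^i_\rig(U/\ekd,\sh{E})$ with $H^i_\rig(\A^1_{k\lser{t}}/\ekd,f_*\sh{E})$ by the previous proposition.

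The step I expect to be the main obstacle is (a): showing rigorously that overconvergence of the connection survives the finite \'{e}tale pushforward $w_{K*}$. This requires unwinding the definition of overconvergence in terms of the system $\{U_m\}$ (equivalently, the tubes of the diagonal) and checking that the relevant convergence estimates are preserved, using finiteness and \'{e}taleness of $w_K$ on the $U_m$ and the identification of relative differentials above. Once (a) is in place, steps (b)--(d) are essentially formal.
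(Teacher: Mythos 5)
Your overall route coincides with the paper's, but the step you yourself flag as the main obstacle --- (a), overconvergence of the connection on $w_{K*}\sh{E}$ --- is precisely the step you have not carried out, and it does not pass ``verbatim'' from the compatibilities you list; it needs one concrete input which the paper supplies. By Proposition 5.15 of \cite{rclsf1}, overconvergence is the condition that, for each $n$, on some $U_m$ one has $\Norm{\partial_x^k(e)/k!}\,r^{-k/n}\rightarrow 0$ as $k\rightarrow\infty$ for all $e$ in the relevant module of sections, the norm being a Banach norm on that finitely generated module. Since $w$ is \'{e}tale in a neighbourhood of $U$, the co-ordinate $x$ gives $dx$ as a basis both of $\Omega^1_{\widehat{\P}^1_{\cur{V}\pow{t}}/\cur{V}\pow{t}}$ near $\A^1$ and of $\Omega^1_{\mathfrak{X}/\cur{V}\pow{t}}$ near $U$, so $\partial_x$ is literally the same operator on $\Gamma(w_K^{-1}(U_m),\sh{E})=\Gamma(U_m,w_{K*}\sh{E})$ whether viewed upstairs or downstairs; and since $w_K^{-1}(U_m)\rightarrow U_m$ is a finite morphism of affinoids, the Banach norm of this space as a $\Gamma(w_K^{-1}(U_m),\cur{O}_{w_K^{-1}(U_m)})$-module is equivalent to its Banach norm as a $\Gamma(U_m,\cur{O}_{U_m})$-module. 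Hence the estimate for $\sh{E}$ transfers unchanged to $w_{K*}\sh{E}$; that norm-equivalence for finite modules is the missing mechanism you should state explicitly.

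Second, in step (b) the projection formula is not the right tool and, as written, would give the wrong answer: for the connecting map $v:\mathfrak{X}'\rightarrow\mathfrak{X}$ (finite and \'{e}tale near $U$) the projection formula produces $v_{K*}v_K^*(\sh{E}\otimes j^\dagger_U\Omega^*_{\mathfrak{X}_K/S_K})\cong \sh{E}\otimes j^\dagger_U\Omega^*_{\mathfrak{X}_K/S_K}\otimes v_{K*}\cur{O}_{\mathfrak{X}'_K}$, with an unwanted factor of rank $\deg v$. What collapses this factor is that $v$ restricts to the identity on $U$ combined with the overconvergent functor $j^\dagger_U$, i.e.\ the Strong Fibration Theorem (Proposition 4.1 of \cite{rclsf1}), which is exactly what the paper invokes: it forms the fibre product $\mathfrak{X}'\times_{\widehat{\P}^1_{\cur{V}\pow{t}}}\mathfrak{X}$ with $U$ embedded diagonally --- so no common compactification or auxiliary $p$-adic model is needed, and the subtlety you worry about does not arise --- and concludes $\mathbf{R}v_{K*}(\sh{E}'\otimes j_U^\dagger\Omega^*_{\mathfrak{X}'_K/S_K})\cong \sh{E}\otimes j_U^\dagger\Omega^*_{\mathfrak{X}_K/S_K}$ directly from that theorem. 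Your steps (c) and (d) are as in the paper (for (d) the paper likewise combines $\mathbf{R}\Gamma(\mathfrak{X}_K,-)\cong\mathbf{R}\Gamma(\P^{1,\mathrm{an}}_{S_K},\mathbf{R}w_{K*}(-))$ with the projection-formula identification of the pushed-forward de Rham complex established just before the proposition).
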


\begin{proof} Suppose that $w:\mathfrak{X}\rightarrow \widehat{\P}^1_{\cur{V}\pow{t}}$ and $w':\mathfrak{X}'\rightarrow \widehat{\P}^1_{\cur{V}\pow{t}}$ are two choices of morphism of formal $\cur{V}\pow{t}$-schemes lifting $f:U\rightarrow \A^1_{k\lser{t}}$ as in Proposition \ref{forlift}. Then after replacing $\mathfrak{X}'$ by the fibre product $\mathfrak{X}'\times_{\widehat{\P}^1_{\cur{V}\pow{t}}}\mathfrak{X}$ and embedding $U$ via the diagonal morphism, we may assume that there exits a morphism $v:\mathfrak{X}'\rightarrow \mathfrak{X}$ which is finite and \'{e}tale in neighbourhood of $U$. Let $\overline{X}'$ denote the Zariski closure of $U$ in $\mathfrak{X}'\otimes_{\cur{V}\pow{t}} k\pow{t}$, and let $\sh{E},\sh{E}'$ denote the realisations of some isocrystal in $\mathrm{Isoc}^\dagger(U/\ekd)$ on $(U,\overline{X},\mathfrak{X})$ and $(U,\overline{X}',\mathfrak{X}')$ respectively. Again, $]\overline{X}'[_{\mathfrak{X}'}=\mathfrak{X}'_K$ and by the Strong Fibration Theorem, i.e. Proposition 4.1 of \cite{rclsf1}, we have that
$$
\mathbf{R}v_{K*}(\sh{E}'\otimes j_{U}^\dagger\Omega^*_{\mathfrak{X}'_K/S_K}) \cong \sh{E}\otimes j_{U}^\dagger\Omega^*_{\mathfrak{X}_K/S_K}
$$
from which we deduce the independence of the chosen lift $w$. 

To show that this connection is overconvergent we use Proposition 5.15 of \cite{rclsf1}. Let $x$ be the co-ordinate on $\widehat{\P}^1_{\cur{O}_F}$, so that $dx$ is a basis for $\Omega^1_{\widehat{\P}^1_{\cur{V}\pow{t}}/\cur{V}\pow{t}}$ in a neighbourhood of $\A^1$, as well as being a basis for $\Omega^1_{\mathfrak{X}/\cur{V}\pow{t}}$ in a neighbourhood of $U$. Also note that in this case the closed tubes $[-]_n$ are equal to $\mathfrak{X}_K$ and $\P^{1,\mathrm{an}}_{S_K}$ for all $n$. So let $n\geq0$ and let $U_m$ be one of the standard neighbourhoods of $]\A^1_{k\lser{t}}[_{\widehat{\P}^1_{\cur{V}\pow{t}}}$ inside $\P^{1,\mathrm{an}}_{S_K}$ such that $w_K^{-1}(U_m)\rightarrow U_m$ is finite \'{e}tale, $\sh{E}$ extends to a module with integrable connection on $w_K^{-1}(U_m)$, and
$$ \Norm{\frac{\partial^k_x(e)}{k!}} r^{-\frac{k}{n}}\rightarrow 0
$$
as $k\rightarrow 0$ for all $e\in \Gamma(w_K^{-1}(U_m),\sh{E})$. Since the Banach norm on $\Gamma(w_K^{-1}(U_m),\sh{E})$ as an $\cur{O}_{w_K^{-1}(U_m)}$-module is equivalent to the Banach norm on $\Gamma(w_K^{-1}(U_m),\sh{E})=\Gamma(U_m,w_{L*}\sh{E})$ as an $\cur{O}_{U_m}$-module, it follows that
$$ \Norm{\frac{\partial^k_x(e)}{k!}} r^{-\frac{k}{n}}\rightarrow 0
$$
for all $e\in \Gamma(U_m,w_{K*}\sh{E})$, in other words the connection on $w_{K*}(\sh{E})$ is overconvergent.

Finally, the fact that 
$$
\mathbf{R}\Gamma(\mathfrak{X}_K,-)\cong \mathbf{R}\Gamma(\P^{1,\mathrm{an}}_{S_K},\mathbf{R}w_{K*}(-))
$$
easily implies statement about the cohomology of the pushforward.\end{proof}

The next stage is to show that $f_*$ is compatible with Frobenius structures. Since $f:U\rightarrow \A^1_{k\lser{t}}$ is finite \'{e}tale, the relative Frobenius $F_{U/\A^1}:U\rightarrow U'$ is an isomorphism, where $U'$ is the base change of $U$ via the absolute ($q$-power) Frobenius of $\A^1_{k\lser{t}}$. Hence if $\varphi$ is a $\sigma$-linear lift of the $q$-power Frobenius on $\P^1_{k\pow{t}}$ to $\widehat{\P}^1_{\cur{V}\pow{t}}$ (such liftings always exist), then the base change $\mathfrak{X}':=\mathfrak{X}\times_{\widehat{\P}^1_{\cur{V}\pow{t}},\varphi}\widehat{\P}^1_{\cur{V}\pow{t}}$ of $\mathfrak{X}$ via $\varphi$ together with its natural map to $\mathfrak{X}$ is a lifting of the $q$-power Frobenius on $U$. 

Explicitly, let $\overline{X}'$ be the base change of $\overline{X}$ by the $q$-power Frobenius on $\P^1_{k\pow{t}}$, then the relative Frobenius $F_{U/\A^1}:U\rightarrow U'$ induces a closed immersion $U\rightarrow \overline{X}'$, and the induced diagram of frames
$$
\xymatrix{ (U,\overline{X}',\mathfrak{X}')\ar[r]^{\varphi'}\ar[d]^{w'} &  (U,\overline{X},\mathfrak{X}) \ar[d]^w \\
(\A^1_{k\lser{t}},\P^1_{k\pow{t}},\widehat{\P}^1_{\cur{V}\pow{t}}) \ar[r]^{\varphi} & (\A^1_{k\lser{t}},\P^1_{k\pow{t}},\widehat{\P}^1_{\cur{V}\pow{t}}) }
$$
is a lifting of the commutative diagram
$$
\xymatrix{
U \ar[r]\ar[d] & U \ar[d] \\
\A^1_{k\lser{t}} \ar[r] & \A^1_{k\lser{t}}  }
$$
where the horizontal morphisms are the $q$-power Frobenii. Hence by Lemma 5.22 of \cite{rclsf1}, the Frobenius pullback functors
\begin{align*}
F^*:\mathrm{Isoc}^\dagger(U/\ekd)&\rightarrow \mathrm{Isoc}^\dagger(U/\ekd) \\
F^*:\mathrm{Isoc}^\dagger(\A^1_{k\lser{t}}/\ekd)&\rightarrow \mathrm{Isoc}^\dagger(\A^1_{k\lser{t}}/\ekd)
\end{align*}
can be identified with $\varphi_K'^*$ and $\varphi_K^*$ respectively.

\begin{prop} \'{E}tale pushforward $f_{*}$ commutes with Frobenius pullback, in the sense that the diagram
$$
\xymatrix{ \mathrm{Isoc}^\dagger(U/\ekd)\ar[d]^{f_{*}}\ar[r]^{F^*} & \mathrm{Isoc}^\dagger(U/\ekd) \ar[d]^{f_{*}} \\
\mathrm{Isoc}^\dagger(\A^1_{k\lser{t}}/\ekd)\ar[r]^{F^*} & \mathrm{Isoc}^\dagger(\A^1_{k\lser{t}}/\ekd) 
}
$$
commutes up to natural isomorphism. 
\end{prop}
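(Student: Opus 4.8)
The plan is to verify this on realisations on the tube. Fix the lift of Frobenius $\varphi$ on $\widehat{\P}^1_{\cur{V}\pow{t}}$ used to compute $F^*$ on $\A^1_{k\lser{t}}$, and let $\varphi'$ be the induced lift on $\mathfrak{X}'=\mathfrak{X}\times_{\widehat{\P}^1_{\cur{V}\pow{t}},\varphi}\widehat{\P}^1_{\cur{V}\pow{t}}$, together with the morphism $w':\mathfrak{X}'\to\widehat{\P}^1_{\cur{V}\pow{t}}$ lifting $f$, exactly as in the commutative square of frames preceding the statement. By Lemma 5.22 of \cite{rclsf1}, the functor $F^*$ on $\mathrm{Isoc}^\dagger(U/\ekd)$ is computed on realisations by $\varphi_K'^*$ and on $\mathrm{Isoc}^\dagger(\A^1_{k\lser{t}}/\ekd)$ by $\varphi_K^*$, while $f_*$ is computed by $w_{K*}$ and by $w'_{K*}$ respectively. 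So the proposition reduces to producing a natural isomorphism
$$
\varphi_K^*\circ w_{K*}\;\cong\;w'_{K*}\circ\varphi_K'^*
$$
of functors from coherent $j_U^\dagger\cur{O}_{\mathfrak{X}_K}$-modules with overconvergent integrable connection to coherent $j_{\A^1}^\dagger\cur{O}_{\P^{1,\mathrm{an}}_{S_K}}$-modules with overconvergent integrable connection, and then to check that it is independent of the auxiliary choices.

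The underlying isomorphism of $j^\dagger$-modules is an instance of finite base change, obtained as follows. Taking generic fibres of $\mathfrak{X}'=\mathfrak{X}\times_{\widehat{\P}^1_{\cur{V}\pow{t}},\varphi}\widehat{\P}^1_{\cur{V}\pow{t}}$ gives a cartesian square of rigid spaces $\mathfrak{X}'_K=\mathfrak{X}_K\times_{\P^{1,\mathrm{an}}_{S_K},\varphi_K}\P^{1,\mathrm{an}}_{S_K}$. Since $\varphi$ reduces modulo $\pi$ to the finite $q$-power Frobenius, for each $m$ there is an $m'$ with $\varphi_K(U_{m'})\subseteq U_m$; intersecting the cartesian square with these cofinal systems and using that $w_K^{-1}(U_m)=U'_m$ is finite \'{e}tale over $U_m$ for $m\gg0$ (as recorded in the construction of $f_*$) yields, for $m\gg0$, a cartesian square of affinoids
$$
\xymatrix{ w_K'^{-1}(U_{m'})\ar[r]^{\varphi'_K}\ar[d]_{w'_K} & U'_m\ar[d]^{w_K} \\ U_{m'}\ar[r]^{\varphi_K} & U_m }
$$
whose vertical maps are finite \'{e}tale. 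Over affinoids a pushforward along a finite map is restriction of scalars along a finite ring homomorphism, which commutes with arbitrary base change, so $\varphi_K^*w_{K*}\sh{F}_m\cong w'_{K*}\varphi_K'^*\sh{F}_m$ naturally for coherent $\sh{F}_m$; passing to the colimit over the cofinal systems gives the desired isomorphism of $j^\dagger$-modules. To upgrade it to connections one uses, exactly as in the construction of the pushforward connection, that $w_K$ and $w'_K$ are \'{e}tale near $U$, so $w_K^*j_{\A^1}^\dagger\Omega^1_{\P^{1,\mathrm{an}}_{S_K}/S_K}\cong j_U^\dagger\Omega^1_{\mathfrak{X}_K/S_K}$ and likewise for $w'_K$; the base change isomorphism is compatible with the de Rham differentials of the realisations (because $\varphi_K$ and $\varphi'_K$ fit into a lift of a commuting square and hence respect them), and the pushforward and pullback connections are built from $\nabla$ by these functorial operations, so the isomorphism intertwines them. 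Naturality in $\sh{E}$ is immediate, and independence of the lifts $w$, $w'$ follows from the independence of $f_*$ from the chosen lift established earlier via the Strong Fibration Theorem; this makes the above a natural isomorphism of functors $\mathrm{Isoc}^\dagger(U/\ekd)\to\mathrm{Isoc}^\dagger(\A^1_{k\lser{t}}/\ekd)$.

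The main obstacle is this last compatibility with connections: one must check that $\varphi_K^*$ applied to the connection on $w_{K*}\sh{E}$ (defined via the projection formula $w_{K*}(\sh{E}\otimes w_K^*\Omega^1)\cong w_{K*}\sh{E}\otimes\Omega^1$) agrees, under the finite base change isomorphism, with $w'_{K*}$ applied to the pulled-back connection on $\varphi_K'^*\sh{E}$. This amounts to tracking the identifications $w_K^*\Omega^1\cong\Omega^1$ through the commuting square of frames displayed before the statement, which is a lift of the commuting square of $q$-power Frobenii, so no genuinely new idea is needed, but the diagram chase is the delicate part. The finite base change itself, and the passage to colimits, are formal once one knows that the images $\varphi_K(U_{m'})$ eventually lie in a prescribed $U_m$; that fact deserves to be stated explicitly and follows from Lemma \ref{a1cofinal} together with the explicit shape of a Frobenius lift.
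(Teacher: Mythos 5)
Your proposal is correct and takes essentially the same route as the paper: after exhibiting the natural horizontal base change morphism $\varphi_K^*w_{K*}\sh{E}\rightarrow w'_{K*}\varphi_K'^*\sh{E}$ and forgetting the connection, everything reduces to base change for the pushforward along a finite (\'{e}tale) map. The only difference is presentational: the paper does this in one stroke via the global Monsky--Washnitzer description, identifying coherent $j^\dagger$-modules with finitely generated modules over $\ekd\weak{x}$ and $A_K^\dagger$ so that the map becomes $M\otimes_{\ekd\weak{x},\sigma}\ekd\weak{x}\rightarrow M\otimes_{A_K^\dagger}B_K^\dagger$ with $B_K^\dagger=A_K^\dagger\otimes_{\ekd\weak{x},\sigma}\ekd\weak{x}$, visibly an isomorphism, whereas you perform the same finite base change affinoid-by-affinoid on the cofinal system $U_m$ and pass to the colimit.
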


\begin{proof} For any coherent $ j_{U}^\dagger \cur{O}_{\mathfrak{X}_{K}}$-module $\sh{E}$ with an integrable connection, there is a natural base change morphism 
$$
\varphi_K^*w_{K*}\sh{E} \rightarrow w'_{K*}\varphi'^*_K\sh{E}
$$
which is horizontal for the induced connections, we must show that this morphism is an isomorphism. Of course, we may forget the connection and simply prove the corresponding statement for coherent $ j_{U}^\dagger \cur{O}_{\mathfrak{X}_{K}}$-modules.

We can given this a Monsky-Washnitzer interpretation as follows, we have $$\Gamma(\P^{1,\mathrm{an}}_{S_K},j_{\A^1}^\dagger\cur{O}_{\P^{1,\mathrm{an}}_{S_K}})=\ekd\weak{x}$$ and the Frobenius $\sigma$ induces a Frobenius $\sigma$ on $\ekd\weak{x}$. Then
$$ \mathrm{Coh}(j_{\A^1}^\dagger\cur{O}_{\P^{1,\mathrm{an}}_{S_K}}) \cong \mathrm{Mod}_{\mathrm{fg}}(\ekd\weak{x})
$$
and if we similarly let $A_K^\dagger= \Gamma(\mathfrak{X}_K,j_{U}^\dagger\cur{O}_{\mathfrak{X}_K})$, then we get an equivalence
$$ \mathrm{Coh}(j_{U}^\dagger\cur{O}_{\mathfrak{X}_K}) \cong \mathrm{Mod}_{\mathrm{fg}}(A_K^\dagger).
$$
There is a finite \'{e}tale morphism of rings $\ekd\weak{x}\rightarrow A_K^\dagger$ such that the push forward $f_{*}$ is identified with the forgetful functor 
$$  \mathrm{Mod}_{\mathrm{fg}}(A_K^\dagger)\rightarrow \mathrm{Mod}_{\mathrm{fg}}(\ekd\weak{x}).
$$
We can identify $\Gamma(\mathfrak{X}'_K,j_{U}^\dagger\cur{O}_{\mathfrak{X}'_K})$ with $$B^\dagger_K := A_K^\dagger\otimes_{\ekd\weak{x},\sigma} \ekd\weak{x}.$$ Then the base change morphism becomes the natural morphism
$$ M \otimes_{\ekd\weak{x},\sigma} \ekd\weak{x} \rightarrow M\otimes_{A_K^\dagger} B^\dagger_K
$$ 
associated to a finite $A_K^\dagger$-module $M$, which is an isomorphism.
\end{proof}

Finally, since we want to be able to prove base change theorems as well as finiteness theorems, we will need to know compatibility of $f_{*}$ with the \'{e}tale pushforward functor in `usual' rigid cohomology, as constructed for example in \S2.6 of \cite{tsuzukigysin}. Let $\cur{O}_{\ek}$ denote the ring of integers of $\ek$, and let $w:(U,\overline{X},\mathfrak{X})\rightarrow (\A^1_{k\lser{t}},\P^1_{k\pow{t}},\widehat{\P}^1_{\cur{V}\pow{t}})$ be a lifting of $U\rightarrow \A^1_{k\lser{t}}$ as above. Write $\overline{X}_{k\lser{t}}=\overline{X}\otimes_{k\pow{t}} k\lser{t}$ and $\hat{\mathfrak{X}}=\mathfrak{X}\otimes_{\cur{V}\pow{t}} \cur{O}_{\ek}$. Then
$$
w:(U,\overline{X}_{k\lser{t}},\hat{\mathfrak{X}})\rightarrow (\A^1_{k\lser{t}},\P^1_{k\lser{t}},\widehat{\P}^1_{\cur{O}_{\ek}})
$$
is a morphism of frames over $\cur{O}_{\ek}$ lifting $U\rightarrow \A^1_{k\lser{t}}$. Methods entirely similar to those used in \S1 of \cite{rclsf1} show that overconvergent isocrystals on $U/\ek$ (resp. $\A^1_{k\lser{t}}/\ek$) can be described as modules with overconvergent connection on $]\overline{X}_{k\lser{t}}[_{\hat{\mathfrak{X}}}=\hat{\mathfrak{X}}_{\ek}$ (resp. $]\P^1_{k\lser{t}}[_{\widehat{\P}^1_{\cur{O}_{\ek}}}=\P^{1,\mathrm{an}}_{\ek}$) as adic spaces, and that Tsuzuki's \'{e}tale pushforward functor can be described as taking the module with connection 
$$
\sh{E}\rightarrow \sh{E}\otimes j_{U}^\dagger\Omega^1_{\hat{\mathfrak{X}}_{\ek}/\ek} 
$$
to the module with connection corresponding to
$$
\mathbf{R}w_{K*}(\sh{E}\rightarrow \sh{E}\otimes j_{U}^\dagger\Omega^1_{\hat{\mathfrak{X}}_{\ek}/\ek} )\cong  w_{K*}(\sh{E}) \rightarrow w_{K*}(\sh{E})\otimes j_{\A^1}^\dagger\Omega^1_{\P^{1,\mathrm{an}}_{\ek}/\ek} 
$$
exactly as before. Since the natural morphisms of tubes appearing as the horizontal arrows in the diagram
$$\xymatrix{
\hat{\mathfrak{X}}_{\ek} \ar[r]\ar[d]^{w_{K}} &\mathfrak{X}_K\ar[d]^{w_K} \\
\P^{1,\mathrm{an}}_{\ek} \ar[r] & \P^{1,\mathrm{an}}_{S_K} }
$$
are open immersions, and the diagram is Cartesian, it follows immediately that the natural base change morphism arising from the diagram is an isomorphism, or in other words that the diagram
$$
\xymatrix{
\mathrm{Isoc}^\dagger(U/\ekd) \ar[r]^{\sh{E}\mapsto \hat{\sh{E} }}\ar[d]^{f_{*}} & \mathrm{Isoc}^\dagger(U/\ek) \ar[d]^{f_{*}} \\ \mathrm{Isoc}^\dagger(\A^1_{k\lser{t}}/\ekd) \ar[r]^{\sh{E}\mapsto \hat{\sh{E}}} & \mathrm{Isoc}^\dagger(\A^1_{k\lser{t}}/\ek)
}
$$
commutes up to natural isomorphism. Of course, an entirely similar argument shows that the same is true with Frobenius structures. We can summarise the results of this section so far as follows.

\begin{theo} \label{summ} Let $f:X\rightarrow \P^1_{k\lser{t}}$ be a finite morphism, \'{e}tale away from $\infty$, and let $U=f^{-1}(\A^1_{k\lser{t}})$. Then after a finite separable extension of $k\lser{t}$ there exists a pushforward functor
$$
f_{*}:F\text{-}\mathrm{Isoc}^\dagger(U/\ekd) \rightarrow F\text{-}\mathrm{Isoc}^\dagger(\A^1_{k\lser{t}}/\ekd)
$$
which commutes with further finite separable extensions $F/k\lser{t}$, and is such that 
$$
H^i_\rig(U/\ekd,\sh{E})\cong H^i_\rig(\A^1_{k\lser{t}}/\ekd,f_{*}\sh{E})
$$
for any $\sh{E}\in F\text{-}\mathrm{Isoc}^\dagger(U/\ekd)$. Moreover, the diagram
$$
\xymatrix{ F\text{-}\mathrm{Isoc}^\dagger(U/\ekd)
 \ar[r]\ar[d]^{f_{*}} & F\text{-}\mathrm{Isoc}^\dagger(U/\ek) \ar[d]^{f_{*}} \\ 
 F\text{-}\mathrm{Isoc}^\dagger(\A^1_{k\lser{t}}/\ekd)
 \ar[r] & F\text{-}\mathrm{Isoc}^\dagger(\A^1_{k\lser{t}}/\ek) 
}
$$
commutes up to natural isomorphism.
\end{theo}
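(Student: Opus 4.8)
The plan is to assemble the statement from the constructions and compatibilities already established in this section. First, combining Theorem \ref{finet} with Proposition \ref{forlift}, after replacing $k\lser{t}$ by a suitable finite separable extension we obtain a morphism of smooth and proper frames
$$ w:(U,\overline{X},\mathfrak{X})\rightarrow (\A^1_{k\lser{t}},\P^1_{k\pow{t}},\widehat{\P}^1_{\cur{V}\pow{t}}) $$
with $\overline{X}\cong \mathfrak{X}\otimes_{\cur{V}\pow{t}} k\pow{t}$ and $w$ \'{e}tale in a neighbourhood of $U$. The proposition constructing \'{e}tale pushforward then furnishes the functor $f_{*}:\mathrm{Isoc}^\dagger(U/\ekd)\rightarrow \mathrm{Isoc}^\dagger(\A^1_{k\lser{t}}/\ekd)$, together with its independence of the chosen lift, its compatibility with further finite separable base change $F/k\lser{t}$, and the cohomology isomorphism $H^i_\rig(U/\ekd,\sh{E})\cong H^i_\rig(\A^1_{k\lser{t}}/\ekd,f_{*}\sh{E})$.

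Next I would promote $f_{*}$ to a functor on $F$-isocrystals. Given $\sh{E}\in F\text{-}\mathrm{Isoc}^\dagger(U/\ekd)$, that is an object of $\mathrm{Isoc}^\dagger(U/\ekd)$ equipped with an isomorphism $\Phi\colon F^*\sh{E}\isomto\sh{E}$, the proposition asserting that $f_{*}$ commutes with Frobenius pullback supplies a natural isomorphism $F^*f_{*}\sh{E}\cong f_{*}F^*\sh{E}$; composing with $f_{*}\Phi$ yields an isomorphism $F^*f_{*}\sh{E}\isomto f_{*}\sh{E}$, so $f_{*}\sh{E}$ inherits a Frobenius structure, functorially in $\sh{E}$. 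One checks this is compatible with iterating Frobenius pullback, so the resulting $f_{*}$ is a well-defined functor $F\text{-}\mathrm{Isoc}^\dagger(U/\ekd)\rightarrow F\text{-}\mathrm{Isoc}^\dagger(\A^1_{k\lser{t}}/\ekd)$; it still commutes with finite separable extensions $F/k\lser{t}$ since both the underlying pushforward and the Frobenius-pullback comparison do, and the cohomology isomorphism persists.

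It remains to establish the compatibility square relating $f_{*}$ over $\ekd$ and over $\ek$. This is exactly the content of the discussion preceding the theorem: writing $\hat{\mathfrak{X}}=\mathfrak{X}\otimes_{\cur{V}\pow{t}}\cur{O}_{\ek}$, the morphism $w$ also lifts $U\rightarrow\A^1_{k\lser{t}}$ as a morphism of frames over $\cur{O}_{\ek}$, Tsuzuki's \'{e}tale pushforward on $\mathrm{Isoc}^\dagger(-/\ek)$ is computed by the same formula $\sh{E}\mapsto w_{K*}\sh{E}$, and the square of tubes
$$\xymatrix{ \hat{\mathfrak{X}}_{\ek}\ar[r]\ar[d]^{w_K} & \mathfrak{X}_K\ar[d]^{w_K} \\ \P^{1,\mathrm{an}}_{\ek}\ar[r] & \P^{1,\mathrm{an}}_{S_K} }$$
is Cartesian with open immersions along the top and bottom. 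Hence the base change morphism $\widehat{w_{K*}\sh{E}}\cong w_{K*}\hat{\sh{E}}$ is an isomorphism, and the same argument respects connections and Frobenius structures, giving the asserted commuting diagram.

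The main point here is bookkeeping rather than a genuine obstacle: the hard input --- Proposition \ref{forlift}, and behind it the semistable reduction theorem used to produce a good $k\pow{t}$-model before lifting to characteristic zero --- has already been isolated, and what remains is to verify that the chain of natural isomorphisms (independence of the lift, commutation with $F^*$, commutation with $F/k\lser{t}$, and Cartesianness of the tube diagram) is mutually coherent. The only step requiring a little care is confirming that the Frobenius structure placed on $f_{*}\sh{E}$ is independent of the auxiliary choices of $\mathfrak{X}$ and of the Frobenius lift $\varphi$; this follows from the independence statements already proved, by passing to a common refinement $\mathfrak{X}'\times_{\widehat{\P}^1_{\cur{V}\pow{t}}}\mathfrak{X}$.
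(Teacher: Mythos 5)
Your proposal is correct and matches the paper's treatment: Theorem \ref{summ} is stated there precisely as a summary of the results just established (Theorem \ref{finet} plus Proposition \ref{forlift} to produce the lifted frame morphism after a finite separable extension, the proposition constructing $f_{*}$ on isocrystals with the cohomology comparison, the proposition on commutation with Frobenius pullback to upgrade to $F$-isocrystals, and the Cartesian tube-diagram discussion for compatibility with the $\ek$-valued pushforward). The only content beyond citation is the bookkeeping of coherence of these natural isomorphisms, which you identify and handle in the same way the paper implicitly does.
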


We need one last lemma before we can prove Theorem \ref{finitecurves}. 

\begin{lem} \label{spectral} Let $X/k\lser{t}$ be an embeddable variety, $X=\cup_j X_j$ a finite open cover of $X$ and $\sh{E}\in \mathrm{Isoc}^\dagger(X/\ekd)$. For indices $J=\left\{j_1,\ldots,j_n\right\}$ of we set $X_J= X_{j_1}\cap\ldots\cap X_{j_n}$. Then there exists a spectral sequence 
$$
E_1^{p,q}=\bigoplus_{\norm{J}=p} H^q_\rig(X_J/\ekd,\sh{E})\Rightarrow H^{p+q}_\rig(X_J/\ekd,\sh{E}).
$$
\end{lem}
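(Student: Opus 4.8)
The plan is to obtain the spectral sequence as the \v{C}ech-to-derived-functor spectral sequence of the cover $\{X_j\}$, realised at the level of overconvergent de Rham complexes on one fixed frame. Choose a frame $(X,Y,\mathfrak{P})$ into which $X$ embeds (it exists since $X$ is embeddable), and write $\Omega^\bullet=\Omega^\bullet_{]Y[_\mathfrak{P}/S_K}$. By the construction of $\ekd$-valued rigid cohomology in \cite{rclsf1}, its independence of the chosen frame, and the compatibility of the overconvergent-section functors $j^\dagger$ with open immersions, for \emph{every} open subvariety $Z\subseteq X$ (each such $Z$ is again embeddable) the group $H^q_\rig(Z/\ekd,\sh{E})$ is computed as the hypercohomology $\mathbf{H}^q\bigl(]Y[_\mathfrak{P},\,j^\dagger_Z(\sh{E}\otimes\Omega^\bullet)\bigr)$ over this same tube; in particular this applies to $Z=X$ and to all the finitely many intersections $X_J$. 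So it is enough to build a spectral sequence of the stated form abutting to $\mathbf{H}^{p+q}\bigl(]Y[_\mathfrak{P},\,j^\dagger_X(\sh{E}\otimes\Omega^\bullet)\bigr)$.

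The key ingredient is the \v{C}ech resolution of $j^\dagger$-sheaves: for any abelian sheaf $\mathcal{F}$ on $]Y[_\mathfrak{P}$ the complex of sheaves
$$
0 \longrightarrow j^\dagger_X\mathcal{F} \longrightarrow \bigoplus_{\norm{J}=1} j^\dagger_{X_J}\mathcal{F} \longrightarrow \bigoplus_{\norm{J}=2} j^\dagger_{X_J}\mathcal{F} \longrightarrow \cdots
$$
is exact. This is a purely sheaf-theoretic property of the functors $j^\dagger$: it may be checked on stalks, where it reduces to the standard statement on overconvergence along a finite cover, or one may write $j^\dagger_Z\mathcal{F}=\colim_V j_{V*}j_V^{*}\mathcal{F}$ over strict neighbourhoods $V$ of $]Z[$ in $]Y[_\mathfrak{P}$ and pass to the filtered colimit of the ordinary \v{C}ech resolution of $\mathcal{F}$; since the $j^\dagger$ here are exactly those used in \cite{rclsf1}, the usual argument applies verbatim. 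Applying this with $\mathcal{F}=\sh{E}\otimes\Omega^p$ for each $p$ gives a double complex $C^{p,q}=\bigoplus_{\norm{J}=p}j^\dagger_{X_J}(\sh{E}\otimes\Omega^q)$ ($p\geq1$, $q\geq0$) whose rows, augmented in \v{C}ech degree $1$ by $j^\dagger_X(\sh{E}\otimes\Omega^q)$, are exact; hence its total complex resolves $j^\dagger_X(\sh{E}\otimes\Omega^\bullet)$ and has the same hypercohomology.

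Filtering the total complex by the \v{C}ech degree now produces a spectral sequence whose $E_1$-page is the columnwise hypercohomology,
$$
E_1^{p,q}=\bigoplus_{\norm{J}=p}\mathbf{H}^q\bigl(]Y[_\mathfrak{P},\,j^\dagger_{X_J}(\sh{E}\otimes\Omega^\bullet)\bigr)\;\cong\;\bigoplus_{\norm{J}=p} H^q_\rig(X_J/\ekd,\sh{E}),
$$
the first isomorphism being the identification recalled above, with $d_1$ the alternating-sum \v{C}ech differential, and which converges to $\mathbf{H}^{p+q}\bigl(]Y[_\mathfrak{P},\,j^\dagger_X(\sh{E}\otimes\Omega^\bullet)\bigr)=H^{p+q}_\rig(X/\ekd,\sh{E})$ (with the standard conventions on the \v{C}ech grading). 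The same construction works over $\ek$, and, being functorial throughout, it carries Frobenius structures along, which will matter for the base-change statements elsewhere in the paper.

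The one place where there is genuine content is the exactness of the \v{C}ech complex of the $j^\dagger$-sheaves, together with the claim that $\mathbf{H}^q\bigl(]Y[_\mathfrak{P},j^\dagger_{X_J}(\sh{E}\otimes\Omega^\bullet)\bigr)$ really does compute $H^q_\rig(X_J/\ekd,\sh{E})$ on the single frame $(X,Y,\mathfrak{P})$; both are part of the sheaf-theoretic package built in \cite{rclsf1} (frame independence with coefficients, and the standard behaviour of overconvergent sections under covers and open immersions), so once that machinery is cited the argument is essentially formal and I do not expect a serious obstacle.
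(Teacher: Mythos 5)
Your argument is correct and is essentially the paper's: the paper disposes of this lemma by citing Lemma 4.4 of \cite{rclsf1}, which packages exactly the \v{C}ech machinery you spell out (exactness of the complex of $j^\dagger$-sheaves for a finite open cover on a fixed frame, plus computation of the cohomology of each $X_J$ on that same frame), so your proof just makes the outsourced content explicit. The only caveat is the usual indexing convention on the \v{C}ech degree, which you already flag.
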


\begin{proof} Follows from Lemma 4.4 of \cite{rclsf1}.
\end{proof}

\begin{proof}[Proof of Theorem \ref{finitecurves}] By Lemma \ref{spectral} and the corresponding result for rigid cohomology over $\ek$ (i.e. the spectral sequence resulting from Proposition 2.1.8 of \cite{iso}), the question is local on $X$, and hence we may assume that there exists a finite \'{e}tale map to $\A^1_{k\lser{t}}$. After making a finite separable extension of $k\lser{t}$ we may assume therefore that we have a pushforward functor 
$$ f_{*}:F\text{-}\mathrm{Isoc}^\dagger(X/\ekd) \rightarrow F\text{-}\mathrm{Isoc}^\dagger(\A^1_{k\lser{t}}/\ekd)
$$
as in Theorem \ref{summ}. Hence we may reduce to the case of affine 1-space, i.e. Theorem \ref{finitesheaf}.
\end{proof} 

\begin{defn} A $\varphi$-module over $\ekd$ is a finite dimensional $\ekd$-vector space $M$, together with a Frobenius structure, that is a $\sigma$-linear map $$\varphi:M\rightarrow M$$ which induces an isomorphism $M\otimes_{\ekd,\sigma}\ekd\cong M$.
\end{defn}

\begin{cor} Let $X$ be a smooth curve over $k\lser{t}$, and $\sh{E}\in F\text{-}\mathrm{Isoc}^\dagger(X/\ekd)$. Then the cohomology groups $H^i_\rig(X/\ekd,\sh{E})$ are $\varphi$-modules over $\ekd$.
\end{cor}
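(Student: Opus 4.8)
The corollary bundles two claims: finite dimensionality of $H^i_\rig(X/\ekd,\sh{E})$ over $\ekd$, and bijectivity of the linearisation of the Frobenius it carries. Finite dimensionality is precisely Theorem \ref{finitecurves}, so the only real work is the second claim. First I would recall why there is a Frobenius at all: the $F$-structure on $\sh{E}$ is an isomorphism $F\colon F^*\sh{E}\isomto\sh{E}$ of overconvergent isocrystals (Frobenius pullback being as in \S5 of \cite{rclsf1}), and applying $H^i_\rig(X/\ekd,-)$ and pre-composing with the $\sigma$-semilinear map $H^i_\rig(X/\ekd,\sh{E})\to H^i_\rig(X/\ekd,F^*\sh{E})$ induced by the $q$-power Frobenius $X\to X$ yields a $\sigma$-linear endomorphism $\varphi$ of $H^i_\rig(X/\ekd,\sh{E})$; what must be shown is that its $\ekd$-linearisation is an isomorphism.

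The plan is to prove this by descending from $\ek$. By Theorem \ref{finitecurves} the base change map $H^i_\rig(X/\ekd,\sh{E})\otimes_{\ekd}\ek\to H^i_\rig(X/\ek,\hat{\sh{E}})$ is an isomorphism, and since $\sh{E}\mapsto\hat{\sh{E}}$ commutes with Frobenius pullback (\S5 of \cite{rclsf1}) this isomorphism is Frobenius-equivariant; as $\sigma$ on $\ekd$ is the restriction of $\sigma$ on $\ek$, a linear map of finite-dimensional $\ekd$-spaces is an isomorphism iff it is so after $-\otimes_{\ekd}\ek$, so it suffices to show that the linearised Frobenius on $H^i_\rig(X/\ek,\hat{\sh{E}})$ is an isomorphism. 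This is the statement that `classical' $\ek$-valued rigid cohomology with $F$-isocrystal coefficients carries a bijective Frobenius; if a direct reference is not at hand, I would re-derive it by exactly the strategy of \S\ref{finsmc}: \'{e}tale pushforward commutes with Frobenius pullback (Theorem \ref{summ}), so via the spectral sequence of Lemma \ref{spectral} (and Proposition 2.1.8 of \cite{iso}) one reduces to $\A^1_{k\lser{t}}$, hence to a $(\varphi,\nabla)$-module $M'$ over $\ek\weak{x}$; the long exact sequence used in the proof of Theorem \ref{finitemodule}, which relates $H^i(M')$ to $H^i(M'\otimes\cur{R}_{\ek})$ and $H^i(M'\otimes\cur{Q}_{\ek})$ and is Frobenius-equivariant, then reduces the question to bijectivity of Frobenius on the cohomology of $(\varphi,\nabla)$-modules over the Robba ring $\cur{R}_{\ek}$.

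The hard part splits into bookkeeping and genuine input. The bookkeeping is checking that the base change isomorphism of Theorem \ref{finitecurves}, and the various comparison isomorphisms in the reduction to $\A^1$, were constructed functorially enough in the isocrystal to carry along $F\colon F^*\sh{E}\isomto\sh{E}$ and hence be Frobenius-equivariant; this is routine but requires going back through \S5 of \cite{rclsf1}. The genuine input is bijectivity of Frobenius on $H^0$ and $H^1$ of a $(\varphi,\nabla)$-module over $\cur{R}_{\ek}$: this is not formal, since the de Rham complex of $F^*M$ is not simply the $\sigma$-twist of that of $M$ because of the factor $\partial_y(\sigma(y))$, but it is part of the standard theory of $(\varphi,\nabla)$-modules over the Robba ring (obtainable, for instance, via the $p$-adic monodromy theorem). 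Granting this, descent along $\ekd\to\ek$ completes the proof.
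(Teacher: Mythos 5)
Your argument is the same as the paper's: finite dimensionality is Theorem \ref{finitecurves}, and the linearised Frobenius is an isomorphism because, by the base change isomorphism of that theorem (which is Frobenius-equivariant), it becomes one after the faithfully flat extension $-\otimes_{\ekd}\ek$, where bijectivity is the known statement for classical $\ek$-valued rigid cohomology. The extra material on re-deriving the $\ek$-statement is not needed but does no harm; the proposal is correct.
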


\begin{proof} The linearised Frobenius 
$$\varphi^\sigma : H^i_\rig(X/\ekd,\sh{E}) \otimes_{\ekd,\sigma}\ekd\rightarrow H^i_\rig(X/\ekd,\sh{E})$$
becomes an isomorphism upon base-changing to $\ek$, it must therefore be an isomorphism.
\end{proof}

\section{Cohomology with compact support and Poincar\'{e} duality for curves}\label{poindu}

Although we cannot prove it at the moment, we fully expect that $\ekd$-valued rigid cohomology forms an `extended' Weil cohomology. By this we mean that together with all the usual axioms for a Weil cohomology, we can define cohomology groups for arbitrary varieties (not necessarily smooth and proper), as well as cohomology with compact support and support in a closed subscheme. All these should all not only be vector spaces, but in fact $(\varphi,\nabla)$-modules over $\ekd$, and there should be versions of the K\"{u}nneth formula and Poincar\'{e} duality.

In full generality this seems somewhat distant (for example, even finite dimensionality appears rather difficult). In this section we start towards this goal: we define cohomology with compact support and prove Poincar\'{e} duality for smooth curves (as $\varphi$-modules). To define cohomology with compact support requires a bit more care than in the case of `usual' rigid cohomology, since we are trying to capture sections having support compact over $k\lser{t}$, not $k\pow{t}$. In fact this subtlety is acknowledged in le Stum's book on rigid cohomology \cite{rigcoh}, where he only defines the relative rigid cohomology with compact support of a frame $$(X,Y,\frak{P})\rightarrow (S,\overline{S},\frak{S})$$ under the assumption that $S=\overline{S}$. Our goal is to define the `relative rigid cohomology with compact supports' of a smooth and proper morphism of frames $$(X,Y,\frak{P})\rightarrow \left(\spec{k\lser{t}},\spec{k\pow{t}},\spf{\cur{V}\pow{t}}\right)$$ and so this problem cannot be side-stepped. 

Let us first treat the case of constant coefficients, so suppose that we have a smooth frame $(X,Y,\frak{P})$ over $\cur{V}\pow{t}$. We set $Z=Y\setminus X$, $Y'=Y\otimes_{k\pow{t}}k\lser{t}$ and $Z'=Y'\setminus X$. Finally we let $W=Y\otimes_{k\pow{t}}k$. Thus we have a diagram of tubes 
$$ \xymatrix{ & ]X[_\frak{P} \ar[dl]_{j'}\ar[d]_j & \\
]Y'[_\frak{P} \ar[r]^k & ]Y[_\frak{P} & ]W[_\frak{P}\ar[l]_h \\
]Z'[_\frak{P}\ar[u]^{i'}\ar[r]^{k'} & ]Z[_\frak{P}\ar[u]^i}
$$
where $j,j',k$ and $k'$ are closed embeddings, $i,i'$ and $h$ are open embeddings and the lower left had square is Cartesian. For any sheaf $\sh{F}$ on $]Y[_\frak{P}$ we define 
$$ \underline{\Gamma}_{]X[_\frak{P}}(\sh{F}):= \ker (k_*k^{-1}\sh{F}\rightarrow  i_*i^{-1}k_*k^{-1}\sh{F} ),
$$
the total derived functor of $\underline{\Gamma}_{]X[_\frak{P}}$ can then be computed as 
$$ \mathbf{R}\underline{\Gamma}_{]X[_\frak{P}}(\sh{K}^*) = \left( k_*k^{-1}\sh{K}^*\rightarrow \mathbf{R}i_*i^{-1} k_*k^{-1}\sh{K}^* \right)[1].
$$
We define the cohomology with compact support of $(X,Y,\frak{P})$ to be
$$ H^i_{c,\rig}((X,Y,\frak{P})/\ekd):= H^i(]Y[_\frak{P},\mathbf{R}\underline{\Gamma}_{]X[_\frak{P}}(\Omega^*_{]Y[_\frak{P}/S_K})).
$$
To perhaps motivate this definition a bit better, or at least better demonstrate the analogy with compactly supported rigid cohomology, let us slightly recast our definition of $\ekd$-valued rigid cohomology. For any sheaf $\sh{F}$ on $]Y'[_\frak{P}$ we define 
$$ j'^\dagger_X\sh{F}:= j'_*j'^{-1}\sh{F}
$$ 
so that 
\begin{align*} H^i_\rig((X,Y,\frak{P})/\ekd)&=H^i(]Y[_\frak{P},j_*j^{-1}\Omega^*_{]Y[_\frak{P}/S_K}) \\
&= H^i(]X[_\frak{P},j'^{-1}k^{-1}\Omega^*_{]Y[_\frak{P}/S_K}) \\
&= H^i(]Y[_\frak{P},j'_*j'^{-1}k^{-1}\Omega^*_{]Y[_\frak{P}/S_K}) \\
&= H^i(]Y[_\frak{P},j'^\dagger_X (k^{-1}\Omega^*_{]Y[_\frak{P}/S_K}))
\end{align*}
and for any sheaf $\sh{F}$ on $]Y'[_\frak{P}$ we define
$$ \underline{\Gamma}_{]X[_\frak{P}}(\sh{F}) = \ker ( \sh{F}\rightarrow i'_*i'^{-1}\sh{F} ).
$$
The total derived functor of $\underline{\Gamma}_{]X[_\frak{P}}$ is therefore given by
$$ \mathbf{R}\underline{\Gamma}_{]X[_\frak{P}}(\sh{K}^*) = \left( \sh{K}^*\rightarrow \mathbf{R}i'_*i'^{-1}\sh{K}^* \right)[-1]
$$ 
and it is straightforward to verify that we have
$$ H^i_{c,\rig}((X,Y,\frak{P})/\ekd) \cong H^i(]Y'[_\frak{P},\mathbf{R}\underline{\Gamma}_{]X[_\frak{P}}(k^{-1}\Omega^*_{]Y[_\frak{P}/S_K})).
$$
These groups satisfy are functorial in two different ways. First, if $(U,Y,\frak{P})\rightarrow (X,Y,\frak{P})$ is a morphism of smooth and proper frames such that the induced maps on $Y$ and $\frak{P}$ are the identity, and $U\rightarrow X$ is an open immersion, then we get an induced morphism
$$ H^i_{c,\rig}((U,Y,\frak{P})/\ekd) \rightarrow H^i_{c,\rig}((X,Y,\frak{P})/\ekd)
$$
which arises from the natural morphism of functors $ \underline{\Gamma}_{]U[_\frak{P}}\rightarrow \underline{\Gamma}_{]X[_\frak{P}}$. Also, if we have a morphism of smooth and proper frames
$$ u:(X',Y',\frak{P}') \rightarrow (X,Y,\frak{P})
$$
such that the diagram
$$\xymatrix{ X' \ar[r]\ar[d] & Y' \ar[d] \\ X \ar[r] &Y }
$$ is Cartesian, then we get an induced morphism
$$ H^i_{c,\rig}((X,Y,\frak{P})/\ekd) \rightarrow H^i_{c,\rig}((X',Y',\frak{P}')/\ekd)
$$ 
arising from the natural morphism of functors $\underline{\Gamma}_{]X[_\frak{P}}u_{K*} \rightarrow u_{K*}\underline{\Gamma}_{]X'[_\frak{P'}}$. One also verifies easily that compactly supported cohomology only depends on a neighbourhood of $]X[_\frak{P}$ inside $]Y[_\frak{P}$, since if $j_V:V\rightarrow ]Y[_\frak{P}$ is the inclusion of such a neighbourhood, then we have a natural isomorphism of functors
$$ j_{V*}\underline{\Gamma}_{]X[_\frak{P}}j_V^{-1} \cong \underline{\Gamma}_{]X[_\frak{P}}
$$ 
where $\underline{\Gamma}_{]X[_\frak{P}}$ on the LHS denotes the obvious analogous functor for sheaves on $V$. The key step in proving that $H^i_{c,\rig}((X,Y,\frak{P})/\ekd)$ only depends on $X$ is to show a Poincar\'{e} lemma with compact supports. First, however, we need an excision sequence.

\begin{lem}[Excision] Let $(X,Y,\frak{P})$ be a smooth frame, and $U\subset X$ an open subvariety. Let $Z$ be a closed complement to $U$ in $X$, $\overline{Z}=Y\setminus U$, and $i:]\overline{Z}[_\frak{P}\rightarrow ]Y[_\frak{P}$ the corresponding open immersion. Then for any sheaf $\sh{F}$ on $]Y[_\frak{P}$ there is an exact triangle
$$ \mathbf{R}\underline{\Gamma}_{]U[_\frak{P}}(\sh{F}) \rightarrow \mathbf{R}\underline{\Gamma}_{]X[_\frak{P}}(\sh{F}) \rightarrow \mathbf{R}i_*\mathbf{R}\underline{\Gamma}_{]Z[_\frak{P}}(i^{-1}\sh{F}) \overset{+1}{\rightarrow}
$$
of sheaves on $]Y[_\frak{P}$, and hence a long exact sequence
$$ \ldots\rightarrow H^i_{c,\rig}((U,Y,\frak{P})/\ekd)\rightarrow H^i_{c,\rig}((X,Y,\frak{P})/\ekd) \rightarrow H^i_{c,\rig}((Z,\overline{Z},\frak{P})/\ekd)\rightarrow \ldots
$$
in compactly supported cohomology.
\end{lem}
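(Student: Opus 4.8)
The plan is to recognise each of the three functors $\mathbf{R}\underline{\Gamma}_{]U[_\mathfrak{P}}$, $\mathbf{R}\underline{\Gamma}_{]X[_\mathfrak{P}}$, $\mathbf{R}\underline{\Gamma}_{]Z[_\mathfrak{P}}$ as a derived ``sections supported in a closed analytic subset'' functor, and then to invoke the standard localisation triangle attached to the open--closed decomposition $X = U\sqcup Z$. It is cleanest to carry this out on $]Y'[_\mathfrak{P}$ rather than on $]Y[_\mathfrak{P}$, using the recast description of $H^i_{c,\rig}$ given just before the lemma, and then to transport the result back to $]Y[_\mathfrak{P}$.

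First I would set up the picture on the generic fibre. Write $Z' = Y'\setminus X$ and $\overline{Z}' = \overline{Z}\otimes_{k\pow{t}}k\lser{t} = Y'\setminus U$. Since $U$ is open in $Y'$ and $Z = X\setminus U$, one gets $\overline{Z}' = Z'\sqcup Z$ with $Z'$ closed and $Z$ open in $\overline{Z}'$; in particular $(Z,\overline{Z},\mathfrak{P})$ is again a smooth frame with $\overline{Z}$ proper over $k\pow{t}$ and generic fibre $\overline{Z}'$, and $\overline{Z}'\setminus Z = Z'$, so that the recast functor $\underline{\Gamma}_{]Z[_\mathfrak{P}}$ for this frame is $\ker(\,\cdot\,\to n_*n^{-1}(\,\cdot\,))$ for $n\colon ]Z'[_\mathfrak{P}\hookrightarrow]\overline{Z}'[_\mathfrak{P}$ the open immersion. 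Using that the tube of an open subscheme is a closed analytic subset whose complement is the tube of the complementary closed subscheme, we obtain inside $]Y'[_\mathfrak{P}$: the subsets $]X[$ and $]U[$ are closed, $]Y'[\setminus]X[ = ]Z'[$ and $]Y'[\setminus]U[ = ]\overline{Z}'[$, $]U[\subset]X[$ with $]X[\setminus]U[ = ]Z[$, and the open immersion $i'_X\colon]Z'[\hookrightarrow]Y'[$ factors as $i'_X = a\circ n$ with $a\colon]\overline{Z}'[_\mathfrak{P}\hookrightarrow]Y'[_\mathfrak{P}$. Consequently, for a complex $\mathcal{H}^*$ of sheaves on $]Y'[_\mathfrak{P}$ one has $\mathbf{R}\underline{\Gamma}_{]X[}(\mathcal{H}^*) = \mathrm{fib}(\mathcal{H}^*\to\mathbf{R}a_*\mathbf{R}n_*n^{-1}a^{-1}\mathcal{H}^*)$ and $\mathbf{R}\underline{\Gamma}_{]U[}(\mathcal{H}^*) = \mathrm{fib}(\mathcal{H}^*\to\mathbf{R}a_*a^{-1}\mathcal{H}^*)$.

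Next comes the (formal) homological step. The unit $a^{-1}\mathcal{H}^*\to\mathbf{R}n_*n^{-1}a^{-1}\mathcal{H}^*$ induces a map $\mathbf{R}a_*a^{-1}\mathcal{H}^*\to\mathbf{R}a_*\mathbf{R}n_*n^{-1}a^{-1}\mathcal{H}^*$ compatible with the structure maps from $\mathcal{H}^*$, hence on fibres a map $\mathbf{R}\underline{\Gamma}_{]U[}(\mathcal{H}^*)\to\mathbf{R}\underline{\Gamma}_{]X[}(\mathcal{H}^*)$, which is the one underlying the map of the lemma. Since the cofibre of a map of fibres $\mathrm{fib}(\mathcal{H}^*\to A)\to\mathrm{fib}(\mathcal{H}^*\to B)$ lying over a map $A\to B$ is $\mathrm{fib}(A\to B)$ (the $3\times 3$ lemma), and $\mathbf{R}a_*$ commutes with fibres, the cofibre here is $\mathbf{R}a_*\,\mathrm{fib}(a^{-1}\mathcal{H}^*\to\mathbf{R}n_*n^{-1}a^{-1}\mathcal{H}^*) = \mathbf{R}a_*\mathbf{R}\underline{\Gamma}_{]Z[}(a^{-1}\mathcal{H}^*)$. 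This yields, functorially in $\mathcal{H}^*$, an exact triangle
$$\mathbf{R}\underline{\Gamma}_{]U[}(\mathcal{H}^*)\to\mathbf{R}\underline{\Gamma}_{]X[}(\mathcal{H}^*)\to\mathbf{R}a_*\mathbf{R}\underline{\Gamma}_{]Z[}(a^{-1}\mathcal{H}^*)\overset{+1}{\to}.$$
Applying this with $\mathcal{H}^* = k^{-1}\mathcal{F}$ for a sheaf (or complex) $\mathcal{F}$ on $]Y[_\mathfrak{P}$ and then $k_* = \mathbf{R}k_*$ ($k$ being a closed immersion) gives a triangle on $]Y[_\mathfrak{P}$; base change along the Cartesian squares of tubes coming from $Y'\hookrightarrow Y$ identifies its first two terms with the first-form functors $\mathbf{R}\underline{\Gamma}_{]U[_\mathfrak{P}}(\mathcal{F})$ and $\mathbf{R}\underline{\Gamma}_{]X[_\mathfrak{P}}(\mathcal{F})$, while $k\circ a = i\circ k_{\overline{Z}}$ (both being the tube inclusion $]\overline{Z}'[_\mathfrak{P}\hookrightarrow]Y[_\mathfrak{P}$) identifies the third term with $\mathbf{R}i_*\mathbf{R}\underline{\Gamma}_{]Z[_\mathfrak{P}}(i^{-1}\mathcal{F})$. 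Taking $H^i(]Y[_\mathfrak{P},-)$ of this triangle for $\mathcal{F} = \Omega^*_{]Y[_\mathfrak{P}/S_K}$, and using $\Omega^*_{]\overline{Z}[_\mathfrak{P}/S_K} = i^{-1}\Omega^*_{]Y[_\mathfrak{P}/S_K}$ (as $]\overline{Z}[_\mathfrak{P}$ is open in $\mathfrak{P}_K$), produces the long exact sequence, its three terms being $H^i_{c,\rig}((U,Y,\mathfrak{P})/\ekd)$, $H^i_{c,\rig}((X,Y,\mathfrak{P})/\ekd)$ and $H^i(]\overline{Z}[_\mathfrak{P},\mathbf{R}\underline{\Gamma}_{]Z[_\mathfrak{P}}(\Omega^*_{]\overline{Z}[_\mathfrak{P}/S_K})) = H^i_{c,\rig}((Z,\overline{Z},\mathfrak{P})/\ekd)$.

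The homological content is entirely formal; the only genuine work — and the step I expect to be the main obstacle — is the bookkeeping: checking that $(Z,\overline{Z},\mathfrak{P})$ is a frame of the allowed type with generic fibre exactly $\overline{Z}'$, that every tube inclusion in sight has the claimed open or closed type and that $i'_X$ factors as $a\circ n$, and that the base-change isomorphisms transporting the triangle from $]Y'[_\mathfrak{P}$ back to $]Y[_\mathfrak{P}$ hold, so that the third term really is the compactly supported de Rham complex of $(Z,\overline{Z},\mathfrak{P})$ as defined.
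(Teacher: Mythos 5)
Your proposal is correct and is essentially the paper's own argument: both express each $\mathbf{R}\underline{\Gamma}$ as the shifted fibre of a unit map, use the formal octahedral/cone-of-fibres identification, and then match terms via the Cartesian tube square ($i^{-1}k_*\cong k'_*i'^{-1}$) and the factorisation of the open immersion of tubes. The only (cosmetic) difference is that you carry out the formal step on $]Y'[_\frak{P}$ with the recast functors and then push forward along the closed immersion $k$, whereas the paper works directly on $]Y[_\frak{P}$ with the first-form functors.
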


\begin{proof} First note that the natural map $Z\rightarrow \overline{Z}$ is an open immersion, so that $(Z,\overline{Z},\frak{P})$ is indeed a smooth frame. Let $Y'=Y\otimes_{k\pow{t}}k\lser{t}$ as usual, and also let $\overline{Z}'=Y'\cap \overline{Z}$, $W=Y\setminus X$. Consider the diagram of tubes
$$ \xymatrix{ ]Y'[_\frak{P}\ar[r]^k & ]Y[_\frak{P} \\ ]\overline{Z}'[_\frak{P} \ar[r]^{k'} \ar[u]^{i'} & ]\overline{Z}[_\frak{P} \ar[u]^i & ]W[_\frak{P}\ar[l]_j \ar[ul]_{i_W}
}
$$ where the square is Cartesian, $k,k'$ are closed immersions, $i,i',j,i_W$ are open immersions, also note that we have $W=\overline{Z}\setminus Z$. Thus we have
\begin{align*} \mathbf{R}\underline{\Gamma}_{]U[_\frak{P}}(\sh{F}) &\simeq \left(k_*k^{-1}\sh{F}\rightarrow \mathbf{R}i_*i^{-1}k_*k^{-1}\sh{F}\right)[-1] \\
\mathbf{R}\underline{\Gamma}_{]X[_\frak{P}}(\sh{F}) &\simeq \left(k_*k^{-1}\sh{F}\rightarrow \mathbf{R}i_{W*}i_W^{-1}k_*k^{-1}\sh{F}\right)[-1] \\
\mathbf{R}i_*\mathbf{R}\underline{\Gamma}_{]Z[_\frak{P}}(i^{-1}\sh{F}) &\simeq \left(\mathbf{R}i_*k'_*k'^{-1}i^{-1}\sh{F}\rightarrow \mathbf{R}i_*\mathbf{R}j_*j^{-1}k'_*k'^{-1}i^{-1}\sh{F}\right)[-1]
\end{align*}
and hence it suffices to show that there is a quasi-isomorphism
\begin{align*} \mathbf{R}i_*i^{-1}k_*k^{-1}\sh{F} \rightarrow \mathbf{R}i_{W*}i_W^{-1}k_*k^{-1}i^{-1}\sh{F}  \\ \simeq  \mathbf{R}i_*k'_*k'^{-1}\sh{F}\rightarrow \mathbf{R}i_*\mathbf{R}j_*j^{-1}k'_*k'^{-1}i^{-1}\sh{F} .
\end{align*}
But unpacking this all, setting $\sh{G}=i^{-1}k_*k^{-1}\sh{F}$, and noting that we have $k'_*i'^{-1}\cong i^{-1}k_*$, then both terms in this simply become
$$  \mathbf{R}i_*\sh{G}\rightarrow \mathbf{R}i_*\mathbf{R}j_*j^{-1}\sh{G} 
$$
which proves the first claim. The second them immediately follows.
\end{proof}

\begin{lem}[Poincar\'{e} Lemma with compact supports] Let $(X,Y,\frak{P})$ be a smooth frame over $\cur{V}\pow{t}$, and consider the morphism of frames
$$ u:(X,Y,\widehat{\A}^d_\frak{P})\rightarrow (X,Y,\frak{P}).
$$
Then
$$ \mathbf{R}\underline{\Gamma}_{]X[_\frak{P}}( \Omega^*_{]Y[_\frak{P}/S_K} ) \rightarrow \mathbf{R}u_{K*}\mathbf{R}\underline{\Gamma}_{]X[_{\widehat{\A}^d_\frak{P}}}( \Omega^*_{]Y[_{\widehat{\A}^1_\frak{P}}/S_K} )
$$
is a quasi-isomorphism.
\end{lem}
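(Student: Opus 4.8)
The plan is to reduce everything to the ordinary (non-compactly-supported) relative Poincar\'e lemma for affine space, by exploiting the recast description of $\mathbf{R}\underline{\Gamma}_{]X[_\frak{P}}$ obtained above. Throughout one works on the $k\lser{t}$-tubes, via the identification $H^i_{c,\rig}((X,Y,\frak{P})/\ekd) \cong H^i(]Y'[_\frak{P}, \mathbf{R}\underline{\Gamma}_{]X[_\frak{P}}(k^{-1}\Omega^*_{]Y[_\frak{P}/S_K}))$ together with the formula $\mathbf{R}\underline{\Gamma}_{]X[_\frak{P}}(\sh{K}^*) = \big(\sh{K}^* \to \mathbf{R}i'_*i'^{-1}\sh{K}^*\big)[-1]$, where $i' : ]Z'[_\frak{P}\to ]Y'[_\frak{P}$ is the open immersion of the diagram above, and similarly over $\widehat{\A}^d_\frak{P}$. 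Write $\bar{u}_K : ]Y'[_{\widehat{\A}^d_\frak{P}} \to ]Y'[_\frak{P}$ for the induced projection on $k\lser{t}$-tubes.

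The first step is to check that $\mathbf{R}\bar{u}_{K*}$ commutes with $\mathbf{R}\underline{\Gamma}_{]X[}$. Here one uses that forming tubes is compatible with the smooth affine projection $\widehat{\A}^d_\frak{P} \to \frak{P}$: both $]Y'[_{\widehat{\A}^d_\frak{P}}$ and $]Z'[_{\widehat{\A}^d_\frak{P}}$ are identified with $]Y'[_\frak{P}\times D^d$ and $]Z'[_\frak{P}\times D^d$ respectively, $D^d = D^d(0,1^-)$ the open polydisc, so that $]Z'[_{\widehat{\A}^d_\frak{P}} = \bar{u}_K^{-1}(]Z'[_\frak{P})$ inside $]Y'[_{\widehat{\A}^d_\frak{P}}$, and the square of tubes with vertices $]Z'[_{\widehat{\A}^d_\frak{P}}, ]Y'[_{\widehat{\A}^d_\frak{P}}, ]Z'[_\frak{P}, ]Y'[_\frak{P}$ is Cartesian with horizontal arrows the open immersions $i'$. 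Open base change then gives $i'^{-1}\mathbf{R}\bar{u}_{K*} \cong \mathbf{R}\bar{u}_{K*}i'^{-1}$, and combined with functoriality of pushforward this yields $\mathbf{R}\bar{u}_{K*}\mathbf{R}i'_*i'^{-1} \cong \mathbf{R}i'_*i'^{-1}\mathbf{R}\bar{u}_{K*}$; applying this to the two terms of the cone defining $\mathbf{R}\underline{\Gamma}_{]X[}$ gives the desired commutation (and compatibly with the natural transformation $\underline{\Gamma}_{]X[_\frak{P}}u_{K*}\to u_{K*}\underline{\Gamma}_{]X[_{\widehat{\A}^d_\frak{P}}}$ coming from functoriality of compactly supported cohomology).

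Granting this, the map in the lemma is the image under the triangulated functor $\mathbf{R}\underline{\Gamma}_{]X[_\frak{P}}$ of the natural morphism $\Omega^*_{]Y[_\frak{P}/S_K} \to \mathbf{R}u_{K*}\Omega^*_{]Y[_{\widehat{\A}^d_\frak{P}}/S_K}$ given by pullback of forms, so it suffices to prove that the latter is a quasi-isomorphism. But $]Y[_{\widehat{\A}^d_\frak{P}} = ]Y[_\frak{P}\times D^d$ with $u_K$ the projection, and $\Omega^*_{]Y[_{\widehat{\A}^d_\frak{P}}/S_K}$ is the total complex of $\Omega^*_{]Y[_\frak{P}/S_K}$ with the Koszul--de Rham complex in the $d$ extra coordinates; the claim is then the relative de Rham Poincar\'e lemma on the open polydisc, namely that every power series converging on $D^d$ has a primitive converging on $D^d$ (the factor $|n+1|^{-1}$ being subexponential) and that $D^d$, a nested union of closed polydiscs, has vanishing higher coherent cohomology. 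One may instead reduce to $d=1$ by factoring $\widehat{\A}^d_\frak{P} = \widehat{\A}^1_{\widehat{\A}^{d-1}_\frak{P}}$ and inducting; either way this step is classical.

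The argument is thus essentially formal. The only points needing care are the identification of the tube of $Y$, resp. of $Z'$, in $\widehat{\A}^d_\frak{P}$ with a relative polydisc over the corresponding tube in $\frak{P}$ --- which is what makes the square of tubes Cartesian and licenses the open base change --- and the elementary convergence estimate in the Poincar\'e lemma; neither of these presents a genuine obstacle.
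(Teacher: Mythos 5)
Your overall architecture (commute $\mathbf{R}u_{K*}$ past $\mathbf{R}\underline{\Gamma}_{]X[}$, then invoke a Poincar\'{e} lemma without supports) is reasonable, and your geometric observations are fine: the tubes over $\widehat{\A}^d_\frak{P}$ are indeed relative open polydiscs, the relevant squares are Cartesian, and base change along the \emph{open} immersions $i'$ is formal. The gap is in the step where you claim the map of the lemma is $\mathbf{R}\underline{\Gamma}_{]X[_\frak{P}}$ applied to the naive morphism $\Omega^*_{]Y[_\frak{P}/S_K}\rightarrow \mathbf{R}u_{K*}\Omega^*_{]Y[_{\widehat{\A}^d_\frak{P}}/S_K}$, so that only the classical polydisc computation remains. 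The functor $\underline{\Gamma}_{]X[_\frak{P}}$ also involves the \emph{closed} immersion $k:]Y'[_\frak{P}\rightarrow ]Y[_\frak{P}$ (the $k_*k^{-1}$ part, i.e.\ the passage to the $k\lser{t}$-tube), and to make your identification you need in addition the base change isomorphism $k^{-1}\mathbf{R}u_{K*}\cong \mathbf{R}\bar{u}_{K*}\,k^{-1}$ along this closed subset; equivalently, in your $]Y'[$-formulation the input you actually need is that $k^{-1}\Omega^*_{]Y[_\frak{P}/S_K}\rightarrow \mathbf{R}\bar{u}_{K*}\bigl(k^{-1}\Omega^*_{]Y[_{\widehat{\A}^d_\frak{P}}/S_K}\bigr)$ is a quasi-isomorphism. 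Neither of these is formal: sections and cohomology along the closed tube $]Y'[_\frak{P}$ are computed as colimits over systems of neighbourhoods, and commuting $\mathbf{R}u_{K*}$ (i.e.\ the fibrewise integration over the open polydisc and the vanishing of higher coherent cohomology) with those colimits is exactly where the overconvergent content of the statement lives. The classical Poincar\'{e} lemma you prove gives the quasi-isomorphism on $]Y[_\frak{P}$, but applying the exact functor $k_*k^{-1}$ to it is not the map that occurs in the cone; the missing piece is precisely the base change map, and you have not argued it.

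This is also why the paper proceeds differently: it localises on $\frak{P}$, passes to derived global sections, writes both sides as cones of the complexes computing the rigid cohomology (with its $j^\dagger$-type restriction) of the auxiliary frames $(Y',Y,-)$ and $(Z,\overline{Z},-)$, and then quotes the already-established Poincar\'{e} lemma without supports (Proposition 4.3 of \cite{rclsf1}) twice, once for each frame --- that result is exactly the ``restricted to the closed tube'' statement your argument silently requires. Your proof can be repaired by replacing the naive polydisc computation with that citation (or by proving the base change isomorphism along $k$ directly, using the quasi-compactness and cofinality arguments of \S1 of \cite{rclsf1}), but as written the crucial overconvergence step has been elided rather than proved.
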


\begin{proof} The question is local on $\frak{P}$, which we may thus assume to be affine, and by considering further localisations, it suffices to prove the statement after applying the derived global sections functor. As in the proof of the excision theorem (but much simpler) we have a quasi-isomorphism
$$ \mathbf{R}\Gamma(]Y[_{\widehat{\A}^d_\frak{P}}, \mathbf{R}\underline{\Gamma}_{]X[_{\widehat{\A}^d_\frak{P}}}( \Omega^*_{]Y[_{\widehat{\A}^d_\frak{P}}/S_K}) \simeq \left(\mathbf{R}\Gamma_\rig ((Y',Y,\widehat{\A}^d_\frak{P})/\ekd) \rightarrow \mathbf{R}\Gamma_\rig ((Z,\overline{Z},\widehat{\A}^d_\frak{P})/\ekd) \right)[-1] 
$$
where $Y'=Y\otimes_{k\pow{t}} k\lser{t}$, $Z=Y'\setminus X$ and $\overline{Z}$ is the closure of $Z$ inside $Y$, and, for example,
$$\mathbf{R}\Gamma_\rig ((Y',Y,\widehat{\A}^d_\frak{P})/\ekd)= \mathbf{R}\Gamma(]Y[_{\widehat{\A}^d_\frak{P}},j_{Y'}^\dagger\Omega^*_{]Y[_{\widehat{\A}^d_\frak{P}}/S_K} )$$
is the cohomology complex computing the rigid cohomology of the frame $(Y',Y,\widehat{\A}^d_\frak{P})$. Similarly, we have 
$$ \mathbf{R}\Gamma(]Y[_\frak{P}, \mathbf{R}\underline{\Gamma}_{]X[_\frak{P}}( \Omega^*_{]Y[_\frak{P}/S_K}) \simeq \left(\mathbf{R}\Gamma_\rig ((Y',Y,\frak{P})/\ekd) \rightarrow \mathbf{R}\Gamma_\rig ((Z',\overline{Z'},\frak{P})/\ekd) \right)[-1] .
$$
But the Poincar\'{e} Lemma without supports (Proposition 4.3 of \cite{rclsf1}) shows that the maps
\begin{align*} \mathbf{R}\Gamma_\rig ((Y',Y,\frak{P})/\ekd) &\rightarrow \mathbf{R}\Gamma_\rig ((Y',Y,\widehat{\A}^d_\frak{P})/\ekd) \\
\mathbf{R}\Gamma_\rig ((Z,\overline{Z},\frak{P})/\ekd) &\rightarrow  \mathbf{R}\Gamma_\rig ((Z,\overline{Z},\widehat{\A}^d_\frak{P})/\ekd)
\end{align*}
are quasi-isomorphisms (see also Remark 4.7 of \emph{loc. cit.}) and the result follows.
\end{proof}

\begin{cor} Let $u:(X,Y',\frak{P}')\rightarrow (X,Y,\frak{P})$ be a smooth and proper morphism of smooth frames, with $u^{-1}X\cap Y' = X$. Then the natural map
$$ H^i_{c,\rig}((X,Y,\frak{P})/\ekd) \rightarrow H^i_{c,\rig}((X,Y',\frak{P}')/\ekd)
$$
is an isomorphism.
\end{cor}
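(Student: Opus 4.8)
The plan is to run the usual Berthelot-style d\'{e}vissage for independence of the ambient formal scheme, this time with compact supports, deducing everything from the excision lemma and the Poincar\'{e} lemma with compact supports proved above. It suffices to show that the natural morphism of complexes of sheaves on $]Y[_\frak{P}$
$$ \mathbf{R}\underline{\Gamma}_{]X[_\frak{P}}(\Omega^*_{]Y[_\frak{P}/S_K}) \longrightarrow \mathbf{R}u_{K*}\,\mathbf{R}\underline{\Gamma}_{]X[_{\frak{P}'}}(\Omega^*_{]Y'[_{\frak{P}'}/S_K}) $$
is a quasi-isomorphism, since applying $\mathbf{R}\Gamma(]Y[_\frak{P},-)$ then gives the statement. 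This can be checked locally on $]Y[_\frak{P}$, hence locally on $\frak{P}$, so I may assume $\frak{P}=\spf{A}$ is affine. Covering $\frak{P}'$ by affine opens and using that $\underline{\Gamma}_{]X[}$ commutes with restriction to opens (together with the excision sequence to patch the pieces), a \v{C}ech--Mayer--Vietoris argument reduces us to the case where $\frak{P}'$ is itself affine. Then $\frak{P}'\to\frak{P}$, being an affine morphism of affine formal schemes, factors as a closed immersion $\frak{P}'\hookrightarrow\widehat{\A}^d_\frak{P}$ followed by the projection, and the smoothness of $u$ makes this closed immersion regular in a neighbourhood of $X$.

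I would then treat the two factors separately. For the projection $\widehat{\A}^d_\frak{P}\to\frak{P}$ the required isomorphism is precisely the Poincar\'{e} lemma with compact supports proved just above. For the closed immersion $\frak{P}'\hookrightarrow\widehat{\A}^d_\frak{P}$ I would invoke the Strong Fibration Theorem (Proposition 4.1 of \cite{rclsf1}): the hypothesis $u^{-1}X\cap Y'=X$ ensures that $X$ sits inside both frames in a compatible way, so that a neighbourhood of $]X[$ inside $(\widehat{\A}^d_\frak{P})_K$ is, Zariski-locally, identified over $]X[_{\frak{P}'}$ with a polydisc bundle --- which is the same as a neighbourhood of $]X[$ inside a suitable affine-space bundle over $\frak{P}'$. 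Combined with the fact that compactly supported cohomology depends only on a neighbourhood of $]X[$ (established above via $j_{V*}\underline{\Gamma}_{]X[_\frak{P}}j_V^{-1}\cong\underline{\Gamma}_{]X[_\frak{P}}$) and one further application of the Poincar\'{e} lemma with compact supports, this identifies the compactly supported cohomology of the $\widehat{\A}^d_\frak{P}$-frame with that of the $\frak{P}'$-frame. Reassembling through the \v{C}ech complex yields the desired quasi-isomorphism.

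The main obstacle is bookkeeping rather than a new idea. One must check that the formalism of $\mathbf{R}\underline{\Gamma}_{]X[}$, the excision triangle, and the neighbourhood-invariance statement are all compatible with the covering of $\frak{P}'$ and with $\mathbf{R}u_{K*}$; and, more delicately, one must keep track of the compactification $Y'$ and of the successive closures of $X$ chosen inside $\widehat{\A}^d_\frak{P}$ and inside the affine-space bundle over $\frak{P}'$, verifying at each stage that the condition ``the preimage of $X$ meets the compactification in exactly $X$'' is preserved, so that the Strong Fibration Theorem and the compact-support Poincar\'{e} lemma apply at every step. Granting this, the argument is formally identical to the proof of frame-independence for cohomology without supports in \cite{rclsf1}.
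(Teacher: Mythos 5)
Your proposal is correct and follows essentially the same route as the paper: localise on $X$ via excision, use that compactly supported cohomology depends only on a neighbourhood of $]X[_\frak{P}$, and combine the Strong Fibration Theorem with the Poincar\'{e} lemma with compact supports, exactly the d\'{e}vissage of the no-supports frame-independence proof in \cite{rclsf1} that the paper invokes. The extra detail you supply (factoring through $\widehat{\A}^d_\frak{P}$ and the \v{C}ech bookkeeping) is just an expansion of that same argument.
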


\begin{proof} Taking into account the Strong Fibration Theorem, that $H^i_{c,\rig}((X,Y,\frak{P})/\ekd)$ only depends on a neighbourhood of $]X[_\frak{P}$ in $]Y[_\frak{P}$, and that we may localise on $X$ using the excision sequence, this then follows exactly as in the proof of Theorem 4.5 of \cite{rclsf1}. 
\end{proof}

Hence we get the following.

\begin{theo} Up to natural isomorphism, $H^i_{c,\rig}((X,Y,\frak{P})/\ekd)$ only depends on $X$ and not on the choice of smooth and proper frame $(X,Y,\frak{P})$, thus it makes sense to write $H^i_{c,\rig}(X/\ekd)$. These compactly supported cohomology groups are covariant with respect to open immersions and contravariant with respect to proper morphisms.
\end{theo}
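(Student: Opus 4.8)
The plan is to run the usual independence-of-frame argument, now that the three preceding results --- the Poincar\'{e} lemma with compact supports, the excision sequence, and the Corollary comparing $H^i_{c,\rig}$ along a Cartesian smooth and proper morphism of frames --- are available, together with the two functorialities for $H^i_{c,\rig}((X,Y,\frak{P})/\ekd)$ recorded just before the statement.

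First, independence of the frame. Given two smooth and proper frames $(X,Y_1,\frak{P}_1)$ and $(X,Y_2,\frak{P}_2)$, I would form the graph frame $(X,\overline{X}_{12},\frak{P}_1\times_{\spf{\cur{V}\pow{t}}}\frak{P}_2)$, where $\overline{X}_{12}$ is the scheme-theoretic closure of $X$ embedded diagonally in $Y_1\times_{k\pow{t}}Y_2$; since $X$ is separated, $X$ is open in $\overline{X}_{12}$ and this is again a smooth and proper frame. The two projections are smooth and proper morphisms of frames (products of smooth morphisms, and each $Y_i$ is proper over $k\pow{t}$), and they are Cartesian on the first two components because the graph of each $X\to Y_j$ is closed in $X\times_{k\pow{t}}Y_j$. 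Hence the Corollary gives isomorphisms $H^i_{c,\rig}((X,Y_i,\frak{P}_i)/\ekd)\xrightarrow{\ \sim\ }H^i_{c,\rig}((X,\overline{X}_{12},\frak{P}_1\times\frak{P}_2)/\ekd)$ for $i=1,2$, whose composite is the desired comparison isomorphism. To check that these comparison isomorphisms are transitive --- so that the groups form a canonical system and one may write $H^i_{c,\rig}(X/\ekd)$ --- I would pass to the triple graph frame built from $\frak{P}_1\times\frak{P}_2\times\frak{P}_3$, which maps by Cartesian smooth and proper morphisms to each pairwise graph frame and to each $\frak{P}_i$, and invoke the naturality of the morphism of functors $\underline{\Gamma}_{]X[_{\frak{P}}}u_{K*}\to u_{K*}\underline{\Gamma}_{]X[_{\frak{P}'}}$ underlying the second functoriality, which forces the relevant triangle to commute.

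Next, the functorialities. For an open immersion $\iota\colon U\hookrightarrow X$, any smooth and proper frame $(X,Y,\frak{P})$ for $X$ is also one for $U$, and the first functoriality (induced by $\underline{\Gamma}_{]U[_{\frak{P}}}\to\underline{\Gamma}_{]X[_{\frak{P}}}$) gives $\iota_*\colon H^i_{c,\rig}(U/\ekd)\to H^i_{c,\rig}(X/\ekd)$; independence of the frame and compatibility with composition of open immersions follow by comparing over graph frames and using naturality. For a proper morphism $g\colon X'\to X$, fix a smooth and proper frame $(X,Y,\frak{P})$ for $X$ and a frame-compactification $X'\hookrightarrow Y_0$ over $k\pow{t}$, and let $Y'$ be the scheme-theoretic closure of $X'$ in $Y_0\times_{k\pow{t}}Y$ embedded via $x'\mapsto(x',g(x'))$, with $\frak{P}'=\frak{P}_0\times_{\spf{\cur{V}\pow{t}}}\frak{P}$. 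The point is that, $Y_0$ being separated and $g$ proper, the map $X'\to Y_0\times_{k\pow{t}}X$, $x'\mapsto(x',g(x'))$, is proper --- it is the (closed) graph embedding of the open immersion $X'\hookrightarrow Y_0$ followed by the base change of $g$ along $X\times_{k\pow{t}}Y_0\to X$ --- so its image is already closed and one gets $Y'\times_Y X=X'$. Thus $u\colon(X',Y',\frak{P}')\to(X,Y,\frak{P})$ is a Cartesian smooth and proper morphism of frames, and the second functoriality yields $g^*\colon H^i_{c,\rig}(X/\ekd)\to H^i_{c,\rig}(X'/\ekd)$ after identifying $H^i_{c,\rig}((X',Y',\frak{P}')/\ekd)$ with $H^i_{c,\rig}(X'/\ekd)$ via the independence isomorphism; well-definedness and functoriality in $g$ are again checked by passing to common refinements.

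Finally, the expected main obstacle. The geometry here is light --- the one genuine geometric input is that properness of $g$ makes the graph embedding into $Y_0\times_{k\pow{t}}X$ a closed embedding, which is exactly what the construction of $g^*$ needs, while all the analytic content is carried by the Poincar\'{e} lemma and excision already proved. The real labour is the combinatorial bookkeeping: verifying that the family of comparison isomorphisms satisfies the cocycle condition and is compatible with both functorialities. This is routine --- it follows the same pattern as the proof that ordinary $H^i_\rig$ is independent of the frame in \cite{rclsf1} --- but it is where the care has to go.
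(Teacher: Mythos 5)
Your proposal is correct and follows essentially the same route as the paper: the frame-independence and open-immersion parts are the standard fibre-product-of-frames bookkeeping (resting on the preceding Corollary) that the paper treats as clear, and the only substantive point --- contravariance for a proper $g\colon X'\to X$ --- is handled exactly as in the paper, by producing a frame for $X'$ whose comparison square with $(X,Y,\frak{P})$ is Cartesian so that the second functoriality applies. Where the paper invokes the abstract fact that a square with proper vertical maps, open-immersion horizontal maps and $X'$ dense in $Y'$ is automatically Cartesian, you verify the same Cartesianness directly from the closedness of the image of the proper graph map $X'\to Y_0\times_{k\pow{t}}X$; this is the same mechanism, just checked by hand.
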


\begin{proof} The only thing that is not clear is functoriality with respect to proper morphisms, but this just follows from the fact that if we have a diagram
$$ \xymatrix{ X' \ar[r]\ar[d] & Y' \ar[d] \\ X \ar[r] & Y}
$$
with both vertical maps proper, and both horizontal maps open immersions, and $X'$ dense inside $Y'$, then the diagram is actually Cartesian.
\end{proof}

For non-constant coefficients $\sh{E}\in\mathrm{Isoc}^\dagger(X/\ekd)$, we perform an entirely similar construction, but simply replacing $]Y[_\frak{P}$ by some suitable neighbourhood of $]X[_\frak{P}$ to which $\sh{E}$ extends. Specifically, if $V$ is some neighbourhood of $]X[_\frak{P}$ inside $]Y[_\frak{P}$, and $\sh{F}$ is a sheaf of $V$, then denoting by $i_V$ the inclusion of the complement of $]X[_\frak{P}$ into $V$ and by $k_V$ the inclusion of $V':= V\cap ]Y'[_\frak{P}$ into $V$, we define
$$ \underline{\Gamma}_{]X[_\frak{P}}(\sh{F}) = \ker (  k_{V*}k_V^{-1}\sh{F}\rightarrow i_{V*}i_V^{-1}k_{V*}k_V^{-1}\sh{F})
$$
as above. If $\sh{E}\in\mathrm{Isoc}^\dagger(X/\ekd)$ extends to a module with connection $E_V$ on some such $V$, we define
$$ H^i_{c,\rig}((X,Y,\frak{P})/\ekd,E_V) := H^i(V,\mathbf{R}\underline{\Gamma}_{]X[_\frak{P}}(E_V\otimes \Omega^*_{V/S_K}))
$$
exactly as before. Of course, there is an entirely similarly defined functor $ \underline{\Gamma}_{]X[_\frak{P}} $ for sheaves on $V'$ we have 
$$H^i_{c,\rig}((X,Y,\frak{P})/\ekd,E_V) \cong H^i(V',\mathbf{R} \underline{\Gamma}_{]X[_\frak{P}}(k_V^{-1}(E_V\otimes \Omega^*_{V/S_K})) )$$
as before. This does not depend on the choice of neighbourhood $V$, or on the extension $E_V$ of $\sh{E}$, we will therefore write it as $H^i_{c,\rig}((X,Y,\frak{P})/\ekd,\sh{E}) $

\begin{rem}
If we define the endofunctor $j_X'^\dagger$ for sheaves on $V'$ in the obvious way, then we also have
$$ H^i_\rig(X/\ekd,\sh{E}) \cong H^i(V',j_X'^\dagger (k_V^{-1}(E_V\otimes\Omega^*_{V/S_K})))
$$
so the cohomology without supports of $\sh{E}$ can also be computed in terms of sheaves on $V'$.
\end{rem}

\begin{prop} This does not depend on the choice of smooth and proper frame $(X,Y,\frak{P})$ containing $X$, in that if we have a smooth and proper morphism
$$ u:(X,Y',\frak{P}')\rightarrow (X,Y,\frak{P})
$$
of smooth frames, such that $u^{-1}(X)\cap Y' = X$, then 
$$H^i_{c,\rig}((X,Y',\frak{P}')/\ekd,\sh{E})\rightarrow H^i_{c,\rig}((X,Y,\frak{P})/\ekd,\sh{E})
$$
is an isomorphism.
\end{prop}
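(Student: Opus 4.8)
The plan is to run the same argument as in the constant-coefficient corollary above (which itself follows Theorem 4.5 of \cite{rclsf1}), supplying the coefficient versions of its two ingredients: the excision sequence and the Poincar\'{e} lemma with compact supports. First recall that, by the discussion preceding the statement, $H^i_{c,\rig}((X,Y,\frak{P})/\ekd,\sh{E})$ depends only on the choice of a neighbourhood $V$ of $]X[_\frak{P}$ inside $]Y[_\frak{P}$ and of an extension $E_V$ of $\sh{E}$ to $V$, and likewise for $(X,Y',\frak{P}')$; we are therefore free to shrink $V$ and $V'$. The map in question is induced by a morphism of complexes $\mathbf{R}\underline{\Gamma}_{]X[_\frak{P}}(E_V\otimes\Omega^*_{V/S_K})\to \mathbf{R}u_{K*}\mathbf{R}\underline{\Gamma}_{]X[_{\frak{P}'}}(u_K^*E_V\otimes\Omega^*)$, and we want this to be a quasi-isomorphism; since that can be checked at the level of complexes of sheaves on $V$ it is local on $\frak{P}$, so we may also assume $\frak{P}$ (and hence $\frak{P}'$) affine.

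Next I would establish the excision triangle with coefficients: for $U\subset X$ open with closed complement $Z$ and $\overline{Z}=Y\setminus U$, there is an exact triangle
$$ \mathbf{R}\underline{\Gamma}_{]U[_\frak{P}}(E_V\otimes\Omega^*_{V/S_K})\rightarrow \mathbf{R}\underline{\Gamma}_{]X[_\frak{P}}(E_V\otimes\Omega^*_{V/S_K})\rightarrow \mathbf{R}i_*\mathbf{R}\underline{\Gamma}_{]Z[_\frak{P}}\big(i^{-1}(E_V\otimes\Omega^*_{V/S_K})\big)\overset{+1}{\rightarrow} $$
proved verbatim as in the Excision Lemma (its proof only manipulates the functors $k_*,k'_*,i_*,i'_*,j_*$ and the Cartesian square of tubes, and tensoring throughout by $E_V$ is harmless), and hence a long exact sequence in compactly supported cohomology with coefficients; the same holds over $V'$. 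The usual Noetherian induction on $X$ combined with the five lemma then reduces us to the case in which $X$ and $Y$ are affine, where after one more shrinking of $V$ we may invoke the Strong Fibration Theorem (Proposition 4.1 of \cite{rclsf1}): since $u$ is smooth and proper with $u^{-1}(X)\cap Y'=X$, a suitable neighbourhood of $]X[_{\frak{P}'}$ is identified, compatibly with $u_K^*E_V$ and with the whole diagram of tubes entering the definition of $\mathbf{R}\underline{\Gamma}_{]X[}$, with a neighbourhood of $]X[$ inside $]Y[_{\widehat{\A}^d_\frak{P}}$. This reduces us to the case $(X,Y',\frak{P}')=(X,Y,\widehat{\A}^d_\frak{P})$.

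In that case I would prove the coefficient version of the Poincar\'{e} lemma with compact supports, namely that
$$ \mathbf{R}\underline{\Gamma}_{]X[_\frak{P}}(E_V\otimes\Omega^*_{V/S_K})\rightarrow \mathbf{R}u_{K*}\mathbf{R}\underline{\Gamma}_{]X[_{\widehat{\A}^d_\frak{P}}}\big(u_K^*E_V\otimes\Omega^*_{u_K^{-1}(V)/S_K}\big) $$
is a quasi-isomorphism. The argument copies that of the Poincar\'{e} Lemma with compact supports above: working locally on the affine $\frak{P}$ and applying derived global sections, one expresses both sides, via the identity $\mathbf{R}\underline{\Gamma}_{]X[}(\sh{K}^*)\simeq(\sh{K}^*\to \mathbf{R}i'_*i'^{-1}\sh{K}^*)[-1]$ on the tube of $Y'=Y\otimes_{k\pow{t}}k\lser{t}$, as cones of the rigid-cohomology complexes without supports (but with coefficients) of the frames $(Y',Y,-)$ and $(Z',\overline{Z'},-)$; the transition maps induced by $u$ are quasi-isomorphisms by the Poincar\'{e} lemma without supports, which applies equally with coefficients, i.e.\ Proposition 4.3 of \cite{rclsf1} together with Remark 4.7 of \emph{loc. cit.}, and the claim follows. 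Applying $H^i(V,-)$ completes the proof.

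The main obstacle, exactly as in the constant-coefficient case, is arranging the Strong Fibration Theorem so that the identification of neighbourhoods is simultaneously compatible with all the tubes $]Y[_\frak{P}$, $]Y'[_\frak{P}$ and $]W[_\frak{P}$ occurring in the construction of $\mathbf{R}\underline{\Gamma}_{]X[_\frak{P}}$ and with the chosen extension $E_V$; once that compatibility is secured, everything is reduced to the Poincar\'{e} lemma without supports with coefficients, which is already available.
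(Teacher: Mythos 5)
Your overall architecture (excision with coefficients, reduction via the Strong Fibration Theorem to the affine-space fibration, then a Poincar\'{e} lemma with compact supports and coefficients) matches the paper's intent, but the step where you actually prove that coefficient Poincar\'{e} lemma has a genuine gap, and it is exactly the point the paper singles out as the only new difficulty. You propose to express both sides, after taking derived global sections, ``as cones of the rigid-cohomology complexes without supports (but with coefficients) of the frames $(Y',Y,-)$ and $(Z',\overline{Z'},-)$'' and then to invoke the Poincar\'{e} lemma without supports with coefficients (Proposition 4.3 of \cite{rclsf1}). This is not available: $\sh{E}$ is an overconvergent isocrystal on $X$, so its realisation $E_V$ exists only on a strict neighbourhood $V$ of $]X[_\frak{P}$, not on any neighbourhood of $]Y'[_\frak{P}$ or $]\overline{Z}'[_\frak{P}$. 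The frames $(Y',Y,\frak{P})$ and $(Z',\overline{Z'},\frak{P})$ carry no coefficient object at all, and the two pieces of your cone are just $\mathbf{R}\Gamma$ of $E_V\otimes\Omega^*$ over $V'=V\cap]Y'[_\frak{P}$ and over the complement of $]X[_\frak{P}$ in $V$; these are not $j^\dagger$-type (overconvergent) complexes for those frames, they genuinely depend on the choice of $V$, and Proposition 4.3 of \cite{rclsf1} does not apply to them. Your closing remark, that the only obstacle is arranging the Strong Fibration Theorem compatibly with the tubes and with $E_V$, misidentifies the problem: no amount of compatibility in the fibration theorem makes the coefficient defined where your cone description needs it.

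The missing idea, which is the actual content of the paper's proof, is a localisation that forces the coefficient to live on the \emph{whole} tube of a suitably modified frame. Namely: it suffices to prove the stronger statement that $\mathbf{R}\underline{\Gamma}_{]X[_\frak{P}}(\sh{E}\otimes\Omega^*)\rightarrow\mathbf{R}u_{K*}\mathbf{R}\underline{\Gamma}_{]X[_{\widehat{\A}^d_\frak{P}}}(u^*\sh{E}\otimes\Omega^*)$ is a quasi-isomorphism; this is local on $\frak{P}_K$, so one may base change to some $[Y]_n$ and assume $]Y[_\frak{P}=\frak{P}_K$ is affinoid; excision then reduces to the case $X=Y\cap D(g)$ for some $g\in\cur{O}_\frak{P}$, so that the neighbourhood $V$ may be taken affinoid and hence admits a formal model; finally, since compactly supported cohomology depends only on a neighbourhood of $]X[_\frak{P}$, one base changes the frame to this formal model of $V$, after which $E_V$ is a module with connection on the entire tube and the constant-coefficient argument (your cone description included) applies verbatim. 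Without this reduction your proof of the key quasi-isomorphism does not go through.
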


\begin{proof} This is similar to the constant coefficients case, although there is a slight subtlety in the extension of the Poincar\'{e} Lemma to include coefficients. In the constant case we knew we could extend to a module with connection on the whole of the tube $]Y[_\frak{P}$, in general we will only be able to extend to some neighbourhood $V$ of $]X[_\frak{P}$. We get round this as follows. 

First we note that it certainly suffices to prove the stronger statement that
$$ \mathbf{R}\underline{\Gamma}_{]X[_\frak{P}}( \sh{E}\otimes \Omega^*_{]Y[_\frak{P}/S_K} ) \rightarrow \mathbf{R}u_{K*}\mathbf{R}\underline{\Gamma}_{]X[_{\widehat{\A}^d_\frak{P}}}( u^*\sh{E}\otimes \Omega^*_{]Y[_{\widehat{\A}^1_\frak{P}}/S_K} )
$$ is a quasi-isomorphism. We use the fact that this question is local on $\frak{P}_K$ to base change to some $[Y]_n$, and hence assume that $]Y[_\frak{P}=\frak{P}_K$, which we may also assume to be affinoid. We then use an excision sequence to reduce to the case when $X=Y\cap D(g)$ for some $g\in \cur{O}_\frak{P}$, thus we may assume that the neighbourhood $V$ is affinoid, and thus has a formal model. Again using the fact that compactly supported cohomology only depends on some neighbourhood of $]X[_\frak{P}$, we may further base change to this formal model of $V$ to then ensure that we do get a module with connection on the whole of $]Y[_\frak{P}$.
\end{proof}

Thus it makes sense to write these groups as $ H^i_{c,\rig}(X/\ekd,\sh{E})$, they are functorial in $\sh{E}$, as well as being covariant with respect to open immersions $X\rightarrow X'$ when $\sh{E}$ extends to $X'$ and contravariant with respect to proper morphisms. We also get the functorialities with respect to finite separable extensions of $k\lser{t}$ as well as Frobenius as in \S5 of \cite{rclsf1} (we will come back to the issue of the extension $\ekd\rightarrow \ek$ shortly). Hence, as in the case of cohomology without supports, if $\sh{E}\in F\text{-}\mathrm{Isoc}^\dagger(X/\ekd)$ we get a Frobenius on  $H^i_{c,\rig}(X/\ekd,\sh{E})$, that is a $\sigma$-linear morphism
$$ H^i_{c,\rig}(X/\ekd,\sh{E})\rightarrow H^i_{c,\rig}(X/\ekd,\sh{E}),
$$
however, we do not know if this is a $\varphi$-module structure (i.e. linearises to an isomorphism) in general.

Since for any sheaf $\sh{F}$ on some open neighbourhood $V'$ of $]X[_\frak{P}$ inside $]Y'[_\frak{P}$ there is an obvious map 
$$ \underline{\Gamma}_{]X[_\frak{P}}\sh{F} \rightarrow j'^\dagger_X\sh{F}
$$ which is trivially an isomorphism when $X=Y'$, for any embeddable variety there is a natural `forget supports' map
$$ H^i_{c,\rig}(X/\ekd,\sh{E})\rightarrow H^i_\rig(X/\ekd,\sh{E})
$$
which is an isomorphism when $X$ is proper. This is compatible with Frobenius when $\sh{E}\in F\text{-}\mathrm{Isoc}^\dagger(X/\ekd)$. We also have an excision sequence
$$ \ldots\rightarrow H^i_{c,\rig}(U/\ekd,\sh{E}|_U)\rightarrow H^i_{c,\rig}(X/\ekd,\sh{E}) \rightarrow H^i_{c,\rig}(Z/\ekd,\sh{E}|_Z)\rightarrow \ldots
$$
for any open immersion $U\subset X$ of embeddable varieties over $k\lser{t}$ with complement $Z$, and any $\sh{E}\in\mathrm{Isoc}^\dagger(X/\ekd)$.

The next problem is to relate $H^i_{c,\rig}(X/\ekd,\sh{E})$ to the `usual' rigid cohomology with compact supports $H^i_{c,\rig}(X/\ek,\hat{\sh{E}})$, or at least produce a canonical base change morphism
$$  H^i_{c,\rig}(X/\ekd,\sh{E}) \otimes_{\ekd}\ek \rightarrow H^i_{c,\rig}(X/\ek,\hat{\sh{E}}).
$$
To do so, we suppose that we have a smooth and proper frame $(X,Y,\frak{P})$, and we let $(X,Y',\frak{P}')$ denote the base change of this frame to $\cur{O}_{\ek}$. Let $Z$ be the complement of $X$ in $Y$, and $Z'$ the complement of $X$ in $Y'$. For simplicity we will stick to the constant coefficients case, the general case can be handled by replacing $]Y[_\frak{P}$ by a suitable neighbourhood of $]X[_\frak{P}$. We consider the diagram
$$\xymatrix{ ]Z[_\frak{P} \ar[r]^i & ]Y[_\frak{P} & ]Y'[_\frak{P} \ar[l]_k \\
]Z'[_{\frak{P}'}\ar[r]^{i'} \ar[u] & ]Y'[_{\frak{P}'} \ar[u]_j \ar[ur] }
$$
so that we have 
$$\mathbf{R}\underline{\Gamma}_{]X[_\frak{P}}(\sh{F}) \cong \left( k_*k^{-1}\sh{F}\rightarrow \mathbf{R}i_*i^{-1}k_*k^{-1}\sh{F} \right)[-1]$$
for any sheaf $\sh{F}$ on $]Y[_\frak{P}$. Define the functor $\underline{\Gamma}_{]X[_{\frak{P}'}} = \ker \sh{F}\rightarrow i'_*i'^{-1}\sh{F}$ for sheaves on $]Y'[_{\frak{P}'}$, so that we have
$$ \mathbf{R}\underline{\Gamma}_{]X[_\frak{P}'}(\sh{F}) \cong \left(\sh{F}\rightarrow \mathbf{R}i'_*i'^{-1}\sh{F} \right)[-1].
$$
Then we have a natural isomorphism of functors
$$ j^{-1}\mathbf{R}\underline{\Gamma}_{]X[_\frak{P}} \cong \mathbf{R}\underline{\Gamma}_{]X[_\frak{P}'}j^{-1}
$$
which therefore induces an $\ekd$-linear morphism
$$ H^i_{c,\rig}(X/\ekd) \rightarrow H^i(]Y'[_{\frak{P}'}, \mathbf{R}\underline{\Gamma}_{]X[_{\frak{P}'}}(\Omega^*_{]Y'[_{\frak{P}'}/\ek}))
$$
and hence an $\ek$-linear morphism 
$$ H^i_{c,\rig}(X/\ekd)\otimes_{\ekd}\ek \rightarrow H^i(]Y'[_{\frak{P}'}, \mathbf{R}\underline{\Gamma}_{]X[_{\frak{P}'}}(\Omega^*_{]Y'[_{\frak{P}'}/\ek})).
$$
It therefore suffices to show that the RHS computes compactly supported $\ek$-valued rigid cohomology of $X$. But since the underlying topoi of a Tate rigid space and the corresponding adic space locally of finite type over $\ek$ coincide, and this equivalence is functorial, it follows that the complex of sheaves $\mathbf{R}\underline{\Gamma}_{]X[_{\frak{P}'}}(\Omega^*_{]Y'[_{\frak{P}'}})$ on $]Y'[_{\frak{P}'}$ gives rise to the same object in the underlying topos of $]Y'[_{\frak{P}'}$ as the analogous construction given by Berthelot in \cite{rigcohber} in terms of Tate's rigid spaces. Hence we have 
$$ H^i(]Y'[_{\frak{P}'}, \mathbf{R}\underline{\Gamma}_{]X[_{\frak{P}'}}(\Omega^*_{]Y'[_{\frak{P}'}/\ek})) \cong H^i_{c,\rig}(X/\ek)
$$
as expected, and this gives us our base change morphism. For smooth curves, and with coefficients which extend to the compactification, we can easily see that this is an isomorphism as follows. 

\begin{lem} Let $X/k\lser{t}$ be a smooth curve with compactification $\overline{X}$ and suppose that $\sh{E}\in F\text{-}\mathrm{Isoc}^\dagger(X/\ekd)$ extends to an overconvergent $F$-isocrystal $\overline{\sh{E}}$ on $\overline{X}$. Then the base change morphism
$$ H^i_{c,\rig}(X/\ekd,\sh{E})\otimes_{\ekd}\ek \rightarrow H^i_{c,\rig}(X/\ek,\hat{\sh{E}})
$$
is an isomorphism.
\end{lem}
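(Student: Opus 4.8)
The plan is to reduce the statement to two cases for which base change is already known: proper curves and zero-dimensional schemes. Let $\overline{X}$ be the smooth compactification of $X$ (proper over $k\lser{t}$), and set $Z=\overline{X}\setminus X$; since $X/k\lser{t}$ is smooth, $Z$ is a disjoint union of spectra of finite \emph{separable} extensions of $k\lser{t}$. By hypothesis $\sh{E}$ extends to $\overline{\sh{E}}\in F\text{-}\mathrm{Isoc}^\dagger(\overline{X}/\ekd)$, so I may form the excision long exact sequence established above (applied to the open immersion $X\hookrightarrow\overline{X}$ with complement $Z$, and coefficients $\overline{\sh{E}}$):
$$ \cdots\rightarrow H^i_{c,\rig}(X/\ekd,\sh{E})\rightarrow H^i_{c,\rig}(\overline{X}/\ekd,\overline{\sh{E}})\rightarrow H^i_{c,\rig}(Z/\ekd,\overline{\sh{E}}|_Z)\rightarrow H^{i+1}_{c,\rig}(X/\ekd,\sh{E})\rightarrow\cdots $$
together with the analogous sequence with $\ek$-coefficients. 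The base change morphisms are functorial for open immersions and proper morphisms — they all arise from the single morphism of formal schemes $\widehat{\mathfrak{P}}=\mathfrak{P}\otimes_{\cur{V}\pow{t}}\cur{O}_{\ek}\rightarrow\mathfrak{P}$ underlying a chosen smooth proper frame $(X,\overline{X},\mathfrak{P})$ — so they assemble into a commutative ladder between the $\ek$-excision sequence and the $\ekd$-excision sequence tensored over $\ekd$ with $\ek$; the latter is still exact, since $\ekd$ is a field and hence $\ek$ is flat over it.

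Granting this, it suffices to prove the base change isomorphism for $\overline{X}$ and for $Z$, whereupon the statement for $X$ follows by the five lemma. For $\overline{X}$: since $\overline{X}$ is proper, the forget-supports maps $H^i_{c,\rig}(\overline{X}/\ekd,\overline{\sh{E}})\isomto H^i_\rig(\overline{X}/\ekd,\overline{\sh{E}})$ and $H^i_{c,\rig}(\overline{X}/\ek,\widehat{\overline{\sh{E}}})\isomto H^i_\rig(\overline{X}/\ek,\widehat{\overline{\sh{E}}})$ are isomorphisms, compatibly with base change; thus the base change morphism for $\overline{X}$ is identified with the one for cohomology without supports, which is an isomorphism by Theorem \ref{finitecurves}. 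For $Z$: it is finite, hence proper, over $k\lser{t}$, so again compactly supported cohomology coincides with cohomology without supports, and being zero-dimensional it is concentrated in degree $0$; there $H^0_\rig(Z/\ekd,\overline{\sh{E}}|_Z)$ is built from the fibres of $\overline{\sh{E}}$ at the finitely many closed points of $Z$, and the base change isomorphism reduces to the compatibility of the finite extensions $\cur{E}_K^{\dagger,F}$ of $\ekd$ with their completions $\cur{E}_K^F$ of $\ek$ under $\ekd\rightarrow\ek$, as in \S5 of \cite{rclsf1} (cf. also Lemma \ref{basefinite}).

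I expect the only real work to be the bookkeeping in the first paragraph: one must verify that the excision triangles for $\ekd$- and $\ek$-coefficients, the base change morphisms, and the forget-supports isomorphisms are all mutually compatible, so that the diagram chase is legitimate. This is a matter of unwinding the definitions of $\mathbf{R}\underline{\Gamma}_{]X[_{\mathfrak{P}}}$ and of the base change morphism in terms of the Cartesian square of tubes attached to $\widehat{\mathfrak{P}}\rightarrow\mathfrak{P}$, and presents no essential difficulty. It is worth noting that the hypothesis that $\sh{E}$ extends to $\overline{X}$ is used precisely — and only — to make the excision sequence available with the given coefficients; this is the source of the restriction on coefficients in the statement, and via the excision sequence it also yields finite dimensionality of $H^i_{c,\rig}(X/\ekd,\sh{E})$ as a by-product.
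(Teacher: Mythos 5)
Your argument is essentially the paper's own proof: excision for $X\subset\overline{X}$ with zero-dimensional complement, identification of the terms for $\overline{X}$ and for the complement with cohomology without supports (both being proper), base change for those terms via Theorem \ref{finitecurves} and the zero-dimensional case, and a diagram chase using flatness of $\ek$ over $\ekd$. One small slip: smoothness of $X$ (or of $\overline{X}$) does not force the residue fields at the points of $Z=\overline{X}\setminus X$ to be separable over $k\lser{t}$ (they may be inseparable, since $k\lser{t}$ is not perfect), but this is harmless --- base change for the zero-dimensional piece still reduces to finite extensions of $\ekd$, with the inseparable case handled exactly as in the corollary following this lemma, via $\cur{E}_K^{\dagger,\sigma^{-1}}$.
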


\begin{proof} Of course, since smooth curves are always embeddable, $H^i_{c,\rig}(X/\ekd,\sh{E})$ is actually defined. Let $D$ be the closed complement of $X$. Since cohomology with and without supports agree for $\overline{X}$ and $D$ we have an exact sequence
\begin{align*} 0 \rightarrow H^0_{c,\rig}(X/\ekd,\sh{E})&\rightarrow H^0_\rig(\overline{X}/\ekd,\overline{\sh{E}})\rightarrow H^0_\rig(D/\ekd,\overline{\sh{E}}|_D) \\ &\rightarrow H^1_{c,\rig}(X/\ekd,\sh{E}) \rightarrow H^1_{\rig}(\overline{X}/\ekd,\overline{\sh{E}})\rightarrow 0
\end{align*}
as well as an isomorphism
$$ H^2_{c,\rig}(X/\ekd,\sh{E}) \cong H^2_\rig(\overline{X}/\ekd,\overline{\sh{E}}).
$$
 Of course, there same holds over $\ek$, and this exact sequence and isomorphism are compatible with base change. Since we know that base change holds for $H^i_\rig(\overline{X}/\ekd,\overline{\sh{E}})$ and $H^i_\rig(D/\ekd,\overline{\sh{E}}|_D)$, it follows that it also must hold for $H^i_{c,\rig}(X/\ekd,\sh{E})$.
\end{proof}

Actually, thanks to excision we now get finite dimensionality and base change for compactly supported cohomology for any curve, without smoothness hypotheses.

\begin{cor} Let $X$ be a curve over $k\lser{t}$. Then the base change morphism
$$ H^i_{c,\rig}(X/\ekd)\otimes_{\ekd}\ek \rightarrow H^i_{c,\rig}(X/\ek)
$$
is an isomorphism.
\end{cor}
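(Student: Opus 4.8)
The plan is a dévissage via the excision sequence, reducing the corollary to the case of a smooth curve, handled by the preceding lemma (with constant coefficients, which trivially extend to any smooth compactification), and to the case of a $0$-dimensional variety over $k\lser{t}$, where base change is immediate. Since rigid cohomology, with or without supports, is insensitive to nilpotents, we may assume $X$ is reduced. The first step is to choose a dense open subvariety $U\subseteq X$ whose closed complement $Z$ is a finite set of closed points; in the simplest situation one takes $U$ to be the smooth locus of $X$. The excision sequence then gives a long exact sequence
$$ \cdots\to H^i_{c,\rig}(U/\ekd)\to H^i_{c,\rig}(X/\ekd)\to H^i_{c,\rig}(Z/\ekd)\to H^{i+1}_{c,\rig}(U/\ekd)\to\cdots $$
together with the analogous sequence over $\ek$, and — tracing through the definitions, the relevant squares of tubes being Cartesian and both sequences being assembled from the same elementary sheaf operations $i_*,i^{-1},k_*,k^{-1}$ and their derived functors — the base change morphisms give a morphism between these two long exact sequences. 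As $\ekd$ is a field, $-\otimes_{\ekd}\ek$ is exact, so by the five lemma it suffices to prove base change for $U$ and for $Z$.

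For $Z$, being $0$-dimensional and proper the forget-supports map identifies $H^i_{c,\rig}(Z/\ekd)$ with $H^i_\rig(Z/\ekd)$, which vanishes in degrees $i>0$ and in degree $0$ is the finite $\ekd$-algebra of global sections; base change for it is immediate from the construction, and when the residue fields of the points of $Z$ are separable over $k\lser{t}$ it follows from Lemma~\ref{basefinite} together with the fact that passing to the $p$-adic completion commutes with finite base change. For $U$ a smooth curve with constant coefficients the preceding lemma applies verbatim, since $U$ admits a smooth projective compactification $\overline{U}$ and the constant isocrystal $\cur{O}_{U/\ekd}$ extends to $\cur{O}_{\overline{U}/\ekd}$. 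Finiteness of $H^i_{c,\rig}(X/\ekd)$ then follows from the resulting isomorphism with the $\ek$-valued groups, which are already known to be finite dimensional.

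The step I expect to be delicate is the very first choice of $U$: over the imperfect field $k\lser{t}$ a reduced curve need not be generically smooth — for instance $\Spec k\lser{t}[x,y]/(y^p-x^p-t)$ is integral but nowhere smooth over $k\lser{t}$ — so the smooth locus of $X$ may be empty. To handle the general case I would, after using excision to reduce to $X$ integral, pass to a suitable finite purely inseparable extension $F'/k\lser{t}$ (for instance one of the form $k\lser{t}^{1/p^m}$, enlarged if necessary by finitely many $p$-power roots of elements of $k$) chosen so that $(X_{F'})_{\mathrm{red}}$ acquires a dense smooth open subvariety; since rigid cohomology ignores nilpotents, and since base change holds along $F'/k\lser{t}$ — for extensions $k\lser{t}^{1/p^m}$ this is essentially the $\sigma^{-m}$-relabeling $\ekd\mapsto(\ekd)^{\sigma^{-m}}$ used throughout \S\ref{s1ked}, compatibly over $\ek$ — this reduces to the generically smooth situation over $F'$, where the argument above applies. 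The remaining point, the compatibility of the base change morphism with the excision triangles, is formal.
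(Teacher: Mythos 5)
Your overall route is the paper's own: the paper also reduces, via a finite and possibly inseparable extension of $k\lser{t}$, to the generically smooth case, treats separable extensions exactly as in Lemma \ref{basefinite}, and handles the purely inseparable part by regarding a copy $\cur{E}_K^{\dagger,\sigma^{-1}}$ of $\ekd$ as a finite $\ekd$-algebra via the Frobenius lift $\sigma$ --- which is precisely your ``$\sigma^{-m}$-relabeling''; the excision d\'evissage onto the smooth open part and its finite complement, which you spell out, is what the paper leaves implicit, and your remarks on compatibility of the base change maps with the excision triangles and on the zero-dimensional case are at the same level of detail as the paper.

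One step fails as written, and it sits exactly at the delicate point you yourself flag. A smooth affine curve $U$ over the imperfect field $k\lser{t}$ need \emph{not} admit a smooth projective compactification: the regular projective model can have regular but non-smooth closed points, for instance the smooth locus of $y^2=x^p-t$ ($p$ odd) compactifies only to a regular curve carrying a non-smooth point with residue field $k\lser{t}(t^{1/p})$, and any smooth projective compactification would have to coincide with this regular model. This matters because the lemma you invoke, although stated for an arbitrary compactification $\overline{X}$, uses in its proof base change \emph{without} supports for $\overline{X}$, i.e.\ Theorem \ref{finitecurves}, which is only available when $\overline{X}$ is smooth; quoting it with a non-smooth $\overline{U}$ would be circular. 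The repair stays entirely inside your scheme: choose the auxiliary finite extension $F'$ large enough that not merely a dense open of $(X_{F'})_{\mathrm{red}}$ is smooth, but the normalization of a compactification of $(X_{F'})_{\mathrm{red}}$ is smooth (a finite extension suffices, and both its separable and purely inseparable parts are covered by your base-change-along-finite-extensions step), and then run the excision argument against that smooth proper model. With this adjustment your proposal is correct and coincides with the paper's proof.
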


\begin{proof} Since every variety becomes generically smooth after a (possibly inseparable) finite extension of $k\lser{t}$, it suffices to prove that $H^i_{c,\rig}(X/\ekd,\sh{E})$ commutes with finite extensions of $k\lser{t}$. The case of separable extensions is handled entirely similarly to Lemma \ref{basefinite}, it therefore suffices to treat the case of the extension $k\lser{t}\rightarrow k\lser{t}^{1/p}$. If we let $\cur{E}_K^{\dagger,\sigma^{-1}}$ denote a copy of $\ekd$ but with $\ekd$-algebra given by the Frobenius lift $\sigma$. This makes $\cur{E}_K^{\dagger,\sigma^{-1}}$ into a finite extension of $\ekd$, and hence we can apply methods entirely similar to those used in the proof of Lemma \ref{basefinite}.
\end{proof}

Hence, as usual, if $X/k\lser{t}$ is a smooth curve, and $\sh{E}\in F\text{-}\mathrm{Isoc}^\dagger(X/\ekd)$ extends to a compactification $\overline{X}$, then $H^i_{c,\rig}(X/\ekd,\sh{E})$ is a $\varphi$-module over $\ekd$ (that is, it is finite dimensional and the linearised Frobenius is an isomorphism). If $X$ is singular, then we at least know that the same is true for $H^i_{c,\rig}(X/\ekd)$.

Now, if $X/k\lser{t}$ is any Frobenius-embeddable variety, i.e. there exists smooth and proper frame $(X,Y,\frak{P})$ which admits a lifting of Frobenius, then we can construct a Frobenius-compatible Poincar\'{e} pairing as follows. Suppose that $X$ is purely of dimension $n$. Since for any sheaves $\sh{F},\sh{G}$ of $\ekd$-modules on some neighbourhood $V'$ of $]X[_\frak{P}$ in $]Y'[_\frak{P}$ there is a natural isomorphism
$$ \cur{H}\mathrm{om}_{\ekd}(j'^\dagger_X\sh{F},\sh{G})\cong \underline{\Gamma}_{]X[_\frak{P}}\cur{H}\mathrm{om}_{S_K}(\sh{F},\sh{G}),
$$
we easily deduce the existence of a pairing 
$$ H^i_\rig(X/\ekd,\sh{E})\times H^j_{c,\rig}(X/\ekd,\sh{F})\rightarrow H^{i+j}_{c,\rig}(X/\ekd,\sh{E}\otimes \sh{F})
$$
for any $\sh{E},\sh{F}\in\mathrm{Isoc}^\dagger(X/\ekd)$. Hence we get a pairing
$$ H^i_\rig(X/\ekd,\sh{E})\times H^{2n-i}_{c,\rig}(X/\ekd,\sh{E}^\vee)\rightarrow H^{2n}_{c,\rig}(X/\ekd)
$$ 
where $\sh{E}^\vee$ is the dual of $\sh{E}$, which is Frobenius-compatible when $\sh{E}\in F\text{-}\mathrm{Isoc}^\dagger(X/\ekd)$.

\begin{theo} Suppose that $X$ is a smooth curve. Then there is a trace morphism
$$\mathrm{Tr}:H^{2}_{c,\rig}(X/\ekd)\rightarrow \ekd(-1)$$ of $\varphi$-modules over $\ekd$, which is an isomorphism when $X$ is geometrically irreducible. Moreover, for any $\sh{E}\in F\text{-}\mathrm{Isoc}^\dagger(X/\ekd)$ which extends to some $\overline{\sh{E}}$ on a compactification $\overline{X}$ of $X$, the induced Poincar\'{e} pairing
$$ H^i_\rig(X/\ekd,\sh{E})\times H^{2-i}_{c,\rig}(X/\ekd,\sh{E}^\vee)\rightarrow \ekd(-1)
$$
is perfect.
\end{theo}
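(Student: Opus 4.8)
The plan is to build the trace morphism by hand and then deduce the perfectness of the pairing entirely by base change to $\ek$, invoking classical Poincar\'{e} duality for smooth curves over the complete discretely valued field $\ek$ together with the finiteness and base change statements already established (Theorems \ref{finitesheaf} and \ref{finitecurves} and the comparison lemmas for $H^\bullet_{c,\rig}$ proved above). The one delicate point is to produce $\mathrm{Tr}$ \emph{with values in $\ekd(-1)$} rather than merely in $\ek(-1)$: the classical trace is a priori only $\ek$-valued, and it is not obvious that it carries the $\ekd$-lattice $H^2_{c,\rig}(X/\ekd)\subset H^2_{c,\rig}(X/\ek)$ into $\ekd(-1)$, so the $\ekd$-valued trace has to be constructed intrinsically and only afterwards identified with the classical one.

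To construct $\mathrm{Tr}$ I would reduce to $\P^1$. Excision, together with the comparison $H^i_{c,\rig}(\overline{Z}/\ekd)\cong H^i_\rig(\overline{Z}/\ekd)$ for $\overline{Z}$ proper and the vanishing $H^i_\rig(D/\ekd)=0$ for $0$-dimensional $D$ and $i>0$, shows that $H^2_{c,\rig}(-/\ekd)$ is insensitive to removing $0$-dimensional subschemes, so $H^2_{c,\rig}(X/\ekd)\cong H^2_{c,\rig}(V/\ekd)$ for any dense open $V\subset X$, and $\cong H^2_\rig(\overline{X}/\ekd)$ for $\overline{X}$ a smooth compactification. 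By Theorem \ref{finet} one may choose $V$ admitting a finite \'{e}tale morphism $g\colon V\to\A^1_{k\lser{t}}$; finite pushforward then identifies $H^2_{c,\rig}(V/\ekd)$ with $H^2_{c,\rig}(\A^1_{k\lser{t}}/\ekd,g_*\sh{O})$, and the trace of the finite \'{e}tale extension $g_*\sh{O}\to\sh{O}$ maps this to $H^2_{c,\rig}(\A^1_{k\lser{t}}/\ekd)\cong H^2_\rig(\P^1_{k\lser{t}}/\ekd)$. For $\P^1$ itself, the Mayer--Vietoris sequence for the standard open cover by two copies of $\A^1$, using that $H^i_\rig(\A^1_{k\lser{t}}/\ekd)=0$ for $i>0$ (Theorem \ref{finitesheaf}), identifies $H^2_\rig(\P^1_{k\lser{t}}/\ekd)$ with $H^1_\rig(\G_{m,k\lser{t}}/\ekd)=\ekd\cdot[dx/x]$, which is free of rank one over $\ekd$ with Frobenius acting by multiplication by $q$; that is, it is $\ekd(-1)$. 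Composing the three maps defines $\mathrm{Tr}_X\colon H^2_{c,\rig}(X/\ekd)\to\ekd(-1)$, a morphism of $\varphi$-modules by construction. Independence of the choices of $V$, $g$ and $\overline{X}$, and the assertion that $\mathrm{Tr}_X$ is an isomorphism when $X$ is geometrically irreducible, then follow by applying $-\otimes_{\ekd}\ek$ and comparing with the analogous construction over $\ek$: each link of the chain is compatible with restriction to the $\ek$-tube, the outermost groups base change correctly already with constant coefficients, and the resulting $\ek$-valued map is the classical trace $H^2_{c,\rig}(X/\ek)\to\ek(-1)$, which is independent of all choices and is an isomorphism for $X$ geometrically irreducible; since a morphism of finite-dimensional $\ekd$-vector spaces is an isomorphism, resp.\ agrees with another such morphism, as soon as this holds after the faithfully flat extension $\ekd\to\ek$, we are done.

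With $\mathrm{Tr}$ available, the pairing of the theorem is the composite $H^i_\rig(X/\ekd,\sh{E})\times H^{2-i}_{c,\rig}(X/\ekd,\sh{E}^\vee)\to H^2_{c,\rig}(X/\ekd)\xrightarrow{\mathrm{Tr}_X}\ekd(-1)$, where the first map is the cup-product pairing already constructed above. Both factors are finite-dimensional over $\ekd$: the first by Theorem \ref{finitecurves}, the second by the base change lemma for compactly supported cohomology proved above, which applies precisely because $\sh{E}$, and hence $\sh{E}^\vee$, extends to the compactification $\overline{X}$. Since the cup product and the functor $\underline{\Gamma}_{]X[_\frak{P}}$ commute with the restriction from $]Y[_\frak{P}$ to $]Y'[_\frak{P}$, and $\mathrm{Tr}$ was built to be compatible with base change, applying $-\otimes_{\ekd}\ek$ turns our pairing into the classical Poincar\'{e} pairing $H^i_\rig(X/\ek,\hat{\sh{E}})\times H^{2-i}_{c,\rig}(X/\ek,\hat{\sh{E}}^\vee)\to\ek(-1)$, which is perfect by classical Poincar\'{e} duality for smooth curves. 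Finally, a bilinear pairing of finite-dimensional vector spaces over a field is perfect if and only if the determinant of a Gram matrix with respect to any bases is nonzero; this determinant lies in $\ekd$ and is unaffected by the extension $\ekd\to\ek$, so perfectness over $\ek$ forces perfectness over $\ekd$. Compatibility with Frobenius throughout is immediate from the constructions.

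The main obstacle is the construction of $\mathrm{Tr}$ at the level of $\ekd$: since the classical trace does not obviously restrict to the $\ekd$-lattice, the trace must be produced intrinsically, and the reduction to the explicit Mayer--Vietoris (equivalently residue) description on $\P^1$, where $\ekd$-rationality is manifest, is the crux. A secondary technical point is that finite pushforward must be exact on compactly supported cohomology with the expected compatibilities; this is why the argument is organised so that $g$ is genuinely \'{e}tale on $\A^1$ and the passage from $\A^1$ to $X$ is made by excision, rather than working with a finite cover of the whole compactification. Once these are in place the perfectness of the Poincar\'{e} pairing is essentially formal.
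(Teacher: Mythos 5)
Your overall strategy --- construct an $\ekd$-rational trace by reduction to $\P^1$ and deduce both the isomorphism statement and perfectness by base change to $\ek$ --- is the paper's strategy, and your treatment of the duality half (cup product followed by $\mathrm{Tr}$, finiteness from Theorem \ref{finitecurves} and the compactly supported base change lemma for coefficients extending to $\overline{X}$, then descent of perfectness along $\ekd\rightarrow\ek$ via a Gram determinant) is essentially the paper's argument. The gap is in the construction of $\mathrm{Tr}$ itself, which you correctly identify as the crux. Your chain $H^2_{c,\rig}(X/\ekd)\cong H^2_{c,\rig}(V/\ekd)\cong H^2_{c,\rig}(\A^1_{k\lser{t}}/\ekd,g_*\sh{O})\rightarrow H^2_{c,\rig}(\A^1_{k\lser{t}}/\ekd)$ leans on the \'{e}tale pushforward of Theorem \ref{summ} in two ways that are not available. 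First, the comparison $H^i_\rig(U/\ekd,\sh{E})\cong H^i_\rig(\A^1_{k\lser{t}}/\ekd,f_*\sh{E})$ is proved only for cohomology \emph{without} supports; the compactly supported analogue is nowhere established, and since $H^i_{c,\rig}$ is covariant only for open immersions and contravariant for proper maps, the required compatibility of $\mathbf{R}w_{K*}$ with the functors $\underline{\Gamma}_{]X[}$ is a statement that needs proof, not just the remark you give it. Second, and more seriously, the pushforward functor only exists after a finite separable extension of $k\lser{t}$: Proposition \ref{forlift} needs semistable models to lift $g$ to a morphism of frames. So as written your construction only yields a trace with values in $\cur{E}_K^{\dagger,F}(-1)$ for some finite separable $F/k\lser{t}$, and the descent back to $\ekd$ is missing.

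The paper's proof avoids both problems. For geometrically irreducible $X$ it uses excision to reduce to $X$ proper, takes a finite map $X\rightarrow\P^1_{k\lser{t}}$ and uses the \emph{contravariant} pullback $H^2_\rig(\P^1_{k\lser{t}}/\ekd)\rightarrow H^2_\rig(X/\ekd)$ --- ordinary functoriality, requiring no lifting and no pushforward --- which is an isomorphism because it becomes one after base change to $\ek$; the source is then identified with $\ekd(-1)$ by direct computation for $\P^1$ (your Mayer--Vietoris identification via $H^1_\rig(\G_{m,k\lser{t}}/\ekd)$ is a fine way to do this, though the vanishing of $H^{>0}_\rig(\A^1_{k\lser{t}}/\ekd)$ comes from base change plus the classical computation, not from Theorem \ref{finitesheaf} alone). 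For general $X$ the paper passes to a finite Galois extension $F/k\lser{t}$ splitting $X$ into geometrically irreducible components, sums the component traces, and descends using the trace of the finite Galois extension $\cur{E}_K^{\dagger,F}/\ekd$ --- exactly the descent step your pushforward route would also need and which you omit. To repair your argument, either add the compactly supported pushforward comparison together with this Galois descent, or replace the pushforward step by the pullback argument just described.
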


\begin{rem} \begin{enumerate} \item Here $\ekd(-1)$ means the constant $\pn$-module, but with the Frobenius action multiplied by $q$. 
\item Since a smooth curve is always Frobenius-embeddable, the statement of the theorem makes sense.
\end{enumerate}
\end{rem}

\begin{proof} Let us first suppose that $X$ is geometrically irreducible. Then upon base-changing to $\ek$, $H^2_{c,\rig}(X/\ekd)$ becomes one-dimensional and the Poincar\'{e} pairing perfect, the same is therefore true over $\ekd$. Thus it suffices to show that $H^2_{c,\rig}(X/\ekd)\cong \ekd(-1)$, by excision we may assume that $X$ is proper, and hence admits a finite map to $\P^1$. Thus we get a natural map
$$ H^2_{\rig}(\P^1_{k\lser{t}}/\ekd) \rightarrow H^2_{\rig}(X/\ekd)
$$
which becomes an isomorphism upon base changing to $\ek$, and is therefore an isomorphism. Thus it suffices to treat the case $X=\P^1_{k\lser{t}}$. This can be computed directly.

In general, there exists a finite, Galois extension $F/k\lser{t}$ over which $X$ breaks up into a disjoint union of geometrically irreducible components. Hence by summing the trace maps on each component we get
$$ \mathrm{Tr}:H^2_{c,\rig}(X_F/\cur{E}_K^{\dagger,F}) = H^2_{c,\rig}(X/\cur{E}_K^{\dagger})\otimes_{\ekd} \cur{E}_K^{\dagger,F} \rightarrow \cur{E}_K^{\dagger,F}(-1)
$$
and hence using the trace map $\cur{E}_K^{\dagger,F}\rightarrow \ekd$ (since $\cur{E}_K^{\dagger,F}/\ekd$ is a finite Galois extension), we get an induced trace map
$$ \mathrm{Tr}:H^2_{c,\rig}(X/\cur{E}_K^{\dagger})\rightarrow \ekd(-1).
$$
That the induced pairing is perfect then follows again from base change to $\ek$.
\end{proof}

\bibliographystyle{amsplain}\addcontentsline{toc}{section}{References}
\bibliography{/Users/cdl10/Documents/Dropbox/Maths/lib.bib}

\end{document}